\documentclass[a4paper, 11pt]{amsart}

\usepackage{amsmath,amssymb}
\usepackage{amsthm}
\usepackage{enumitem}

\theoremstyle{definition}
\newtheorem{thm}{Theorem}[section]
\newtheorem{prop}[thm]{Proposition}
\newtheorem{lem}[thm]{Lemma}
\newtheorem{cor}[thm]{Corollary}
\newtheorem{clm}[thm]{Claim}
\newtheorem{dfn}[thm]{Definition}
\newtheorem{ex}[thm]{Example}
\newtheorem{rem}[thm]{Remark}

\numberwithin{equation}{section}

\newcommand{\obsdiam}[2]{\mathrm{ObsDiam}({#1};-{#2})}
\newcommand{\diam}[1]{\mathrm{diam}({#1})}
\newcommand{\supp}[1]{\mathrm{supp}\,{#1}}

\newcommand{\cov}[3]{\mathrm{Cov}({#1};{#2},{#3})}

\newcommand{\Sep}{\mathrm{Sep}}
\newcommand{\dP}{d_{\mathrm{P}}}
\newcommand{\dPH}{(d_{\mathrm{P}})_{\mathrm{H}}}
\newcommand{\dTV}{d_{\mathrm{TV}}}
\newcommand{\extN}{\overline{\mathbb{N}}}

\newcommand{\kl}{k(\ell)}
\newcommand{\kpl}{k^{\prime}(\ell)}

\newcommand{\A}{\mathcal{A}}

\newcommand{\M}{\mathcal{M}}
\newcommand{\Q}{\mathcal{Q}}
\newcommand{\X}{\mathcal{X}}

\newcommand{\N}{\mathbb{N}}
\newcommand{\R}{\mathbb{R}}

\newcommand{\ON}{\overline{\mathbb{N}}}

\newcommand{\TX}{\widetilde{X}}

\allowdisplaybreaks

\title{direct sums and decompositions of Gromov's pyramids}
\author[Toshiaki Miyamoto]{Toshiaki Miyamoto$^{*}$}
\address{Mathematical Institute, Tohoku University,
Sendai 980-8578, Japan}
\email{miyamoto.toshiaki.s2@dc.tohoku.ac.jp}
\date{\today}
\subjclass[2020]{53C23}
\keywords{metric measure space, pyramid, direct sum of pyramids, covering number}
\thanks{${}^{*}$The author is a JSPS Research Fellow.}

\begin{document}
\begin{abstract}
    Gromov introduced the notion of a pyramid as a generalization of a metric measure space, based on the idea of the concentration of measure phenomenon.
    In this paper, we introduce the concept of a \textit{direct sum} of pyramids, which naturally appears as a limit of a sequence of metric measure spaces whose measures concentrate on finitely or countably many regions, with the distances between these regions diverging to infinity.
    As one of our main results, we prove that any pyramid admits a unique direct sum decomposition.
    Moreover, as an application, we establish the method for checking whether a given pyramid is an extended metric measure space.
\end{abstract}
\maketitle
\tableofcontents
\section{Introduction}
The geometry of metric measure spaces has its origin in the study of convergence theory and analysis on Riemannian manifolds with lower Ricci curvature bounds.
In particular, convergence theory is an important topic in this field, and various convergence notions, such as \textit{measured Gromov--Hausdorff convergence} (see~\cite{Fukaya}), have been studied.

Gromov~\cite[Chapter 3$\frac{1}{2}_{+}$]{Gromov} introduced the \textit{observable distance} $d_{\mathrm{conc}}(X,Y)$ between two mm-spaces $X$ and $Y$ and developed a new convergence theory based on the idea of \textit{concentration of measure phenomenon}.
We here call a triple $(X, d_{X}, \mu_{X})$ an \textit{mm-space}, where $(X, d_{X})$ is a complete and separable metric space and $\mu_{X}$ a Borel probability measure on $X$.
The observable distance induces a topology, called the \textit{concentration topology}, on the set $\X$ of all isomorphism classes of mm-spaces.

The concentration of measure phenomenon, due to L\'{e}vy and Milman, states that 1-Lipschitz functions are almost constant with high probability (see~\cite{Ledoux}).
For example, it is well known that a 1-Lipschitz function on a high-dimensional unit sphere is highly concentrated around its median (or \textit{L\'{e}vy mean}) (for details, see~\cite{Ledoux}).
Gromov and Milman~\cite{GrM} formulated the concentration of measure phenomenon as a property of sequences of mm-spaces, and a sequence with this property is called a \textit{L\'{e}vy family}.
For example, the sequence of the $n$-dimensional unit spheres $\{S^{n}(1)\}_{n=1}^{\infty}$ is a L\'{e}vy family.
It is known that a L\'{e}vy family converges to the one-point mm-space in the concentration topology.

Gromov also introduced the notion of a \textit{pyramid} defined as a certain directed subset of $\X$ with respect to the Lipschitz order $\prec$ (see Definition~\ref{def:Lipschitz order}).
There exists a natural embedding $\iota:\X \ni X \mapsto \mathcal{P}_{X} \in \Pi$, where $\Pi$ is the space of all pyramids, and \[\mathcal{P}_{X} := \{Y \in \X \mid Y \prec X\},\]which is called the \textit{pyramid associated with} $X$ (for details, see Section 2.4). 
Shioya~\cite{OS2015_limit_formula,S2016book,Shioya_2017} introduced a metric $\rho$ on the space $\Pi$ and proved that the map $\iota$ yields a natural compactification of $\X$ with the concentration topology.
Therefore, the notion of a pyramid is important to understand the concentration topology.
We sometimes identify the mm-space $X$ with the pyramid $\mathcal{P}_{X}$ by this map $\iota$.
Moreover, the study of pyramids itself is also of independent interest.
The limit of a sequence of mm-spaces in $\Pi$ is not necessarily an mm-space.
For example, the sequence $\{S^{n}(\sqrt{n})\}_{n=1}^{\infty}$ of the $n$-dimensional sphere with radius $\sqrt{n}$ converges in $\Pi$ to the \textit{infinite-dimensional virtual standard Gaussian space} $\Gamma^{\infty}$, which is a pyramid but not an mm-space (for details, see~\cite{K2022, S2016book, Shioya_2017}).

In this paper, we introduce the notion of a \textit{direct sum of pyramids} as a generalization of the notion of a direct sum of mm-spaces and study its properties.
The notion of a direct sum of pyramids is essential for studying sequences of mm-spaces whose measures concentrate on finitely or countably many regions, with the distances between these regions diverging to infinity.
As a main theorem, we prove that any pyramid of infinite observable diameter (see Definition~\ref{def:observable diameter}) admits a uniquely determined direct sum decomposition.
As an application, we investigate a general method for determining whether a given pyramid is an extended mm-space or not by introducing the \textit{covering number} of a pyramid.

In the following, we describe the setting and the main results.
We set \[\overline{\mathbb{N}}:= \mathbb{N} \cup \{\infty\} = \{1,2,\dots, \} \cup \{\infty\},\]
and for $N \in \overline{\mathbb{N}}$, 
\begin{equation*}
    [N] := \begin{cases}
                \{1,2,\dots, N\} &\text{if $N < \infty$,}\\
                \N &\text{if $N = \infty$.} 
            \end{cases}
\end{equation*}
Furthermore, we set
\[\mathcal{A}_{1} := \bigcup_{N \in \overline{\mathbb{N}}}\{(a_{n})_{n=1}^{N} \in (0,1]^{N} \mid \|(a_{n})_{n=1}^{N}\|_{1} := \textstyle\sum_{n=1}^{N} a_{n} = 1\}, \] and \[\A := \{(0)\}\cup \bigcup_{N \in \extN}\left\{(a_{n})_{n=1}^{N} \in (0,1]^{N} \,\middle\vert\, \begin{aligned}&a_{n} \ge a_{n+1}\hspace{1mm}\text{for}\hspace{1mm}n \in [N-1]\\ &\text{and}\hspace{1mm} \|(a_{n})_{n=1}^{N}\|_{1} \le 1 \end{aligned} \right\}.\]
Note that whenever we write $\{X_{n}\}_{n=1}^{N}$, $(a_{n})_{n=1}^{N} \in \mathcal{A}$, or $(a_{n})_{n=1}^{N} \in \mathcal{A}_{1}$, we always assume $N \in \overline{\mathbb{N}}$.

For $(a_{n})_{n=1}^{N} \in \mathcal{A}_{1}$ and a sequence of pyramids $\{\mathcal{P}_{n}\}_{n=1}^{N}$, we define
\[\sum_{n=1}^{N} \mathcal{P}_{n}^{a_{n}} := \left\{X \in \mathcal{X} \,\middle|\, \begin{aligned} &\text{For}\hspace{1mm}n \in [N],\hspace{1mm}\text{there exist an mm-space}\hspace{1mm} X_{n} \in \mathcal{P}_{n}\\ &\text{and a 1-Lipschitz map}\hspace{1mm} f_{n}:X_{n} \to X\hspace{1mm}\text{such that}\\ &\mu_{X} = \sum_{n=1}^{N} a_{n}(f_{n})_{*}\mu_{X_{n}} \end{aligned}\right\}.\]
We call this the \textit{direct sum of pyramids} $\{\mathcal{P}_{n}\}_{n=1}^{N}$ \textit{with weight} $(a_{n})_{n=1}^{N}$.
If $N < \infty$, then we sometimes write $\mathcal{P}_{1}^{a_{1}} + \mathcal{P}_{2}^{a_{2}}+ \cdots + \mathcal{P}_{N}^{a_{N}}$ instead of $\textstyle\sum_{n=1}^{N}\mathcal{P}^{a_{n}}$. 

We obtain the following theorem.
\begin{thm}\label{thm:direct sum of pyramids is a pyramid}
    For any $(a_{n})_{n=1}^{N} \in \mathcal{A}_{1}$ and any sequence $\{\mathcal{P}_{n}\}_{n=1}^{N}$ of pyramids, $\sum_{n=1}^{N} \mathcal{P}_{n}^{a_{n}}$ is a pyramid.
\end{thm}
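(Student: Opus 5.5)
We must verify the three defining properties of a pyramid (in Gromov's/Shioya's sense, as recalled in Section 2.4): a subset $\mathcal{P}\subset\X$ is a pyramid if (i) $X\in\mathcal{P}$ and $Y\prec X$ imply $Y\in\mathcal{P}$; (ii) for $X,X'\in\mathcal{P}$ there is $Y\in\mathcal{P}$ with $X\prec Y$ and $X'\prec Y$; (iii) $\mathcal{P}$ is nonempty and closed in the box topology. Write $\mathcal{P}:=\sum_{n=1}^{N}\mathcal{P}_{n}^{a_{n}}$.

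Nonemptiness is immediate: each $\mathcal{P}_{n}$ contains the one-point space $*$, and the maps $*\to *$ give $*\in\mathcal{P}$. For (i), let $X\in\mathcal{P}$ be witnessed by $X_{n}\in\mathcal{P}_{n}$ and $f_{n}:X_{n}\to X$. If $Y\prec X$ via a 1-Lipschitz $g:X\to Y$ with $g_{*}\mu_{X}=\mu_{Y}$, then $g\circ f_{n}:X_{n}\to Y$ is 1-Lipschitz and $\mu_{Y}=\sum a_{n}(g\circ f_{n})_{*}\mu_{X_{n}}$ by linearity of push-forward. This also covers the case where the domination map is only defined $\mu_X$-a.e., since $(f_n)_*\mu_{X_n}\ll\mu_X$; I would modify on null sets, or use the Lipschitz-order definition with a 1-Lipschitz map defined on the support.

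For (ii), take $X,X'\in\mathcal{P}$ with data $(X_{n},f_{n})$ and $(X'_{n},f'_{n})$. Since $\mathcal{P}_{n}$ is directed, choose $Z_{n}\in\mathcal{P}_{n}$ dominating both $X_{n}$ and $X'_{n}$ via maps $p_{n}:Z_{n}\to X_{n}$ and $p'_{n}:Z_{n}\to X'_{n}$. Let $Y=X\times X'$ with the $\ell_{\infty}$ metric, and define $h_{n}=(f_{n}\circ p_{n},f'_{n}\circ p'_{n}):Z_{n}\to Y$, which is 1-Lipschitz for $\ell_\infty$. Set $\mu_{Y}=\sum a_{n}(h_{n})_{*}\mu_{Z_{n}}$, a probability measure because $\|a\|_1=1$. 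Then $Y\in\mathcal{P}$, and the coordinate projections push $\mu_{Y}$ to $\sum a_{n}(f_{n})_{*}\mu_{X_{n}}=\mu_{X}$ and likewise to $\mu_{X'}$. Hence $X,X'\prec Y$.

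The main obstacle is (iii), closedness. Suppose $X^{k}\in\mathcal{P}$ box-converges to $X$, with data $(X^{k}_{n},f^{k}_{n})$. My plan is to pass to the images: $Y^{k}_{n}:=(X^{k},(f^{k}_{n})_{*}\mu_{X^{k}_{n}})$ satisfies $Y^k_n\prec X^k_n$, so $Y^{k}_{n}\in\mathcal{P}_{n}$, and $\mu_{X^{k}}=\sum_n a_{n}\mu_{Y^{k}_{n}}$. Using the box-metric characterization via parameters (or Prokhorov on a common space after embedding $X^{k}$ and $X$ isometrically into a common Polish space with $\mu_{X^{k}}\to\mu_{X}$ weakly, a standard consequence of box convergence), I would extract limits. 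The measures $\nu^k_n:=\mu_{Y^{k}_{n}}\le a_{n}^{-1}\mu_{X^{k}}$ are tight, so by a diagonal argument $\nu^{k}_{n}\to\nu_{n}$ weakly for every $n$ along a subsequence. The limits satisfy $\sum a_{n}\nu_{n}=\mu_{X}$, with the tail controlled by $\sum_{n>M}a_{n}\to0$ uniformly in $k$, and $\nu_n\le a_n^{-1}\mu_X$, so $\supp{\nu_n}\subset X$. Next, $(X,\nu_{n})$ is the box limit of $Y^{k}_{n}\in\mathcal{P}_{n}$, because weak convergence in a common space implies box convergence. Since $\mathcal{P}_{n}$ is closed, $(X,\nu_{n})\in\mathcal{P}_{n}$, and the identity map on $X$ witnesses $X\in\mathcal{P}$. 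The delicate points are justifying the common-embedding reduction and ensuring the diagonal extraction and tail control work when $N=\infty$. If the common embedding proves awkward, I would instead use the weak Hausdorff convergence characterization of pyramid closedness from Section 2.4, together with the approximation-by-Lipschitz-maps lemma.
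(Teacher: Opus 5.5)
Your proof is correct. The closedness step has the same skeleton as the paper's: pass to the image measures $(f^k_n)_*\mu_{X^k_n}$ on $X^k$, use $a_n\nu^k_n\le\mu_{X^k}$ for tightness, extract a diagonal subsequence, identify $\sum_n a_n\nu_n=\mu_X$ by uniform tail control, and conclude via closedness of $\mathcal{P}_n$ and the identity map. Two technical choices differ, though.

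\textbf{Directedness.} The paper writes $\mathcal{P}$ as the union of the pyramids $\mathcal{P}_{\sum_n X_n^{a_n}}$ associated with extended mm-spaces and observes $X,X'\prec\sum_n Z_n^{a_n}$. It then invokes Ozawa--Shioya's theorem that $\mathcal{P}_{\sum_n Z_n^{a_n}}$ is a pyramid to get a common upper bound. Your $\ell_\infty$-product $X\times X'$ with $\mu_Y=\sum_n a_n(h_n)_*\mu_{Z_n}$ exhibits that upper bound explicitly, so your argument is self-contained. (The same product trick shows directly that $\mathcal{P}_{\widetilde X}$ is directed for any extended mm-space $\widetilde X$.)

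\textbf{Closedness.} The paper avoids a common ambient space. It takes $\varepsilon_k$-mm-isomorphisms $p_k\colon X_k\to X$ and pushes the component measures forward to $X$ itself. It then checks directly that $p_{k(\ell)}$ is an $\varepsilon'_\ell$-mm-isomorphism from $(X_{k(\ell)},(f_{nk(\ell)})_*\mu_{Y_{nk(\ell)}})$ to $(X,\nu_n)$, where $\varepsilon'_\ell=\max\{\varepsilon_{k(\ell)}/a_n,\ \dP(\cdot,\nu_n)\}$ and the factor $1/a_n$ accounts for the exceptional set.

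Your route instead needs the fact that box convergence yields isometric embeddings of all $X^k$ and $X$ into one Polish space with $\mu_{X^k}\to\mu_X$ weakly. This follows from the equivalence of the box and Gromov--Prokhorov topologies plus a gluing argument. It is true and standard, but it is not among the tools recalled in the paper, so you should cite it. Once it is granted, the convergence $(X^k,\nu^k_n)\to(X,\nu_n)$ in the box topology is immediate from $\square\le 2\dP$ on a common space, and you avoid the $\varepsilon/a_n$ bookkeeping.

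You also verify downward closedness explicitly, which the paper leaves implicit in the representation of the direct sum as a union of pyramids. Your worry about a.e.-defined domination maps is unnecessary, since domination maps are everywhere-defined 1-Lipschitz maps by definition.
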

Roughly speaking, the direct sum $\sum_{n=1}^{N}\mathcal{P}_{n}^{a_{n}}$ is a pyramid consisting of the pyramids $\mathcal{P}_{n}$ with weights $a_{n}$, where the distance between $\mathcal{P}_{n}$ and $\mathcal{P}_{m}$ is infinite for $n \neq m$, although this distance is not defined in a strict sense.

We remark that the direct sum of pyramids defined above is a generalization of pyramids generated by atoms observed in~\cite{EKM2024, KNS2024} (see~\eqref{eq:pyramid generated by atoms is a direct sum of pyramids}).

We calculate a nontrivial example of a sequence of mm-spaces that converges in $\Pi$ to a direct sum of pyramids in Section 4.
For example, the wedge sum $T^{n}\vee T^{n}$ of two copies of the $n$-torus $T^{n} := S^{1}(1)\times S^{1}(1)\times \cdots \times S^{1}(1)$ converges in $\Pi$ to the direct sum of two copies of the pyramid $T^{\infty}$, which is the limit of the sequence $\{T^{n}\}_{n=1}^{\infty}$ in $\Pi$.

Next, we describe the main results.
We say that the observable diameter of a pyramid is \textit{finite} if and only if its $\kappa$-observable diameter is finite for every $0 < \kappa < 1$ (see Definition~\ref{def:finiteness of the observable diameter of pyramids}).
We obtain the following decomposition of pyramids.
\begin{thm}\label{thm:decomposition of pyramids}
    For any pyramid $\mathcal{P}$, there exist $A=(a_{n})_{n=1}^{N} \in \mathcal{A}$ and a sequence $\{\mathcal{P}_{n}\}_{n=1}^{N}$ of pyramids of finite observable diameter such that \[\mathcal{P} = \mathcal{X}^{1-\|A\|_{1}} + \sum_{n=1}^{N} \mathcal{P}_{n}^{a_{n}}.\]
\end{thm}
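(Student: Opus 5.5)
I would build the decomposition directly out of the approximating mm-spaces. By the compactification result of Shioya, every pyramid $\mathcal{P}$ is the $\rho$-limit of the pyramids $\mathcal{P}_{X_k}$ of a sequence of mm-spaces $X_k$; equivalently, $\mathcal{P}$ is the closure of an increasing union of such pyramids. The idea is to track, inside $X_k$, the pieces of mass that stay at bounded distance from each other as $k\to\infty$, and to let the distance between different pieces tend to infinity.

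\textbf{Step 1: cluster masses.} For a radius $R>0$ define
\[
m_1(R):=\lim_{k\to\infty}\sup_{x}\mu_{X_k}\bigl(B(x,R)\bigr),
\]
after passing to a subsequence. This quantity is monotone in $R$. I would check that it is really an invariant of $\mathcal{P}$: it can be read off from the partial diameters of $1$-Lipschitz images in $\mathbb{R}$, that is, from observable-diameter type data. Then set $a_1:=\lim_{R\to\infty}m_1(R)$.

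Next, use a diagonal argument to pick centres $x_k^{1}$ and radii $R_k\to\infty$ slowly, so that $\mu_{X_k}(B(x_k^{1},R_k))\to a_1$. Remove these balls and iterate to obtain $a_2\ge a_3\ge\cdots$ and centres $x_k^{n}$. The clusters must eventually separate, with $d(x_k^{n},x_k^{m})\to\infty$ for $n\neq m$; otherwise two clusters would merge into one of larger mass, contradicting the maximality of $a_n$. The resulting sequence is nonincreasing and has $\sum a_n\le 1$, so $A:=(a_n)\in\mathcal{A}$.

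\textbf{Step 2: limit pieces.} Let $Y_k^{n}$ be the ball $B(x_k^{n},R_k)$ with its normalized restricted measure. Extract a subsequence along which each $\mathcal{P}_{Y_k^{n}}$ converges to a pyramid $\mathcal{P}_n$; this is possible by compactness of $\Pi$ together with a diagonal argument. Each $\mathcal{P}_n$ has finite observable diameter. The reason is that a mass proportion $\ge 1-\kappa$ of $Y_k^n$ is contained in a ball of radius $R(\kappa)$ that does not depend on $k$, because $a_n$ is attained at finite radii. This bounds the $\kappa$-observable diameter uniformly in $k$, and the bound passes to the limit.

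\textbf{Step 3: the identity.} To prove $\mathcal{P}\supset \mathcal{X}^{1-\|A\|_1}+\sum_n\mathcal{P}_n^{a_n}$, take an element of the right-hand side. Its pieces come from $Y_k^n$ and from the leftover part $Z_k$. The leftover has no cluster of positive mass at any fixed scale, so it is "dispersed". Because the clusters are far apart and the leftover is dispersed, one can construct $1$-Lipschitz maps from $X_k$ that realize any prescribed mm-space on the leftover mass $1-\|A\|_1$ and realize the chosen images of the $Y_k^n$ on separated regions. By closedness of $\mathcal{P}$ this gives the inclusion.

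For the reverse inclusion $\subset$, fix $Y\prec X_k$ with images lying in a bounded set, and split $\mu_{X_k}$ into the pushforwards of the cluster parts and the leftover part. The cluster parts give elements of $\mathcal{P}_n$ in the limit. The leftover part is absorbed by $\mathcal{X}^{1-\|A\|_1}$, since that summand allows an arbitrary mm-space. Finally, take closures, using that both sides are closed pyramids; that the right-hand side is a pyramid follows from Theorem~\ref{thm:direct sum of pyramids is a pyramid}.

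\textbf{Main obstacle.} The hardest step is the $\supset$ direction for the $\mathcal{X}^{1-\|A\|_1}$ term: I must show that dispersed mass can be mapped $1$-Lipschitz onto an arbitrary mm-space. My first attempt would be a greedy decomposition of $Z_k$ into many small, far-apart pieces. The quantitative fact to prove is that these pieces can be grouped and spread out by $1$-Lipschitz maps so as to approximate any finite-support measure in the weak/Prokhorov sense. A secondary difficulty is making the cluster selection in Step 1 depend only on $\mathcal{P}$ and not on the chosen approximating sequence. This is needed for well-definedness, though not for existence.
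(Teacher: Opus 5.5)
Your construction breaks at Step 1. The maximal ball mass $\sup_x\mu_{X_k}(B(x,R))$ is covering-number data, the kind that detects whether a pyramid is an (extended) mm-space. It is not observable-diameter data, and it can vanish at every fixed scale for sequences that do not dissipate at all. Take $\mathcal{P}=\Gamma^{\infty}$, approximated by the Gaussian spaces $\Gamma^{k}$ or by $S^{k}(\sqrt{k})$. For every fixed $R$ one has $\sup_x\gamma^{k}(B(x,R))=\gamma^{k}(B(0,R))\to 0$. Hence $m_1\equiv 0$ and $a_1=0$, and your procedure returns $A=(0)$, i.e.\ $\mathcal{P}=\mathcal{X}$. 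This does not depend on the chosen sequence: if $\mathcal{P}_{X_k}\to\Gamma^{\infty}$, then $\Gamma^{m}\prec_{\varepsilon}X_k$ along subsequences, and maximal ball mass can only increase (up to $\varepsilon$-errors) when passing to a dominated space. But $\Gamma^{\infty}$ has finite observable diameter, $\obsdiam{\Gamma^{\infty}}{\kappa}=\mathrm{diam}(\gamma^{1};1-\kappa)$, so its decomposition is $A=(1)$, $\mathcal{P}_1=\Gamma^{\infty}$. The same example refutes two other claims you rely on. First, $m_1(R)$ cannot be read off from partial diameters of $1$-Lipschitz images in $\mathbb{R}$. Second, in Step 3, mass without positive-mass balls at fixed scales cannot in general be spread $1$-Lipschitz onto an arbitrary mm-space: by Gaussian concentration, $\Gamma^{k}$ cannot be mapped, even up to small error, onto two atoms of mass $1/2$ at large distance. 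What the leftover must do is \emph{infinitely dissipate} (separation distances $\to\infty$), which is a condition on separated sets, not on balls. Your Step 2 radius bound for finite observable diameter falls together with Step 1.

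The missing idea is to extract the atoms from $\mathcal{P}$ itself. The paper shows that $t\mathcal{P}$ converges weakly, as $t\to0+$, to the scale-invariant pyramid $\bigcap_{t>0}t\mathcal{P}$. By the Kazukawa--Nakajima--Shioya classification, this equals $\mathcal{P}_A$ for some $A\in\mathcal{A}$. Since $W_n=\mathbb{D}_n(\{pt_k\}_{k=1}^{N};A)\in\mathcal{P}_A\subset\mathcal{P}$, one chooses an approximation sequence $Y_n$ of $\mathcal{P}$ with $W_n\prec Y_n$ and pulls back along the domination maps $g_n$. The sets $g_n^{-1}(pt_k)$ have mass exactly $a_k$ and are pairwise at distance at least $n$. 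The normalized restriction to $g_n^{-1}(\mathbb{D}_n)$ dominates $\mathbb{D}_n$, hence infinitely dissipates. So both your separation step and your main obstacle become automatic. After extracting convergent subsequences of the pieces, as in your Step 2, two facts identify $\mathcal{P}$ with $\mathcal{X}^{1-\|A\|_1}+\sum_n\mathcal{P}_n^{a_n}$: the continuity corollary for spaces in $\mathcal{X}(\cdot\,;A;r)$ with $r\to\infty$, and the fact that infinitely dissipating sequences converge weakly to $\mathcal{X}$. Finite observable diameter of the $\mathcal{P}_n$ then comes not from a radius bound but from uniqueness of atoms. If $t\mathcal{P}_n\to\mathcal{P}_{B_n}$, rescaling the decomposition gives $\mathcal{P}_A=\mathcal{P}_{\sum_n a_nB_n}$, so $A=\sum_n a_nB_n$. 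This forces $B_n=(1)$, i.e.\ $t\mathcal{P}_n\to\{*\}$.
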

Note that this theorem is nontrivial only in the case when a pyramid $\mathcal{P}$ has infinite observable diameter.
We here explain briefly how Theorem~\ref{thm:decomposition of pyramids} is proved.
Considering the scale transformation $t\mathcal{P}$, $t > 0$, and letting $t \to 0+$, we observe that $t\mathcal{P}$ converges weakly to some scale-invariant pyramid $\mathcal{Q}$, which is a pyramid generated by atoms (see~\cite[Theorem 1.7]{KNS2024}).
By a simple observation, we see that for any pyramid $\mathcal{R}$ of finite observable diameter, $t\mathcal{R}$ converges weakly to the pyramid consisting of only a one-point mm-space.
Based on the above observation, we expect that the atoms which appear in the limit $\mathcal{Q}$ come from the one-point mm-space which is the limit of the part of finite observable diameter in $\mathcal{P}$.
Reconstructing the parts that collapsed to a one point, we will obtain the decomposition of $\mathcal{P}$.

In addition, we obtain the following theorem on the uniqueness of the decomposition.
\begin{thm}\label{thm:the uniquness of decomposition of pyramid}
    Let $A=(a_{n})_{n=1}^{N}, B=(b_{n})_{n=1}^{M}$ be elements of $\mathcal{A}$, and let $\{\mathcal{P}_{n}\}_{n=1}^{N}, \{\mathcal{Q}_{n}\}_{n=1}^{M}$ be sequences of pyramids of finite observable diameter. 
    If \[\mathcal{X}^{1-\|A\|_{1}} + \sum_{n=1}^{N}\mathcal{P}_{n}^{a_{n}} = \mathcal{X}^{1-\|B\|_{1}} + \sum_{n=1}^{M} \mathcal{Q}_{n}^{b_{n}},\] then the following (i) and (ii) hold.
    \begin{enumerate}
        \item $N=M$.
        \item There exists a bijective map $f:[N] \to [N]$ such that $\mathcal{P}_{n} = \mathcal{Q}_{f(n)}$ and $a_{n} = b_{f(n)}$ for each $n \in [N]$.
    \end{enumerate}
\end{thm}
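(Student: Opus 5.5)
The uniqueness will follow from two steps. First I recover the weight sequence $A$ intrinsically from the pyramid $\mathcal{P}$. Then I recover each summand from $\mathcal{P}$ together with a choice of "component". For the first step I use the scaling limit already sketched after Theorem~\ref{thm:decomposition of pyramids}. I will show that as $t\to 0+$,
\[
t\Bigl(\mathcal{X}^{1-\|A\|_{1}} + \sum_{n=1}^{N}\mathcal{P}_{n}^{a_{n}}\Bigr)
\]
converges weakly to the pyramid generated by atoms with weights $(a_n)$. The scaling commutes with the direct sum; this is a routine check from the definition, since $t\cdot$ preserves $1$-Lipschitz maps. Each $t\mathcal{P}_n$ converges weakly to the one-point pyramid because $\mathcal{P}_n$ has finite observable diameter. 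The term $\mathcal{X}^{1-\|A\|_1}$ is scale-invariant. One also needs continuity of the direct sum under weak convergence of the summands, which I would prove directly from the definition via approximating finite sub-sums (tails of small total weight are negligible in $\rho$). Since pyramids generated by atoms determine their nonincreasing weight sequence (\cite{KNS2024}), the equality of the two pyramids gives $(a_n)=(b_n)$ as elements of $\mathcal{A}$. In particular $N=M$.

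For the second step I must identify the $\mathcal{P}_n$ themselves, up to permutation among equal weights. My plan is to characterize the summands as follows. For a pyramid $\mathcal{P}$ and $a>0$, let $S_a(\mathcal{P})$ be the set of pyramids $\mathcal{R}$ of finite observable diameter such that $\mathcal{P}\supseteq \mathcal{R}^{a}+\mathcal{X}^{1-a}$, i.e. such that $\mathcal{R}$ "fits with mass $a$" inside $\mathcal{P}$. I would then show the following. If $\mathcal{R}$ has finite observable diameter and mass $a$ fits in the decomposed pyramid, then, by concentration of the finite-diameter part, the mass of $\mathcal{R}$ must essentially lie in single components. Testing with $X\in\mathcal{R}$ that are far from being split, one should get $\mathcal{R}\subset\mathcal{P}_n$ for some $n$ with $a_n\ge a$. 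To make this rigorous I would approximate $\mathcal{R}$ by finitely many mm-spaces $X\in\mathcal{R}$ with large diameter separations controlled. In a representation $\mu_Y=\sum a_n (f_n)_*\mu_{X_n}$ of some $Y$ dominating $X\oplus$(far mass), the images of different $X_n$ can be pulled back. A partial-Lipschitz/separation argument (using $\obsdiam{\mathcal{R}}{\kappa}<\infty$) then shows that a $1-\varepsilon$ fraction of $\mu_X$ comes from a single $X_n$, up to weight $a\le a_n$.

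Given this, the maximal elements of $S_{a_1}(\mathcal{P})$ are exactly the $\mathcal{P}_n$ with $a_n=a_1$. The sum with an appropriately scaled copy of the remaining pieces shows that each $\mathcal{P}_n$ is in $S_{a_n}$. I then proceed by induction on the distinct weight values, matching the summands with a given weight by counting multiplicity. Multiplicities agree by the first step, which yields the bijection $f$. A subtlety is that $\mathcal{P}_n\subset\mathcal{P}_m$ with $a_n\le a_m$ could cause confusion. To handle it I would match with multiplicity using a stronger test: $\mathcal{R}_1^{a}+\mathcal{R}_2^{a'}+\mathcal{X}^{\cdots}$ fits only if the $\mathcal{R}_i$ go into distinct components with enough weight. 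This is a Hall-type matching condition.

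The main obstacle is the localization claim: a finite-observable-diameter pyramid placed with mass $a$ inside the direct sum must concentrate essentially in one component, rather than spreading its mass over several components or into the $\mathcal{X}^{1-\|A\|_1}$ part. I would attack it via the $\kappa$-observable diameter. Splitting mass $a$ across components separated by unbounded distances would produce $1$-Lipschitz functions with large partial diameter on sets of measure bounded below, which contradicts finiteness of $\obsdiam{\mathcal{R}}{\kappa}$ for suitable $\kappa$.
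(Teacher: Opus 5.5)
Your first step (rescale by $t\to0+$, use that scaling commutes with direct sums, continuity of the direct sum, $t\mathcal{P}_n\to\{*\}$, and uniqueness of atom sequences) is exactly the paper's, and it correctly gives $A=B$, hence $N=M$.

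The second step, however, is built on a test that the summands do not pass. The condition $\mathcal{R}^{a}+\mathcal{X}^{1-a}\subset\mathcal{P}$ requires $\mathcal{P}$ to absorb an \emph{arbitrary} mm-space carrying mass $1-a$. Replacing ``the remaining pieces'' by the maximal pyramid $\mathcal{X}$ enlarges the left-hand side, so the condition does not follow from $\mathcal{P}_n^{a_n}+(\cdots)^{1-a_n}\subset\mathcal{P}$. In fact, letting $t\to0+$ turns the condition into $\mathcal{P}_{(a)}\subset\mathcal{P}_A$. Testing this with a space having one atom of mass $a$ and a diffuse remainder forces $\|A\|_1\le a$. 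Consequently $\mathcal{P}_n\in S_{a_n}(\mathcal{P})$ fails as soon as $N\ge2$. When $\|A\|_1=1$, one even has $S_a(\mathcal{P})=\emptyset$ for every $a<1$; this is essentially the paper's lemma that $\mathcal{X}^{1-\alpha}+\mathcal{Q}^{\alpha}\subset\mathcal{P}$ makes $\mathcal{P}$ partially infinitely dissipated. Concretely, take $\mathcal{P}=\{*\}^{1/2}+\{*\}^{1/2}$. A three-point space with weights $\frac12,\frac14,\frac14$ lies in $\{*\}^{1/2}+\mathcal{X}^{1/2}$ but not in $\mathcal{P}$. So $S_{1/2}(\mathcal{P})$ is empty, and the ``maximal elements'' characterization collapses.

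Your localization claim also has source and target reversed. Finite observable diameter of the \emph{target} $X\in\mathcal{R}$ does not prevent several source pieces $X_n$ from feeding mass into $X$; in $\{*\}^{1/2}+\{*\}^{1/2}$ the one-point space receives mass $\frac12$ from each summand. Nor need the images $f_n(X_n)$ be far apart inside $Y$, so there is no separation to which $\Sep\le\mathrm{ObsDiam}$ could be applied. What finite observable diameter controls is the \emph{source*}: a space in a pyramid of finite observable diameter cannot have its push-forward split between far-apart regions. Nor can such spaces dissipate, since otherwise $\mathcal{X}$ would be contained in that pyramid.

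The paper uses exactly this, in the following steps.
\begin{enumerate}
\item It first strips the $\mathcal{X}$-part: for pyramids that are not partially infinitely dissipated, $\mathcal{X}^{1-\alpha}+\mathcal{P}^{\alpha}=\mathcal{X}^{1-\alpha}+\mathcal{Q}^{\alpha}$ implies $\mathcal{P}=\mathcal{Q}$. This reduces to $\|A\|_1=1$.
\item It then tests with the \emph{whole} decomposition at once. Take $Z_k=(\sum_n X_n^{a_n})_k$ with $X_n$ approximating $\mathcal{P}_n$; it lies in $\sum_n\mathcal{Q}_n^{a_n}$. Hence $\mu_{Z_k}=\sum_n a_n(f_{nk})_*\mu_{Y_{nk}}$ with $Y_{nk}\in\mathcal{Q}_n$.
\item Each $Y_{nk}$ asymptotically lands in a single component $X_{m_n}$. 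Mass balance gives $a_{m_n}\ge a_n$, and monotonicity of the weights yields a weight-preserving bijection greedily. An auxiliary lemma then gives $X_{m_n}\in\mathcal{Q}_n$.
\item Finally, the finitely many summands of each common weight are matched by chasing $\mathcal{P}_n\subset\mathcal{Q}_{\sigma(n)}\subset\mathcal{P}_{\tau\sigma(n)}\subset\cdots$ around the cycles of $\tau\circ\sigma$. This resolves the $\mathcal{P}_n\subset\mathcal{P}_m$ issue that a maximal-elements approach cannot.
\end{enumerate}
Your Hall-type remark points in this direction. It must, however, be carried out with the full weight vector, and with the concentration argument applied to the pieces coming from the finite-observable-diameter side.
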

The uniqueness result in the case of pyramids generated by atoms was established in~\cite[Proposition 6.9]{EKM2024} and~\cite[Lemma 4.4]{KNS2024}.
In the proof of Theorem~\ref{thm:the uniquness of decomposition of pyramid}, we use this result.

Next, we study the \textit{covering number} of a pyramid.
\begin{dfn}[\cite{VL_mm_entropy}]
    For $r > 0$, $0 < \kappa < 1$, and for an mm-space $X$, we define \[\cov{X}{r}{\kappa} := \min \hspace{1mm}\{\#\mathcal{N}\mid \mathcal{N} \subset X,\hspace{1mm}\mu_{X}(B_{r}(\mathcal{N})) \ge 1-\kappa\},\] and call it the $(r,\kappa)$-\textit{covering number} of $X$. 
    Note that, for a set $A$, we denote by $\# A$ the cardinality of $A$. 
\end{dfn}
The covering number is an invariant for mm-spaces, and is monotone increasing with respect to the Lipschitz order relation.
This enables us to define the covering number of a pyramid (for details, see~\cite{EKM2024}).
\begin{dfn}
    For $r > 0$, $0 < \kappa < 1$, and for a pyramid $\mathcal{P}$, we define \[\cov{\mathcal{P}}{r}{\kappa}:=\sup_{X \in \mathcal{P}} \cov{X}{r}{\kappa},\] and call it the $(r,\kappa)$-\textit{covering number} of $\mathcal{P}$.
\end{dfn}
As an application of Theorem~\ref{thm:decomposition of pyramids}, we obtain the following theorem.
\begin{thm}\label{thm:when a pyrmid is an ext. mm sp.}
    Let $\mathcal{P}$ be a pyramid.
    Then the following (i) and (ii) are equivalent to each other.
    \begin{enumerate}
        \item For any $r > 0$ and any $0 < \kappa < 1$, we have $\cov{\mathcal{P}}{r}{\kappa} < +\infty$.
        \item There exists an extended mm-space $X$ such that $\mathcal{P} = \mathcal{P}_{X}$.
    \end{enumerate}
\end{thm}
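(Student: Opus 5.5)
We prove (ii)$\Rightarrow$(i) directly and obtain (i)$\Rightarrow$(ii) from the decomposition of Theorem~\ref{thm:decomposition of pyramids}. An extended mm-space $X$ splits into components $X_n$ at mutually finite distance, with masses $a_n=\mu_X(X_n)$, and $\mathcal{P}_X=\sum_n \mathcal{P}_{X_n}^{a_n}$ with each $X_n$ an honest mm-space. We will use this description freely.

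\emph{(ii)$\Rightarrow$(i).} Fix $r>0$ and $0<\kappa<1$. Since $\sum a_n=1$, choose finitely many components $X_1,\dots,X_m$ carrying mass $>1-\kappa/2$. Each $X_n$ is separable, so it has a finite set $\mathcal{N}_n$ with $\mu_{X_n}(B_{r/2}(\mathcal{N}_n))\ge 1-\kappa/2$. Hence $\mathcal{N}=\bigcup_{n\le m}\mathcal{N}_n$ satisfies $\mu_X(B_{r/2}(\mathcal{N}))\ge 1-\kappa$.

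For $Y\in\mathcal{P}_X$ we must bound $\cov{Y}{r}{\kappa}$ by $\#\mathcal{N}$. If $Y\prec X$ via a 1-Lipschitz map $f$, then $f(\mathcal{N})$ works directly. A general $Y\in\mathcal{P}_X$ is only a $\square$-limit of spaces dominated by $X$. Here we use the monotonicity and semicontinuity of the covering number already established in~\cite{EKM2024}: $\cov{Y}{r}{\kappa}\le \cov{X}{r'}{\kappa'}$ for slightly smaller $r'$ and $\kappa'$. This is why we take radius $r/2$ and margin $\kappa/2$ above. Thus $\cov{\mathcal{P}_X}{r}{\kappa}<\infty$.

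\emph{(i)$\Rightarrow$(ii).} Write $\mathcal{P}=\mathcal{X}^{1-\|A\|_1}+\sum_n\mathcal{P}_n^{a_n}$ by Theorem~\ref{thm:decomposition of pyramids}.

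\textbf{Step 1: $\|A\|_1=1$.} If $c=1-\|A\|_1>0$, the summand $\mathcal{X}^{c}$ lets us place mass $c$ onto any mm-space. For instance, take $Y$ to be $k$ points at mutually large distances, each of mass $c/k$, together with the remaining mass. Then $\cov{Y}{r}{\kappa}$ is unbounded as $k\to\infty$ once $\kappa<c$. We verify this lies in $\mathcal{P}$ by the definition of the direct sum, using a one-point space in each $\mathcal{P}_n$.

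\textbf{Step 2: each $\mathcal{P}_n$ is an mm-space.} We show $\cov{\mathcal{P}_n}{r}{\kappa}<\infty$ for all $r,\kappa$: if $Y_n\in\mathcal{P}_n$, then the direct sum of $Y_n$ with weight $a_n$ and points elsewhere, far apart, lies in $\mathcal{P}$, so covering bounds for $\mathcal{P}$ at level $a_n\kappa$ transfer to $Y_n$. Together with finite observable diameter, this should force $\mathcal{P}_n=\mathcal{P}_{X_n}$ for an mm-space $X_n$.

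This is the main obstacle, and we would attack it as follows. Take an approximating increasing sequence $Y^{(k)}\in\mathcal{P}_n$ exhausting $\mathcal{P}_n$. Uniform covering-number bounds together with a uniform observable-diameter bound give a uniform measure-theoretic total boundedness. By the box-precompactness criterion (Gromov's/Shioya's: uniformly bounded covering numbers at each scale up to $\kappa$, plus bounded diameter of a big piece), the $Y^{(k)}$ then have a $\square$-convergent subsequence to some $X_n$, and $\mathcal{P}_{Y^{(k)}}\to\mathcal{P}_{X_n}$, so $\mathcal{P}_n=\mathcal{P}_{X_n}$. The delicate point is that finite observable diameter controls 1-Lipschitz functions, not the diameter of the spaces themselves. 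Combined with covering numbers, though, a finite cover by $r$-balls carrying mass $1-\kappa$ has pairwise center distances bounded via observable diameter, since distance functions to centers are 1-Lipschitz. This yields the needed boundedness.

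\textbf{Step 3: assemble $X$.} Finally, set $X=\bigsqcup_n X_n$ with $\mu_X=\sum a_n\mu_{X_n}$ and infinite distance between components. Then $\mathcal{P}_X=\sum_n\mathcal{P}_{X_n}^{a_n}=\mathcal{P}$, which is (ii).
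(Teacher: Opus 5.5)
Your outline matches the paper's proof. It uses the decomposition of Theorem~\ref{thm:decomposition of pyramids} and gets $\|A\|_1=1$ because $\mathcal{X}$ has infinite covering numbers. It transfers covering bounds via $(Y_n^{a_n}+\{*\}^{1-a_n})_{2r}\in\mathcal{P}$, giving $\cov{\mathcal{P}_n}{r}{\kappa}\le\cov{\mathcal{P}}{r}{a_n\kappa}$. The finite-observable-diameter case goes through the $\square$-precompactness criterion (Lemma~\ref{lem:the condition when a pyramid of finite observable diameter is an mm-space}), and reassembly uses $\sum_n\mathcal{P}_{X_n}^{a_n}=\mathcal{P}_{\sum_n X_n^{a_n}}$.

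Two side remarks. In (ii)$\Rightarrow$(i), the detour through $\square$-limits and a cited ``semicontinuity'' is unnecessary. For an extended mm-space, $\mathcal{P}_X$ is by definition $\{Y\in\mathcal{X}\mid Y\prec X\}$, and Ozawa--Shioya show this set is already a pyramid, hence closed. So the push-forward $f(\mathcal{N})$ of one net of $X$ handles every $Y$; this is the paper's Lemma~\ref{lem:the covering number of ext mm-space is finite}. On the other hand, working only with an approximation sequence $\{Y^{(k)}\}$ is a legitimate shortcut. A $\square$-convergent subsequence already forces $\mathcal{P}_n=\mathcal{P}_{X_n}$, exactly as in the proof of Proposition~\ref{prop:P is compact <=> P=PX}. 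This lets you skip the paper's transfer of nets to arbitrary $Y\in\mathcal{P}_n$ via $Y\prec_\delta X_n$.

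The genuine gap is in the step you yourself flag as delicate. It is not true that the centres of a finite $r$-cover of mass $1-\kappa$ have pairwise distances bounded via the observable diameter. Testing with $d(x^i,\cdot)$ gives $d(x^i,x^j)\le\obsdiam{\mathcal{P}_n}{\kappa'}+2r$ only when both $B_r(x^i)$ and $B_r(x^j)$ carry mass larger than $\kappa'$. A minimizing net can contain centres whose balls carry mass tending to $0$ along the sequence and which drift off to infinity. For example, for $j>12r$ let $Y_j$ be the subspace $\{0,5r,10r,j\}\subset\mathbb{R}$ with masses $\tfrac12-t_j,\ \tfrac12-t_j,\ t_j,\ t_j$, where $t_j\to0$. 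Each $\kappa'$-observable diameter is bounded uniformly in $j$, and all covering numbers are at most $4$. Yet $\{0,j\}$ is a minimizer for $\cov{Y_j}{r}{1/2}$, with unbounded diameter.

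The missing idea is that you must discard light centres and pay for it in mass. The paper does this in Claim~\ref{clm:Nn is almost uniformly bounded}. By pigeonhole, some centre $x^1$ has $\mu(B_r(x^1))\ge(1-\kappa)/C$, where $C$ is the covering bound. Put $R:=\Sep(\mathcal{P}_n;(1-\kappa)/C,\kappa)+2r$, which is finite because separation distance is dominated by observable diameter \cite[Proposition 4.10]{OS2015_limit_formula}. Then the $r$-balls around centres farther than $R$ from $x^1$ have total mass $<\kappa$. Dropping those centres leaves a net of diameter $\le 2R$ that still supports mass $1-2\kappa$.

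Equivalently, you can drop every centre whose ball has mass $<\tau$, losing at most $C\tau$, and bound the remaining pairwise distances by $\Sep(\,\cdot\,;\tau,\tau)+2r$. With either version inserted, your Step~2 produces the uniform nets required by Lemma~\ref{lem:box-precompactness criterion}, and the argument closes.
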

The term \textit{extended mm-space} means that its metric takes its values in $[0, +\infty]$ (for details, see Definition~\ref{def:extended mm-space}).
Moreover, for an extended mm-space $X$, $\mathcal{P}_{X}$ is the \textit{pyramid associated with} $X$ (for details, see Section 3.2).
Theorem~\ref{thm:when a pyrmid is an ext. mm sp.} provides a new general method for deciding whether a given pyramid is an extended mm-space.
For example, if a pyramid $\mathcal{Q}$ satisfies $\mathcal{Q} \subset \mathcal{P}_{X}$ for some extended mm-space $X$, then $\mathcal{Q}$ is also an extended mm-space, that is, we can find an extended mm-space $Y$ such that $\mathcal{Q} = \mathcal{P}_{Y}$ by Theorem~\ref{thm:when a pyrmid is an ext. mm sp.} since the covering number of $\mathcal{Q}$ is bounded above by that of $\mathcal{P}_{X}$, which is finite (see Corollary~\ref{cor:Q < P_X, X; ext. mm-sp. => Q=P_Y}).
We use Theorem~\ref{thm:decomposition of pyramids} only in the proof of (i) $\Rightarrow$ (ii), while (i) $\Leftarrow$ (ii) follows from a simple argument.
By Theorem~\ref{thm:when a pyrmid is an ext. mm sp.} (i) $\Leftarrow$ (ii), we provide an alternative proof that any infinite product space of a nontrivial mm-space is not an mm-space (see Corollary~\ref{cor:infinite product is not mm-sp.}).

The organization of this paper is as follows.
In Section 2, we summarize the preliminaries.
In Section 3, we study properties of the direct sum of mm-spaces and pyramids.
After that, in Section 4, we calculate an example of a sequence of mm-spaces that converges in $\Pi$ to a direct sum of pyramids.
In Section 5, we prove Theorem~\ref{thm:decomposition of pyramids} and Theorem~\ref{thm:the uniquness of decomposition of pyramid}.
Finally, in Section 6, we prove Theorem~\ref{thm:when a pyrmid is an ext. mm sp.}. 

\,

\noindent\textbf{Acknowledgments.}
The author would like to thank Professor Takashi Shioya for his valuable comments and helpful support.
The author is also grateful to Shigeaki Yokota for helpful discussions on the proof of Lemma~\ref{lem:the measure of r-balls of infinite product space are zero}.
The author would like to thank Daisuke Kazukawa for his helpful comments.

\section*{Notation}
\begin{itemize}
    \item $\N:= \{1,2,\dots\}$;
    \item $\overline{\N} := \N \cup \{\infty\}$;
    \item For a set $A$, we denote by $\#A$ the cardinality of $A$;
    \item Let $(X, d_{X})$ be a metric space. For a subset $A \subset X$ and a real number $r >0$, we set \[U_{r}^{X}(A) = U_{r}(A) := \{ x \in X \mid d_{X}(x, A) < r\} \] and
        \begin{equation*}
            B_{r}^{X}(A) = B_{r}(A) := \{x \in X \mid d_{X}(x,\,A) \le r\},
        \end{equation*}
        where $d_{X}(x,\, A) := \inf \{d_{X}(x,y) \mid y \in A\}$. 
    \item For $N \in \ON$, we set 
            \begin{equation*}
                [N] := \begin{cases}
                            \{1,2,\dots,N\} &\text{if $N < \infty$},\\
                            \N &\text{if $N=\infty$};
                        \end{cases}
            \end{equation*}
    \item For a (finite or infinite) sequence $(a_{n})_{n=1}^{N}$, $N \in \ON$, of real numbers, we define \[\|(a_{n})_{n=1}^{N}\|_{1} := \textstyle\sum_{n=1}^{N}\,\lvert a_{n}\rvert.\]
    \item $\mathcal{A}_{1} := \bigcup_{N \in \extN}\{A=(a_{n})_{n=1}^{N} \in (0,1]^{N} \mid \|A\|_{1}  = 1\}$;
    \item $\A := \{(0)\}\cup \displaystyle\bigcup_{N \in \overline{\N}}\left\{(a_{n})_{n=1}^{N} \in (0,1]^{N} \,\middle\vert\, \begin{aligned}&a_{n} \ge a_{n+1}\hspace{1mm}\text{for}\hspace{1mm}n \in [N-1]\\ &\text{and}\hspace{1mm} \|(a_{n})_{n=1}^{N}\|_{1} \le 1 \end{aligned} \right\}$;
\end{itemize}
Note that whenever we write $\{X_{n}\}_{n=1}^{N}$, $(a_{n})_{n=1}^{N} \in \mathcal{A}$, or $(a_{n})_{n=1}^{N} \in \mathcal{A}_{1}$, we always assume $N \in \overline{\mathbb{N}}$.

\section{Preliminaries}

In this section, we recall some definitions and properties of metric measure spaces and Gromov's pyramids. 
Our main references are~\cite[Chapter 3$\frac{1}{2}_{+}$]{Gromov} and~\cite{S2016book}, and we follow the terminology used therein.
\subsection{Metric Measure spaces}
\begin{dfn}[mm-space]\label{def:mm-space}
    Let $(X, d_{X})$ be a complete and separable metric space and $\mu_{X}$ a Borel probability measure on $X$. 
    We call the triple $(X, d_{X}, \mu_{X})$ an \textit{mm-space}. 
    For any mm-space $X$, we denote its metric by $d_{X}$ and its Borel probability measure by $\mu_{X}$.
\end{dfn}

\begin{dfn}[mm-isomorphism]\label{def:mm-isomorphism}
    Let $X$ and $Y$ be mm-spaces. 
    We say that $X$ and $Y$ are \textit{mm-isomorphic} to each other if there exists an isometry $f:\supp{\mu_{X}} \to \supp{\mu_{Y}}$ such that $f_{*}\mu_{X} = \mu_{Y}$, where $\supp{\mu_{X}}$ and $\supp{\mu_{Y}}$ denote the support of $\mu_{X}$ and $\mu_{Y}$, respectively, and $f_{*}\mu_{X}$ is the push-forward measure of $\mu_{X}$ by $f$.
    We call such a map $f$ an \textit{mm-isomorphism}. 
    We denote by $\mathcal{X}$ the set of all mm-isomorphism classes of mm-spaces.
\end{dfn}
Since any mm-space $X$ is mm-isomorphic to $(\supp{\mu_{X}}, d_{X}, \mu_{X})$, we assume that $X = \supp{\mu_{X}}$ for any mm-space $X$ unless otherwise stated.
\begin{dfn}[Nontrivial mm-space]\label{def:nontrivial mm-space}
    We say that an mm-space $X$ is \textit{nontrivial} if it is not mm-isomorphic to the one-point mm-space $*$, equipped with the trivial metric and the Dirac measure.
\end{dfn}

\begin{dfn}[Domination]\label{def:Lipschitz order}
    For two mm-spaces $X$ and $Y$, we write $Y \prec X$ if there exists a 1-Lipschitz map $f:X \to Y$ such that $f_{*}\mu_{X} = \mu_{Y}$. 
    We call such an $f$ a \textit{domination map} and the relation $\prec$ on $\mathcal{X}$ the \textit{Lipschitz order}.
\end{dfn}

\begin{rem}
    The Lipschitz order is a partial order relation on $\mathcal{X}$ (for details, see~\cite[Proposition 2.11]{S2016book}).
\end{rem}

\subsection{Box distance}
\ 

Let $(X, d_{X})$ be a metric space. For a subset $A \subset X$ and a real number $r >0$, we set \[U_{r}^{X}(A) = U_{r}(A) := \{ x \in X \mid d_{X}(x, A) < r\} \] and
\begin{equation*}
    B_{r}^{X}(A) = B_{r}(A) := \{x \in X \mid d_{X}(x,\,A) \le r\},
\end{equation*}
where $d_{X}(x,\, A) := \inf \{d_{X}(x,y) \mid y \in A\}$.

For a metric space $X$, we write $P(X)$ for the space of all Borel probability measures on $X$.
\begin{dfn}[Prokhorov distance]\label{def:Prokhorov distance}
    Let $(X,\, d_{X})$ be a metric space. For $\mu,\, \nu \in P(X)$, the \textit{Prokhorov distance} $d_{\mathrm{P}}(\mu, \nu)$ between $\mu$ and $\nu$ is defined by \[\dP(\mu, \nu) := \inf \{\varepsilon > 0 \mid \mu(U_{\varepsilon}(A)) \ge \nu(A) -\varepsilon \hspace{2mm}\text{for any Borel subset}\hspace{2mm}A \subset X\}. \]
\end{dfn}

The Prokhorov distance defines a metric on $P(X)$. 
If $X$ is separable, then $d_{\mathrm{P}}$ is compatible with the weak convergence of Borel probability measures (for details, see~\cite[Section 6]{Bill}).

\begin{dfn}[Total variation distance]\label{def:the total variation distance}
    Let $X$ be a metric space.
    For $\mu, \nu \in P(X)$, we define the \textit{total variation distance} $\dTV(\mu, \nu)$ between $\mu$ and $\nu$ by \[\dTV(\mu, \nu) := \sup \{\lvert \mu(A) - \nu(A)\rvert\mid A \hspace{1mm}\text{is a Borel subset of}\hspace{1mm}X\}.\]
\end{dfn}
The following lemma is well known (see, for example,~\cite{Huber}).
\begin{lem}\label{lem:dP < dTV}
    For any Borel probability measures $\mu$ and $\nu$ on a metric space $X$, we have \[\dP(\mu, \nu) \le \dTV(\mu, \nu).\]
\end{lem}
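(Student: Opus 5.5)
Since \(\dP\) is an infimum, it suffices to show that every \(\varepsilon > \dTV(\mu,\nu)\) is admissible in Definition~\ref{def:Prokhorov distance}. That is, for every Borel set \(A \subset X\) we want
\[
\mu(U_{\varepsilon}(A)) \ge \nu(A) - \varepsilon .
\]

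Fix such an \(\varepsilon\) and a Borel set \(A\). There are two steps.

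First, the open neighbourhood \(U_{\varepsilon}(A)\) contains \(A\), because every \(x \in A\) satisfies \(d_X(x,A)=0<\varepsilon\). It is also a Borel (indeed open) set, so monotonicity of \(\mu\) gives \(\mu(U_{\varepsilon}(A)) \ge \mu(A)\).

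Second, the definition of total variation gives
\[
\mu(A) \ge \nu(A) - \lvert \mu(A)-\nu(A)\rvert \ge \nu(A) - \dTV(\mu,\nu) > \nu(A) - \varepsilon .
\]

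Combining the two steps yields the required inequality for every Borel \(A\). Hence \(\dP(\mu,\nu) \le \varepsilon\), and letting \(\varepsilon \downarrow \dTV(\mu,\nu)\) gives \(\dP(\mu,\nu) \le \dTV(\mu,\nu)\).

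One degenerate case needs a separate remark. If \(\dTV(\mu,\nu)=0\), then \(\mu=\nu\), and every \(\varepsilon>0\) is admissible, so \(\dP(\mu,\nu)=0\). This is in fact already covered by the argument above.

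There is no real obstacle here. The only point to check is that \(U_{\varepsilon}(A)\) is measurable and contains \(A\), and both are immediate from the definitions.
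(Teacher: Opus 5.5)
Your argument is correct and complete: since $A \subset U_{\varepsilon}(A)$, we get $\mu(U_{\varepsilon}(A)) \ge \mu(A) \ge \nu(A) - \dTV(\mu,\nu)$, so every $\varepsilon > \dTV(\mu,\nu)$ (which is automatically positive) is admissible in the definition of $\dP$. The paper gives no proof of its own here and only cites the lemma as well known (Huber's book); your argument is the standard proof behind that citation.
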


\begin{dfn}[Distortion]\label{distortion}
    Let $(X,\, d_{X})$ and $(Y,\, d_{Y})$ be metric spaces. 
    For a subset $S \subset X\times Y$, we define 
    \begin{equation*}
        \mathrm{dis}(S) := \begin{cases}
                                \sup\{|d_{X}(x,x^{\prime})-d_{Y}(y,y^{\prime})|\mid (x,y), (x^{\prime},y^{\prime}) \in S\} &\text{if $S \neq \emptyset$}, \\
                                0 &\text{if $S=\emptyset$} ,
                            \end{cases}
    \end{equation*}
    and we call it the \textit{distortion} of $S$.
\end{dfn}

\begin{dfn}[Coupling]\label{def:coupling}
    Let $X$ and $Y$ be metric spaces, and let $\mu \in P(X)$ and $\nu \in P(Y)$. 
    We say that $\pi \in P(X\times Y)$ is a \textit{coupling} between $\mu$ and $\nu$ if \[\pi(A \times Y) = \mu(A)\hspace{3mm}\text{and}\hspace{3mm}\pi(X\times B) = \nu(B)\] hold for any Borel subsets $A \subset X$ and $B \subset Y$. 
\end{dfn}

\begin{dfn}[Box distance,~\cite{Nakajima_box_dist}]\label{def:box distance}
    For two mm-spaces $X$ and $Y$, we define the \textit{box distance} $\square(X,Y)$ between $X$ and $Y$ as the infimum of $\max\{\mathrm{dis}(S), 1-\pi(S)\}$, where $\pi$ runs over all couplings between $\mu_{X}$ and $\mu_{Y}$ and $S$ runs over all Borel subsets of $X\times Y$.
\end{dfn}

\begin{rem}
    We note that the box distance was originally defined using the notion of a \textit{parameter} by Gromov~\cite{Gromov}.
    It was shown by Nakajima~\cite{Nakajima_box_dist} that Definition~\ref{def:box distance} is equivalent to the original definition.
\end{rem}

The function $\square:\X \times \X \to [0,1]$ is a metric on $\X$ (see~\cite[Theorem 4.10]{S2016book}) and $(\X, \square)$ is a complete and separable metric space (see~\cite[Theorem 4.14, Proposition 4.25]{S2016book}).
The topology on $\X$ induced by the metric $\square$ is called the \textit{box topology}.

It is sometimes useful to describe convergence in the box topology in terms of the following maps.

\begin{dfn}[$\varepsilon$-mm-isomorphism]\label{def:epsilon-mm-isom.}
    Let $X$ and $Y$ be mm-spaces, let $f:X \to Y$ be a Borel measurable map, and let $\varepsilon \ge 0$.
    The map $f$ is said to be an $\varepsilon$-\textit{mm-isomorphism} if there exists a Borel subset $\TX \subset X$ such that 
    \begin{itemize}
        \item $\mu_{X}(\TX) \ge 1 - \varepsilon$;
        \item $|d_{Y}(f(x),\, f(y))-d_{X}(x,y)| \le \varepsilon$ for any $x, y \in \TX$;
        \item $\dP(\mu_{Y},\, f_{*}\mu_{X}) \le \varepsilon$.
    \end{itemize}
    We call $\TX$ a \textit{non-exceptional domain} of $f$.
\end{dfn}

\begin{prop}[{\cite[Lemma 4.22]{S2016book}}] \label{prop:box dist. <-> epsillon-mm-isom.}
    Let $X$ and $Y$ be mm-spaces and let $\varepsilon > 0$. 
    Then we have the following (i) and (ii).
    \begin{enumerate}
        \item If there exists an $\varepsilon$-mm-isomorphism from $X$ to $Y$, then $\square(X,Y) \le 3\varepsilon$.
        \item If $\square(X,Y) < \varepsilon$, then there exists a $3\varepsilon$-mm-isomorphism from $X$ to $Y$.
    \end{enumerate}
\end{prop}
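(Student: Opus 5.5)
We prove (i) and (ii) separately, working in both directions between a coupling realizing the box distance and a map with a good non-exceptional domain.

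\emph{Proof of (i).} Let $f:X\to Y$ be an $\varepsilon$-mm-isomorphism with non-exceptional domain $\TX$. Since $\dP(\mu_Y,f_*\mu_X)\le\varepsilon$, Strassen's theorem (the coupling characterization of the Prokhorov distance) gives a coupling $\pi_0$ between $f_*\mu_X$ and $\mu_Y$ with $\pi_0(\{(y,y')\mid d_Y(y,y')>\varepsilon\})\le\varepsilon$. We glue $\pi_0$ with the graph coupling $(\mathrm{id},f)_*\mu_X$ along $f_*\mu_X$ (disintegration, using separability), obtaining a coupling $\pi$ between $\mu_X$ and $\mu_Y$ concentrated on pairs $(x,y)$ with $d_Y(f(x),y)\le\varepsilon$, except on a set of measure at most $\varepsilon$. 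Set
\[S:=\{(x,y)\in\TX\times Y\mid d_Y(f(x),y)\le\varepsilon\}.\]
Then $1-\pi(S)\le 2\varepsilon$. For $(x,y),(x',y')\in S$ the triangle inequality gives
\[\lvert d_X(x,x')-d_Y(y,y')\rvert\le\lvert d_X(x,x')-d_Y(f(x),f(x'))\rvert+2\varepsilon\le 3\varepsilon .\]
Hence $\square(X,Y)\le 3\varepsilon$.

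\emph{Proof of (ii).} Take a coupling $\pi$ and a Borel set $S$ with $\mathrm{dis}(S)<\varepsilon$ and $1-\pi(S)<\varepsilon$. The set $S$ is $\varepsilon$-close to the graph of a map, because $(x,y),(x,y')\in S$ forces $d_Y(y,y')<\varepsilon$. We build a Borel map $f$ as follows.
\begin{enumerate}
\item Choose a countable partition of $Y$ into Borel sets $B_i$ of small diameter $\delta$, with points $y_i\in B_i$.
\item Let $f(x)=y_i$, where $i$ is the least index such that $(\{x\}\times B_i)\cap S\neq\emptyset$. This is measurable after passing to a compact (or $\sigma$-compact) subset of $S$ of almost full measure.
\item On the remaining set, let $f$ be constant.
\end{enumerate}
We take $\TX$ to be the projection of this good part of $S$, so $\mu_X(\TX)\ge 1-\varepsilon$.

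On $\TX$ the map $f$ distorts distances by at most $\varepsilon+2\delta$. Moreover, $f_*\mu_X$ and $\mu_Y$ are coupled via $(f\circ p_1,p_2)_*\pi$, which moves mass at most $\varepsilon+\delta$ off a set of measure at most $2\varepsilon$. Hence $\dP\le 2\varepsilon+\delta$. Letting $\delta\to0$, all three bounds fit within $3\varepsilon$.

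\emph{Main obstacle.} The delicate step is the measurability of the selection in (ii). We would handle it by inner regularity (restricting to a compact $K\subset S$ of nearly full measure, where the projection and the least-index selection are Borel) rather than by a general measurable selection theorem.
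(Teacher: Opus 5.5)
Your argument is correct. It necessarily differs from the paper, which gives no proof and only cites \cite[Lemma 4.22]{S2016book}. In that source $\square$ is defined through Gromov's parameters $I\to X$, $I\to Y$, so the citation reaches the coupling definition used here only via Nakajima's equivalence; your proof works directly with couplings and avoids that detour.

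For (i), the shortest route using tools already in the paper is this. The graph coupling $(\mathrm{id},f)_*\mu_X$ with $S=\{(x,f(x))\mid x\in\TX\}$ gives $\square(X,(Y,d_Y,f_*\mu_X))\le\varepsilon$. Lemma~\ref{lem:box < 2dP} gives $\square((Y,d_Y,f_*\mu_X),Y)\le 2\varepsilon$, and the triangle inequality for $\square$ finishes. Your Strassen-plus-gluing step is in effect an inline proof of Lemma~\ref{lem:box < 2dP}: it produces one explicit coupling, at the price of invoking Strassen and disintegration. With the paper's infimum definition of $\dP$, you should apply Strassen at each $\varepsilon'>\varepsilon$ and let $\varepsilon'\downarrow\varepsilon$ at the end, unless you justify attainment by tightness.

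For (ii), your estimates give more than you claim:
\begin{itemize}
\item $\mu_X(\TX)\ge\pi(K)>1-\varepsilon$;
\item the distortion on $\TX$ is $<\varepsilon+2\delta$;
\item on $K$ the coupling $(f\circ p_1,p_2)_*\pi$ displaces points by less than $\varepsilon+\delta$, and $\pi(K^c)<\varepsilon$, so $\dP(f_*\mu_X,\mu_Y)\le\varepsilon+\delta$.
\end{itemize}
Hence $f$ is an $(\varepsilon+2\delta)$-mm-isomorphism. Since $f$ depends on $\delta$, you should simply fix $\delta\le\varepsilon$ rather than ``let $\delta\to0$''.

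The one claim to tighten is measurability. For an arbitrary Borel partition $\{B_i\}$, the set $K\cap(X\times B_i)$ is only a Borel subset of a compact set, and its projection can fail to be Borel. Instead take $B_i=B_{\delta/2}(z_i)\setminus\bigcup_{j<i}B_{\delta/2}(z_j)$, with closed balls and $(z_i)$ dense in $Y$. These sets are $F_\sigma$, so each $K\cap(X\times B_i)$ is $\sigma$-compact, its projection is $\sigma$-compact and hence Borel, and your least-index map is Borel. Alternatively, accept a universally measurable $f$, replace it by a Borel map agreeing with it $\mu_X$-a.e., and shrink $\TX$ by a null set.
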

The following lemma is useful to estimate box distances.
\begin{lem}[{\cite[Proposition 4.12]{S2016book}}]\label{lem:box < 2dP}
    Suppose that $(X,d_{X})$ is a complete and separable metric space.
    For any $\mu, \nu \in P(X)$, we have \[\square((X,d_{X},\mu),\hspace{1mm}(X,d_{X},\nu)) \le 2\dP(\mu, \nu). \]
\end{lem}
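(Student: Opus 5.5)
The plan is to build an explicit coupling of $\mu$ and $\nu$ that is concentrated near the diagonal of $X \times X$, and to feed it into Definition~\ref{def:box distance}. Fix any $\varepsilon > \dP(\mu,\nu)$. The key external input is \emph{Strassen's theorem} on the Prokhorov metric; it is valid because $X$ is complete and separable. It says that $\dP(\mu,\nu) < \varepsilon$ implies the existence of a coupling $\pi$ between $\mu$ and $\nu$ with
\[
\pi\bigl(\{(x,y) \in X\times X \mid d_{X}(x,y) \le \varepsilon\}\bigr) \ge 1-\varepsilon .
\]
I would quote this from a standard reference (e.g.~\cite{Huber} or~\cite{Bill}) rather than reprove it.

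Next, set
\[
S := \{(x,y) \in X \times X \mid d_{X}(x,y) \le \varepsilon\}.
\]
This set is closed, hence Borel. By the choice of $\pi$ we get $1-\pi(S) \le \varepsilon$. For the distortion, take $(x,y),(x',y') \in S$. The triangle inequality gives
\[
|d_{X}(x,x') - d_{X}(y,y')| \le d_{X}(x,y) + d_{X}(x',y') \le 2\varepsilon,
\]
so $\mathrm{dis}(S) \le 2\varepsilon$. By Definition~\ref{def:box distance},
\[
\square((X,d_X,\mu),(X,d_X,\nu)) \le \max\{\mathrm{dis}(S),\, 1-\pi(S)\} \le \max\{2\varepsilon,\, \varepsilon\} = 2\varepsilon .
\]
Letting $\varepsilon \downarrow \dP(\mu,\nu)$ gives the claimed bound.

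The only substantive step is the existence of the near-diagonal coupling. The cruder route would take the identity map, which is an $\varepsilon$-mm-isomorphism, and apply Proposition~\ref{prop:box dist. <-> epsillon-mm-isom.}(i). That only yields the constant $3$, so Strassen's theorem, or an equivalent optimal-transport duality argument, is genuinely needed to get the factor $2$. If one wants to avoid citing Strassen, I would first prove the result for finitely supported measures, using a Hall marriage / max-flow argument on a bipartite graph. I would then extend it by approximating $\mu$ and $\nu$ weakly, using separability. Finally I would extract a limit coupling by tightness, using that $S$ is closed so the mass bound passes to the limit by the portmanteau theorem. This approximation step is where the care lies.
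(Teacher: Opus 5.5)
Your proof is correct. The paper does not prove this lemma itself; it cites \cite[Proposition 4.12]{S2016book}, where the factor $2$ arises in essentially the same way: Strassen's theorem gives a coupling $\pi$ with $\pi(\{(x,y) \mid d_X(x,y) > \varepsilon\}) \le \varepsilon$ for each $\varepsilon > \dP(\mu,\nu)$, and the triangle inequality $|d_X(x,x')-d_X(y,y')| \le d_X(x,y)+d_X(x',y')$ then bounds the distortion by $2\varepsilon$. The only difference is cosmetic: the book states the estimate in terms of Gromov's parameter definition of $\square$, whereas you substitute the coupling directly into the coupling form of Definition~\ref{def:box distance}, which makes the argument a few lines long.
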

Next, we recall the $\square$-\textit{precompactness criterion}.
\begin{dfn}\label{def:supporting net}
    Let $\varepsilon > 0$, and let $X$ be an mm-space and $\mathcal{N} \subset X$ a net, that is, a discrete subset of $X$.
    We say that $\mathcal{N}$ is an $\varepsilon$-\textit{supporting net} if it satisfies \[\mu_{X}(B_{\varepsilon}(\mathcal{N})) \ge 1-\varepsilon.\]
\end{dfn}
\begin{dfn}\label{def:box-precompact subset}
    A subset $\mathcal{Y} \subset \X$ is said to be $\square$-\textit{precompact} if the metric space $(\mathcal{Y}, \square)$ is totally bounded.
\end{dfn}

\begin{lem}[{\cite[Lemma 4.28 (3)]{S2016book}}]\label{lem:box-precompactness criterion}
    For a subset $\mathcal{Y} \subset \X$, the following (i) and (ii) are equivalent to each other.
    \begin{enumerate}
        \item $\mathcal{Y}$ is $\square$-precompact.
        \item For any $0 < \varepsilon < 1$, there exists a positive number $\Delta(\varepsilon)$ such that for any $Y \in \mathcal{Y}$, there exists an $\varepsilon$-supporting net $\mathcal{N} \subset Y$ that satisfies $\#\mathcal{N} \le \Delta(\varepsilon)$ and $\diam{\mathcal{N}} \le \Delta(\varepsilon)$.
    \end{enumerate}
\end{lem}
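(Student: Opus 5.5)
For (ii)$\Rightarrow$(i) I would reduce every $Y\in\mathcal{Y}$ to a finite mm-space with uniformly few points and uniformly bounded diameter, and then use that such finite spaces form a compact parameter family. For (i)$\Rightarrow$(ii) I would transport finite supporting nets of a finite $\square$-net of $\mathcal{Y}$ back to every $Y$ by means of $\varepsilon$-mm-isomorphisms (Proposition~\ref{prop:box dist. <-> epsillon-mm-isom.}).

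\emph{(ii)$\Rightarrow$(i).} Fix $\varepsilon$ and take, for $Y\in\mathcal{Y}$, an $\varepsilon$-supporting net $\mathcal{N}=\{y_1,\dots,y_k\}$ with $k\le\Delta(\varepsilon)$ and $\diam{\mathcal{N}}\le\Delta(\varepsilon)$. (A supporting net with $\#\mathcal{N}\le\Delta(\varepsilon)$ is finite.)
\begin{enumerate}
    \item Define a Borel nearest-point map $p:Y\to\mathcal{N}$ by choosing the smallest index among the nearest points. Points outside $B_\varepsilon(\mathcal{N})$ are sent to $y_1$.
    \item On $B_\varepsilon(\mathcal{N})$ we have $|d(p(x),p(x'))-d(x,x')|\le 2\varepsilon$, and $B_\varepsilon(\mathcal{N})$ has measure $\ge 1-\varepsilon$.
    \item Hence $p$ is a $2\varepsilon$-mm-isomorphism from $Y$ onto $F_Y:=(\mathcal{N},d_Y,p_*\mu_Y)$, and Proposition~\ref{prop:box dist. <-> epsillon-mm-isom.}(i) gives $\square(Y,F_Y)\le 6\varepsilon$.
    \item Each $F_Y$ is determined by a distance matrix in $[0,\Delta]^{k\times k}$ and a weight vector in the simplex, with $k\le\Delta$. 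These parameters range over a compact set.
    \item The box distance is continuous in these parameters. For the distance matrix, use the identity coupling and the distortion bound. For the weights, use Lemma~\ref{lem:box < 2dP} together with Lemma~\ref{lem:dP < dTV}.
    \item Therefore finitely many finite spaces are $\varepsilon$-dense among all the $F_Y$. Combined with step 3, $\mathcal{Y}$ is covered by finitely many $7\varepsilon$-balls, which gives total boundedness.
\end{enumerate}

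\emph{(i)$\Rightarrow$(ii).} Given $\varepsilon$, choose $\delta\ll\varepsilon$ and a finite $\delta$-net $Y_1,\dots,Y_m$ of $\mathcal{Y}$ for $\square$.
\begin{enumerate}
    \item Since each $Y_i$ is separable and $\mu_{Y_i}$ is a probability measure, there is a finite set $\mathcal{N}_i\subset Y_i$ with $\mu_{Y_i}(B_{\varepsilon/2}(\mathcal{N}_i))\ge 1-\varepsilon/2$. Being finite, $\mathcal{N}_i$ has finite diameter.
    \item For $Y\in\mathcal{Y}$ with $\square(Y,Y_i)<\delta$, take a $3\delta$-mm-isomorphism $f:Y\to Y_i$ with non-exceptional domain $\TX$, using Proposition~\ref{prop:box dist. <-> epsillon-mm-isom.}(ii).
    \item For each $z\in\mathcal{N}_i$ such that $f(\TX)\cap B_{\varepsilon/2+3\delta}(z)\neq\emptyset$, pick one $y_z\in\TX$ in this preimage. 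The set $\mathcal{N}:=\{y_z\}$ then has $\#\mathcal{N}\le\#\mathcal{N}_i$.
    \item Its diameter is at most $\diam{\mathcal{N}_i}+\varepsilon+9\delta$, by the distortion bound on $\TX$.
    \item For any $y\in\TX$ with $f(y)\in B_{\varepsilon/2+3\delta}(z)$ we get $d(y,y_z)\le \varepsilon+12\delta$.
\end{enumerate}

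The step needing care is the measure bound in (i)$\Rightarrow$(ii), because the Prokhorov condition only controls $f_*\mu_Y$ on open neighbourhoods. Using $\dP(\mu_{Y_i},f_*\mu_Y)\le 3\delta$, I get
\[
f_*\mu_Y\bigl(B_{\varepsilon/2+3\delta}(\mathcal{N}_i)\bigr)\;\ge\;\mu_{Y_i}\bigl(B_{\varepsilon/2}(\mathcal{N}_i)\bigr)-3\delta\;\ge\;1-\varepsilon/2-3\delta .
\]
Intersecting with $\TX$ costs another $3\delta$, so $\mu_Y(B_{\varepsilon+12\delta}(\mathcal{N}))\ge 1-\varepsilon/2-6\delta$. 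With $\delta$ small enough (e.g. $12\delta\le\varepsilon/2$) and $\varepsilon$ rescaled at the start, $\mathcal{N}$ is an $\varepsilon$-supporting net. We can then take $\Delta(\varepsilon):=\max_i\bigl(\#\mathcal{N}_i+\diam{\mathcal{N}_i}\bigr)+2$.
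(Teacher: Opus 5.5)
Your proof is correct. The paper gives no proof of its own for this lemma (it is simply imported from \cite[Lemma 4.28]{S2016book}), and your argument is the standard one behind that result: (ii)$\Rightarrow$(i) by nearest-point projection onto the net together with total boundedness of the finite spaces with at most $\Delta$ points and diameter at most $\Delta$, and (i)$\Rightarrow$(ii) by pulling finite nets of a finite $\square$-net back through $3\delta$-mm-isomorphisms, with the open-to-closed neighbourhood passage in the Prokhorov estimate handled correctly.
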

\subsection{Pyramids}
\begin{dfn}[Pyramid]\label{def:pyramid}
    Let $\mathcal{P}$ be a nonempty subset of $\X$. 
    We say that $\mathcal{P}$ is a \textit{pyramid} if it satisfies the following (i), (ii), and (iii).
    \begin{enumerate}
        \item $\mathcal{P}$ is closed in the box topology.
        \item If $Y \in \mathcal{P}$ and $X\prec Y$, then $X \in \mathcal{P}$.
        \item The subset $\mathcal{P}$ is \textit{directed} with respect to the Lipschitz order, that is, for any $X, Y \in \mathcal{P}$, there exists $Z \in \mathcal{P}$ such that $X \prec Z$ and $Y \prec Z$. 
    \end{enumerate}
\end{dfn}
The sets $\{*\}$ and $\X$ are both pyramids.
For an mm-space $X$, define \[\mathcal{P}_{X} := \{Y \in \X\mid Y \prec X\}.\]
Then $\mathcal{P}_{X}$ is a pyramid, called the \textit{pyramid associated with} $X$ (see~\cite[\S6.1]{S2016book}). 
For two mm-spaces $X$ and $Y$, $X \prec Y$ if and only if $\mathcal{P}_{X} \subset \mathcal{P}_{Y}$.
The pyramid $\X$ is maximum, that is, any pyramid is contained in $\X$.

Next, we recall the notion of the \textit{weak convergence} of pyramids.

\begin{dfn}[Weak convergence]\label{def:weak Hausdorff convergence of pyramids}
    Let $\mathcal{P}$ and $\mathcal{P}_{n}$, $n \in \N$, be pyramids. We say that $\mathcal{P}_{n}$ \textit{converges weakly} to $\mathcal{P}$ if the following (i) and (ii) hold.
    \begin{enumerate}
        \item For any $X \in \mathcal{P}$, we have \[\lim_{n \to \infty} \square(X,\, \mathcal{P}_{n}) = 0.\]
        \item For any $X \in \X \setminus \mathcal{P}$, we have \[\liminf_{n \to \infty} \square(X,\, \mathcal{P}_{n}) > 0.\]
    \end{enumerate}
\end{dfn}

The next lemma is useful to construct pyramids.

\begin{lem}[{\cite[\S1.5]{EKM2024}}]\label{lem:pyramids generated by directed set}
    For a directed subset $\mathcal{D} \subset \X$ with respect to the Lipschitz order $\prec$, \[\mathcal{P}\mathcal{D} := \overline{\bigcup_{X \in \mathcal{D}}\mathcal{P}_{X}}^{\,\square}\] is a pyramid, where upper bar with $\square$ means the closure with respect to the box topology.
\end{lem}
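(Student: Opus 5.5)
The plan is to verify the three conditions of Definition~\ref{def:pyramid} for $\mathcal{P}\mathcal{D}$. Nonemptiness is clear since $\mathcal{D}\neq\emptyset$, and closedness in the box topology holds by construction. First I would treat the union $\mathcal{U}:=\bigcup_{X\in\mathcal{D}}\mathcal{P}_{X}$ itself.

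\begin{itemize}
\item $\mathcal{U}$ is downward closed: if $X'\prec Y\in\mathcal{P}_X$, then $X'\in\mathcal{P}_X$ by transitivity.
\item $\mathcal{U}$ is directed: given $X'\in\mathcal{P}_X$ and $Y'\in\mathcal{P}_Y$ with $X,Y\in\mathcal{D}$, directedness of $\mathcal{D}$ gives $Z\in\mathcal{D}\subset\mathcal{U}$ with $X,Y\prec Z$, hence $X',Y'\prec Z$.
\end{itemize}

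The work is to show that both properties survive passing to the $\square$-closure.

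\emph{Downward closedness of the closure.} Let $Y\in\overline{\mathcal{U}}$ with $Y_k\in\mathcal{U}$ and $\square(Y_k,Y)\to0$, and let $X\prec Y$ via a 1-Lipschitz map $f$. I would use the standard approximation statement from \cite{S2016book}: if $Y_k\to Y$ in $\square$ and $X\prec Y$, then there exist $X_k\prec Y_k$ with $X_k\to X$ in $\square$. To sketch why it holds, take by Proposition~\ref{prop:box dist. <-> epsillon-mm-isom.} an $\varepsilon_k$-mm-isomorphism $g_k:Y_k\to Y$ with $\varepsilon_k\to0$. Then $f\circ g_k$ is 1-Lipschitz up to an additive error $\varepsilon_k$ on a set of measure at least $1-\varepsilon_k$. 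Correcting it, for instance by a McShane-type extension or by the known lemma on ``$\varepsilon$-dominations'', yields $X_k\prec Y_k$ that is $O(\varepsilon_k)$-close to $X$. Since $X_k\in\mathcal{U}$, we get $X\in\overline{\mathcal{U}}$.

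\emph{Directedness of the closure.} This is the main obstacle, because the dominating spaces $Z_k$ need not converge. Let $X,Y\in\overline{\mathcal{U}}$ with $X_k\to X$ and $Y_k\to Y$, where $X_k,Y_k\in\mathcal{U}$. Directedness of $\mathcal{U}$ gives $Z_k\in\mathcal{U}$ with domination maps $p_k:Z_k\to X_k$ and $q_k:Z_k\to Y_k$.
\begin{itemize}
\item The map $(p_k,q_k):Z_k\to X_k\times Y_k$, with the $\ell_\infty$ product metric, is 1-Lipschitz. Hence $W_k:=(X_k\times Y_k,\ell_\infty,\pi_k)$ with $\pi_k:=(p_k,q_k)_*\mu_{Z_k}$ satisfies $W_k\prec Z_k$, so $W_k\in\mathcal{U}$. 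Here $\pi_k$ is a coupling of $\mu_{X_k}$ and $\mu_{Y_k}$.
\item Transport $\pi_k$ to a probability measure $\tilde\pi_k$ on $X\times Y$ via $\varepsilon_k$-mm-isomorphisms $X_k\to X$ and $Y_k\to Y$. Its marginals are Prokhorov-close to $\mu_X$ and $\mu_Y$.
\item Such measures are tight, since their marginals approach fixed tight measures. By Prokhorov's theorem a subsequence converges weakly to a coupling $\pi$ of $\mu_X$ and $\mu_Y$.
\item Set $W:=(X\times Y,\ell_\infty,\pi)$. Combining Lemma~\ref{lem:box < 2dP} with the fact that product maps of $\varepsilon$-mm-isomorphisms are $O(\varepsilon)$-mm-isomorphisms for the $\ell_\infty$ metric, I expect $\square(W_k,W)\to0$ along the subsequence, so $W\in\overline{\mathcal{U}}$.
\item The coordinate projections from $W$ are 1-Lipschitz and push $\pi$ to $\mu_X$ and $\mu_Y$. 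Hence $X,Y\prec W$, which proves directedness.
\end{itemize}

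The delicate bookkeeping is in the last two items: making the non-exceptional domains and the Prokhorov errors of the transported couplings compatible with a single limiting coupling $\pi$.
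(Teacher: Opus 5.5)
Your proof is correct. The paper itself gives no proof of this lemma and only cites \cite[\S1.5]{EKM2024}, so there is no in-text argument to match; your direct check of the three axioms of Definition~\ref{def:pyramid} stands as a self-contained proof.

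The approximation statement you need for downward closedness is already available in the paper. By Proposition~\ref{prop:rho < box}, $\rho(\mathcal{P}_{Y_k},\mathcal{P}_Y)\le\square(Y_k,Y)\to0$, so $\mathcal{P}_{Y_k}\to\mathcal{P}_Y$ weakly by Theorem~\ref{thm:metric rho}. Condition (i) of weak convergence then gives $X_k\in\mathcal{P}_{Y_k}\subset\mathcal{U}$ with $\square(X_k,X)\to0$. Citing this is cleaner than your McShane sketch, which only works after embedding $X$ isometrically into $\ell_\infty$ and extending coordinatewise.

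The bookkeeping you call delicate is routine. Let $\widetilde{X}_k,\widetilde{Y}_k$ be non-exceptional domains of $\phi_k:X_k\to X$ and $\psi_k:Y_k\to Y$. Because $\pi_k$ is a coupling of $\mu_{X_k}$ and $\mu_{Y_k}$, you get $\pi_k(\widetilde{X}_k\times\widetilde{Y}_k)\ge 1-2\varepsilon_k$. On this set the $\ell_\infty$-distortion of $\phi_k\times\psi_k$ is at most $\varepsilon_k$, and the Prokhorov error is $\dP(\tilde\pi_k,\pi)$. Hence $\phi_k\times\psi_k$ is a $\max\{2\varepsilon_k,\dP(\tilde\pi_k,\pi)\}$-mm-isomorphism $W_k\to W$, and Proposition~\ref{prop:box dist. <-> epsillon-mm-isom.} gives $\square(W_k,W)\to0$.

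For comparison, the tools quoted in the paper allow a much shorter route. Since $(\X,\square)$ is separable, take a countable dense set $\{U_i\}\subset\mathcal{U}$ with $U_i\prec X_i\in\mathcal{D}$. Use directedness of $\mathcal{D}$ to build a chain $Z_1\prec Z_2\prec\cdots$ in $\mathcal{D}$ with $X_i\prec Z_i$, so that $\mathcal{P}\mathcal{D}=\overline{\bigcup_i\mathcal{P}_{Z_i}}^{\,\square}$. By compactness of $(\Pi,\rho)$, a subsequence $\mathcal{P}_{Z_{i_j}}$ converges weakly to a pyramid $\mathcal{Q}$. Because the chain is increasing, each element of $\bigcup_i\mathcal{P}_{Z_i}$ eventually has $\square$-distance $0$ to $\mathcal{P}_{Z_{i_j}}$. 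By condition (ii) it lies in $\mathcal{Q}$, and since $\mathcal{Q}$ is closed, the whole closure lies in $\mathcal{Q}$. Conversely, condition (i) puts $\mathcal{Q}$ inside that closure, so $\mathcal{P}\mathcal{D}=\mathcal{Q}$.

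That argument takes a few lines but delegates all the work to the compactness theorem for $\Pi$, in particular to the fact that weak limits of pyramids are pyramids. Your argument avoids that machinery, using only Prokhorov's theorem and the $\varepsilon$-mm-isomorphism criterion. It also produces the common upper bound explicitly, as $X\times Y$ with the $\ell_\infty$ metric and a limiting coupling.
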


\begin{ex}[{$\ell^{p}$-product, for details, see~\cite[Example 1.13]{EKM2024}}]\label{ex:l^p-product of mm-spaces}
    Let $X$ and $Y$ be two mm-spaces, and let $p \in [1, \infty]$. 
    We define a function $d_{X\times_{p}Y}:(X\times Y) \times (X\times Y) \to [0, \infty)$ by \[d_{X\times_{p}Y}((x,y), (x^{\prime}, y^{\prime})) := \begin{cases} (d_{X}(x,x^{\prime})^{p} + d_{Y}(y, y^{\prime})^{p})^{1/p} &\text{if $1 \le p < \infty$}, \\ \max \{d_{X}(x, x^{\prime}), d_{Y}(y, y^{\prime})\} &\text{if $p=\infty$},\end{cases}\] for $x, x^{\prime} \in X$ and $y, y^{\prime} \in Y$. 
    The function $d_{X\times_{p} Y}$ defines a metric on $X \times Y$, and we call it \textit{$\ell^{p}$-product metric}. 
    Moreover, we define a triple \[X\times_{p} Y := (X \times Y,\, d_{X\times_{p} Y},\, \mu_{X}\otimes \mu_{Y})\] and call it the \textit{$\ell^{p}$-product} of $X$ and $Y$. 
    Note that $\mu_{X}\otimes \mu_{Y}$ is the product measure of $\mu_{X}$ and $\mu_{Y}$.
    The $\ell^{p}$-product $X \times_{p} Y$ is an mm-space. 
    For an mm-space $X$ and $p \in [1, \infty]$, we define an mm-space $X_{p}^{n}$ inductively by $X_{p}^{1} := X$, $X_{p}^{n} := (X_{p}^{n-1})\times_{p} X$ for $n \ge 2$. 
    Since the set of mm-spaces \[\mathcal{D}_{X_{p}^{\infty}} := \{X_{p}^{n} \mid n \in \N\}\] is directed, we obtain a pyramid \[X_{p}^{\infty} := \mathcal{P}\mathcal{D}_{X_{p}^{\infty}}\]by Lemma~\ref{lem:pyramids generated by directed set} and call it the \textit{infinite $\ell^{p}$-product} of $X$.
\end{ex}

\begin{ex}[$\ell^{p}$-product of pyramids,~\cite{EKM2024,KNS2024}]\label{ex:l^p-product of pyramids}
    For two pyramids $\mathcal{P}, \Q$ and for $p \in [1, \infty]$, we see that the set of mm-spaces \[\mathcal{D}_{\mathcal{P}\times_{p}\Q} := \{X\times_{p}Y \mid X \in \mathcal{P},\, Y \in \Q\}\] is directed with respect to the Lipschitz order relation. 
    By Lemma~\ref{lem:pyramids generated by directed set}, we obtain a pyramid \[\mathcal{P}\times_{p}\Q := \mathcal{P}\mathcal{D}_{\mathcal{P}\times_{p}\Q}\] and call it the \textit{$\ell^{p}$-product} of $\mathcal{P}$ and $\Q$. 
\end{ex}

\begin{lem}[{Approximation of pyramids,~\cite[Lemma 7.14]{S2016book}}]\label{lem:app. seq. of pyramid}
    For any pyramid $\mathcal{P}$, there exist mm-spaces $X_{n} \in \mathcal{P}$, $n \in \N$, such that \[X_{1} \prec X_{2} \prec \cdots \prec X_{n} \prec \cdots\hspace{3mm}\text{and}\hspace{4mm} \overline{\bigcup_{n=1}^{\infty}\mathcal{P}_{X_{n}}}^{\,\square} = \mathcal{P}.\] 
    We call such a sequence $\{X_{n}\}_{n=1}^{\infty}$ an \textit{approximation sequence} of $\mathcal{P}$, or we say that $\{X_{n}\}_{n=1}^{\infty}$ \textit{approximates} $\mathcal{P}$.
\end{lem}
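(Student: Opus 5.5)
The statement to prove is Lemma~\ref{lem:app. seq. of pyramid}: every pyramid $\mathcal{P}$ has an increasing sequence $X_1 \prec X_2 \prec \cdots$ in $\mathcal{P}$ whose associated pyramids have union dense in $\mathcal{P}$. The plan has two ingredients: separability of $(\X,\square)$, and the directedness axiom (iii) of Definition~\ref{def:pyramid}.

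\emph{Step 1 (a countable dense subset).} Since $(\X,\square)$ is a separable metric space, every subset of it is separable. Hence $\mathcal{P}$ contains a countable subset $\{Y_k\}_{k=1}^{\infty}$ that is $\square$-dense in $\mathcal{P}$. If $\mathcal{P}$ is finite, we simply repeat elements.

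\emph{Step 2 (an increasing chain above the dense set).} We build $X_n \in \mathcal{P}$ inductively. Set $X_1 := Y_1$. Given $X_{n-1}$, apply the directedness of $\mathcal{P}$ to the pair $X_{n-1}, Y_n \in \mathcal{P}$. This gives $X_n \in \mathcal{P}$ with $X_{n-1} \prec X_n$ and $Y_n \prec X_n$. The result is a chain $X_1 \prec X_2 \prec \cdots$ in $\mathcal{P}$ with $Y_n \in \mathcal{P}_{X_n}$ for every $n$.

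\emph{Step 3 (the closure equals $\mathcal{P}$).} We prove two inclusions.

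For "$\subset$": each $X_n$ lies in $\mathcal{P}$, so condition (ii) of Definition~\ref{def:pyramid} gives $\mathcal{P}_{X_n} \subset \mathcal{P}$. Since $\mathcal{P}$ is $\square$-closed by condition (i), the closure of $\bigcup_n \mathcal{P}_{X_n}$ is also contained in $\mathcal{P}$.

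For "$\supset$": the union $\bigcup_n \mathcal{P}_{X_n}$ contains every $Y_n$. The $Y_n$ are dense in $\mathcal{P}$, so the closure of the union contains $\mathcal{P}$.

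No step is a real obstacle. The only point needing care is Step 1, where we use that a subspace of a separable metric space is separable. This requires metrizability, which holds because $\square$ is a metric, so the dense sequence exists.
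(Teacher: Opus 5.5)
Your proof is correct and follows essentially the same route as the standard proof of this lemma in Shioya's book, which the paper cites rather than reproves. That argument also takes a countable $\square$-dense subset of $\mathcal{P}$ using the separability of $(\X,\square)$, then uses directedness to build an increasing chain dominating it, and finally uses closedness together with condition (ii) to obtain equality of the closure with $\mathcal{P}$.
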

\begin{rem}\label{rem:approximation sequence is a weak convergent sequence}
For a pyramid $\mathcal{P}$ and an approximation sequence $\{X_{n}\}_{n=1}^{\infty}$ of $\mathcal{P}$, it is straightforward to see that the pyramid $\mathcal{P}_{X_{n}}$ converges weakly to the pyramid $\mathcal{P}$.
\end{rem}

Next, we review the notion of \textit{measurements} to define the metric $\rho$ on $\Pi$.

\begin{dfn}[{Measurement,~\cite{OS2015_limit_formula, S2016book}}]\label{def:measurement}
    For $N \in \mathbb{N}$ and a real number $R > 0$, we define $\mathcal{M}(N,R)$ as the set of all Borel probability measures on $\R^{N}$ whose support is contained in $B_{R}^{N} := \{x \in \mathbb{R}^{N}\mid \|x\|_{\infty}\le R\}$, where $\|\cdot\|_{\infty}$ is the $\ell^{\infty}$-norm on $\R^{N}$. 
    For an mm-space $X$ and a pyramid $\mathcal{P}$, the $(N,R)$-\textit{measurement} is defined by \[\mathcal{M}(X;N,R) :=\{\,f_{*}\mu_{X}\mid f:X \to (\mathbb{R}^{N},\, \|\cdot\|_{\infty})\,\text{is 1-Lipschitz}\,\} \cap \mathcal{M}(N,R)\] and \[\M(\mathcal{P};N,R) := \{\,\mu \in \M(N,R)\mid (B_{R}^{N},\, \|\cdot\|_{\infty},\, \mu) \in \mathcal{P}\,\}.\]
\end{dfn}
\begin{rem}\label{rem:equivalent representation of measurements}
    For any $N \in \N$ and $R > 0$, and for any mm-space $X$, the following equality holds: \[\M(X;N,R) = \{\,f_{*}\mu_{X}\mid f:X \to (B_{N}^{R},\,\|\cdot\|_{\infty})\,\text{is 1-Lipschitz}\,\}.\]
\end{rem}
Note that $\M(X;N,R)$ and $\M(\mathcal{P};N,R)$ are compact subsets of $P((\R^{N},\|\cdot\|_{\infty}))$ with respect to $\dP$.
We also remark that $\M(\mathcal{P}_{X};N,R) = \M(X;N,R)$ for any mm-space $X$.

\begin{dfn}[{Metric $\rho$,~\cite[Definition 3.5]{OS2015_limit_formula}}]\label{def:metric rho}
    For two pyramids $\mathcal{P}$ and $\Q$, we define \[\rho(\mathcal{P},\Q) := \sum_{N=1}^{\infty}\frac{1}{2^{N}}\cdot \frac{1}{2N}\dPH(\M(\mathcal{P};N,N),\, \M(\Q;N,N)),\] where $\dPH$ is the Hausdorff distance induced from the Prokhorov distance.
\end{dfn}
We denote the set of all pyramids by $\Pi$.
The following theorem follows from~\cite[Theorem 3.7]{OS2015_limit_formula} and~\cite[Theorem 6.12]{S2016book}.
\begin{thm}\label{thm:metric rho}
    The function $\rho$ is a metric on the space $\Pi$ that is compatible with weak convergence. 
    Moreover, $\Pi$ is compact with respect to $\rho$.
\end{thm}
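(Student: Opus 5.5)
This result is quoted from \cite[Theorem 3.7]{OS2015_limit_formula} and \cite[Theorem 6.12]{S2016book}, so in the paper it is simply invoked. If I were to reprove it, I would organize everything around the principle that a pyramid is determined by its measurements. Concretely, the first step is to show that for any pyramid $\mathcal{P}$ and any mm-space $X$,
\[X \in \mathcal{P} \iff \M(X;N,R) \subset \M(\mathcal{P};N,R) \text{ for all } N \in \N,\ R > 0.\]
For ``$\Rightarrow$'', note that if $X \in \mathcal{P}$ and $f:X\to B_R^N$ is $1$-Lipschitz, then $(B_R^N,\|\cdot\|_\infty,f_*\mu_X) \prec X$, so it lies in $\mathcal{P}$ by Definition~\ref{def:pyramid}(ii). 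For ``$\Leftarrow$'', take a finite $\varepsilon$-supporting net $\{x_1,\dots,x_N\}$ of $X$. The truncated map $x\mapsto (\min\{d_X(x,x_i),R\})_{i=1}^N$ into $(B_R^N,\|\cdot\|_\infty)$ is $1$-Lipschitz and is an $O(\varepsilon)$-mm-isomorphism onto its image for $R$ large. By Proposition~\ref{prop:box dist. <-> epsillon-mm-isom.} and the closedness of $\mathcal{P}$, this gives $X\in\mathcal{P}$.

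Next I would check that $\rho$ is a metric. Each $\dPH$ term is at most $1$, so the series converges. Symmetry and the triangle inequality are inherited termwise from the Hausdorff distance. For definiteness, suppose $\rho(\mathcal{P},\Q)=0$. Then $\M(\mathcal{P};N,N)=\M(\Q;N,N)$ for every $N$, since these sets are compact. Because $\M(\cdot;N,R)$ for $R\le N$ is recovered from $\M(\cdot;N,N)$ by taking the measures supported in $B_R^N$, and $(N,R)$-measurements embed into $(N',N')$-measurements for $N'\ge \max\{N,R\}$, the first step gives $\mathcal{P}=\Q$.

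For compatibility with weak convergence, I would show that $\mathcal{P}_n\to\mathcal{P}$ weakly if and only if $\M(\mathcal{P}_n;N,R)\to\M(\mathcal{P};N,R)$ in $\dPH$ for all $N,R$.
\begin{itemize}
\item Condition (i) of Definition~\ref{def:weak Hausdorff convergence of pyramids} yields the ``lower'' half of the Hausdorff convergence. This uses Lemma~\ref{lem:box < 2dP} together with the fact that small box distance between spaces of the form $(B_R^N,\mu)$ lets one move a measurement to a nearby measurement by a $1$-Lipschitz map into $B_R^N$.
\item Condition (ii) gives the ``upper'' half by a contradiction argument, using the compactness of $\M(N,R)$.
\end{itemize}
The converse direction reuses the finite-net approximation from the first step.

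Compactness is the step I expect to be the main obstacle. Since each $\M(N,N)$ is compact under $\dP$, its hyperspace of closed subsets is compact (Blaschke). A diagonal argument then extracts a subsequence along which $\M(\mathcal{P}_n;N,N)$ converges in $\dPH$ to some compact set $\mathcal{L}_N$ for every $N$. I would define $\mathcal{P}$ as the set of $X$ with $\M(X;N,N)\subset\mathcal{L}_N$ for all $N$. Nonemptiness and closedness are immediate, and downward closedness follows from monotonicity of measurements. The hard part is directedness, and the identification $\M(\mathcal{P};N,N)=\mathcal{L}_N$ (i.e.\ that every limit measure is realized by some space in $\mathcal{P}$). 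To handle this, I would pick $X_n\in\mathcal{P}_n$ whose measurements realize given elements of $\mathcal{L}_N$. Directedness of each $\mathcal{P}_n$ then lets me combine two such spaces into one space of the form $(B_{R}^{2N},\|\cdot\|_\infty,\nu_n)$ dominating both. Finally, passing to a Prokhorov limit of the $\nu_n$ in the compact set $\M(2N,R)$ gives a common upper bound in $\mathcal{P}$.
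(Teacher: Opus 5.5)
Your verification of the metric axioms and of compatibility with weak convergence follows the measurement approach of \cite{OS2015_limit_formula}, which is the source the paper cites, and it is sound in outline. Two details should be made explicit.
First, in the ``lower half'' the spaces supplied by condition (i) are arbitrary members of $\mathcal{P}_n$, not spaces of the form $(B^N_R,\|\cdot\|_\infty,\mu)$. So you need the general estimate of $\dPH(\M(X;N,R),\M(Y;N,R))$ in terms of $\square(X,Y)$, which requires correcting a map that is $1$-Lipschitz up to an additive error into a genuinely $1$-Lipschitz map into $(\R^N,\|\cdot\|_\infty)$. You also need a finite-net argument in the compact set $\M(\mathcal{P};N,R)$ to make the approximation uniform.
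Second, passing between all $(N,R)$ and the diagonal $R=N$ used in $\rho$ requires the $1$-Lipschitz coordinatewise truncation onto $B^N_R$ together with coordinate projections.

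The genuine gap is the directedness of your limit $\mathcal{P}=\{X : \M(X;N,N)\subset\mathcal{L}_N \ \text{for all}\ N\}$ in the compactness step. By contrast, the identification $\M(\mathcal{P};N,N)=\mathcal{L}_N$ is easy: push approximants $\mu_n\to\mu$ forward by any $1$-Lipschitz map, and use that a Hausdorff limit contains all limits of points of the approximating sets.

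Your construction realizes $\mu,\mu'\in\mathcal{L}_N$ by $(B^N_N,\mu_n),(B^N_N,\mu'_n)\in\mathcal{P}_n$, couples them through a common dominating space, and takes a Prokhorov limit $\nu$. This only shows that the \emph{finite-dimensional} members $(B^N_N,\mu)$, $(B^N_N,\mu')$ of $\mathcal{P}$ have a common upper bound $(B^{2N}_N,\nu)\in\mathcal{P}$. Directedness is required for arbitrary $X,Y\in\mathcal{P}$, which need not be of this form (they may have unbounded support, say). Knowing that $\mathcal{P}$ is the box closure of the downward closure of a directed family does not give directedness for free; that is exactly what a statement like Lemma~\ref{lem:pyramids generated by directed set} has to prove.

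You need a second limit, in the approximation parameter. One way to do it:
\begin{enumerate}
\item Take the finite-net maps $F_k:X\to B^{N_k}_{R_k}$ and $G_k:Y\to B^{N'_k}_{R_k}$ from your first step. They are $1$-Lipschitz and $O(\varepsilon_k)$-mm-isomorphisms onto their images, with $\varepsilon_k\to0$.
\item Your finite-dimensional argument gives a coupling $\nu^{(k)}$ of $(F_k)_*\mu_X$ and $(G_k)_*\mu_Y$ with $(B^{N_k+N'_k}_{R_k},\|\cdot\|_\infty,\nu^{(k)})\in\mathcal{P}$.
\item Glue: disintegrate $\mu_X$ along $F_k$ and $\mu_Y$ along $G_k$. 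This yields couplings $\pi_k$ of $\mu_X$ and $\mu_Y$ with $(F_k\times G_k)_*\pi_k=\nu^{(k)}$.
\item The $\pi_k$ have fixed marginals, so they are tight, and a subsequence converges weakly to a coupling $\pi$.
\item The map $F_k\times G_k$ is an $O(\varepsilon_k)$-mm-isomorphism from $(X\times Y,d_{X\times_{\infty}Y},\pi_k)$ onto $(B^{N_k+N'_k}_{R_k},\|\cdot\|_\infty,\nu^{(k)})$. By Proposition~\ref{prop:box dist. <-> epsillon-mm-isom.} and Lemma~\ref{lem:box < 2dP}, $(X\times Y,d_{X\times_{\infty}Y},\pi)$ has box distance $0$ to the closed set $\mathcal{P}$, so it lies in $\mathcal{P}$.
\item This space dominates $X$ and $Y$ via the projections.
\end{enumerate}
Alternatively, show that the spaces $(B^{N_k+N'_k}_{R_k},\|\cdot\|_\infty,\nu^{(k)})$ form a $\square$-precompact family and use that the Lipschitz order is closed under box convergence. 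With this step added, your measurement-based compactness argument becomes a legitimate alternative to the cited proof.
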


\begin{rem}
    In~\cite{S2016book, Shioya_2017}, another metric on $\Pi$ is introduced, and a result similar to Theorem~\ref{thm:metric rho} is proved.
\end{rem}

The following proposition is useful for estimating $\rho$.
It follows from~\cite[Theorem 3.7 (3)]{OS2015_limit_formula} and~\cite[Proposition 5.5 (2)]{S2016book}.
\begin{prop}\label{prop:rho < box}
    For any mm-spaces $X$ and $Y$, we have \[\rho(\mathcal{P}_{X}, \mathcal{P}_{Y}) \le \square(X,Y).\]
\end{prop}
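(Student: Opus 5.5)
The plan is to bound each term in the series defining $\rho$ separately. Since $\M(\mathcal{P}_{X};N,R) = \M(X;N,R)$, it suffices to show that for every $N \in \N$ we have
\[\dPH(\M(X;N,N),\,\M(Y;N,N)) \le \square(X,Y).\]
Given this, Definition~\ref{def:metric rho} yields
\[\rho(\mathcal{P}_{X},\mathcal{P}_{Y}) \le \square(X,Y)\sum_{N=1}^{\infty}\frac{1}{2^{N}}\cdot\frac{1}{2N} \le \square(X,Y).\]
The argument is symmetric in $X$ and $Y$. So it is enough to fix $\varepsilon > \square(X,Y)$ and show: every $\mu = f_{*}\mu_{X} \in \M(X;N,N)$ is within Prokhorov distance $\varepsilon$ of some element of $\M(Y;N,N)$.

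By Definition~\ref{def:box distance}, there are a coupling $\pi$ between $\mu_{X}$ and $\mu_{Y}$ and a Borel set $S \subset X\times Y$ with $\mathrm{dis}(S) \le \varepsilon$ and $\pi(S) \ge 1-\varepsilon$. Write $f=(f_{1},\dots,f_{N}):X \to (B_{N}^{N},\|\cdot\|_{\infty})$, which is $1$-Lipschitz; by Remark~\ref{rem:equivalent representation of measurements} each $f_{i}$ is $1$-Lipschitz with values in $[-N,N]$. The key step is to transport $f$ to $Y$ by a McShane-type formula over $S$:
\[g_{i}(y) := \inf_{(x,y')\in S}\bigl(f_{i}(x) + d_{Y}(y,y')\bigr),\]
followed by truncation to $[-N,N]$. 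We may assume $S \neq \emptyset$, since otherwise $\pi(S)=0 \ge 1-\varepsilon$ forces $\varepsilon \ge 1$ and the claim is trivial.

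Each $g_{i}$ is an infimum of $1$-Lipschitz functions of $y$, hence $1$-Lipschitz, and so is its truncation. Hence $g=(g_{i})_{i}:Y \to (B_{N}^{N},\|\cdot\|_{\infty})$ is $1$-Lipschitz and $g_{*}\mu_{Y} \in \M(Y;N,N)$. For $(x,y)\in S$, taking $(x,y')=(x,y)$ gives $g_{i}(y) \le f_{i}(x)$. Conversely, for any $(x',y')\in S$,
\[f_{i}(x') + d_{Y}(y,y') \ge f_{i}(x') + d_{X}(x,x') - \varepsilon \ge f_{i}(x) - \varepsilon,\]
using $\mathrm{dis}(S)\le\varepsilon$. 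Since truncation to $[-N,N]$ does not increase these errors (as $f_{i}(x)\in[-N,N]$), we get $\|f(x)-g(y)\|_{\infty} \le \varepsilon$ for all $(x,y)\in S$.

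Finally, $(f\times g)_{*}\pi$ is a coupling of $f_{*}\mu_{X}$ and $g_{*}\mu_{Y}$ that gives mass at least $1-\varepsilon$ to the set $\{(u,v) : \|u-v\|_{\infty}\le\varepsilon\}$. For any Borel $A$ and any $\delta>\varepsilon$, this gives $f_{*}\mu_{X}(U_{\delta}(A)) \ge g_{*}\mu_{Y}(A) - \varepsilon$. Hence $\dP(f_{*}\mu_{X}, g_{*}\mu_{Y}) \le \varepsilon$, and letting $\varepsilon \downarrow \square(X,Y)$ completes the argument.

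The only delicate point is the construction of $g$. It must be genuinely $1$-Lipschitz on all of $Y$ rather than merely almost Lipschitz on the projection of $S$, and it must stay in the box $B_{N}^{N}$. The infimum formula combined with truncation handles both. Measurability of $g$ is automatic from continuity, even though $S$ is only Borel.
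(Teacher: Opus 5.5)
Your proof is correct, and it takes a genuinely different route from the paper. The paper gives no argument of its own; it chains two cited results, $\rho(\mathcal{P}_{X},\mathcal{P}_{Y})\le d_{\mathrm{conc}}(X,Y)$ from \cite[Theorem 3.7 (3)]{OS2015_limit_formula} and $d_{\mathrm{conc}}(X,Y)\le\square(X,Y)$ from \cite[Proposition 5.5 (2)]{S2016book}, so it factors through Gromov's observable distance with its parameter/Ky Fan machinery.

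You bypass $d_{\mathrm{conc}}$ entirely and work directly with the coupling description of $\square$ in Definition~\ref{def:box distance}. The McShane-type transfer over $S$, followed by truncation, gives a genuinely $1$-Lipschitz $g:Y\to(B^{N}_{N},\|\cdot\|_{\infty})$ with $\|f(x)-g(y)\|_{\infty}\le\varepsilon$ on all of $S$ at once. The single set $S$ controls every coordinate simultaneously, so you get the termwise bound $\dPH(\M(X;N,N),\M(Y;N,N))\le\square(X,Y)$ uniformly in $N$. In fact this yields the sharper constant $\rho(\mathcal{P}_{X},\mathcal{P}_{Y})\le\frac{\ln 2}{2}\,\square(X,Y)$.

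The cited route buys, in exchange, the intermediate inequality $\rho\le d_{\mathrm{conc}}$, which is much stronger in general. For example, $d_{\mathrm{conc}}(S^{n}(1),*)\to 0$ (L\'evy family), while $\square(S^{n}(1),*)$ stays bounded away from $0$, since any set of measure $>1/2$ contains antipodal points. Your route buys a short, self-contained proof that uses only definitions stated in this paper.
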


Next, we recall the notion of 1-Lipschitz maps up to an additive error and related notions.
These will be used frequently in this paper. 
\begin{dfn}[1-Lipschitz map up to additive error]
    Let $X, Y$ be mm-spaces and $f:X \to Y$ a Borel measurable map, and let $\varepsilon \ge 0$.
    The map $f$ is said to be a \textit{1-Lipschitz map up to (an additive error)} $\varepsilon$ if there exists a Borel subset $\TX \subset X$ such that 
    \begin{itemize}
        \item $\mu_{X}(\TX) \ge 1 - \varepsilon$;
        \item $d_{Y}(f(x),\, f(y)) \le d_{X}(x,y) + \varepsilon$ \,for any $x, y \in \TX$.
    \end{itemize}
    We call such a Borel subset $\TX$ a \textit{non-exceptional domain} of $f$.
\end{dfn}

\begin{dfn}[{\cite[Definition 3.3]{KY2021}}]\label{def:Lip. order up to error}
    For two mm-spaces $X, Y$ and a real number $\varepsilon \ge 0$, we write $Y \prec_{\varepsilon} X$ if there exists a Borel measurable map $f:X \to Y$ that is 1-Lipschitz up to $\varepsilon$ and that satisfies $\dP(f_{*}\mu_{X},\, \mu_{Y}) \le \varepsilon$.
\end{dfn}

\begin{dfn}[{\cite[Definition 3.14]{KY2021}}]\label{def:domination up to error for pyramids}
    Let $\mathcal{P}$ and $\mathcal{Q}$ be two pyramids, and let $\varepsilon > 0$. 
    We write $\mathcal{P} \prec_{\varepsilon} \mathcal{Q}$ if, for any $X \in \mathcal{P}$, there exists $Y \in \mathcal{Q}$ such that $X \prec_{\varepsilon} Y$.
\end{dfn}
We will frequently use the following two results.
\begin{prop}[{\cite[Proposition 3.15]{KY2021}}]\label{prop:Pn <en Qn => P < Q}
    Let $\{\mathcal{P}_{n}\}_{n=1}^{\infty}$ and $\{\mathcal{Q}_{n}\}_{n=1}^{\infty}$ be sequences of pyramids that converge weakly to pyramids $\mathcal{P}$ and $\mathcal{Q}$, respectively. 
    Let $\{\varepsilon_{n}\}_{n=1}^{\infty}$ be a sequence of positive numbers that converges to 0. 
    If $\mathcal{P}_{n} \prec_{\varepsilon_{n}} \mathcal{Q}_{n}$ for every $n \ge 1$, then we have $\mathcal{P} \subset \mathcal{Q}$.
\end{prop}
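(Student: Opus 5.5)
The plan is to pass through measurements (Definition~\ref{def:measurement}) and use the fact that weak convergence of pyramids is equivalent to $\rho$-convergence (Theorem~\ref{thm:metric rho}). That is, convergence in the Hausdorff–Prokhorov distance of $\M(\cdot\,;N,N)$ for every $N$. Concretely, I will show that for every $N\in\N$,
\[\M(\mathcal{P};N,N)\subset \M(\mathcal{Q};N,N),\]
and then conclude $\mathcal{P}\subset\mathcal{Q}$. For this last implication I rely on the standard fact from~\cite{S2016book} that a pyramid is determined by its measurements: if $\M(X;N,N)\subset \M(\mathcal{Q};N,N)$ for all $N$, then $X\in\mathcal{Q}$.

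\textbf{Key analytic step.} Suppose $X\prec_{\varepsilon}Y$ via a map $f:Y\to X$ with non-exceptional domain $\widetilde{Y}$. I claim every $\mu\in\M(X;N,R)$ lies within Prokhorov distance $C\varepsilon$ of $\M(Y;N,R)$, with $C$ independent of $N$ and $R$.

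Given a $1$-Lipschitz map $g:X\to(B^N_R,\|\cdot\|_\infty)$, each coordinate $\varphi=g_i\circ f$ is $1$-Lipschitz up to $\varepsilon$ on $\widetilde{Y}$. Replace it by the inf-convolution
\[\varphi'(y):=\inf_{z\in\widetilde{Y}}\bigl(\varphi(z)+d_Y(y,z)\bigr),\]
truncated to $[-R,R]$. This function is $1$-Lipschitz on all of $Y$. For $y\in\widetilde{Y}$ we have $\varphi(y)-\varepsilon\le\varphi'(y)\le\varphi(y)$, because $\varphi(z)+d_Y(y,z)\ge\varphi(y)-\varepsilon$. So $h=(\varphi'_1,\dots,\varphi'_N)$ is $1$-Lipschitz into $(B^N_R,\|\cdot\|_\infty)$ and $\|h-g\circ f\|_\infty\le\varepsilon$ off a set of measure at most $\varepsilon$. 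This gives
\[\dP\bigl(h_*\mu_Y,(g\circ f)_*\mu_Y\bigr)\le\varepsilon.\]
Since $\dP(f_*\mu_Y,\mu_X)\le\varepsilon$ and $g$ is $1$-Lipschitz, we also get $\dP\bigl((g\circ f)_*\mu_Y,g_*\mu_X\bigr)\le\varepsilon$. Hence $\dP(h_*\mu_Y,\mu)\le2\varepsilon$, where $\mu=g_*\mu_X$.

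\textbf{Passing to pyramids and the limit.} Applying the key step to each $X\in\mathcal{P}_n$ and a witness $Y\in\mathcal{Q}_n$ with $X\prec_{\varepsilon_n}Y$, I get
\[\M(\mathcal{P}_n;N,N)\subset B_{2\varepsilon_n}\bigl(\M(\mathcal{Q}_n;N,N)\bigr)\]
in $\dP$. Here I use $\M(Y;N,N)\subset\M(\mathcal{Q}_n;N,N)$, which holds since $\mathcal{Q}_n$ is downward closed under $\prec$. By Theorem~\ref{thm:metric rho}, $\M(\mathcal{P}_n;N,N)\to\M(\mathcal{P};N,N)$ and $\M(\mathcal{Q}_n;N,N)\to\M(\mathcal{Q};N,N)$ in $\dPH$, because each summand of $\rho$ tends to zero. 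Letting $n\to\infty$ and using that $\M(\mathcal{Q};N,N)$ is compact, hence closed, yields $\M(\mathcal{P};N,N)\subset\M(\mathcal{Q};N,N)$ for every $N$.

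\textbf{Main obstacle.} I expect the difficulty to be the final step: converting inclusion of all measurements into $\mathcal{P}\subset\mathcal{Q}$. For an mm-space $X\in\mathcal{P}$, I would show $\M(X;N,R)\subset\M(\mathcal{Q};N,R)$ for all $N,R$. This follows from the $R=N$ case by composing with a truncation, since $\M(X;N,R)\subset\M(X;N',N')$ after padding coordinates with zeros and taking $N'\ge\max(N,R)$. Then I would approximate $X$ in $\square$ by finite mm-spaces embedded $1$-Lipschitz, in fact isometrically up to small error, into $(\R^N,\|\cdot\|_\infty)$. Finally I would use the closedness of $\mathcal{Q}$ in the box topology, together with Lemma~\ref{lem:box < 2dP}, to deduce $X\in\mathcal{Q}$. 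This is the content of the measurement characterization in~\cite{S2016book}, which I would cite rather than reprove.
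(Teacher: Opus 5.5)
Your argument is correct. Note that the paper gives no proof of this statement; it is quoted from \cite{KY2021}, so there is no internal argument to match. Your route has three stages:
\begin{enumerate}
\item coordinatewise inf-convolution on $\widetilde{Y}$, giving $\M(X;N,R)\subset B_{2\varepsilon}(\M(Y;N,R))$;
\item $\rho$-convergence together with closedness of $\M(\mathcal{Q};N,N)$;
\item the measurement characterization of membership in a pyramid.
\end{enumerate}
This is a legitimate self-contained proof. The only outsourced step, stage (iii), is standard, and your sketch of it can be made precise as follows. Take a dense sequence $\{x_i\}$ in $X$ and $\Phi_N(x)=(\min\{d_X(x,x_i),N\})_{i=1}^{N}$. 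Then $(\Phi_N)_*\mu_X\in\M(X;N,N)$, and $(B^N_N,\|\cdot\|_\infty,(\Phi_N)_*\mu_X)$ $\square$-converges to $X$. Closedness of $\mathcal{Q}$ then finishes, and no padding is needed.

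Your key trick also gives a shorter proof if you run it in infinitely many coordinates. Compose $f$ with a Fr\'echet isometric embedding $\iota:X\to\ell^\infty$ and inf-convolve every coordinate over $\widetilde{Y}$. The infima are finite and $h(y)\in\ell^\infty$, since $\widetilde{Y}\neq\emptyset$. This yields a $1$-Lipschitz $h:Y\to\ell^\infty$ with $\|h-\iota\circ f\|_\infty\le\varepsilon$ on $\widetilde{Y}$. Hence $X':=(\overline{h(Y)},\|\cdot\|_\infty,h_*\mu_Y)\prec Y$, and by your two Prokhorov estimates together with Lemma~\ref{lem:box < 2dP}, $\square(X,X')\le 4\varepsilon$.

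With this lemma the proposition follows directly from the definition of weak convergence. For $X\in\mathcal{P}$, choose $X_n\in\mathcal{P}_n$ with $\square(X,X_n)\to 0$ and witnesses $Y_n\in\mathcal{Q}_n$. The lemma gives $X_n'\prec Y_n$, so $X_n'\in\mathcal{Q}_n$, with $\square(X_n,X_n')\le 4\varepsilon_n$. Therefore $\square(X,\mathcal{Q}_n)\to 0$, and condition (ii) of weak convergence forces $X\in\mathcal{Q}$.

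The two routes trade off as follows. The infinite-coordinate version bypasses $\rho$, compactness of measurements, and the reconstruction step entirely. Your finite-dimensional version instead yields the quantitative bound $\M(\mathcal{P}_n;N,N)\subset B_{2\varepsilon_n}(\M(\mathcal{Q}_n;N,N))$, which is useful in its own right.
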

The next lemma follows from Proposition~\ref{prop:box dist. <-> epsillon-mm-isom.}.
\begin{lem}\label{lem:PXn -> P, Z in P => Z <ek Xnk}
    Suppose that $\{X_{n}\}_{n=1}^{\infty}$ is a sequence of mm-spaces and $\mathcal{P}$ is a pyramid. 
    If $\mathcal{P}_{X_{n}}$ converges weakly to $\mathcal{P}$, then for any $Z \in \mathcal{P}$ and for any positive numbers $\varepsilon_{k} \to 0+$, there exists a subsequence $\{X_{n(k)}\}_{k=1}^{\infty}$ of $\{X_{n}\}_{n=1}^{\infty}$ such that \[Z \prec_{\varepsilon_{k}} X_{n(k)}\] for each $k \in \N$.
\end{lem}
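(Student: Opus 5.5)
The statement follows from Proposition~\ref{prop:box dist. <-> epsillon-mm-isom.} (ii) together with the definition of weak convergence, after unpacking the error notions. Fix $Z \in \mathcal{P}$ and $\varepsilon_{k} \to 0+$.

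First, condition (i) of Definition~\ref{def:weak Hausdorff convergence of pyramids} gives $\lim_{n\to\infty}\square(Z,\mathcal{P}_{X_{n}}) = 0$. So for each $k$ there are infinitely many $n$ and some $Y_{n} \in \mathcal{P}_{X_{n}}$ with $\square(Z, Y_{n}) < \varepsilon_{k}/3$. Choose them inductively with $n(k) > n(k-1)$; this is the only place a subsequence is needed, and it only ensures strict monotonicity.

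Second, Proposition~\ref{prop:box dist. <-> epsillon-mm-isom.} (ii) gives an $\varepsilon_{k}$-mm-isomorphism $g:Y_{n(k)} \to Z$. Let $\widetilde{Y}$ be a non-exceptional domain, so $\mu_{Y}(\widetilde{Y}) \ge 1-\varepsilon_{k}$. On $\widetilde{Y}$ we have $d_{Z}(g(y),g(y')) \le d_{Y}(y,y') + \varepsilon_{k}$, and also $\dP(g_{*}\mu_{Y}, \mu_{Z}) \le \varepsilon_{k}$.

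Third, since $Y_{n(k)} \prec X_{n(k)}$, there is a 1-Lipschitz $h: X_{n(k)} \to Y_{n(k)}$ with $h_{*}\mu_{X_{n(k)}} = \mu_{Y_{n(k)}}$. Set $f := g\circ h$ and take $\widetilde{X} := h^{-1}(\widetilde{Y})$. Then $\mu_{X}(\widetilde{X}) = \mu_{Y}(\widetilde{Y}) \ge 1-\varepsilon_{k}$. For $x,x' \in \widetilde{X}$,
\[d_{Z}(f(x),f(x')) \le d_{Y}(h(x),h(x')) + \varepsilon_{k} \le d_{X}(x,x')+\varepsilon_{k}.\]
Moreover $f_{*}\mu_{X} = g_{*}\mu_{Y}$, so $\dP(f_{*}\mu_{X},\mu_{Z}) \le \varepsilon_{k}$. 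This is exactly $Z \prec_{\varepsilon_{k}} X_{n(k)}$ in the sense of Definition~\ref{def:Lip. order up to error}. Borel measurability of $f$ is clear, since it is a composition of Borel maps.

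The only point needing care is matching the constants: the box-distance threshold must be $\varepsilon_{k}/3$ so that the factor $3$ in Proposition~\ref{prop:box dist. <-> epsillon-mm-isom.} (ii) produces exactly $\varepsilon_{k}$. Everything else is a direct composition.
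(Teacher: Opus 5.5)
Your proof is correct and is essentially the argument the paper has in mind when it says the lemma follows from Proposition~\ref{prop:box dist. <-> epsillon-mm-isom.}. You pick $Y \in \mathcal{P}_{X_{n(k)}}$ with $\square(Z,Y) < \varepsilon_{k}/3$, take the resulting $\varepsilon_{k}$-mm-isomorphism $Y \to Z$, and precompose it with a domination map $X_{n(k)} \to Y$, pulling back the non-exceptional domain.
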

Finally, we state the following known result, which will be used later.
\begin{prop}[cf.~{\cite[$3\frac{1}{2}_{+}$.15 (f)]{Gromov}, see also~\cite[Corollary 3.20]{KY2021}}]\label{prop:P is compact <=> P=PX}
    For a pyramid $\mathcal{P}$, the following (i) and (ii) are equivalent to each other.
    \begin{enumerate}
        \item $\mathcal{P} \subset \X$ is a compact subset with respect to the box topology.
        \item There is an mm-space $X$ such that $\mathcal{P} = \mathcal{P}_{X}$.
    \end{enumerate}
\end{prop}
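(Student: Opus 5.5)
We prove the two implications separately. The implication (ii) $\Rightarrow$ (i) follows from the $\square$-precompactness criterion together with completeness of $(\X,\square)$. The implication (i) $\Rightarrow$ (ii) takes a box-limit of an approximation sequence and then applies Proposition~\ref{prop:Pn <en Qn => P < Q}.

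\emph{(ii) $\Rightarrow$ (i).} Let $\mathcal{P}=\mathcal{P}_X$. By definition $\mathcal{P}_X$ is closed in $(\X,\square)$, which is complete, so it suffices to show that $\mathcal{P}_X$ is $\square$-precompact. We use Lemma~\ref{lem:box-precompactness criterion}.
\begin{enumerate}
\item Fix $0<\varepsilon<1$. Since $X$ is separable and $\mu_X$ is tight, there is a finite $\varepsilon$-supporting net $\mathcal{N}\subset X$. Set $\Delta(\varepsilon):=\max\{\#\mathcal{N},\diam{\mathcal{N}}\}$.
\item Let $Y\prec X$ with domination map $f\colon X\to Y$, and put $\mathcal{N}':=f(\mathcal{N})$. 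Then $\#\mathcal{N}'\le\#\mathcal{N}$ and $\diam{\mathcal{N}'}\le\diam{\mathcal{N}}$.
\item Since $f$ is 1-Lipschitz, $f(B_\varepsilon(\mathcal{N}))\subset B_\varepsilon(\mathcal{N}')$. Hence
\[\mu_Y(B_\varepsilon(\mathcal{N}'))\ge\mu_X(B_\varepsilon(\mathcal{N}))\ge 1-\varepsilon.\]
\end{enumerate}
So every $Y\in\mathcal{P}_X$ has an $\varepsilon$-supporting net bounded by the uniform constant $\Delta(\varepsilon)$, and Lemma~\ref{lem:box-precompactness criterion} gives precompactness.

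\emph{(i) $\Rightarrow$ (ii).}
\begin{enumerate}
\item Take an approximation sequence $X_1\prec X_2\prec\cdots$ of $\mathcal{P}$ (Lemma~\ref{lem:app. seq. of pyramid}).
\item By compactness of $\mathcal{P}$, pass to a subsequence, still increasing, with $\square(X_n,X)=:\delta_n\to0$ for some $X\in\mathcal{P}$. Since $\mathcal{P}$ is a pyramid, $\mathcal{P}_X\subset\mathcal{P}$.
\item For the reverse inclusion, the subsequence still approximates $\mathcal{P}$, so $\mathcal{P}_{X_n}\to\mathcal{P}$ weakly (Remark~\ref{rem:approximation sequence is a weak convergent sequence}). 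We will show $\mathcal{P}_{X_n}\prec_{\varepsilon_n}\mathcal{P}_X$ with $\varepsilon_n\to0$. Proposition~\ref{prop:Pn <en Qn => P < Q}, applied with the constant sequence $\mathcal{Q}_n=\mathcal{P}_X$, then yields $\mathcal{P}\subset\mathcal{P}_X$.
\end{enumerate}

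The main step is verifying $\mathcal{P}_{X_n}\prec_{\varepsilon_n}\mathcal{P}_X$.
\begin{enumerate}
\item By Proposition~\ref{prop:box dist. <-> epsillon-mm-isom.}(ii), there is a $3\varepsilon'_n$-mm-isomorphism $\varphi_n\colon X\to X_n$, where $\varepsilon'_n:=2\delta_n$, or any number $>\delta_n$ tending to $0$.
\item Given $Z\prec X_n$ with domination map $g\colon X_n\to Z$, consider $g\circ\varphi_n\colon X\to Z$.
\item On the non-exceptional domain $\TX$ of $\varphi_n$ we have $\mu_X(\TX)\ge1-3\varepsilon'_n$ and
\[d_Z(g\varphi_n(x),g\varphi_n(y))\le d_{X_n}(\varphi_n(x),\varphi_n(y))\le d_X(x,y)+3\varepsilon'_n.\]
So $g\circ\varphi_n$ is 1-Lipschitz up to $3\varepsilon'_n$.
\item Because $g$ is 1-Lipschitz, push-forward by $g$ does not increase the Prokhorov distance. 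Using $g_*\mu_{X_n}=\mu_Z$, we get
\[\dP\bigl((g\varphi_n)_*\mu_X,\mu_Z\bigr)\le\dP\bigl((\varphi_n)_*\mu_X,\mu_{X_n}\bigr)\le3\varepsilon'_n.\]
\item Hence $Z\prec_{3\varepsilon'_n}X$, and $X\in\mathcal{P}_X$, so $\mathcal{P}_{X_n}\prec_{3\varepsilon'_n}\mathcal{P}_X$.
\end{enumerate}
This completes the argument, giving $\mathcal{P}=\mathcal{P}_X$.
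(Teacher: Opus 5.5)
Your argument is correct. For (ii) $\Rightarrow$ (i) it matches the paper's: you push a finite $\varepsilon$-supporting net of $X$ forward by the domination map, apply Lemma~\ref{lem:box-precompactness criterion}, and use closedness, making explicit the completeness of $(\X,\square)$ that the paper leaves implicit.

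For (i) $\Rightarrow$ (ii) you take a different route once you have the box-convergent subsequence $X_{n(k)}\to X$.
\begin{itemize}
\item The paper finishes in one step. By Proposition~\ref{prop:rho < box}, $\rho(\mathcal{P}_{X_{n(k)}},\mathcal{P}_X)\le\square(X_{n(k)},X)\to0$. So $\mathcal{P}_{X_{n(k)}}$ converges weakly to both $\mathcal{P}_X$ and $\mathcal{P}$, and uniqueness of weak limits (Theorem~\ref{thm:metric rho}) gives $\mathcal{P}=\mathcal{P}_X$.
\item You prove two inclusions. You get $\mathcal{P}_X\subset\mathcal{P}$ from the pyramid axioms. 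For $\mathcal{P}\subset\mathcal{P}_X$, you compose an $\varepsilon$-mm-isomorphism $X\to X_n$ with domination maps $X_n\to Z$ to obtain $\mathcal{P}_{X_n}\prec_{\varepsilon_n}\mathcal{P}_X$. You then invoke Proposition~\ref{prop:Pn <en Qn => P < Q} with the constant sequence $\mathcal{Q}_n=\mathcal{P}_X$.
\end{itemize}
The paper's version is shorter because it relies on the metric $\rho$ on the space of pyramids and its comparison with $\square$. Yours avoids $\rho$ entirely and stays within the box-distance and approximate-domination tools. In effect, you reprove by hand the half of ``box convergence implies weak convergence of the associated pyramids'' that the argument needs.

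One small point: Proposition~\ref{prop:box dist. <-> epsillon-mm-isom.}(ii) needs $\square<\varepsilon$ strictly, and Proposition~\ref{prop:Pn <en Qn => P < Q} needs positive $\varepsilon_n$. Your choice $\varepsilon'_n=2\delta_n$ therefore fails when $\delta_n=0$, so use your fallback, e.g.\ $\varepsilon'_n=\delta_n+1/n$.
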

\begin{proof}
    First, we prove (i) $\Rightarrow$ (ii).
    Assume (i).
    By Lemma~\ref{lem:app. seq. of pyramid}, there exists an approximation sequence $\{X_{n}\}_{n=1}^{\infty}$ of the pyramid $\mathcal{P}$.
    Since $\mathcal{P} \subset \X$ is compact with respect to the box topology, we have a convergent subsequence $\{X_{n(k)}\}_{k=1}^{\infty}$ of $\{X_{n}\}_{n=1}^{\infty}$ with respect to the box topology.
    Let $X$ be a limit space.
    By Proposition~\ref{prop:rho < box}, we see that the pyramid $\mathcal{P}_{X_{n(k)}}$ converges weakly to the pyramid $\mathcal{P}_{X}$.
    Since $\{X_{n}\}_{n=1}^{\infty}$ is an approximation of $\mathcal{P}$, the pyramid $\mathcal{P}_{X_{n(k)}}$ also converges weakly to $\mathcal{P}$, which implies that $\mathcal{P}=\mathcal{P}_{X}$.
    This completes the proof of (i) $\Rightarrow$ (ii).

    Next, we prove (i) $\Leftarrow$ (ii).
    Assume (ii).
    Take any $\varepsilon > 0$ and $\varepsilon$-supporting finite net $\mathcal{N}$ of $X$, and fix them.
    Since $X$ is separable, there exists such $\mathcal{N}$.
    Set $\Delta(\varepsilon):= \max\{\#\mathcal{N}, \diam{\mathcal{N}}\}$.
    Take any $Y \in \mathcal{P}_{X}$ and take a domination map $f:X \to Y$.
    Set $\mathcal{N}_{Y} := f(\mathcal{N})$.
    Then we have $\#\mathcal{N}_{Y} \le \Delta(\varepsilon)$ and $\diam{\mathcal{N}_{Y}} \le \Delta(\varepsilon)$.
    Furthermore, we see that \[\mu_{Y}(B_{\varepsilon}(\mathcal{N}_{Y})) = \mu_{X}(f^{-1}(B_{\varepsilon}(f(\mathcal{N})))) \ge \mu_{X}(B_{\varepsilon}(\mathcal{N})) \ge 1-\varepsilon,\]which implies that $\mathcal{N}_{Y}$ is an $\varepsilon$-supporting net of $Y$.
    By Lemma~\ref{lem:box-precompactness criterion}, we see that $\mathcal{P}_{X}$ is $\square$-precompact.
    Since $\mathcal{P}_{X}$ is closed with respect to the box topology, we obtain this proposition.
\end{proof}
\subsection{Invariants for mm-spaces and pyramids}

\, 

In this subsection, we recall some invariants for mm-spaces and pyramids.

\begin{dfn}[Separation distance]
    For an mm-space $X$ and real numbers $0 < \kappa_{0}, \kappa_{1},\dots, \kappa_{N} < 1$, $N \in \N$, with $\sum_{i=0}^{N}\kappa_{i} \le 1$, the \textit{separation distance} of $X$ is defined by
    \begin{align*}
        &\mathrm{Sep}(X;\kappa_{0},\kappa_{1},\dots, \kappa_{N}) \\ &\hspace{10mm}:= \sup\left\{\min_{i\neq j}d_{X}(A_{i}, A_{j}) \hspace{1mm} \middle\vert \begin{aligned}&A_{i} \subset X,\, i=0,1,\dots, N, \hspace{1mm}\text{are Borel subsets}\\ &\text{with}\hspace{1mm}\mu_{X}(A_{i}) \ge \kappa_{i}\end{aligned}\right\}.
    \end{align*}
    For a pyramid $\mathcal{P}$, the \textit{separation distance} of $\mathcal{P}$ is defined by \[\mathrm{Sep}(\mathcal{P};\kappa_{0},\kappa_{1},\dots,\kappa_{N}) := \lim_{\varepsilon\to0+}\sup_{X\in \mathcal{P}}\mathrm{Sep}(X;\kappa_{0}-\varepsilon,\kappa_{1}-\varepsilon,\dots,\kappa_{N}-\varepsilon).\]
\end{dfn}

Note that the separation distance is left-continuous and monotone nonincreasing in $\kappa_{i}$ for each $i$.
For any mm-space $X$ and any positive numbers $\kappa_{0},\kappa_{1},\dots,\kappa_{N}$, the following equality holds: \[\mathrm{Sep}(\mathcal{P}_{X};\kappa_{0},\kappa_{1},\dots,\kappa_{N}) = \mathrm{Sep}(X;\kappa_{0},\kappa_{1},\dots,\kappa_{N}).\]
For details, see~\cite[Section 4]{OS2015_limit_formula}.

\begin{dfn}[{Observable diameter}]\label{def:observable diameter}
    Let $X$ be an mm-space and $0 < \kappa < 1$. 
    The $\kappa$-\textit{partial diameter} of $X$ is defined by 
    \begin{align*}
        &\mathrm{diam}(X;1-\kappa) =\mathrm{diam}(\mu_{X};1-\kappa) \\ &\hspace{10mm}:= \inf\{\,\mathrm{diam}(A)\mid A\subset X\hspace{1mm}\text{is a Borel subset with}\hspace{1mm}\mu_{X}(A) \ge 1-\kappa\, \}
    \end{align*}
    and the $\kappa$-\textit{observable diameter} of $X$ is defined by \[\obsdiam{X}{\kappa} := \sup\{\,\mathrm{diam}(f_{*}\mu_{X};1-\kappa)\mid f:X \to \mathbb{R}\hspace{1mm}\text{is 1-Lipschitz}\,\}.\]
    Note that $\mathrm{diam}(\emptyset) := 0$.
    
    For a pyramid $\mathcal{P}$, the $\kappa$-\textit{observable diameter} of $\mathcal{P}$ is defined by \[\obsdiam{\mathcal{P}}{\kappa} := \sup_{X \in \mathcal{P}}\obsdiam{X}{\kappa}.\]
\end{dfn}
Note that the observable diameter is right-continuous and monotone nonincreasing in $\kappa$.
For any mm-space $X$ and $\kappa > 0$, we have \[\obsdiam{\mathcal{P}_{X}}{\kappa} = \obsdiam{X}{\kappa}.\]
For details, see~\cite[Section 3]{OS2015_limit_formula}.

\begin{rem}
    The separation distance and the observable diameter were both originally defined for mm-spaces by Gromov~\cite{Gromov}, and later extended to pyramids by Ozawa and Shioya~\cite{OS2015_limit_formula}.
    The original definition of the observable diameter of a pyramid in~\cite{OS2015_limit_formula} differs from Definition~\ref{def:observable diameter} in form, but they are equivalent to each other (see~\cite[Remark 2.14]{Y_limit_formula}).
\end{rem}

For an mm-space $X$, a pyramid $\mathcal{P}$, and a real number $t > 0$, we define \[tX := (X,\, td_{X},\, \mu_{X})\hspace{3mm}\text{and}\hspace{3mm} t\mathcal{P} := \{tX\mid X \in \mathcal{P}\}. \]

\begin{lem}[{\cite[Proposition 3.4]{OS2015_limit_formula},~\cite[Proposition 2.19]{S2016book}}]\label{lem:scale invariance of observable diameter}
    For an mm-space $X$, a pyramid $\mathcal{P}$, and real numbers $0 < \kappa < 1$ and $t > 0$, we have \[\obsdiam{tX}{\kappa} = t\obsdiam{X}{\kappa}\] and \[\obsdiam{t\mathcal{P}}{\kappa} = t\obsdiam{\mathcal{P}}{\kappa}.\]
\end{lem}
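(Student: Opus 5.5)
The plan is to argue directly from Definition~\ref{def:observable diameter}: first prove the identity for mm-spaces, then pass to pyramids by taking suprema.

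For the mm-space identity, the key observation is a bijection between $1$-Lipschitz functions on $X$ and on $tX$. Since $d_{tX} = t\,d_X$, a map $g:X\to\R$ is $1$-Lipschitz with respect to $d_X$ if and only if $tg$ is $1$-Lipschitz with respect to $t\,d_X$. Hence
\[
  \{f:tX\to\R \text{ 1-Lipschitz}\} = \{tg \mid g:X\to\R \text{ 1-Lipschitz}\}.
\]

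Next I check how the partial diameter on $\R$ scales. Fix such a $g$ and let $\nu := g_*\mu_X$. Write $s_t(x) := tx$ on $\R$, so that $(tg)_*\mu_{tX} = (s_t)_*\nu$, using $\mu_{tX} = \mu_X$. Because $s_t$ is a homeomorphism of $\R$, the Borel sets $A$ with $(s_t)_*\nu(A)\ge 1-\kappa$ are exactly the sets $A = s_t(B)$ with $\nu(B)\ge 1-\kappa$. Moreover $\diam{s_t(B)} = t\,\diam{B}$, which also covers $B=\emptyset$ under the convention $\diam{\emptyset}=0$. Taking infima gives
\[
  \mathrm{diam}((tg)_*\mu_X;1-\kappa) = t\,\mathrm{diam}(g_*\mu_X;1-\kappa).
\]
Taking the supremum over all $1$-Lipschitz $g$ and using the bijection above yields $\obsdiam{tX}{\kappa} = t\,\obsdiam{X}{\kappa}$. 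Since $t>0$ and we use the convention $t\cdot\infty=\infty$, this holds even when the value is $+\infty$.

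For a pyramid $\mathcal{P}$, by definition $t\mathcal{P} = \{tX\mid X\in\mathcal{P}\}$. Applying the mm-space identity termwise,
\[
  \obsdiam{t\mathcal{P}}{\kappa} = \sup_{X\in\mathcal{P}}\obsdiam{tX}{\kappa} = t\sup_{X\in\mathcal{P}}\obsdiam{X}{\kappa}.
\]
A supremum commutes with multiplication by a positive constant, so this equals $t\,\obsdiam{\mathcal{P}}{\kappa}$. The map $X\mapsto tX$ is well defined on mm-isomorphism classes, since an mm-isomorphism $X\to Y$ is also one $tX\to tY$, so the supremum over $t\mathcal{P}$ is meaningful.

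None of these steps is a genuine obstacle. The only points needing care are the empty-set convention and infinite values, and both are handled above.
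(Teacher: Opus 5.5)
Your argument is correct. The bijection $g \mapsto tg$ between $1$-Lipschitz functions on $X$ and on $tX$, together with $\mathrm{diam}((tg)_{*}\mu_{X};1-\kappa) = t\,\mathrm{diam}(g_{*}\mu_{X};1-\kappa)$, gives the mm-space identity, and the pyramid identity then follows by taking suprema under the paper's definition of $\obsdiam{\mathcal{P}}{\kappa}$ as $\sup_{X \in \mathcal{P}}\obsdiam{X}{\kappa}$. The paper gives no proof of its own and only cites Ozawa--Shioya and Shioya's book, where the statement is likewise a direct consequence of the definitions, as in your argument.
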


Next, we recall the notions of scale-invariant pyramids and pyramids generated by atoms.
The main references are~\cite{EKM2024} and~\cite{KNS2024}.

\begin{dfn}[{Scale-invariant pyramids,~\cite{EKM2024,KNS2024}}]\label{def:scale invariant pyramid}
    We say that a pyramid $\mathcal{P}$ is \textit{scale invariant} if $t\mathcal{P} = \mathcal{P}$ holds for any $t > 0$.
\end{dfn}

\begin{rem}
    A pyramid $\mathcal{P}$ is scale invariant if and only if there exists a number $t > 0$ with $t \neq 1$ such that $t\mathcal{P} = \mathcal{P}$ (see~\cite[Proposition 2.10]{EKM2024}).
\end{rem}

\begin{prop}[{\cite[Theorem 6.2]{EKM2024},~\cite{KNS2024}}]\label{prop:pyramids generated by atoms}
    For a sequence $A=(a_{n})_{n=1}^{N} \in \A$, set \[\mathcal{P}_{A} := \left\{X \in \X\,\middle|\, \begin{aligned} &\text{There exists a map}\hspace{1mm} f:\{1,\dots, N\} \to X\hspace{1mm} \text{such that}\\ &\mu_{X} \ge \textstyle\sum_{n=1}^{N} a_{n}\delta_{f(n)}\end{aligned} \right\}.\]
    Then $\mathcal{P}_{A}$ is a pyramid, called the \textit{pyramid generated by atoms} $A$.
\end{prop}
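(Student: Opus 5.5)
The proposition asks that $\mathcal{P}_{A}$ satisfy conditions (i)--(iii) of Definition~\ref{def:pyramid}. Nonemptiness is clear: if $N<\infty$, the finite space $\{1,\dots,N,0\}$ carrying the weights $a_{n}$ and $1-\|A\|_{1}$ belongs to $\mathcal{P}_{A}$, with a similar countable space if $N=\infty$. Throughout I use the reformulation that $\mu_{X} \ge \sum_{n} a_{n}\delta_{f(n)}$ means $\mu_{X}(\{x\}) \ge \sum_{f(n)=x} a_{n}$ for every $x \in X$. This is because a measure dominating a purely atomic measure is the same as pointwise domination of atom masses. I will verify (ii), then (iii), then (i), and I expect (i) to be the main obstacle.

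For (ii), let $Y \prec X \in \mathcal{P}_{A}$ with a domination map $g:X\to Y$, and let $f$ witness $X\in\mathcal{P}_{A}$. Then
\[
\mu_{Y} = g_{*}\mu_{X} \ge \textstyle\sum_{n} a_{n}\, g_{*}\delta_{f(n)} = \sum_{n} a_{n}\delta_{g(f(n))},
\]
so $g\circ f$ witnesses $Y \in \mathcal{P}_{A}$.

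For (iii), take $X,Y\in\mathcal{P}_{A}$ with witnesses $f,g$. The residual measures $\nu := \mu_{X}-\sum a_{n}\delta_{f(n)}$ and $\nu' := \mu_{Y}-\sum a_{n}\delta_{g(n)}$ are nonnegative and have the same mass $c = 1-\|A\|_{1}$. Define the coupling
\[
\pi := \textstyle\sum_{n} a_{n}\delta_{(f(n),g(n))} + c^{-1}\,\nu\otimes\nu',
\]
where the second term is omitted if $c=0$. Then $\pi$ has marginals $\mu_{X}$ and $\mu_{Y}$. Let $Z$ be $X\times Y$ with the $\ell^{\infty}$-product metric and measure $\pi$; it is complete and separable. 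The two projections are 1-Lipschitz and push $\pi$ forward to $\mu_{X}$ and $\mu_{Y}$, so $X,Y\prec Z$. Moreover $n\mapsto (f(n),g(n))$ witnesses $Z\in\mathcal{P}_{A}$.

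For (i), let $X_{k}\in\mathcal{P}_{A}$ with witnesses $f_{k}$ and $\square(X_{k},X)\to 0$. Choose couplings $\pi_{k}$ of $\mu_{X_{k}}$ and $\mu_{X}$ and Borel sets $S_{k}$ with $\mathrm{dis}(S_{k})\le\varepsilon_{k}$, $\pi_{k}(S_{k})\ge 1-\varepsilon_{k}$, and $\varepsilon_{k}\to 0$.
\begin{enumerate}
\item Fix $n$. Let $E_{k}^{n}$ be the slice of $S_{k}$ over the point $f_{k}(n)$. It has diameter at most $\varepsilon_{k}$, because two pairs with the same first coordinate have second coordinates at distance at most $\varepsilon_{k}$. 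It also carries $\mu_{X}$-mass at least $a_{n}-\varepsilon_{k}$ (by the marginal condition).
\item Pick $x_{n}^{k}\in E_{k}^{n}$. By tightness, choose a compact $K$ with $\mu_{X}(K) > 1-a_{n}/2$. For large $k$, $E_{k}^{n}$ must meet $K$, since its mass exceeds $a_{n}/2$. Hence $x_{n}^{k}$ lies within $\varepsilon_{k}$ of $K$, and a subsequence converges to some $x_{n}\in X$.
\item A diagonal argument over $n$ produces one subsequence along which $x_{n}^{k}\to x_{n}$ for every $n\in[N]$.
\item To verify $\mu_{X}(\{x\})\ge\sum_{x_{n}=x}a_{n}$, fix $x$, a finite set $F\subset\{n : x_{n}=x\}$, and $\delta>0$.
\item For large $k$, every $n\in F$ has $E_{k}^{n}\subset B_{\delta}(x)$. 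The points $f_{k}(n)$, $n\in F$, may coincide, but grouping them gives $\mu_{X_{k}}(f_{k}(F)) \ge \sum_{n\in F}a_{n}$. Since $\bigcup_{n\in F}E_{k}^{n}$ is the $S_{k}$-slice over $f_{k}(F)$, we get
\[
\mu_{X}(B_{\delta}(x)) \;\ge\; \pi_{k}\bigl(S_{k}\cap (f_{k}(F)\times X)\bigr) \;\ge\; \textstyle\sum_{n\in F}a_{n}-\varepsilon_{k}.
\]
\item Let $k\to\infty$, then $\delta\to0$, then let $F$ increase.
\end{enumerate}
This yields $\mu_{X}\ge\sum_{n}a_{n}\delta_{x_{n}}$, so $X\in\mathcal{P}_{A}$. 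The delicate point, which I expect to be the main obstacle, is that $E_{k}^{n}$ is the slice over the point $f_{k}(n)$, not over the atom's full mass. The estimate in the last display therefore has to be done with the grouped atoms $f_{k}(F)$, as above, so that coinciding indices are correctly accounted for.
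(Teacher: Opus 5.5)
Your proof is correct. The pointwise reformulation of $\mu_X\ge\sum_n a_n\delta_{f(n)}$ is valid because $f([N])$ is countable. The coupling $\sum_n a_n\delta_{(f(n),g(n))}+c^{-1}\nu\otimes\nu'$ has the right marginals and keeps the diagonal atoms. In the closedness step, each slice $E_k^n$ has diameter at most $\varepsilon_k$ and mass at least $a_n-\varepsilon_k$, so tightness of the single measure $\mu_X$ gives convergent atom locations. Grouping over $f_k(F)$ then handles coincident atoms correctly.

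There is one small omission. In the degenerate case $A=(0)$ you have $a_1=0$, so a compact $K$ with $\mu_X(K)>1-a_n/2$ cannot exist. There, however, $\mathcal{P}_{(0)}=\mathcal{X}$ and nothing needs proving. Nonemptiness is also immediate from $*\in\mathcal{P}_A$.

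The paper gives no proof of this proposition; it imports it from \cite[Theorem 6.2]{EKM2024} and \cite{KNS2024}. Within the paper's own framework it can be recovered by a different route. Take $\nu:=\mu_X-\sum_n a_n\delta_{f(n)}$, set $X_0:=(X,d_X,\nu/(1-\|A\|_1))$, and use the identity map $X_0\to X$; this is the residual-measure trick from the proof of Lemma~\ref{lem:the direct sum of pyramids equals the direct sum of seq. of real number}. It verifies the set identity \eqref{eq:pyramid generated by atoms is a direct sum of pyramids}, i.e.\ $\mathcal{P}_A$ is the direct sum of $\mathcal{X},\{*\},\{*\},\dots$ with weights $(1-\|A\|_1,a_1,a_2,\dots)$. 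Theorem~\ref{thm:direct sum of pyramids is a pyramid} then applies.

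That route is more general and avoids your ``delicate point''. Its closedness argument passes the identity $\mu_X=\sum_n a_n\nu_n$ to the limit using Prokhorov's theorem and Corollary~\ref{cor:continuity of convex combination of probability measures}. Each limit component lies in $\{*\}$ by closedness, so coincident atoms are absorbed automatically. The cost is reliance on $\varepsilon$-mm-isomorphisms and, for directedness, on Ozawa--Shioya's theorem that $\mathcal{P}_Y$ is a pyramid for every extended mm-space $Y$. Your argument is self-contained: it uses only the coupling definition of $\square$ and tightness of one measure. It also makes explicit both the limiting atom locations and the common upper bound in the directedness step.
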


\begin{lem}[{\cite[Proposition 6.9]{EKM2024},~\cite[Lemma 4.4]{KNS2024}}]\label{lem:A=B <=> P_A = P_B}
    Let $A=(a_{n})_{n=1}^{N}$ and $B=(b_{n})_{n=1}^{M}$ be elements of $\mathcal{A}$.
    Then $\mathcal{P}_{A} = \mathcal{P}_{B}$ if and only if $A=B$, where $A=B$ means that $N=M$ and $a_{n}=b_{n}$ for all $n \in [N]$.
\end{lem}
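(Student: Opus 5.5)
We prove the nontrivial direction, $\mathcal{P}_{A} = \mathcal{P}_{B} \Rightarrow A = B$, by reading the sequence $A$ off the pyramid $\mathcal{P}_{A}$ through a simple numerical invariant. The converse direction is trivial.

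For an mm-space $X$ and $k \in \N$, let $s_{k}(X)$ be the supremum of $\sum_{x \in F}\mu_{X}(\{x\})$ over all subsets $F \subset X$ with $\#F \le k$. This is the total mass of the $k$ largest atoms of $\mu_{X}$, or of all atoms if there are fewer than $k$. For a pyramid $\mathcal{P}$ set
\[
\sigma_{k}(\mathcal{P}) := \inf_{X \in \mathcal{P}} s_{k}(X).
\]
The goal is to prove that
\[
\sigma_{k}(\mathcal{P}_{A}) = \sum_{n=1}^{\min\{k,N\}} a_{n}
\]
for every $A = (a_{n})_{n=1}^{N} \in \A$, with the convention that the right-hand side is $0$ when $A = (0)$. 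Granting this, $\mathcal{P}_{A} = \mathcal{P}_{B}$ gives equal values of $\sigma_{k}$ for every $k$. Taking successive differences $\sigma_{k} - \sigma_{k-1}$ then recovers $a_{k}$ for $k \le N$, and recovers $0$ for $k > N$. Since all $a_{n}$ and $b_{n}$ are positive, this forces $N = M$ and $a_{n} = b_{n}$ for all $n$.

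\emph{Lower bound.} Let $X \in \mathcal{P}_{A}$ and let $f$ be as in Proposition~\ref{prop:pyramids generated by atoms}. For each $x$ in the image of $f$ we have $\mu_{X}(\{x\}) \ge \sum_{n \in f^{-1}(x)} a_{n}$. Put $K := \min\{k, N\}$ and take $F := f(\{1,\dots,K\})$, which has at most $k$ points. The fibres $f^{-1}(x)$ for $x \in F$ are disjoint and together contain $\{1,\dots,K\}$. Hence
\[
s_{k}(X) \ge \sum_{x \in F}\mu_{X}(\{x\}) \ge \sum_{n=1}^{K} a_{n}.
\]
This gives $\sigma_{k}(\mathcal{P}_{A}) \ge \sum_{n=1}^{K} a_{n}$.

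\emph{Upper bound.} We exhibit one model space $X_{A} \in \mathcal{P}_{A}$ realizing the bound. Take the closed subset $X_{A} := \{-n \mid n \in [N]\} \cup [0,1]$ of $\R$, with the Euclidean metric; it is complete and separable. Equip it with the measure
\[
\mu := \sum_{n=1}^{N} a_{n}\delta_{-n} + (1 - \|A\|_{1})\,\mathcal{L}|_{[0,1]},
\]
where $\mathcal{L}|_{[0,1]}$ is Lebesgue measure on $[0,1]$. This is a probability measure, and $X_{A} \in \mathcal{P}_{A}$ via $f(n) := -n$. The remaining point is that the atoms of $\mu$ are exactly the points $-n$, with masses $a_{n}$ in nonincreasing order, since Lebesgue measure has no atoms. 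Therefore $s_{k}(X_{A}) = \sum_{n \le K} a_{n}$. The support of $\mu$ may be smaller than $X_{A}$ (for instance when $\|A\|_{1} = 1$), but passing to the support does not change the atoms, so this causes no trouble.

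The main care is needed in the combinatorial lower bound when $f$ is not injective, since atoms can merge. This is handled by choosing $F$ as the image of the first $K$ indices, as above. No closure or limiting argument is needed, because $\sigma_{k}$ is an infimum taken directly over $\mathcal{P}_{A}$, and $\mathcal{P}_{A}$ is already known to be a pyramid.
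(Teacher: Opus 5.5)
Your proof is correct. Note that the paper gives no proof of this lemma: it imports the statement from \cite{EKM2024} and \cite{KNS2024} and uses it as a black box in the proofs of Theorem~\ref{thm:decomposition of pyramids} and Theorem~\ref{thm:the uniquness of decomposition of pyramid}. So there is no argument in the paper to match, and yours is a self-contained substitute for the citation.

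The invariant $\sigma_k(\mathcal{P})=\inf_{X\in\mathcal{P}}s_k(X)$ is well chosen, and each step holds.

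\textbf{Lower bound.} It uses only the defining inequality $\mu_X\ge\sum_n a_n\delta_{f(n)}$ from Proposition~\ref{prop:pyramids generated by atoms}. Taking $F=f(\{1,\dots,K\})$ correctly handles the case where several $f(n)$ collide, because the fibres are disjoint.

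\textbf{Upper bound.} It needs only one explicit member of $\mathcal{P}_A$ whose atoms are exactly the $a_n$. Your space on $\{-n\}\cup[0,1]$ works, including when $N=\infty$ (the set $\{-n\}$ is closed and discrete) and when $\|A\|_1=1$.

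\textbf{No limits are needed.} The quantity $s_k$ is an mm-isomorphism invariant, so the argument never touches closures, weak convergence, or the pyramid axioms. Only the set-theoretic description of $\mathcal{P}_A$, which is its definition in this paper, is used.

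One step needs tidying. Your final sentence, ``all $a_n$ and $b_n$ are positive,'' does not cover $A=(0)$, whose only entry is $0$. Treat that case separately, either by setting $\sigma_0:=0$ and noting that $\sigma_k(\mathcal{P}_{(0)})=0$ for every $k$, or directly: $\sigma_1(\mathcal{P}_{(0)})=0<b_1=\sigma_1(\mathcal{P}_B)$ whenever $B\neq(0)$.
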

The scale-invariant pyramids are completely determined.
\begin{thm}[{\cite[Theorem 1.7]{KNS2024}}]\label{thm:the characterization of scale-invariant pyramids}
    Let $\mathcal{P}$ be a pyramid.
    Then the following (i) and (ii) are equivalent to each other.
    \begin{enumerate}
        \item $\mathcal{P}$ is scale invariant.
        \item There exists $A \in \mathcal{A}$ such that $\mathcal{P}=\mathcal{P}_{A}$.
    \end{enumerate}
\end{thm}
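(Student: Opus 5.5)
The implication (ii) $\Rightarrow$ (i) is direct. For $t>0$, the space $tX$ has the same measure as $X$. So a map $f:[N]\to X$ with $\mu_X \ge \sum_n a_n\delta_{f(n)}$ is also such a map into $tX$. Hence $t\mathcal{P}_A \subset \mathcal{P}_A$ for every $t>0$, and applying this with $1/t$ gives equality.

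For (i) $\Rightarrow$ (ii), the plan is to read off the atomic structure of $\mathcal{P}$ by blowing up its elements.

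\textbf{Step 1.} For an mm-space $X$, let $A(X)\in\mathcal{A}$ be the nonincreasing rearrangement of the masses of the atoms of $\mu_X$, with $A(X)=(0)$ if there are none. The first key claim is that $\mathcal{P}_{tX}$ converges weakly to $\mathcal{P}_{A(X)}$ as $t\to\infty$.

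\begin{itemize}
\item \emph{Upper inclusion.} By definition, any $Y\in\mathcal{P}_{tX}$ is a $1$-Lipschitz image of $X$. It therefore carries atoms of mass at least $a_n$ at the images $f(n)$ of the atoms of $X$, so $\mathcal{P}_{tX}\subset\mathcal{P}_{A(X)}$ for every $t$.
\item \emph{Lower inclusion.} Given $Y\in\mathcal{P}_{A(X)}$ with witness $f$, I would construct maps $tX\to Y$ that are $1$-Lipschitz up to $\varepsilon$. Such a map sends the $n$-th atom of $X$ to $f(n)$ for finitely many $n$. On the remaining mass, which splits into the small atoms and the diffuse part, it should realize the leftover measure $\mu_Y-\sum_n a_n\delta_{f(n)}$ approximately. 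This uses that, after scaling by a huge $t$, the diffuse part can be cut into finitely many pieces that are far apart, up to small mass. Proposition~\ref{prop:Pn <en Qn => P < Q} then closes the argument.
\end{itemize}

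I expect this lower inclusion to be the main obstacle. It needs a careful quantitative splitting of nonatomic mass into well-separated pieces of prescribed sizes, approximately, and control of the Prokhorov error.

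\textbf{Step 2.} Since $\mathcal{P}$ is scale invariant, $X\in\mathcal{P}$ implies $tX\in\mathcal{P}$ for all $t$, so $\mathcal{P}_{tX}\subset\mathcal{P}$. By Step~1 and Proposition~\ref{prop:Pn <en Qn => P < Q}, this gives $\mathcal{P}_{A(X)}\subset\mathcal{P}$. Conversely $X\in\mathcal{P}_{A(X)}$, trivially. Take an approximation sequence $X_1\prec X_2\prec\cdots$ of $\mathcal{P}$ (Lemma~\ref{lem:app. seq. of pyramid}) and set $A_k:=A(X_k)$. Then $\mathcal{P}_{A_k}$ is increasing in $k$ and
\[
\mathcal{P}=\overline{\textstyle\bigcup_k \mathcal{P}_{A_k}}^{\,\square}.
\]

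\textbf{Step 3.} It remains to show that an increasing union of pyramids generated by atoms has closure $\mathcal{P}_A$ for some $A\in\mathcal{A}$.

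\begin{itemize}
\item \emph{Choice of $A$.} By a diagonal argument, pass to a subsequence along which each coordinate $(A_k)_n$ converges; call the limit $A$, dropping zero entries. Monotonicity of $\mathcal{P}_{A_k}$ should make the partial sums $\sum_{n\le m}(A_k)_n$ essentially nonincreasing in $k$, so the limit is canonical.
\item \emph{Continuity.} The main ingredient is that coordinatewise convergence $A_k\to A$ implies $\mathcal{P}_{A_k}\to\mathcal{P}_A$ weakly. Each inclusion can be checked with explicit maps that are $1$-Lipschitz up to $\varepsilon$, by moving mass at most $\varepsilon$ in total variation and using Lemma~\ref{lem:dP < dTV}.
\end{itemize}

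Compactness and uniqueness of weak limits (Theorem~\ref{thm:metric rho}) then give $\mathcal{P}=\mathcal{P}_A$.
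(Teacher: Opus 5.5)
The paper does not prove this theorem. It is quoted from \cite[Theorem 1.7]{KNS2024} and used as a black box (for instance in Lemma~\ref{lem:the limit of pyramids tP when t to 0+}), so there is no in-paper argument to compare with. Your outline is a sound self-contained route, and the step you expected to be the main obstacle is routine.

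Let $\mu_d$ be the nonatomic part of $\mu_X$. By tightness and nonatomicity there are a compact $K$ carrying most of $\mu_d$ and an $r>0$ such that every ball $B_{2r}(z)$, $z\in K$, has small $\mu_d$-mass. Take a finite $r$-net $\{z_i\}_{i\le L}$ of $K$. Averaging $\rho\mapsto\mu_d(B_{\rho+s}(z_i)\setminus U_{\rho-s}(z_i))$ over $\rho\in[r,2r]$ gives radii $\rho_i$ whose collars have mass $O(s/r)$. The cells $B_{\rho_i}(z_i)\setminus\bigcup_{i'<i}B_{\rho_{i'}}(z_{i'})$, with these collars removed, are pairwise more than $2s$ apart, at a total loss of $O(Ls/r)\to 0$. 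Also delete $B_s(x_m)$ around the finitely many retained atoms. Then group the cells greedily into pieces whose masses approximate the weights of a finitely supported approximation of $\mu_Y-\sum_n a_n\delta_{f(n)}$. With $s=D/t$, where $D$ bounds the diameter of the finitely many target points in $Y$, this gives your map. Since the resulting error depends on $Y$, apply Proposition~\ref{prop:Pn <en Qn => P < Q} to $\mathcal{P}_Y$ rather than to $\mathcal{P}_{A(X)}$ at once, which would need a uniform $\varepsilon$. Composing with domination maps $Y\to Z$ gives $\mathcal{P}_Y\prec_{\varepsilon}\mathcal{P}$, which is what the proposition needs.

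Step 3 needs two adjustments.

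\emph{First adjustment.} Moving mass in total variation proves only one inclusion. It does give $\mathcal{P}_{A_k}\prec_{\varepsilon_k}\mathcal{P}_A$, because the mass to be added at the points $f(n)$ is $\sum_n(a_n-(A_k)_n)_+\to0$ by summability of $A$. It cannot give the reverse inclusion when mass of $A_k$ escapes into many small atoms, and this happens in your setting.
\begin{itemize}
\item Take $A_k=(2^{-k},\dots,2^{-k})$ with $2^k$ entries. It is realized by the approximation sequence of $\mathcal{X}$ consisting of $2^k$ equally weighted points at mutual distance $2^k$.
\item The pyramids $\mathcal{P}_{A_k}$ increase to $\mathcal{X}=\mathcal{P}_{(0)}$.
\item Yet for nonatomic $\mu_Y$, any $Z\in\mathcal{P}_{A_k}$ and any Borel $g:Z\to Y$, the measure $g_*\mu_Z$ is purely atomic, hence at total variation distance $1$ from $\mu_Y$.
\end{itemize}
There you must discretize the leftover measure of $Y$ in the Prokhorov sense. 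Alternatively, cite \cite[Theorem 0.7]{EKM2024} or \cite[Lemma 4.5]{KNS2024}. They apply because $a_n\le 1/n$ on $\mathcal{A}$, so coordinatewise convergence there is equivalent to $\ell^\infty$-convergence.

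\emph{Second adjustment.} Monotonicity of $\mathcal{P}_{A_k}$ and the remark on partial sums are unnecessary. The sandwich $\mathcal{P}_{X_k}\subset\mathcal{P}_{A_k}\subset\mathcal{P}$ already yields $\mathcal{P}_{A_k}\to\mathcal{P}$ weakly, and uniqueness of weak limits then gives $\mathcal{P}=\mathcal{P}_A$.
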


Finally, we recall the notion of \textit{dissipation}, which is the phenomenon opposite to the concentration.

\begin{dfn}[{Dissipation, cf.~\cite[Definition 8.1]{S2016book}}]\label{def:infinitely dissipate}
    Let $\{X_{n}\}_{n=1}^{\infty}$ be a sequence of mm-spaces. 
    We say that $\{X_{n}\}_{n=1}^{\infty}$ \textit{infinitely dissipates} if, for any positive numbers $\kappa_{0}, \kappa_{1},\dots, \kappa_{N}$ with $\sum_{i=0}^{N}\kappa_{i}<1$, where $N \in \N$, it satisfies \[\lim_{n\to\infty}\Sep(X_{n};\kappa_{0},\kappa_{1},\dots,\kappa_{N}) = +\infty.\]
\end{dfn}

The following Lemma is useful to check whether a given sequence of mm-spaces infinitely dissipates.
\begin{lem}[{cf.~\cite[Lemma 8.3]{S2016book}}]\label{lem:infinitely dissipation criterion}
    Let $\{X_{n}\}_{n=1}^{\infty}$ be a sequence of mm-spaces.
    If, for each $n$, there exist Borel subsets $A_{n}^{i} \subset X_{n}$, $i=1,2,\dots,N_{n}$, $N_{n} \in \N$, that satisfies the following (i), (ii), and (iii), then $\{X_{n}\}_{n=1}^{\infty}$ infinitely dissipates.
    \begin{enumerate}
        \item $\lim_{n\to\infty}\mu_{X_{n}}(\bigcup_{i=1}^{N_{n}}A_{n}^{i}) = 1$;
        \item $\lim_{n\to\infty}\min_{i\neq j}d_{X_{n}}(A_{n}^{i}, A_{n}^{j})=+\infty$;
        \item $\lim_{n\to\infty}\max_{i=1}^{N_{n}}\mu_{X_{n}}(A_{n}^{i}) = 0$.
    \end{enumerate}
\end{lem}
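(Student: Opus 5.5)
The statement to prove is Lemma~\ref{lem:infinitely dissipation criterion}. Fix $N \in \N$ and positive $\kappa_{0},\dots,\kappa_{N}$ with $\sum_{i=0}^{N}\kappa_{i} < 1$. We must show $\Sep(X_{n};\kappa_{0},\dots,\kappa_{N}) \to +\infty$. The plan is to build, for all sufficiently large $n$, Borel sets $B_{n}^{0},\dots,B_{n}^{N} \subset X_{n}$ with $\mu_{X_{n}}(B_{n}^{i}) \ge \kappa_{i}$. Each $B_{n}^{i}$ will be a union of some of the pieces $A_{n}^{j}$, and distinct $B_{n}^{i}$ will use disjoint index sets. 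Then $d_{X_{n}}(B_{n}^{i},B_{n}^{k}) \ge \min_{j\neq j'} d_{X_{n}}(A_{n}^{j},A_{n}^{j'})$ for $i \neq k$, and condition (ii) sends this lower bound to infinity.

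To construct the sets, fix $\delta>0$ with $\sum_{i}\kappa_{i} + (N+1)\delta < 1$. By (i) and (iii), there is $n_{0}$ such that for $n \ge n_{0}$ we have $\mu_{X_{n}}(\bigcup_{j}A_{n}^{j}) > \sum_{i}\kappa_{i} + (N+1)\delta$ and $\max_{j}\mu_{X_{n}}(A_{n}^{j}) < \delta$.

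Note that (ii) forces the $A_{n}^{j}$ to be pairwise disjoint once $\min_{j \neq j'} d_{X_{n}}(A_{n}^{j},A_{n}^{j'}) > 0$, which holds for large $n$. Hence measures of unions are sums of the individual measures.

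We fill the sets greedily. Go through the indices $j = 1, \dots, N_{n}$ in order and add $A_{n}^{j}$ to $B_{n}^{0}$ until $\mu(B_{n}^{0}) \ge \kappa_{0}$. Since each piece has measure less than $\delta$, we stop with $\kappa_{0} \le \mu(B_{n}^{0}) < \kappa_{0}+\delta$. Continue with the remaining indices to build $B_{n}^{1}$, and so on through $B_{n}^{N}$.

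This never runs out of pieces. Before starting $B_{n}^{i}$, the pieces already used have total measure less than $\sum_{k<i}(\kappa_{k}+\delta)$. The remaining pieces therefore have total measure greater than $\kappa_{i}+\delta$, which is enough to reach $\kappa_{i}$.

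It follows that for $n \ge n_{0}$,
\[\Sep(X_{n};\kappa_{0},\dots,\kappa_{N}) \ge \min_{i\neq k} d_{X_{n}}(B_{n}^{i},B_{n}^{k}) \ge \min_{j\neq j'} d_{X_{n}}(A_{n}^{j},A_{n}^{j'}),\]
and the right-hand side tends to $+\infty$ by (ii).

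The only delicate point is the measure bookkeeping in the greedy step, which rests on the slack $\delta$ chosen at the start. Nothing else is hard.
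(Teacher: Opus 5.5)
Your proof is correct. The slack $\delta$ makes the greedy filling produce $N+1$ Borel sets, built from disjoint index sets, with masses at least $\kappa_0,\dots,\kappa_N$. Their mutual distances are bounded below by $\min_{j\neq j'}d_{X_n}(A_n^j,A_n^{j'})\to+\infty$.

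The paper gives no proof of its own; it only refers to Lemma 8.3 of Shioya's book. Your argument, grouping small, well-separated pieces into $N+1$ blocks of prescribed mass, is the standard way to establish this criterion.
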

The following proposition will be used frequently later.
\begin{prop}[{\cite[Proposition 8.5 (2)]{S2016book}}]\label{prop:infinitely dissipated sequence converges weakly to the maximum pyramid}
    Let $\{X_{n}\}_{n=1}^{\infty}$ be a sequence of mm-spaces.
    If it infinitely dissipates, then $\mathcal{P}_{X_{n}}$ converges weakly to $\X$ as $n \to \infty$.
\end{prop}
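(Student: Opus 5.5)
The statement to prove is Proposition~\ref{prop:infinitely dissipated sequence converges weakly to the maximum pyramid}: if $\{X_n\}$ infinitely dissipates, then $\mathcal{P}_{X_n}$ converges weakly to $\X$. Since the target pyramid is $\X$, condition (ii) of Definition~\ref{def:weak Hausdorff convergence of pyramids} is vacuous. So it suffices to show that for every mm-space $Z$ we have $\square(Z,\mathcal{P}_{X_n})\to0$.

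\textbf{Reduction to finite spaces.} First reduce to the case where $Z$ is a finite mm-space. Finite mm-spaces are $\square$-dense in $\X$. Since $\square(\cdot,\mathcal{P}_{X_n})$ is $1$-Lipschitz in its first argument, approximating $Z$ by a finite $Z'$ with $\square(Z,Z')<\varepsilon$ reduces everything to finite $Z'$. We may further perturb the weights so that every atom of $Z'$ has positive rational-free generic mass. So let $Z=\{z_0,\dots,z_N\}$ with $\mu_Z(\{z_i\})=m_i>0$ and $\sum m_i=1$.

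\textbf{Construction of a dominated space close to $Z$.} Fix $\varepsilon>0$ and choose $\kappa_i:=m_i-\varepsilon'$, where $\varepsilon'>0$ is small so that all $\kappa_i>0$ and $\sum\kappa_i<1$. Infinite dissipation gives, for $n$ large, Borel sets $A_0,\dots,A_N\subset X_n$ with $\mu_{X_n}(A_i)\ge\kappa_i$ and $\min_{i\neq j}d_{X_n}(A_i,A_j)>D:=\diam{Z}$. Define $f:X_n\to Z$ as follows.
\begin{itemize}
\item On each $A_i$, set $f\equiv z_i$.
\item On $X_n\setminus\bigcup A_i$, define $f$ so that it is $1$-Lipschitz, with values in $Z$ chosen measurably.
\end{itemize}
The restriction of $f$ to $\bigcup A_i$ is $1$-Lipschitz, because distinct pieces are more than $D$ apart.

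\textbf{The main obstacle: extending to all of $X_n$.} The difficulty is extending $f$ to all of $X_n$ as a $1$-Lipschitz map into the finite space $Z$, since such an extension generally does not exist. I would avoid it as follows. Replace $Z$ by the space $Y_n:=(Z,d_Z,f_*\nu)$, where $\nu$ is the measure $\mu_{X_n}$ restricted to $\bigcup A_i$, with the leftover mass handled separately.

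A cleaner route is to use Lemma~\ref{lem:PXn -> P, Z in P => Z <ek Xnk}-style errors: show $Z\prec_{\varepsilon}X_n$ directly. Take $f$ to equal $z_i$ on $A_i$, and on the complement (of mass at most $1-\sum\kappa_i\le(N+1)\varepsilon'$) send everything to $z_0$. Then $f$ is $1$-Lipschitz on the non-exceptional domain $\bigcup A_i$, whose mass is at least $1-(N+1)\varepsilon'$.

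Next compare $f_*\mu_{X_n}$ with $\mu_Z$. Each atom mass of $f_*\mu_{X_n}$ is at least $m_i-\varepsilon'$, and the total is $1$. Hence $\dTV(f_*\mu_{X_n},\mu_Z)\le(N+1)\varepsilon'$, so by Lemma~\ref{lem:dP < dTV} the Prokhorov distance is small as well. Thus $Z\prec_{\delta}X_n$ with $\delta=(N+1)\varepsilon'$.

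\textbf{From error-domination to box distance.} It remains to convert $Z\prec_\delta X_n$ into a small box distance to an actual element of $\mathcal{P}_{X_n}$. Here is how I would handle it. Rather than the full $X_n$, consider the finite space $W=(Z,d_Z,f_*\mu_{X_n})$. Construct a genuinely $1$-Lipschitz map by enlarging the metric on the image: let $W'$ be $Z$ with the metric $d'(z_i,z_j)=\min\{d_Z(z_i,z_j),\,d_{X_n}(f^{-1}z_i,f^{-1}z_j)\}$, made into a metric by a shortest-path construction. Then $f:X_n\to W'$ is $1$-Lipschitz, so $W'\in\mathcal{P}_{X_n}$.

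Since the leftover mass was sent to $z_0$, the sets $f^{-1}(z_i)$ for $i\ge1$ equal $A_i$. The only issue is $f^{-1}(z_0)$, which contains junk, so the distances from $z_0$ may shrink. To fix this, I would instead put the junk onto a new point $z_\ast$ and take $W'=\{z_0,\dots,z_N,z_\ast\}$, with the path metric generated by
\begin{itemize}
\item $d_Z$ on $\{z_0,\dots,z_N\}$, and
\item $d_{X_n}(f^{-1}z_\ast,\,f^{-1}z_i)$ from $z_\ast$.
\end{itemize}
Paths through $z_\ast$ could still shorten distances, so I would instead map the junk into $\R$-valued coordinates. Honestly, the simplest fix is a $\square$-estimate that ignores the small junk mass $\delta$. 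Removing mass $\delta$ and renormalizing changes $\square$ by $O(\delta)$ via couplings (Definition~\ref{def:box distance}, taking $S$ to be the diagonal off the junk). This gives $\square(Z,W')\le O(\delta)$ provided the distances among $z_0,\dots,z_N$ in $W'$ equal $d_Z$ whenever no path through $z_\ast$ is shorter. We can always guarantee this by sending the junk to the nearest $A_i$-label, namely defining $f(x)=z_i$ for the $i$ minimizing $d(x,A_i)$ and handling ties measurably, while accepting exceptional mass $\delta$ in the distortion.

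Letting $\varepsilon'\to0$ yields $\square(Z,\mathcal{P}_{X_n})\to0$, which proves the proposition.
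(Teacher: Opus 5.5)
Your reduction to finite $Z$ and the construction of $f$ giving $Z\prec_{\delta}X_n$ with $\delta=(N+1)\varepsilon'$ are correct. The gap is in the last step: none of your variants produces an element of $\mathcal{P}_{X_n}$ that is $\square$-close to $Z$.

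The path metric built from $\min\{d_Z(z_i,z_j),\,d_{X_n}(f^{-1}z_i,f^{-1}z_j)\}$ collapses as soon as two fibres of $f$ touch. Whether the junk is sent to $z_0$, to a new point $z_\ast$, or to the nearest $A_i$, the fibres typically do touch. For example, take $X_n=[0,n]$ with normalized Lebesgue measure, which infinitely dissipates. Let $Z$ be two points at distance $1$ with mass $\frac{1}{2}$ each, and take $A_0=[0,(\frac{1}{2}-\varepsilon')n]$ and $A_1=[(\frac{1}{2}+\varepsilon')n,n]$. The nearest-label fibres are $[0,n/2]$ and $(n/2,n]$, at distance $0$. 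So $W'$ is a single point and $\square(Z,W')=\frac{1}{2}$ for every $n$. The other two variants collapse in the same example. ``Accepting exceptional mass in the distortion'' cannot repair this. The collapsed distance is between the atoms $z_i,z_j$ themselves, which carry mass close to $m_i,m_j$, so they must lie in any set $S$ of nearly full coupling measure.

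The missing idea is the one you mention and then abandon: enlarge the target instead of shrinking the metric.
\begin{itemize}
\item Embed $Z$ isometrically in $(\R^{N+1},\|\cdot\|_\infty)$ by $\iota(z)=(d_Z(z,z_j))_{j=0}^{N}$.
\item Since $\iota\circ f$ is $1$-Lipschitz on $\bigcup_iA_i$, McShane extension of each coordinate gives a $1$-Lipschitz map $F:X_n\to(\R^{N+1},\|\cdot\|_\infty)$ that equals $\iota\circ f$ on $\bigcup_iA_i$.
\item Then $(\R^{N+1},\|\cdot\|_\infty,F_*\mu_{X_n})\in\mathcal{P}_{X_n}$ and $\dTV(F_*\mu_{X_n},\iota_*\mu_Z)\le 2\delta$: one $\delta$ comes from the junk, the other from $\dTV(f_*\mu_{X_n},\mu_Z)$.
\item Lemmas~\ref{lem:dP < dTV} and~\ref{lem:box < 2dP} then give $\square(Z,\mathcal{P}_{X_n})\le 4\delta$.
\end{itemize}

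Alternatively, you can skip the construction entirely. Composing $f$ with a domination map $Z\to Y$ preserves the estimate, since push-forward by a $1$-Lipschitz map does not increase $\dP$. This gives $\mathcal{P}_Z\prec_{\delta_n}\mathcal{P}_{X_n}$ with $\delta_n\to 0$. By compactness of $\Pi$ (Theorem~\ref{thm:metric rho}) and Proposition~\ref{prop:Pn <en Qn => P < Q}, every subsequential weak limit of $\mathcal{P}_{X_n}$ contains every finite mm-space. Being closed, each such limit equals $\X$.
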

\section{Direct sum of mm-spaces and pyramids}
In this section, we first establish the theory of convex combinations of probability measures.
After that, we define the direct sum of mm-spaces and study its relation to extended mm-spaces. 
We then introduce the direct sum of pyramids as a generalization of that of mm-spaces.

\subsection{Convex combinations of probability measures}
\begin{dfn}\label{def:the sum of probability measures}
    Let $X$ be a metric space, and let $(a_{n})_{n=1}^{N} \in \A_{1}$ and $\mu_{n} \in P(X)$, $n \in [N]$.
    We define $\sum_{n=1}^{N}a_{n}\mu_{n}$ by \[\left(\sum_{n=1}^{N}a_{n}\mu_{n}\right)(A) := \sum_{n=1}^{N}a_{n}\mu_{n}(A)\]for any Borel subset $A \subset X$.
\end{dfn}
By definition, we see that $\sum_{n=1}^{N}a_{n}\mu_{n}$ is a Borel probability measure on $X$.
For any bijective map $\sigma:[N] \to [N]$, we easily observe that \[\sum_{n=1}^{N}a_{n}\mu_{n} = \sum_{n=1}^{N}a_{\sigma(n)}\mu_{\sigma(n)}.\]
Let $X$ and $Y$ be two metric spaces.
Let $(a_{n})_{n=1}^{N}$, $(b_{m})_{m=1}^{M} \in \A_{1}$, and let $\mu_{n} \in P(X)$, $\nu_{m} \in P(Y)$, $n \in [N]$, $m \in [M]$.
Take a bijective map $\sigma:[NM] \to [N] \times [M]$, $\sigma(n) = (\sigma_{1}(n), \sigma_{2}(n))$, $n \in [NM]$.
We note that if $N=\infty$ or $M=\infty$, then $NM:=\infty$.
We define 
\begin{equation}\label{eq:the sum of product measures}
    \sum_{n \in [N],\,m\in [M]}a_{n}b_{m}(\mu_{n}\otimes \nu_{m}) := \sum_{n=1}^{NM}a_{\sigma_{1}(n)}b_{\sigma_{2}(n)}(\mu_{\sigma_{1}(n)}\otimes \nu_{\sigma_{2}(n)}).
\end{equation}
It is straightforward to see that the right-hand side of~\eqref{eq:the sum of product measures} is independent of the choice of $\sigma$.
Therefore, the left-hand side of~\eqref{eq:the sum of product measures} is well-defined and is a Borel probability measure on $X \times Y$.

By the $\pi$-$\lambda$ theorem, we immediately obtain the following lemma.
\begin{lem}\label{lem:distributive raw for sums of measures}
    Let $X$ and $Y$ be a metric space.
    Let $(a_{n})_{n=1}^{N}$, $(b_{m})_{m=1}^{M} \in \A_{1}$, and let $\mu_{n} \in P(X)$, $\nu_{m} \in P(Y)$, $n \in [N]$, $m \in [M]$.
    Then we have \[\left(\sum_{n=1}^{N}a_{n}\mu_{n}\right)\otimes \left(\sum_{m=1}^{M}b_{m}\nu_{m}\right) = \sum_{n \in [N],\,m \in [M]}a_{n}b_{m}(\mu_{n}\otimes\nu_{m}).\]
\end{lem}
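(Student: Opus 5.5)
The plan is to check the identity on product sets and then extend it. Denote the left-hand side by $\lambda := (\sum_{n}a_{n}\mu_{n})\otimes(\sum_{m}b_{m}\nu_{m})$ and the right-hand side by $\lambda'$. Both are Borel probability measures on $X\times Y$. For $\lambda$ this holds because it is a product of probability measures. For $\lambda'$ it holds by the remark following~\eqref{eq:the sum of product measures}, since $\sum_{n,m}a_{n}b_{m} = \|A\|_{1}\|B\|_{1} = 1$, where the rearrangement is justified because all terms are nonnegative. The two measures will be compared on the class of measurable rectangles
\[\mathcal{R} := \{E\times F \mid E \subset X,\ F\subset Y \text{ Borel}\},\]
which is closed under finite intersections, so it forms a $\pi$-system.

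For a rectangle $E\times F$, the definition of the product measure and Definition~\ref{def:the sum of probability measures} give
\[\lambda(E\times F) = \Bigl(\sum_{n}a_{n}\mu_{n}(E)\Bigr)\Bigl(\sum_{m}b_{m}\nu_{m}(F)\Bigr).\]
Both factors are series of nonnegative terms with finite sums, so their product equals the double series $\sum_{n,m}a_{n}b_{m}\mu_{n}(E)\nu_{m}(F)$. Because the terms are nonnegative, this double series may be summed in any order, in particular along the enumeration $\sigma$. Each summand equals $a_{n}b_{m}(\mu_{n}\otimes\nu_{m})(E\times F)$, so the double series is exactly $\lambda'(E\times F)$.

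Dynkin's $\pi$-$\lambda$ theorem then finishes the argument. The class $\{S \mid \lambda(S)=\lambda'(S)\}$ is a $\lambda$-system, since both measures are finite with equal total mass $1$ and are countably additive. It contains the $\pi$-system $\mathcal{R}$, so it contains $\sigma(\mathcal{R})$. The remaining point, which is the only one needing care, is that $\sigma(\mathcal{R})$ should be the Borel $\sigma$-algebra of $X\times Y$. This holds when $X$ and $Y$ are separable, as in every application in the paper. In general, one reads the product measure as being defined on the product $\sigma$-algebra $\sigma(\mathcal{R})$, and the statement is then understood as equality on that $\sigma$-algebra.
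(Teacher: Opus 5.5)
Your proposal is correct and follows the paper's route: the paper only says the lemma follows from the $\pi$-$\lambda$ theorem, and you supply the details (the two measures agree on the $\pi$-system of measurable rectangles by rearranging a nonnegative double series, then Dynkin's theorem extends the equality). Your caveat is also correct: $\sigma(\mathcal{R})$ coincides with the Borel $\sigma$-algebra of $X\times Y$ only under separability, and otherwise the identity is read on the product $\sigma$-algebra, a point the paper leaves implicit.
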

The following two lemmas are straightforward and will be used later.
\begin{lem}\label{lem:the estimation of dist. between convex combinations of prob. meas. ver_1}
    Let $M \in \N$ and $(a_{n})_{n=1}^{M} \in \A_{1}$, and let $X$ be a metric space.
    Then, for any $\mu_{n}$, $\nu_{n} \in P(X)$, $n = 1,2,\cdots, M$, we have \[d_{\mathrm{P}}\left(\sum_{n=1}^{M}a_{n}\mu_{n},\, \sum_{n=1}^{M}a_{n}\nu_{n}\right) \le \max_{n=1}^{M}\, d_{\mathrm{P}}(\mu_{n}, \nu_{n}).\]
\end{lem}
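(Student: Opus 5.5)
Set $\varepsilon := \max_{n=1}^{M} \dP(\mu_{n},\nu_{n})$, $\mu := \sum_{n=1}^{M}a_{n}\mu_{n}$ and $\nu := \sum_{n=1}^{M}a_{n}\nu_{n}$. The plan is to check the defining inequality of the Prokhorov distance (Definition~\ref{def:Prokhorov distance}) directly for $\mu$ and $\nu$, at every level $\delta > \varepsilon$; taking the infimum over $\delta$ then gives $\dP(\mu,\nu) \le \varepsilon$.

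First, fix $\delta > \varepsilon$ and a Borel subset $A \subset X$. For each $n$ we have $\dP(\mu_{n},\nu_{n}) \le \varepsilon < \delta$. Hence there is some $\delta_{n}$ with $\dP(\mu_{n},\nu_{n}) \le \delta_{n} < \delta$ such that $\mu_{n}(U_{\delta_{n}}(A)) \ge \nu_{n}(A) - \delta_{n}$. Since $U_{\delta_{n}}(A) \subset U_{\delta}(A)$ and $\delta_{n} < \delta$, this gives
\[\mu_{n}(U_{\delta}(A)) \ge \nu_{n}(A) - \delta.\]
This monotonicity in the level is the only small point needing care: the infimum in the definition need not be attained, so we pass to a strictly larger level.

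Second, multiply by $a_{n}$ and sum over $n = 1, \dots, M$, using $\sum_{n} a_{n} = 1$:
\[\mu(U_{\delta}(A)) = \sum_{n=1}^{M}a_{n}\mu_{n}(U_{\delta}(A)) \ge \sum_{n=1}^{M}a_{n}\nu_{n}(A) - \delta = \nu(A) - \delta.\]
Since $A$ is arbitrary, $\delta$ belongs to the set whose infimum defines $\dP(\mu,\nu)$, so $\dP(\mu,\nu) \le \delta$. Letting $\delta \downarrow \varepsilon$ gives the claim.

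There is no real obstacle here. The finiteness of $M$ is used only to make $\varepsilon$ a maximum rather than a supremum, and even the supremum version would go through by the same argument.
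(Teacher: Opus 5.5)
Your proof is correct: choosing a level $\delta$ strictly above $\max_{n} d_{\mathrm{P}}(\mu_{n},\nu_{n})$, using that admissible levels in the definition are upward closed, and then averaging the one-sided inequalities with the weights $a_{n}$ is exactly the direct verification the paper intends when it calls this lemma straightforward. The paper writes out no proof for it. Your remark that the same argument covers countably many terms, with a supremum in place of the maximum, is also right.
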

\begin{lem}\label{lem:the estimation of dist. between convex combinations of prob. meas. ver_2}
    For any Borel probability measures $\mu_{i}$, $\nu_{i}$ on a metric space $X$ for $i=1,2$, and for any real number $0 < a <1$, we have \[d_{\mathrm{P}}(a\mu_{1}+(1-a)\mu_{2},\, a\nu_{1}+(1-a)\nu_{2}) \le d_{\mathrm{P}}(\mu_{1},\, \nu_{1}) + 1-a. \] 
\end{lem}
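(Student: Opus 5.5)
The plan is to prove the inequality straight from the definition of $\dP$ by exhibiting one admissible $\varepsilon$. Write $\mu := a\mu_{1}+(1-a)\mu_{2}$ and $\nu := a\nu_{1}+(1-a)\nu_{2}$, and set $\delta := \dP(\mu_{1},\nu_{1})$. The target is the following claim:
\[\mu(U_{\varepsilon}(A)) \ge \nu(A) - \varepsilon \quad\text{for every } \varepsilon > \delta + 1 - a \text{ and every Borel } A \subset X.\]
Once this holds, $\dP(\mu,\nu) \le \varepsilon$ for all such $\varepsilon$, and letting $\varepsilon \downarrow \delta + 1 - a$ gives the lemma. The mixtures with $\mu_{2}$ and $\nu_{2}$ simply cost the $(1-a)$ mass they carry, so no information about $\mu_{2},\nu_{2}$ is needed.

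To prove the claim, fix such an $\varepsilon$ and a Borel set $A$, and put $\varepsilon' := \varepsilon - (1-a)$, so that $\varepsilon' > \delta$.

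First, since $\dP(\mu_{1},\nu_{1}) = \delta < \varepsilon'$, there is some $\eta \in (\delta, \varepsilon']$ that is admissible in the infimum defining $\dP(\mu_{1},\nu_{1})$. This gives
\[\mu_{1}(U_{\varepsilon'}(A)) \ge \mu_{1}(U_{\eta}(A)) \ge \nu_{1}(A) - \eta \ge \nu_{1}(A) - \varepsilon'.\]
Note that admissibility for $\eta$ implies it for every larger value, because $U_{r}(A)$ grows with $r$.

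Next, use that $U_{\varepsilon'}(A) \subset U_{\varepsilon}(A)$ and $\mu_{2}\ge 0$ to get
\[\mu(U_{\varepsilon}(A)) \ge a\mu_{1}(U_{\varepsilon'}(A)) \ge a\nu_{1}(A) - a\varepsilon'.\]

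Finally, compare with $\nu(A)$. Since $\nu_{2}(A)\le 1$, we have
\[\nu(A) \le a\nu_{1}(A) + (1-a).\]
Combining this with the previous step,
\[\mu(U_{\varepsilon}(A)) \ge \nu(A) - (1-a) - a\varepsilon' \ge \nu(A) - (1-a) - \varepsilon' = \nu(A) - \varepsilon.\]
This proves the claim.

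There is no real obstacle here. The only point needing care is that the infimum defining $\dP$ may not be attained, which is why $\varepsilon$ is taken strictly larger than $\delta + 1 - a$ and then sent down to it.
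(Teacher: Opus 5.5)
Your proof is correct. The paper omits the proof and calls the lemma straightforward, and your direct check from the definition of $d_{\mathrm{P}}$ is exactly the routine argument intended: fix $\varepsilon > d_{\mathrm{P}}(\mu_{1},\nu_{1}) + 1 - a$, use that $\varepsilon-(1-a)$ is admissible for the pair $(\mu_{1},\nu_{1})$, drop the $\mu_{2}$-term, and bound $\nu_{2}(A)\le 1$.
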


\ 

For $A=(a_{n})_{n=1}^{N}$, $B=(b_{n})_{n=1}^{M} \in \mathcal{A}$ (or $\mathcal{A}_{1}$), we set \[\|A-B\|_{1} := \begin{cases} \sum_{n=1}^{N} |a_{n} - b_{n}| + \sum_{n=N+1}^{M} |b_{n}| &\text{if\, $N \le M$},\\
                                    \sum_{n=1}^{M} |a_{n}-b_{n}| + \sum_{n=M+1}^{N} |a_{n}| &\text{if\, $N \ge M$}.
    \end{cases} \]
    
\begin{lem}\label{lem:the estimation of dist. between convex combinations of probability measures}
    Suppose $A=(a_{n})_{n=1}^{N}$ and $B=(b_{n})_{n=1}^{N^{\prime}}$ are elements of $\mathcal{A}_{1}$, and let $\mu_{k}$, $\nu_{\ell}$, $k \in [N]$, $\ell \in [N^{\prime}]$, be Borel probability measures on a metric space $X$.
    Then for any integer $M \in [\min\{N, N^{\prime}\}]$, we have \[ d_{\mathrm{P}}\left(\sum_{n=1}^{N} a_{n}\mu_{n},\, \sum_{n=1}^{N^{\prime}} b_{n}\nu_{n}\right) \le \max_{n=1}^{M}\, d_{\mathrm{P}}(\mu_{n},\, \nu_{n}) + \|A-B\|_{1} + \sum_{n=M+1}^{N} a_{n} + \sum_{n=M+1}^{N^{\prime}} b_{n}.\]
\end{lem}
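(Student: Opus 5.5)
We will prove the estimate by a triangle-inequality argument through two intermediate measures, using Lemma~\ref{lem:the estimation of dist. between convex combinations of prob. meas. ver_1} for the common head, and Lemma~\ref{lem:dP < dTV} for every perturbation of weights and for the tails. Write $\mu := \sum_{n=1}^{N}a_{n}\mu_{n}$ and $\nu := \sum_{n=1}^{N'}b_{n}\nu_{n}$, and set $\alpha := \sum_{n=1}^{M}a_{n}$ and $\beta := \sum_{n=1}^{M}b_{n}$. We may assume $\alpha,\beta>0$, which holds since $M \ge 1$ and all weights are positive.

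First we pass to the truncated, renormalized measures
\[\mu^{M} := \alpha^{-1}\sum_{n=1}^{M}a_{n}\mu_{n} \quad\text{and}\quad \mu^{\prime} := \sum_{n=1}^{M}a_{n}\mu_{n} + (1-\alpha)\mu_{1}.\]
The simplest intermediate is $\mu'$, which dumps the tail mass onto $\mu_{1}$. For any Borel $A$ we have $|\mu(A)-\mu'(A)| \le \sum_{n=M+1}^{N}a_{n}$, since the two measures differ only in a part of total mass $1-\alpha=\sum_{n>M}a_{n}$ on each side. Hence $\dTV(\mu,\mu')\le \sum_{n=M+1}^{N}a_{n}$. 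Similarly we define $\nu' := \sum_{n=1}^{M}b_{n}\nu_{n}+(1-\beta)\nu_{1}$, which gives $\dTV(\nu,\nu')\le \sum_{n=M+1}^{N'}b_{n}$.

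Next we compare $\mu'$ with $\nu'' := \sum_{n=1}^{M}a_{n}\nu_{n}+(1-\alpha)\nu_{1}$. Both are convex combinations with the same weights $(a_{1}+1-\alpha, a_{2},\dots,a_{M})\in\A_{1}$. Grouping the $\mu_{1}$ terms, Lemma~\ref{lem:the estimation of dist. between convex combinations of prob. meas. ver_1} gives $\dP(\mu',\nu'')\le \max_{n\le M}\dP(\mu_{n},\nu_{n})$.

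It remains to compare $\nu''$ with $\nu'$. These use the same measures $\nu_{1},\dots,\nu_{M}$ but different weights, namely $(a_{1}+1-\alpha,a_{2},\dots,a_{M})$ and $(b_{1}+1-\beta,b_{2},\dots,b_{M})$. Their total variation distance is at most the sum of the absolute weight differences. That sum is $\sum_{n=2}^{M}|a_{n}-b_{n}| + |a_{1}-b_{1}+\beta-\alpha|$, which is at most $\sum_{n=1}^{M}|a_{n}-b_{n}| + |\alpha-\beta|$.

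The main obstacle is controlling this extra $|\alpha-\beta|$ term so that only a single $\|A-B\|_{1}$ appears. For this we use $|\alpha-\beta| = |\sum_{n>M}b_{n}-\sum_{n>M}a_{n}|$, which holds because both sequences sum to $1$. This quantity is bounded by $\sum_{n>M}|a_{n}-b_{n}|$ together with the unmatched tail entries, i.e.\ by the part of $\|A-B\|_{1}$ over indices greater than $M$. Hence $\dTV(\nu'',\nu')\le\|A-B\|_{1}$.

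Chaining the triangle inequality for $\dP$ through $\mu',\nu'',\nu'$, and applying Lemma~\ref{lem:dP < dTV} to the three total variation bounds, yields the claimed estimate. Since $\|A-B\|_{1}$ is finite, the infinite case $N=\infty$ or $N'=\infty$ needs no separate treatment.
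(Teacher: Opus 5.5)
Your proof is correct, but it is organized differently from the paper's. The paper sets $c_n:=\min\{a_n,b_n\}$ for $n\le M$ and $s:=\sum_{n\le M}c_n$. It writes each mixture as $s$ times the renormalized common part $\sum_{n\le M}(c_n/s)\mu_n$ (resp.\ with $\nu_n$) plus $1-s$ times a remainder. It then applies Lemma~\ref{lem:the estimation of dist. between convex combinations of prob. meas. ver_2} followed by Lemma~\ref{lem:the estimation of dist. between convex combinations of prob. meas. ver_1}, obtaining $\dP(\mu,\nu)\le\max_{n\le M}\dP(\mu_n,\nu_n)+1-s$. The stated bound then follows from $1-s=\sum_{n\le M}(a_n-c_n)+\sum_{n>M}a_n$.

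You instead run a four-leg triangle inequality $\mu\to\mu'\to\nu''\to\nu'\to\nu$. You use Lemma~\ref{lem:the estimation of dist. between convex combinations of prob. meas. ver_1} only on the middle leg and Lemma~\ref{lem:dP < dTV} on the other three, and you never need Lemma~\ref{lem:the estimation of dist. between convex combinations of prob. meas. ver_2}.

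The paper's route is shorter and gives a sharper, symmetric intermediate estimate. Since $1-s\le\sum_{n\le M}(a_n-b_n)_{+}+\sum_{n>M}a_n$, the $b$-tail and the part of $\|A-B\|_1$ over indices $>M$ are not actually needed there. Your route is more elementary, since each leg is a crude mass count in total variation, but it uses up the whole budget. Dumping the tails onto $\mu_1$ and $\nu_1$ creates the mismatch $|\alpha-\beta|$. Your observation that this mismatch is controlled by the portion of $\|A-B\|_1$ over indices $>M$ is exactly what lets the chain close with no slack.

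The measure $\mu^{M}$ you introduce is never used and can be deleted.
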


\begin{proof}
    For $n = 1,2,\cdots,M$, we set $c_{n} := \min\{a_{n}, b_{n}\}$. 
    We see that the Borel probability measures $\sum_{n=1}^{N}a_{n}\mu_{n}$ and $\sum_{n=1}^{N^{\prime}}b_{n}\nu_{n}$ are represented respectively as 
    \begin{align*}
        &\sum_{n=1}^{N}a_{n}\mu_{n} = \sum_{n=1}^{M}c_{n}\mu_{n} + \left(\sum_{n=1}^{M} (a_{n}-c_{n})\mu_{n} + \sum_{n=M+1}^{N}a_{n}\mu_{n}\right), \\
        &\sum_{n=1}^{N^{\prime}}b_{n}\nu_{n} = \sum_{n=1}^{M}c_{n}\nu_{n} + \left(\sum_{n=1}^{M}(b_{n}-c_{n})\nu_{n} + \sum_{n=M+1}^{N^{\prime}}b_{n}\nu_{n}\right). 
    \end{align*}
    By Lemma~\ref{lem:the estimation of dist. between convex combinations of prob. meas. ver_1} and Lemma~\ref{lem:the estimation of dist. between convex combinations of prob. meas. ver_2}, we have 
        \begin{align*}
            d_{\mathrm{P}}\left(\sum_{n=1}^{N}a_{n}\mu_{n},\, \sum_{n=1}^{N^{\prime}}b_{n}\nu_{n}\right) &\le d_{\mathrm{P}}\left(\sum_{n=1}^{M}\frac{c_{n}}{\sum_{m=1}^{M}c_{m}}\mu_{n},\, \sum_{n=1}^{M}\frac{c_{n}}{\sum_{m=1}^{M}c_{m}}\nu_{n}\right) + 1- \sum_{n=1}^{M} c_{n} \\
                &\le \max_{n=1}^{M}\, d_{\mathrm{P}}(\mu_{n}, \nu_{n}) + \sum_{n=1}^{N} a_{n} - \sum_{n=1}^{M} c_{n} \\
                &\le \max_{n=1}^{M}\, d_{\mathrm{P}}(\mu_{n}, \nu_{n}) + \sum_{n=1}^{M}(a_{n}-c_{n}) + \sum_{n=M+1}^{N} a_{n} \\
                &\le \max_{n=1}^{M}\, d_{\mathrm{P}}(\mu_{n}, \nu_{n}) + \|A - B\|_{1} + \sum_{n=M+1}^{N} a_{n} + \sum_{n=M+1}^{N^{\prime}} b_{n}.
        \end{align*}
    This completes the proof.
\end{proof}
By Lemma~\ref{lem:the estimation of dist. between convex combinations of probability measures}, we obtain the following corollary.
\begin{cor}\label{cor:continuity of convex combination of probability measures}
    Let $A=(a_{n})_{n=1}^{N}$, $A_{k}=(a_{nk})_{n=1}^{N}$, $k \in \N$, be elements of $\A_{1}$.
    Let $\mu_{n}$, $\mu_{nk}$, $n \in [N]$, $k \in \N$, be Borel probability measures on a metric space $X$.
    If $A_{k}$ converges to $A$ in the $\ell^{1}$-norm and, for each $n$, $\mu_{nk}$ converges weakly to $\mu_{n}$ as $k \to \infty$, then $\sum_{n=1}^{N}a_{nk}\mu_{nk}$ converges weakly to $\sum_{n=1}^{N}a_{n}\mu_{n}$ as $k \to \infty$.
\end{cor}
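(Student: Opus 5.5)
The statement to prove is Corollary~\ref{cor:continuity of convex combination of probability measures}. Since the Prokhorov distance metrizes weak convergence on a separable metric space, I will show directly that $\dP(\sum_{n}a_{nk}\mu_{nk}, \sum_{n}a_{n}\mu_{n}) \to 0$. Strictly speaking, for a nonseparable $X$ one only has one direction: $\dP$-convergence implies weak convergence. That is the direction needed here, so the argument works in general.

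Fix $\varepsilon > 0$. First choose a finite cutoff $M \in [N]$ with $\sum_{n=M+1}^{N} a_{n} < \varepsilon$. This is possible because $\|A\|_{1} = 1$ is a convergent series; if $N<\infty$, simply take $M = N$. Apply Lemma~\ref{lem:the estimation of dist. between convex combinations of probability measures} with $A_{k}$ and $A$ in place of $A$ and $B$, with $\mu_{nk}$ and $\mu_{n}$ in place of $\mu_{n}$ and $\nu_{n}$, and with this $M$. Both sequences have the same length $N$, so $M \in [\min\{N,N\}]$ and the lemma gives
\[
\dP\left(\sum_{n=1}^{N}a_{nk}\mu_{nk},\, \sum_{n=1}^{N}a_{n}\mu_{n}\right) \le \max_{n=1}^{M}\dP(\mu_{nk},\mu_{n}) + \|A_{k}-A\|_{1} + \sum_{n=M+1}^{N}a_{nk} + \sum_{n=M+1}^{N}a_{n}.
\]

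Next, control the tail of $A_{k}$. We have $\sum_{n=M+1}^{N} a_{nk} \le \sum_{n=M+1}^{N}|a_{nk}-a_{n}| + \sum_{n=M+1}^{N}a_{n} \le \|A_{k}-A\|_{1} + \varepsilon$. Substituting, the right-hand side is at most
\[
\max_{n=1}^{M}\dP(\mu_{nk},\mu_{n}) + 2\|A_{k}-A\|_{1} + 2\varepsilon.
\]

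Finally, take limits. Weak convergence of $\mu_{nk}$ to $\mu_n$ gives $\dP(\mu_{nk},\mu_{n}) \to 0$ for each $n$; I assume $X$ is separable here, which is the setting of the paper (e.g.\ mm-spaces). Since only finitely many indices $n \le M$ appear, the maximum also tends to $0$. Together with $\|A_{k}-A\|_{1} \to 0$, this gives $\limsup_{k\to\infty}\dP(\cdot,\cdot) \le 2\varepsilon$. As $\varepsilon$ was arbitrary, the limit is $0$, and the corollary follows.

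The only delicate point is the uniform tail control for $A_{k}$ when $N = \infty$. It is handled by the $\ell^{1}$ estimate above, which is why convergence in $\ell^{1}$ rather than coordinatewise is assumed.
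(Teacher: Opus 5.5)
Your argument is correct and follows the paper's route: the paper derives this corollary directly from Lemma~\ref{lem:the estimation of dist. between convex combinations of probability measures}, using a finite cutoff $M$ and $\ell^{1}$ control of the tail of $A_{k}$, exactly as you do. One small correction: your opening claim that the argument ``works in general'' contradicts your later step, because you also need weak convergence to imply $\dP$-convergence for the hypotheses $\mu_{nk}\to\mu_{n}$, and that is where separability enters (you do assume it later, and it holds in every application in the paper).
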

\subsection{Direct sum of mm-spaces}

\, 

In this paper, the term \textit{extended metric space} means that its metric is allowed to take its values in $\lbrack 0, +\infty\rbrack$.
For an extended metric space $(X, d_{X})$, the function $d_{X}$ is called an \textit{extended metric}.
\begin{dfn}[\cite{OS2015_limit_formula}]\label{def:extended mm-space}
    Let $(X, d_{X})$ be a complete and separable extended metric space, and let $\mu_{X}$ be a Borel probability measure on $(X, d_{X})$. Then the triple $(X, d_{X}, \mu_{X})$ is called an \textit{extended mm-space}. We say that two extended mm-spaces $(X,d_{X},\mu_{X})$ and $(Y,d_{Y},\mu_{Y})$ are \textit{mm-isomorphic} to each other if there exists an isometry $f:\supp{\mu_{X}} \to \supp{\mu_{Y}}$ such that $f_{*}\mu_{X} = \mu_{Y}$.
\end{dfn}

For any extended mm-space $(X, d_{X}, \mu_{X})$, we assume that $X = \supp{\mu_{X}}$ unless otherwise stated.

For $(a_{n})_{n=1}^{N} \in \mathcal{A}_{1}$ and a sequence of mm-spaces $\{X_{n}\}_{n=1}^{N}$, we define an extended metric $d$ on the disjoint union $\bigsqcup_{n=1}^{N} X_{n}$ by
  \begin{equation*}
    d(x,y) := \begin{cases}
                    d_{X_{n}}(x,y) &\text{if\, $x,y \in X_{n}$, $n \in [N]$,}\\
                    +\infty &\text{if\, $x \in X_{n},\, y \in X_{m}$\, with\, $n \neq m$,}
                \end{cases}   
  \end{equation*}
and define a Borel probability measure $\mu$ on $(\bigsqcup_{n=1}^{N}X_{n},\,d)$ by  
  \[\mu := \sum_{n=1}^{N} a_{n}(\iota_{n})_{*}\mu_{X_{n}},\]
where the map $\iota_{n}:X_{n} \ni\, x \mapsto x \in X_{n} \subset \bigsqcup_{n=1}^{N} X_{n}$, $n \in [N]$, is an isometric embedding.
We consider $X_{n}$ as a subspace of $(\bigsqcup_{n=1}^{N}X_{n}, d)$ and write $\mu_{X_{n}}$ instead of $(\iota_{n})_{*}\mu_{X_{n}}$ unless otherwise stated. 
We call the triple $(\bigsqcup_{n=1}^{N} X_{n},\, d,\, \mu)$ the \textit{direct sum of} $\{X_{n}\}_{n=1}^{N}$ \textit{with weight} $(a_{n})_{n=1}^{N}$ and write $\sum_{n=1}^{N} X_{n}^{a_{n}}$. 
If $N < \infty$, then we also denote the direct sum by $X_{1}^{a_{1}} + X_{2}^{a_{2}} + \cdots + X_{N}^{a_{N}}$. 
The direct sum of mm-spaces $\sum_{n=1}^{N}X_{n}^{a_{n}}$ is an extended mm-space. 

Since extended mm-spaces are separable, we see that any extended mm-space admits a unique representation as a direct sum of mm-spaces.
The following proposition is straightforward.

\begin{prop}\label{prop:decomposition of an ext. mm-sp and its uniqueness}
    For any extended mm-space $X$, there exist $(a_{n})_{n=1}^{N} \in \mathcal{A}_{1}$ and a sequence of mm-spaces $\{X_{n}\}_{n=1}^{N}$ such that the direct sum of mm-spaces $\sum_{n=1}^{N} X_{n}^{a_{n}}$ is mm-isomorphic to $X$. Moreover, if $\sum_{n=1}^{N} X_{n}^{a_{n}}$ and $\sum_{n=1}^{M} Y_{m}^{b_{m}}$ are mm-isomorphic to each other, then we have $N=M$, and there exists a bijective map $\varphi:[N] \to [N]$ such that the following (i) and (ii) are both satisfied.
    \begin{enumerate}
        \item $a_{n} = b_{\varphi(n)}$ for each $n \in [N]$.
        \item $X_{n}$ is mm-isomorphic to $Y_{\varphi(n)}$ for each $n \in [N]$. 
    \end{enumerate}
\end{prop}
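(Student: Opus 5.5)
For existence, I decompose $X$ into its finite-distance components. On $X$ (with $X=\supp{\mu_{X}}$) I define $x\sim y$ if and only if $d_{X}(x,y)<+\infty$. By the triangle inequality for the extended metric this is an equivalence relation. Each class $C$ is open, since $U_{1}(x)\subset [x]$, and closed, since its complement is a union of open classes. Separability of $X$ gives a countable dense set meeting every open class, so there are at most countably many classes. Let $N\in\extN$ be their number and enumerate them as $C_{1},C_{2},\dots$.

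Because $X=\supp{\mu_{X}}$ and each $C_{n}$ is a nonempty open set, $\mu_{X}(C_{n})>0$. I set $a_{n}:=\mu_{X}(C_{n})$. The classes partition $X$ and $\mu_{X}$ is $\sigma$-additive, so $\sum_{n}a_{n}=1$ and $(a_{n})_{n=1}^{N}\in\A_{1}$. Next let $X_{n}:=(C_{n},d_{X}|_{C_{n}},a_{n}^{-1}\mu_{X}|_{C_{n}})$. Then $d_{X}$ is a genuine finite metric on $C_{n}$, $C_{n}$ is complete as a closed subset of a complete space, and it is separable as a subspace of a separable space. Hence $X_{n}$ is an mm-space with full support. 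The identity map $\bigsqcup_{n}X_{n}\to X$ is an isometry for the extended metric, since distances between different classes are $+\infty$. It pushes $\sum_{n}a_{n}\mu_{X_{n}}$ forward to $\sum_{n}\mu_{X}|_{C_{n}}=\mu_{X}$, so it is an mm-isomorphism.

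For uniqueness, let $F:\sum_{n}X_{n}^{a_{n}}\to\sum_{m}Y_{m}^{b_{m}}$ be an mm-isomorphism, that is, an isometry between supports with $F_{*}\mu=\nu$. The supports are $\bigsqcup_{n}\supp{\mu_{X_{n}}}$ and $\bigsqcup_{m}\supp{\mu_{Y_{m}}}$, using $a_{n},b_{m}>0$ and the openness of the pieces. Each $X_{n}$ (after passing to its support) is exactly one $\sim$-class of the direct sum. This is because $d_{X_{n}}$ is finite on $X_{n}$ and equals $+\infty$ between different pieces. An isometry preserves finiteness of distances, so $F$ maps $\sim$-classes bijectively onto $\sim$-classes. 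This defines a bijection $\varphi:[N]\to[M]$ with $F(X_{n})=Y_{\varphi(n)}$, and in particular $N=M$ as cardinalities in $\extN$.

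Comparing measures then gives $a_{n}=\mu(X_{n})=\nu(F(X_{n}))=\nu(Y_{\varphi(n)})=b_{\varphi(n)}$, which is (i). For (ii), the restriction $F|_{X_{n}}:X_{n}\to Y_{\varphi(n)}$ is an isometry. For Borel $B\subset Y_{\varphi(n)}$ it satisfies $a_{n}\mu_{X_{n}}(F^{-1}B)=\nu(B)=b_{\varphi(n)}\mu_{Y_{\varphi(n)}}(B)$, and dividing by $a_{n}=b_{\varphi(n)}$ shows that $F|_{X_{n}}$ is an mm-isomorphism.

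The main point requiring care is the support bookkeeping. We need that the support of the direct-sum measure is the disjoint union of the supports of the pieces, and that the $\sim$-classes of the support coincide with those pieces; the latter uses that each $X_{n}=\supp{\mu_{X_{n}}}$ has all distances finite. Everything else is routine.
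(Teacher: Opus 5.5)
Your proof is correct and is essentially the argument the paper has in mind: the paper gives no proof beyond calling the statement straightforward and noting that separability makes the decomposition into mutually-infinite-distance pieces countable, which is exactly your finite-distance-class construction together with the class-preserving argument for uniqueness. The one point you leave implicit, that $F$ hits every class $Y_{m}$ so that $\varphi$ is onto $[M]$, follows at once from $\nu(Y_{m})=b_{m}>0$ and $F_{*}\mu=\nu$.
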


We define the Lipschitz order relation $\prec$ between mm-spaces and extended mm-spaces in the same manner as for mm-spaces. 
Ozawa and Shioya~\cite[Section 5]{OS2015_limit_formula} prove that, for any extended mm-space $X$, \[\mathcal{P}_{X} := \{Y \in \mathcal{X} \mid Y \prec X\}\]is a pyramid.

For any $(a_{n})_{n=1}^{N} \in \mathcal{A}_{1}$ and any sequence of mm-spaces $\{X_{n}\}_{n=1}^{N}$, we observe that 
\begin{equation}\label{eq:the representation of a pyramid associated with an extended mm-space}
    \mathcal{P}_{\sum_{n=1}^{N} X_{n}^{a_{n}}} = \left\{X \in \mathcal{X}\,\middle\vert \,\begin{aligned} &\text{For}\hspace{1mm}n \in [N],\hspace{1mm}\text{there exist a 1-Lipschitz}\\&\text{map}\hspace{1mm}f_{n}:X_{n} \to X \hspace{1mm}\text{such that}\\&\mu_{X}=\textstyle\sum_{n=1}^{N} a_{n}\mu_{X_{n}} \end{aligned} \right\}.
\end{equation}

\subsection{Direct sum of pyramids and its properties}

\ 

In this subsection, we define the direct sum of pyramids and study its properties.
For example, we obtain the continuity of the direct sum operation (Theorem~\ref{thm:the continuity of the direct sum of pyramids}).
Furthermore, we prove some algebraic properties, such as the associative law (Proposition~\ref{prop:the associative law}) and the distributive law (Proposition~\ref{prop:the distributive law}).
\begin{dfn}[Direct sum of pyramids]\label{def:direct sum of pyramids in Section 3}
For $(a_{n})_{n=1}^{N} \in \mathcal{A}_{1}$ and a sequence of pyramids $\{\mathcal{P}_{n}\}_{n=1}^{N}$, we set \[\sum_{n=1}^{N} \mathcal{P}_{n}^{a_{n}} := \left\{X \in \mathcal{X} \,\middle|\, \begin{aligned} &\text{For}\hspace{1mm}n \in [N],\hspace{1mm}\text{there exist an mm-space}\hspace{1mm} X_{n} \in \mathcal{P}_{n}\\ &\text{and a 1-Lipschitz map}\hspace{1mm} f_{n}:X_{n} \to X\hspace{1mm}\text{such that}\\&\mu_{X} = \sum_{n=1}^{N} a_{n}(f_{n})_{*}\mu_{X_{n}} \end{aligned}\right\}.\]
We call $\textstyle\sum_{n=1}^{N}\mathcal{P}_{n}^{a_{n}}$ the \textit{direct sum of pyramids} $\{\mathcal{P}_{n}\}_{n=1}^{N}$ \textit{with weight} $(a_{n})_{n=1}^{N}$. 
If $N < \infty$, then we also write $\mathcal{P}_{1}^{a_{1}} + \mathcal{P}_{2}^{a_{2}}+ \cdots + \mathcal{P}_{N}^{a_{N}}$ instead of $\textstyle\sum_{n=1}^{N}\mathcal{P}_{n}^{a_{n}}$.
\end{dfn}
\begin{rem}
    The notion of a direct sum of pyramids is a natural generalization of pyramids generated by atoms observed in~\cite{EKM2024, KNS2024} (see Proposition~\ref{prop:pyramids generated by atoms}).
\end{rem}
From the equation~\eqref{eq:the representation of a pyramid associated with an extended mm-space}, we easily see that for $(a_{n})_{n=1}^{N} \in \A_{1}$ and a sequence of pyramids $\{\mathcal{P}_{n}\}_{n=1}^{N}$, 
\begin{equation}\label{eq:another representation of direct sum of pyramids}
    \sum_{n=1}^{N}\mathcal{P}_{n}^{a_{n}} = \bigcup_{(X_{n})_{n=1}^{N} \in \prod_{n=1}^{N}\mathcal{P}_{n}} \mathcal{P}_{\sum_{n=1}^{N} X_{n}^{a_{n}}}.
\end{equation}

\begin{proof}[Proof of Theorem~\ref{thm:direct sum of pyramids is a pyramid}]
    We set $\mathcal{P} := \sum_{n=1}^{N} \mathcal{P}_{n}^{a_{n}}$, which is not empty since the one-point mm-space $*$ belongs to $\mathcal{P}$. 
    We next prove that $\mathcal{P}$ is a directed subset of $\mathcal{X}$.
    Take any $X, Y \in \mathcal{P}$. 
    By the formula~\eqref{eq:another representation of direct sum of pyramids}, there exist mm-spaces $X_{n},\, Y_{n} \in \mathcal{P}_{n}$, $n \in [N]$, such that $X \prec \sum_{n=1}^{N} X_{n}^{a_{n}}$ and $Y \prec \sum_{n=1}^{N} Y_{n}^{a_{n}}$. 
    Since $\mathcal{P}_{n}$ is a pyramid, there exists $Z_{n} \in \mathcal{P}_{n}$ such that $X_{n} \prec Z_{n}$ and $Y_{n} \prec Z_{n}$.
    Then we have $X \prec \sum_{n=1}^{N} X_{n}^{a_{n}} \prec \sum_{n=1}^{N} Z_{n}^{a_{n}}$ and $Y \prec \sum_{n=1}^{N} Y_{n}^{a_{n}} \prec \sum_{n=1}^{N} Z_{n}^{a_{n}}$, which implies $X, Y \in \mathcal{P}_{\sum_{n=1}^{N}Z_{n}^{a_{n}}}$. 
    Since $\mathcal{P}_{\sum_{n=1}^{N}Z_{n}^{a_{n}}}$ is a pyramid, there exists an mm-space $Z \in \mathcal{P}_{\sum_{n=1}^{N}Z_{n}^{a_{n}}}$ such that $X \prec Z$ and $Y \in Z$. 
    By the definition of $\mathcal{P}$, we have $Z \in \mathcal{P}_{\sum_{n=1}^{N}Z_{n}^{a_{n}}} \subset \mathcal{P}$.
    This shows that $\mathcal{P}$ is directed with respect to the Lipschitz order.

    Finally, we prove that $\mathcal{P}$ is a closed subset of $\mathcal{X}$ with respect to the box topology.
    Take a sequence $\{X_{k}\}_{k=1}^{\infty}$ of $\mathcal{P}$ that converges to $X \in \mathcal{X}$ in the box topology. 
    By Proposition~\ref{prop:box dist. <-> epsillon-mm-isom.}, there exist real numbers $\varepsilon_{k} \to 0+$ and $\varepsilon_{k}$-mm-isomorphisms $p_{k}:X_{k} \to X$.
    By the definition of $\mathcal{P}$, we find mm-spaces $Y_{nk} \in \mathcal{P}_{n}$ and 1-Lipschitz maps $f_{nk}:Y_{nk} \to X_{k}$ for $n \in [N]$ and $k \in \mathbb{N}$ such that $\mu_{X_{k}} = \sum_{n=1}^{N} a_{n}(f_{nk})_{*}\mu_{Y_{nk}}$. 
    Since we observe that, for each $n$, 
    \begin{align*}
        a_{n}(p_{k}\circ f_{nk})_{*}\mu_{Y_{nk}} &\le \sum_{n=1}^{N} a_{n}(p_{k}\circ f_{nk})_{*}\mu_{Y_{nk}}= (p_{k})_{*}\left(\sum_{n=1}^{N} a_{n}(f_{nk})_{*}\mu_{Y_{nk}}\right)\\
        &= (p_{k})_{*}\mu_{X_{k}}
    \end{align*}
    and since $\{(p_{k})_{*}\mu_{X_{k}}\}_{k=1}^{\infty}$ is a weakly convergent sequence, we see that the set $\{(p_{k}\circ f_{nk})_{*}\mu_{Y_{nk}}\mid k \in \N\}$ is tight, and by Prokhorov's theorem (see~\cite[Theorem 5.1]{Bill}), this set is relatively compact with respect to $\dP$.
    By a diagonal argument, we obtain a subsequence $\{(p_{k(\ell)}\circ f_{nk(\ell)})_{*}\mu_{Y_{nk(\ell)}}\}_{\ell=1}^{\infty}$ of $\{(p_{k}\circ f_{nk})_{*}\mu_{Y_{nk}}\}_{k=1}^{\infty}$ which is a weakly convergent subsequence for each $n$. 
    We denote this limit by $\nu_{n}$, a Borel probability measure on $X$. 
    For $n \in [N]$ and $\ell \in \N$, we set \[Y_{n\ell}^{\prime} := (X_{k(\ell)},\, d_{X_{k(\ell)}},\, (f_{nk(\ell)})_{*}\mu_{Y_{nk(\ell)}}).\]
    Since $Y_{n\ell}^{\prime} \prec Y_{nk(\ell)}$, we have $Y_{n\ell}^{\prime} \in \mathcal{P}_{n}$.
    Considering $p_{k(\ell)}$ as the map from $Y_{n\ell}^{\prime}$ to $(X,\, d_{X},\, \nu_{n})$, we see that $p_{k(\ell)}$ is an $\varepsilon_{\ell}^{\prime}$-mm-isomorphism, where \[\varepsilon_{\ell}^{\prime} := \max\{\varepsilon_{k(\ell)}/a_{n},\, d_{\mathrm{P}}((p_{k(\ell)}\circ f_{nk(\ell)})_{*}\mu_{Y_{nk(\ell)}},\, \nu_{n})\} \to 0+.\] 
    By Proposition~\ref{prop:box dist. <-> epsillon-mm-isom.}, this implies that for any $n \in [N]$, $\{Y_{n\ell}^{\prime}\}_{\ell=1}^{\infty}$ converges to $(X,\, d_{X},\, \nu_{n})$ in the box topology, and we have $(X,\, d_{X},\, \nu_{n}) \in \mathcal{P}_{n}$ because $\mathcal{P}_{n}$ is a closed subset of $\mathcal{X}$ with respect to the box topology.
    Since $\{(p_{k(\ell)} \circ f_{nk(\ell)})_{*}\mu_{Y_{nk(\ell)}}\}_{\ell=1}^{\infty}$ converges weakly to $\nu_{n}$, the sequence $\{(p_{k(\ell)})_{*}\mu_{X_{k(\ell)}}=\sum_{n=1}^{N} a_{n}(p_{k(\ell)} \circ f_{nk(\ell)})_{*}\mu_{Y_{nk(\ell)}}\}_{\ell=1}^{\infty}$ converges weakly to $\sum_{n=1}^{N} a_{n}\nu_{n}$ by Corollary~\ref{cor:continuity of convex combination of probability measures}, and we obtain $\mu_{X} = \sum_{n=1}^{N} a_{n}\nu_{n}$, which implies $X \in \mathcal{P}$. 
    This completes the proof.
\end{proof}

\begin{rem}
    The direct sum of pyramids has the following connections with both extended mm-spaces and pyramids generated by atoms.
    
    For any $(a_{n})_{n=1}^{N} \in \mathcal{A}_{1}$ and any sequence of mm-spaces $\{X_{n}\}_{n=1}^{N}$, we have \[\mathcal{P}_{\sum_{n=1}^{N} X_{n}^{a_{n}}} = \sum_{n=1}^{N} \mathcal{P}_{X_{n}}^{a_{n}}.\] 
    Moreover, for any $A = (a_{n})_{n=1}^{N} \in \mathcal{A}$, we see that 
    \begin{align}\label{eq:pyramid generated by atoms is a direct sum of pyramids}
    \mathcal{P}_{A} =\begin{cases}
                        \X &\text{if $\|A\|_{1}=0$,}\\
                         \mathcal{X}^{1-\|A\|_{1}} + \left(\sum_{n=1}^{N} \{*\}^{a_{n}/\|A\|_{1}}\right)^{\|A\|_{1}} &\text{if $0 <\|A\|_{1} <1$,} \\
                         \sum_{n=1}^{N}\{*\}^{a_{n}} &\text{if $\|A\|_{1}=1$.}
                    \end{cases}
    \end{align}
\end{rem}

We now introduce a family of mm-spaces suitable for approximating the direct sum of pyramids.
For $A = (a_{n})_{n=1}^{N} \in \mathcal{A}_{1}$ and a sequence of mm-spaces $\{X_{n}\}_{n=1}^{N}$, and for $r > 0$, we set \[\mathcal{X}(\{X_{n}\}_{n=1}^{N};A;r) := \left\{X \in \mathcal{X}\, \middle\vert\, \begin{aligned}&\text{There exist isometric embeddings}\\ &\varphi_{n}:X_{n} \to X \hspace{1mm}, n \in [N],\hspace{1mm}\text{such that}\\ &\mu_{X} = \textstyle\sum_{n=1}^{N}a_{n}(\varphi_{n})_{*}\mu_{X_{n}}\hspace{1mm} \text{and}\\ &d_{X}(\varphi_{n}(X_{n}),\, \varphi_{m}(X_{m})) \ge r \hspace{1mm}\text{hold}\\ & \text{for any}\hspace{1mm} n,m\,\text{with}\hspace{1mm} n \neq m  \end{aligned}  \right\}. \]
In particular, if $N=2$, then we write $\mathcal{X}((X_{1}, X_{2}); (a_{1}, a_{2});r)$ instead of $\mathcal{X}(\{X_{n}\}_{n=1}^{2};A;r)$. 
We note that the set $\mathcal{X}(\{X_{n}\}_{n=1}^{N};A;r)$ is not empty. 
For example, take points $\bar{x}_{n} \in X_{n}$, $n \in [N]$, fix them, and define a triple $(\textstyle\sum_{n=1}^{N}(X_{n}, \bar{x}_{n})^{a_{n}})_{r}:=(\bigsqcup_{n=1}^{N}X_{n},\, d,\, \mu)$ by
\begin{align*}
    d(x,y) := \begin{cases}
                    d_{X_{n}}(x,y) &\text{if $x, y \in X_{n}$ for $n \in [N]$}, \\
                    d_{X_{n}}(x, \bar{x}_{n}) + d_{X_{m}}(y, \bar{x}_{m}) + r &\text{if $x \in X_{n}, y \in X_{m}$ with $n \neq m$,}
            \end{cases}
\end{align*}
and \[\mu := \sum_{n=1}^{N} a_{n}\mu_{X_{n}}. \]
Then, we see that \[(\textstyle\sum_{n=1}^{N}(X_{n},\bar{x}_{n})^{a_{n}})_{r} \in \mathcal{X}(\{X_{n}\}_{n=1}^{N};A;r).\]
When considering the limit $r \to \infty$, we will see later that the asymptotic behavior of $(\textstyle\sum_{n=1}^{N}(X_{n}, \bar{x}_{n})^{a_{n}})_{r}$ does not depend on the choice of base points $\{\bar{x}_{n}\}_{n=1}^{N}$ (see Corollary~\ref{cor:the sum of Xnk's approximates the sum of pyramids}). 
Thus, we sometimes ignore the base points and write $(\textstyle\sum_{n=1}^{N}X_{n}^{a_{n}})_{r}$. 
For the same reason, whenever the symbol $(\textstyle\sum_{n=1}^{N}X_{n}^{a_{n}})_{r}$ appears, it is understood that base points $\bar{x}_{n} \in X_{n}$ have been chosen.

Take any $Z \in \mathcal{X}(\{X_{n}\}_{n=1}^{N};A;r)$ and choose isometric embeddings $\varphi_{n}:X_{n} \to Z$, $n \in [N]$, that appear in the definition of $\mathcal{X}(\{X_{n}\}_{n=1}^{N};A;r)$. 
Then we see that 
\begin{itemize}
    \item $\mathrm{supp}\,(\varphi_{n})_{*}\mu_{X_{n}} = \varphi_{n}(X_{n})$ for each $n$;
    \item $\mathrm{supp}\,\mu_{Z} = \bigsqcup_{n=1}^{N}\mathrm{supp}\,(\varphi_{n})_{*}\mu_{X_{n}} = \bigsqcup_{n=1}^{N} \varphi_{n}(X_{n})$.
\end{itemize}
These properties follow from the fact that $d_{Z}(\varphi_{n}(X_{n}),\, \varphi_{m}(X_{m})) \ge r$ for $n,m$ with $n \neq m$.

\begin{lem}\label{lem:dist. between the measurement of the sum of mm-spaces with r}
    Let $A=(a_{n})_{n=1}^{N}$, $B=(b_{n})_{n=1}^{N^{\prime}}$ be elements of $\mathcal{A}_{1}$. Let $\{X_{n}\}_{n=1}^{N}$ be a sequence of mm-spaces and $\{\mathcal{P}_{n}\}_{n=1}^{N^{\prime}}$ a sequence of pyramids. 
    Then, for any integer $M \in [\min\{N, N^{\prime}\}]$, any real number $r > 0$, any positive integer $k$ with $k \le r/2$, and any $Z \in \mathcal{X}(\{X_{n}\}_{n=1}^{N};A;r)$, we have 
    \begin{align*}
        &(d_{\mathrm{P}})_{\mathrm{H}}\left(\mathcal{M}\left(Z;\, k,k\right),\, \mathcal{M}\left(\sum_{n=1}^{N^{\prime}} \mathcal{P}_{n}^{b_{n}};\, k,k\right) \right) \\ &\hspace{-2mm}\le \max_{n=1}^{M}\,(d_{\mathrm{P}})_{\mathrm{H}}(\mathcal{M}(X_{n};k,k),\, \mathcal{M}(\mathcal{P}_{n};k,k)) + \|A-B\|_{1} + \sum_{n=M+1}^{N} a_{n} +\sum_{n=M+1}^{N^{\prime}}b_{n}.
    \end{align*}
\end{lem}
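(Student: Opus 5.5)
We prove the Hausdorff bound by establishing the two one-sided inclusions up to the stated error $\delta := \max_{n\le M}(d_{\mathrm{P}})_{\mathrm{H}}(\mathcal{M}(X_{n};k,k),\mathcal{M}(\mathcal{P}_{n};k,k)) + \|A-B\|_{1} + \sum_{n>M}a_{n}+\sum_{n>M}b_{n}$. The key structural fact is that, because $k\le r/2$ and $d_{Z}(\varphi_{n}(X_{n}),\varphi_{m}(X_{m}))\ge r$, the measurements of $Z$ in $B^{k}_{k}$ are exactly the convex combinations $\sum_{n}a_{n}\mu_{n}$ with $\mu_{n}\in\mathcal{M}(X_{n};k,k)$. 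Combined with Lemma~\ref{lem:the estimation of dist. between convex combinations of probability measures}, this reduces everything to componentwise Prokhorov estimates.

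For the structural fact, we argue as follows. If $f:Z\to B_{k}^{k}$ is 1-Lipschitz, then $f_{*}\mu_{Z}=\sum_{n}a_{n}(f\circ\varphi_{n})_{*}\mu_{X_{n}}$, and each $f\circ\varphi_{n}$ is 1-Lipschitz on $X_{n}$. Conversely, given 1-Lipschitz maps $g_{n}:X_{n}\to B_{k}^{k}$, we define $g$ on $\bigsqcup\varphi_{n}(X_{n})=\mathrm{supp}\,\mu_{Z}$ by $g:=g_{n}\circ\varphi_{n}^{-1}$ on $\varphi_{n}(X_{n})$. This map is 1-Lipschitz: within a piece this is clear, and across pieces we have $\|g(x)-g(y)\|_{\infty}\le 2k\le r\le d_{Z}(x,y)$. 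Since $Z=\mathrm{supp}\,\mu_{Z}$, no extension is needed (alternatively, one could extend coordinatewise by McShane and truncate). Similarly, by~\eqref{eq:another representation of direct sum of pyramids} and the definition, $\mathcal{M}(\sum_{n}\mathcal{P}_{n}^{b_{n}};k,k)$ consists of the measures $\sum_{n}b_{n}\nu_{n}$ with $\nu_{n}\in\mathcal{M}(\mathcal{P}_{n};k,k)$. For the inclusion ``$\subset$'': if $\mu=\sum b_{n}(f_{n})_{*}\mu_{Y_{n}}$ with $Y_{n}\in\mathcal{P}_{n}$ and $f_{n}:Y_{n}\to B_{k}^{k}$ 1-Lipschitz, then $(B_{k}^{k},\|\cdot\|_{\infty},(f_{n})_{*}\mu_{Y_{n}})\prec Y_{n}$, so these measures lie in $\mathcal{P}_{n}$. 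For ``$\supset$'': given $\nu_{n}$, the identity maps from $(B_{k}^{k},\nu_{n})$ into $(B_{k}^{k},\sum b_{n}\nu_{n})$ witness membership. (One should check that $(B^{k}_{k},\|\cdot\|_{\infty},\mu)$ is regarded via its support, which is harmless.)

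For the estimate itself, take any $\mu=\sum a_{n}\mu_{n}\in\mathcal{M}(Z;k,k)$. For each $n\le M$ choose $\nu_{n}\in\mathcal{M}(\mathcal{P}_{n};k,k)$ with $d_{\mathrm{P}}(\mu_{n},\nu_{n})$ at most the Hausdorff distance; the infimum is attained since these measurement sets are compact, or we add $\varepsilon$ and let $\varepsilon\to0$. For $n>M$ take $\nu_{n}=\delta_{0}$, which lies in every measurement of a pyramid since $*\in\mathcal{P}_{n}$. Then $\nu:=\sum b_{n}\nu_{n}$ lies in the pyramid's measurement, and Lemma~\ref{lem:the estimation of dist. between convex combinations of probability measures} gives $d_{\mathrm{P}}(\mu,\nu)\le\delta$. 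The reverse direction is symmetric: given $\sum b_{n}\nu_{n}$, we approximate $\nu_{n}$ for $n\le M$ by $\mu_{n}\in\mathcal{M}(X_{n};k,k)$ and use $\delta_{0}$ for the remaining $n$. We then apply the same lemma with the roles of $A$ and $B$ swapped, noting that its bound is symmetric.

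The main obstacle is the gluing step. We must verify that piecewise 1-Lipschitz maps into a box of $\ell^{\infty}$-diameter $2k$ remain 1-Lipschitz across components separated by at least $r\ge2k$, and that measures on the support suffice. This is precisely where the hypothesis $k\le r/2$ enters. Everything else is bookkeeping via Lemma~\ref{lem:the estimation of dist. between convex combinations of probability measures}.
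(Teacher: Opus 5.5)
Your proposal is correct and follows essentially the same route as the paper. You decompose a measurement of $Z$ through the embeddings $\varphi_n$, pair each piece with a measurement of $\mathcal{P}_n$ (using $\delta_0$ for $n>M$), glue maps across the $r$-separated pieces using $2k\le r$ for the reverse inclusion, and conclude with Lemma~\ref{lem:the estimation of dist. between convex combinations of probability measures}. The only differences are cosmetic: you state the convex-combination descriptions of both measurement sets up front and use compactness to attain the infimum, whereas the paper works inline with $\delta_n$ slightly above the Hausdorff distance and then lets the bound decrease to $\varepsilon$.
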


\begin{proof}
     By the definition of $Z$, there exist isometric embeddings $\varphi_{n}:X_{n} \to Z$, $n \in [N]$, with the required properties. 
     We fix such maps $\varphi_{n}$. 
     Set \[\varepsilon := \max_{n=1}^{M} (d_{\mathrm{P}})_{\mathrm{H}}(\mathcal{M}(X_{n};k,k),\, \mathcal{M}(\mathcal{P}_{n};k,k)) + \|A-B\|_{1} + \sum_{n=M+1}^{N} a_{n} + \sum_{n=M+1}^{N^{\prime}}b_{n}.\] 
     Take any number $\delta_{n}$ with $\delta_{n} > (d_{\mathrm{P}})_{\mathrm{H}}(\mathcal{M}(X_{n};\,k,k),\, \mathcal{M}(\mathcal{P}_{n};\,k,k))$ for $n = 1,2,\cdots,M$. 
     First, we prove $\mathcal{M}(Z;\, k,k) \subset B_{\varepsilon}(\mathcal{M}(\sum_{n=1}^{N^{\prime}}\mathcal{P}_{n}^{b_{n}};\, k,k))$. 
     Take any $\mu \in \mathcal{M}(Z;\, k,k)$. 
     Then there exists a 1-Lipschitz map $F:Z \to (B_{k}^{k},\, \|\cdot\|_{\infty})$ such that $\mu = F_{*}\mu_{Z}$. 
     Since $\mu_{n} := (F \circ \varphi_{n})_{*}\mu_{X_{n}} \in \mathcal{M}(X_{n};\, k,k)$, there exists a Borel probability measure $\nu_{n} \in \mathcal{M}(\mathcal{P}_{n};\, k,k)$ such that $d_{\mathrm{P}}(\mu_{n},\, \nu_{n}) < \delta_{n}$. 
     For $n$ with $M < n \le N^{\prime}$, define $\nu_{n} := \delta_{0}$, the Dirac measure at $0 \in B_{k}^{k}$ and set $\nu := \sum_{n=1}^{N^{\prime}} b_{n}\nu_{n}$. 
     Then we see that $(B_{k}^{k},\, \|\cdot\|_{\infty},\, \nu) \in \textstyle\sum_{n=1}^{N^{\prime}}\mathcal{P}_{n}^{b_{n}}$, that is, $\nu \in \mathcal{M}(\sum_{n=1}^{N^{\prime}}\mathcal{P}_{n}^{b_{n}};\, k,k)$. 
     Moreover, by Lemma~\ref{lem:the estimation of dist. between convex combinations of probability measures}, we have 
    \begin{align*}
        d_{\mathrm{P}}(\mu,\, \nu) &= d_{\mathrm{P}}\left(\sum_{n=1}^{N}a_{n}\mu_{n},\, \sum_{n=1}^{N^{\prime}}b_{n}\nu_{n}\right) \\
        &\le \max_{n=1}^{M}\,d_{\mathrm{P}}(\mu_{n},\, \nu_{n}) + \|A-B\|_{1} + \sum_{n=M+1}^{N}a_{n} + \sum_{n=M+1}^{N^{\prime}}b_{n} \\
        &< \max_{n=1}^{M}\, \delta_{n} + \|A-B\|_{1} + \sum_{n=M+1}^{N}a_{n} + \sum_{n=M+1}^{N^{\prime}}b_{n} =: \varepsilon^{\prime}, 
    \end{align*}
    which implies $\mu \in U_{\varepsilon^{\prime}}(\mathcal{M}(\textstyle\sum_{n=1}^{N^{\prime}}\mathcal{P}_{n}^{b_{n}};\, k,k))$. 
    Since the number $\varepsilon^{\prime}$ does not depend on $\mu$, we have $\mathcal{M}(Z;\, k,k) \subset U_{\varepsilon^{\prime}}\left(\mathcal{M}\left(\sum_{n=1}^{N^{\prime}}\mathcal{P}_{n}^{b_{n}};\, k,k \right)\right)$. 
    Letting $\varepsilon^{\prime} \to \varepsilon+0$, we obtain $\mathcal{M}(Z;\, k,k) \subset B_{\varepsilon}\left(\mathcal{M}\left(\sum_{n=1}^{N^{\prime}}\mathcal{P}_{n}^{b_{n}};\, k,k \right)\right)$.

    Next, we prove $\mathcal{M}\left(\textstyle\sum_{n=1}^{N^{\prime}}\mathcal{P}_{n}^{b_{n}};\,k,k\right) \subset B_{\varepsilon}(\mathcal{M}(Z;\,k,k))$. 
    Take any $\mu \in \mathcal{M}(\sum_{n=1}^{N^{\prime}}\mathcal{P}_{n}^{b_{n}};\, k,k)$. 
    Since $(B_{k}^{k},\, \|\cdot\|_{\infty},\, \mu) \in \sum_{n=1}^{N^{\prime}}\mathcal{P}_{n}^{b_{n}}$, there exist mm-spaces $Z_{n} \in \mathcal{P}_{n}$ and 1-Lipschitz maps $f_{n}:Z_{n} \to (B_{k}^{k},\, \|\cdot\|_{\infty})$ for $n \in [N^{\prime}]$ such that $\mu = \sum_{n=1}^{N^{\prime}}b_{n}(f_{n})_{*}\mu_{Z_{n}}$. 
    Moreover, since $(f_{n})_{*}\mu_{Z_{n}} \in \mathcal{M}(\mathcal{P}_{n};\,k,k)$, we find $\nu_{n} \in \mathcal{M}(X_{n};\,k,k)$, $n = 1,2,\cdots,M$, such that $d_{\mathrm{P}}(\nu_{n},\, (f_{n})_{*}\mu_{Z_{n}}) < \delta_{n}$. 
    Take a 1-Lipschitz map $\Phi_{n}:X_{n} \to (B_{k}^{k},\, \|\cdot\|_{\infty})$ such that $\nu_{n} = (\Phi_{n})_{*}\mu_{X_{n}}$. 
    Define a map $\Phi:Z \to (B_{k}^{k},\, \|\cdot\|_{\infty})$ by \[\Phi(z) := \begin{cases} \Phi_{n}(\varphi_{n}^{-1}(z)) &\text{if $z \in \varphi_{n}(X_{n})$ for $1\le n \le M$}, \\ 0 &\text{if $z \in \varphi_{n}(X_{n})$ for $M < n \le N$}.\end{cases}\] 
    Since $2k \le r$, the map $\Phi$ is 1-Lipschitz.
    We set $\nu := \Phi_{*}\mu_{Z}$, then $\nu \in \mathcal{M}(Z;\,k,k)$.
    We here note that \[\nu = \sum_{n=1}^{N}a_{n}(\Phi\circ \varphi_{n})_{*}\mu_{X_{n}} = \sum_{n=1}^{M}a_{n}(\Phi_{n})_{*}\mu_{X_{n}}+\sum_{n=M+1}^{N}a_{n}\delta_{0}.\]
    By Lemma~\ref{lem:the estimation of dist. between convex combinations of probability measures}, we see that 
    \begin{align*}
        d_{\mathrm{P}}(\nu,\, \mu) &= d_{\mathrm{P}}\left(\sum_{n=1}^{N}a_{n}(\Phi \circ \varphi_{n})_{*}\mu_{X_{n}},\, \sum_{n=1}^{N^{\prime}}b_{n}(f_{n})_{*}\mu_{Z_{n}}\right) \\
        &\le \max_{n=1}^{M}\, d_{\mathrm{P}}((\Phi_{n})_{*}\mu_{X_{n}},\, (f_{n})_{*}\mu_{Z_{n}}) + \|A-B\|_{1} + \sum_{n=M+1}^{N} a_{n} + \sum_{n=M+1}^{N^{\prime}}b_{n} \\
        &< \max_{n=1}^{M}\, \delta_{n} + \|A-B\|_{1} + \sum_{n=M+1}^{N} a_{n} + \sum_{n=M+1}^{N^{\prime}}b_{n} =:\varepsilon^{\prime\prime},
    \end{align*}
    which implies that $\mu \in U_{\varepsilon^{\prime\prime}}(\mathcal{M}(Z;\,k,k))$. 
    Since $\varepsilon^{\prime\prime}$ does not depend on $\mu$, we have $\mathcal{M}(\sum_{n=1}^{N^{\prime}}\mathcal{P}_{n}^{b_{n}};\,k,k) \subset U_{\varepsilon^{\prime\prime}}(\mathcal{M}(Z;\,k,k))$. 
    Letting $\varepsilon^{\prime\prime} \to \varepsilon+0$, we obtain $\mathcal{M}\left(\sum_{n=1}^{N}\mathcal{P}_{n}^{b_{n}};\, k,k\right) \subset B_{\varepsilon}\left(\mathcal{M}(Z;\, k,k \right))$. 
    This completes the proof.
\end{proof}

\begin{lem}\label{lem:dist. between the sum of pyramids and the sum of mm-spaces with r}
        Suppose that $A=(a_{n})_{n=1}^{N}$, $B=(b_{n})_{n=1}^{N^{\prime}}$ are elements of $\mathcal{A}_{1}$. 
        Let $\{X_{n}\}_{n=1}^{N}$ be a sequence of mm-spaces and $\{\mathcal{P}_{n}\}_{n=1}^{N^{\prime}}$ a sequence of pyramids. 
        Then, for any integer $M \in [\min\{N, N^{\prime}\}]$, any real number $r > 0$, and any $Z \in \mathcal{X}(\{X_{n}\}_{n=1}^{N};A;r)$, we have \[\rho\left(\mathcal{P}_{Z},\, \sum_{n=1}^{N^{\prime}} \mathcal{P}_{n}^{b_{n}}\right) \le \sum_{n=1}^{M}\, \rho(\mathcal{P}_{X_{n}},\, \mathcal{P}_{n}) + \frac{1}{2}\|A-B\|_{1} + \frac{1}{2}\sum_{n=M+1}^{N} a_{n} +\frac{1}{2}\sum_{n=M+1}^{N^{\prime}} b_{n} + \frac{1}{2^{r/2}}.\]
\end{lem}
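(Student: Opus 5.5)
We plan to derive the bound directly from Lemma~\ref{lem:dist. between the measurement of the sum of mm-spaces with r} by inserting it into the definition of $\rho$ (Definition~\ref{def:metric rho}). We split the series defining $\rho(\mathcal{P}_{Z}, \sum_{n=1}^{N^{\prime}}\mathcal{P}_{n}^{b_{n}})$ into the terms with $k \le r/2$ and those with $k > r/2$, recalling that $\M(\mathcal{P}_{Z};k,k) = \M(Z;k,k)$.

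For the tail $k > r/2$, we use only that $\dP \le 1$, so the Hausdorff distance $\dPH$ between the two measurements is at most $1$. Then
\[\sum_{k > r/2}\frac{1}{2^{k}}\cdot\frac{1}{2k}\,\dPH(\cdots) \le \sum_{k > r/2}\frac{1}{2^{k}} \le \frac{1}{2^{r/2}}.\]
The last inequality holds because the sum over integers $k > r/2$ is at most $2\cdot 2^{-\lceil r/2\rceil}$ when $r/2 \notin \N$, and equals $2^{-r/2}$ otherwise. To get the clean bound in all cases we may need to absorb the slack using the factor $1/(2k) \le 1/2$ for $k\ge 1$. With that factor the sum is at most $\frac12\cdot 2\cdot 2^{-\lceil r/2\rceil} \le 2^{-r/2}$.

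For the head $k \le r/2$, Lemma~\ref{lem:dist. between the measurement of the sum of mm-spaces with r} gives
\[\dPH \le \max_{n \le M}\dPH(\M(X_{n};k,k),\M(\mathcal{P}_{n};k,k)) + \|A-B\|_{1} + \sum_{n>M}a_{n} + \sum_{n>M}b_{n}.\]
We bound the maximum by the sum over $n \le M$. Multiplying by $2^{-k}(2k)^{-1}$ and summing over $k$, we exchange the order of summation in the first part. Each $n$ then contributes at most $\sum_{k}2^{-k}(2k)^{-1}\dPH(\M(\mathcal{P}_{X_{n}};k,k),\M(\mathcal{P}_{n};k,k)) \le \rho(\mathcal{P}_{X_{n}},\mathcal{P}_{n})$, again using $\M(X_{n};k,k)=\M(\mathcal{P}_{X_{n}};k,k)$.

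The remaining constant terms carry the weight $\sum_{k\ge1}2^{-k}(2k)^{-1} \le \frac12\sum_k 2^{-k} = \frac12$. This produces exactly the coefficients $\frac12$ on $\|A-B\|_{1}$ and on the two tail sums. Adding the head and tail estimates yields the claim.

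The main obstacle is only bookkeeping in the tail estimate: making the bound $2^{-r/2}$ hold for arbitrary real $r$, using the factor $1/(2k)$. Everything else is a direct summation of the previous lemma.
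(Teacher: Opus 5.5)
Your proof is correct and is essentially the paper's argument: split the series at $\lfloor r/2\rfloor$, apply Lemma~\ref{lem:dist. between the measurement of the sum of mm-spaces with r} to the head, bound the maximum by the sum, and use $\dPH\le 1$ together with $\frac{1}{2^{k}\cdot 2k}\le\frac{1}{2^{k+1}}$ on the tail. One correction: your first displayed tail chain $\sum_{k>r/2}2^{-k}\le 2^{-r/2}$ is false when $r/2\notin\N$, so keep the factor $\frac{1}{2k}\le\frac12$ from the start. This gives $\sum_{k>r/2}2^{-(k+1)}=2^{-(\lfloor r/2\rfloor+1)}\le 2^{-r/2}$, exactly as the paper does.
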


\begin{proof}
    Let $m_{r} \in \mathbb{N}$ be the largest integer less than or equal to $r/2$.
    By Lemma~\ref{lem:dist. between the measurement of the sum of mm-spaces with r}, we have 
    \begin{align*}
        &\rho\left(\mathcal{P}_{Z},\, \sum_{n=1}^{N^{\prime}} \mathcal{P}_{n}^{b_{n}}\right) =\sum_{k=1}^{\infty} \frac{1}{2^{k}\cdot 2k}(d_{\mathrm{P}})_{\mathrm{H}}\left(\mathcal{M}(Z\,;k,k),\, \mathcal{M}\left(\sum_{n=1}^{N^{\prime}}\mathcal{P}_{n}^{b_{n}}\,;k,k \right)\right)\\
        &\begin{aligned} &\le \sum_{k=1}^{m_{r}} \frac{1}{2^{k}\cdot 2k}\left\{\max_{n=1}^{M}\, (d_{\mathrm{P}})_{\mathrm{H}}(\mathcal{M}(X_{n}\,;k,k),\, \mathcal{M}(\mathcal{P}_{n}\,;k,k)) \right. \\&\hspace{20mm}+ \|A- B\|_{1} + \sum_{n=M+1}^{N} a_{n} + \sum_{n=M+1}^{N^{\prime}} b_{n}\bigg\}+\sum_{k=m_{r}+1}^{\infty}\frac{1}{2^{k+1}} \end{aligned} \\
        &\begin{aligned} &\le \sum_{k=1}^{m_{r}} \sum_{n=1}^{M}\frac{1}{2^{k}\cdot 2k}(d_{\mathrm{P}})_{\mathrm{H}}(\mathcal{M}(X_{n}\,;k,k),\, \mathcal{M}(\mathcal{P}_{n}\,;k,k)) \\ &\hspace{15mm} + \sum_{k=1}^{m_{r}} \frac{1}{2^{k}\cdot 2k}\left\{\|A- B\|_{1} + \sum_{n=M+1}^{N} a_{n} + \sum_{n=M+1}^{N^{\prime}} b_{n} \right\} + \frac{1}{2^{m_{r}+1}}\end{aligned}\\
        &\le \sum_{n=1}^{M}\, \rho(\mathcal{P}_{X_{n}},\, \mathcal{P}_{n}) + \frac{1}{2}\|A-B\|_{1} + \frac{1}{2}\sum_{n=M+1}^{N} a_{n} + \frac{1}{2}\sum_{n=M+1}^{N^{\prime}} b_{n} +\frac{1}{2^{r/2}}.
    \end{align*}
    This completes the proof.
\end{proof}
By Lemma~\ref{lem:dist. between the sum of pyramids and the sum of mm-spaces with r}, we obtain the following two corollaries.
\begin{cor}\label{cor:the sum of Xnk's approximates the sum of pyramids}
    Let $A=(a_{n})_{n=1}^{N}$, $A_{k}=(a_{nk})_{n=1}^{N}$, $k \in \mathbb{N}$, be elements of $\mathcal{A}_{1}$, and let $\{\mathcal{P}_{n}\}_{n=1}^{N}$ be a sequence of pyramids and $\{X_{nk}\}_{k=1}^{\infty}$ a sequence of mm-spaces for $n \in [N]$. 
    If $A_{k}$ converges to $A$ in the $\ell^{1}$-norm and $\mathcal{P}_{X_{nk}}$ converges weakly to $\mathcal{P}_{n}$ for each $n$ as $k \to \infty$, then, for any sequence of real numbers $\{r(k)\}_{k=1}^{\infty}$ divergent to infinity and any $Z_{k} \in \mathcal{X}(\{X_{nk}\}_{n=1}^{N};A_{k};r(k))$, $k \in \mathbb{N}$, the sequence $\{\mathcal{P}_{Z_{k}}\}_{k=1}^{\infty}$ converges weakly to the pyramid $\sum_{n=1}^{N} \mathcal{P}_{n}^{a_{n}}$.
\end{cor}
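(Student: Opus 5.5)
The plan is to apply Lemma~\ref{lem:dist. between the sum of pyramids and the sum of mm-spaces with r} with $X_{n}=X_{nk}$, $A=A_{k}$, $B=A$, $N'=N$ and $r=r(k)$, together with Theorem~\ref{thm:metric rho}, which says that $\rho$ metrizes weak convergence. It then suffices to prove
\[
\rho\Bigl(\mathcal{P}_{Z_{k}},\,\sum_{n=1}^{N}\mathcal{P}_{n}^{a_{n}}\Bigr)\to 0 .
\]
Fix $\varepsilon>0$.

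First choose a cutoff $M\in[N]$. If $N<\infty$, take $M=N$, so the tail sums vanish. If $N=\infty$, take $M$ so large that $\sum_{n=M+1}^{\infty}a_{n}<\varepsilon$. The tail of $A_{k}$ is then controlled by
\[
\sum_{n=M+1}^{\infty}a_{nk}\le \sum_{n=M+1}^{\infty}a_{n}+\|A_{k}-A\|_{1}<2\varepsilon
\]
for large $k$, since $\|A_{k}-A\|_{1}\to0$.

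With this $M$ fixed, the lemma gives
\[
\rho\Bigl(\mathcal{P}_{Z_{k}},\sum_{n=1}^{N}\mathcal{P}_{n}^{a_{n}}\Bigr)\le \sum_{n=1}^{M}\rho(\mathcal{P}_{X_{nk}},\mathcal{P}_{n})+\tfrac12\|A_{k}-A\|_{1}+\tfrac12\sum_{n>M}a_{nk}+\tfrac12\sum_{n>M}a_{n}+2^{-r(k)/2}.
\]
The lemma requires a single $M$ with $M\in[\min\{N,N'\}]$, which holds here since both sequences have length $N$. I check each term as $k\to\infty$.
\begin{itemize}
\item The first sum is a finite sum of terms that tend to $0$, because each $\mathcal{P}_{X_{nk}}\to\mathcal{P}_{n}$ weakly and $\rho$ is compatible with weak convergence.
\item The norm term $\tfrac12\|A_{k}-A\|_{1}$ tends to $0$ by hypothesis.
\item The two tail sums are bounded by $\varepsilon$ and $\tfrac12\varepsilon$ eventually, by the choice of $M$.
\item The last term $2^{-r(k)/2}$ tends to $0$ because $r(k)\to\infty$.
\end{itemize}

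Taking $\limsup$ gives a bound of at most $\tfrac32\varepsilon$, and since $\varepsilon$ was arbitrary the claim follows. The argument holds for any choice of $Z_{k}\in\mathcal{X}(\{X_{nk}\};A_{k};r(k))$, which also yields the base-point independence remarked earlier.

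The only delicate point is the case $N=\infty$. There the bound must be truncated at a fixed $M$ before letting $k\to\infty$, and the tail of $A_{k}$ must be controlled uniformly through $\ell^{1}$-convergence. Everything else is direct from the lemma.
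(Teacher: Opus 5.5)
Your proof is correct and matches the paper's argument: you truncate at a finite $M$ with small tail of $A$, bound the tail of $A_{k}$ by the tail of $A$ plus $\|A_{k}-A\|_{1}$, and send each term in Lemma~\ref{lem:dist. between the sum of pyramids and the sum of mm-spaces with r} to zero. The only cosmetic difference is that you pass to a $\limsup$ bound of $\tfrac32\varepsilon$, whereas the paper fixes an explicit $k_{0}$ that makes each term at most $\varepsilon/4$.
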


\begin{proof}
    Take any $\varepsilon > 0$ and fix it. 
    There exists an integer $M \in [N]$ such that $\sum_{n=M+1}^{N} a_{n} < \varepsilon/4$. 
    Then we obtain a large integer $k_{0} \ge 1$ in such a way that for any $k \ge k_{0}$, we have $2^{-r(k)/2} < \varepsilon/4$,  $\|A-A_{k}\|_{1} \le \varepsilon/4$, and $\rho(\mathcal{P}_{X_{nk}},\, \mathcal{P}_{n}) < \varepsilon/4M$ for any $n = 1,2,\cdots, M$. 
    By Lemma~\ref{lem:dist. between the sum of pyramids and the sum of mm-spaces with r}, we see that for any $k \ge k_{0}$, 
    \begin{align*}
        &\rho\left(\mathcal{P}_{Z_{k}},\, \sum_{n=1}^{N}\mathcal{P}_{n}^{a_{n}}\right)\\ &\le \sum_{n=1}^{M}\rho(\mathcal{P}_{X_{nk}},\, \mathcal{P}_{n}) + \frac{1}{2}\|A_{k} -A\|_{1} + \frac{1}{2}\sum_{n=M+1}^{N}a_{n} + \frac{1}{2}\sum_{n=M+1}^{N}a_{nk}+ \frac{1}{2^{r(k)/2}} \\
        &\le \sum_{n=1}^{M} \rho(\mathcal{P}_{X_{nk}},\, \mathcal{P}_{n}) + \|A_{k}-A\|_{1} + \sum_{n=M+1}^{N}a_{n} + \frac{1}{2^{r(k)/2}} \\
        &\le \sum_{n=1}^{M}\frac{\varepsilon}{4M} + \frac{\varepsilon}{4} + \frac{\varepsilon}{4}+ \frac{\varepsilon}{4} = \varepsilon,   
    \end{align*}
    which completes the proof.
\end{proof}

\begin{cor}\label{cor:dist. between direct sums of pyramids}
    Let $A=(a_{n})_{n=1}^{N}$, $B=(b_{n})_{n=1}^{N^{\prime}}$ be elements of $\mathcal{A}_{1}$, and let $\{\mathcal{P}_{n}\}_{n=1}^{N}$, $\{\mathcal{Q}_{n}\}_{n=1}^{N^{\prime}}$ be sequences of pyramids. Then, for any integer $M \in [\min\{N,N^{\prime}\}]$, we have \[\rho\left(\sum_{n=1}^{N} \mathcal{P}_{n}^{a_{n}},\, \sum_{n=1}^{N^{\prime}} \mathcal{Q}_{n}^{b_{n}}\right) \le \sum_{n=1}^{M} \rho(\mathcal{P}_{n},\, \mathcal{Q}_{n}) + \frac{1}{2}\|A-B\|_{1} + \frac{1}{2}\sum_{n=M+1}^{N} a_{n} + \frac{1}{2}\sum_{n=M+1}^{N^{\prime}} b_{n}.\]    
\end{cor}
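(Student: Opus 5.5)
The plan is to approximate both direct sums by mm-spaces in the families $\mathcal{X}(\cdot;\cdot;r)$ and then pass to the limit, using Lemma~\ref{lem:dist. between the sum of pyramids and the sum of mm-spaces with r} together with the triangle inequality for $\rho$.

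First, for each $n \in [N]$ take an approximation sequence $\{X_{nk}\}_{k=1}^{\infty}$ of $\mathcal{P}_{n}$ by Lemma~\ref{lem:app. seq. of pyramid}. By Remark~\ref{rem:approximation sequence is a weak convergent sequence} and Theorem~\ref{thm:metric rho}, we have $\rho(\mathcal{P}_{X_{nk}},\mathcal{P}_{n}) \to 0$ as $k \to \infty$. Fix base points and set $Z_{k} := (\sum_{n=1}^{N} X_{nk}^{a_{n}})_{k}$, which lies in $\mathcal{X}(\{X_{nk}\}_{n=1}^{N};A;k)$.

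Next, apply Lemma~\ref{lem:dist. between the sum of pyramids and the sum of mm-spaces with r} twice with the same $Z_{k}$ and the same $M$. The first application uses weight $B=A$ and pyramids $\mathcal{P}_{n}$, and gives
\[
\rho\Bigl(\mathcal{P}_{Z_{k}},\sum_{n=1}^{N}\mathcal{P}_{n}^{a_{n}}\Bigr) \le \sum_{n=1}^{M}\rho(\mathcal{P}_{X_{nk}},\mathcal{P}_{n}) + \sum_{n=M+1}^{N}a_{n} + 2^{-k/2}.
\]
The second uses weight $B$ and pyramids $\mathcal{Q}_{n}$, and gives
\[
\rho\Bigl(\mathcal{P}_{Z_{k}},\sum_{n=1}^{N'}\mathcal{Q}_{n}^{b_{n}}\Bigr) \le \sum_{n=1}^{M}\rho(\mathcal{P}_{X_{nk}},\mathcal{Q}_{n}) + \tfrac12\|A-B\|_{1} + \tfrac12\sum_{n=M+1}^{N}a_{n} + \tfrac12\sum_{n=M+1}^{N'}b_{n} + 2^{-k/2}.
\]
Combining these by the triangle inequality, and then bounding $\rho(\mathcal{P}_{X_{nk}},\mathcal{Q}_{n}) \le \rho(\mathcal{P}_{X_{nk}},\mathcal{P}_{n}) + \rho(\mathcal{P}_{n},\mathcal{Q}_{n})$, yields an estimate with the desired main terms. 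The cost is an extra error $2\sum_{n\le M}\rho(\mathcal{P}_{X_{nk}},\mathcal{P}_{n}) + 2^{1-k/2}$, which tends to $0$ as $k\to\infty$.

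The main obstacle is that this route also leaves an extra $\tfrac32\sum_{n>M}a_{n}$, so it does not give the clean constant $\tfrac12$. To avoid that, I would not use the triangle inequality through $\mathcal{P}_{Z_{k}}$ at all. Instead I would redo the measurement estimate of Lemma~\ref{lem:dist. between the measurement of the sum of mm-spaces with r} directly between the two pyramids.

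Concretely, I would show that for every $k$,
\[
\dPH\Bigl(\M\bigl(\textstyle\sum_{n}\mathcal{P}_{n}^{a_{n}};k,k\bigr),\M\bigl(\textstyle\sum_{n}\mathcal{Q}_{n}^{b_{n}};k,k\bigr)\Bigr) \le \max_{n\le M}\dPH\bigl(\M(\mathcal{P}_{n};k,k),\M(\mathcal{Q}_{n};k,k)\bigr) + \|A-B\|_{1} + \sum_{n>M}a_{n} + \sum_{n>M}b_{n}.
\]
Take $\mu=\sum_{n} a_{n}(f_{n})_{*}\mu_{Y_{n}}$ with $Y_{n}\in\mathcal{P}_{n}$ and $f_{n}$ mapping into $B_{k}^{k}$. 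Each $(f_{n})_{*}\mu_{Y_{n}}$ belongs to $\M(\mathcal{P}_{n};k,k)$, since the space it defines is dominated by $Y_{n}$. For $n\le M$, approximate it within $\delta_{n}$ by some $\nu_{n}\in\M(\mathcal{Q}_{n};k,k)$; for $n>M$, set $\nu_{n}:=\delta_{0}$. Then $\sum_{n} b_{n}\nu_{n}$ lies in $\M(\sum_{n}\mathcal{Q}_{n}^{b_{n}};k,k)$, taking the identity maps on $B_{k}^{k}$ as the 1-Lipschitz maps. Lemma~\ref{lem:the estimation of dist. between convex combinations of probability measures} then bounds the Prokhorov distance by the right-hand side above, and the reverse inclusion is symmetric.

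Finally, multiply by $\frac{1}{2^{k}\cdot 2k}$ and sum over $k$. Since $\sum_{k}\frac{1}{2^{k}\cdot 2k}\le\frac12$, the constant terms contribute at most $\tfrac12\|A-B\|_{1} + \tfrac12\sum_{n>M}a_{n} + \tfrac12\sum_{n>M}b_{n}$. For the first term, bound the max by the sum over $n\le M$ and exchange the order of summation; this gives exactly $\sum_{n\le M}\rho(\mathcal{P}_{n},\mathcal{Q}_{n})$. This proves the corollary.
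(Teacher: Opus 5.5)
Your final argument is correct, but it is not the proof the paper gives. It is essentially the direct route the paper only alludes to in the remark right after the corollary.

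\textbf{The paper's proof.} The paper takes approximation sequences $\{X_{nk}\}_{k}$ of the $\mathcal{Q}_{n}$ (not of the $\mathcal{P}_{n}$) and chooses $Z_{k}\in\mathcal{X}(\{X_{nk}\}_{n=1}^{N'};B;k)$. It applies Lemma~\ref{lem:dist. between the sum of pyramids and the sum of mm-spaces with r} exactly once, to compare $\mathcal{P}_{Z_{k}}$ with $\sum_{n}\mathcal{P}_{n}^{a_{n}}$. It then lets $k\to\infty$, using two facts:
\begin{itemize}
\item Corollary~\ref{cor:the sum of Xnk's approximates the sum of pyramids} gives $\mathcal{P}_{Z_{k}}\to\sum_{n}\mathcal{Q}_{n}^{b_{n}}$ weakly;
\item continuity of $\rho$ gives $\rho(\mathcal{P}_{n},\mathcal{P}_{X_{nk}})\to\rho(\mathcal{P}_{n},\mathcal{Q}_{n})$.
\end{itemize}

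\textbf{Why your first attempt lost a constant.} The loss came from bounding the leg $\rho(\mathcal{P}_{Z_{k}},\sum_{n}\mathcal{P}_{n}^{a_{n}})$ by the lemma with the same fixed $M$. That leaves a term $\sum_{n>M}a_{n}$, so the total coefficient of $\sum_{n>M}a_{n}$ becomes $\tfrac32$ (the excess is $\sum_{n>M}a_{n}$, not $\tfrac32\sum_{n>M}a_{n}$). In fact that leg tends to $0$ by Corollary~\ref{cor:the sum of Xnk's approximates the sum of pyramids}, whose proof lets the truncation index grow. So you did not need the triangle inequality with a lossy term at all.

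\textbf{What your direct route buys.} Your measurement-level argument needs only Lemma~\ref{lem:the estimation of dist. between convex combinations of probability measures} and the definitions. It avoids approximation sequences, the separated spaces $\mathcal{X}(\cdot;\cdot;r)$, and any limiting procedure. It also gives a stronger, termwise bound on $\dPH(\M(\cdot;k,k),\M(\cdot;k,k))$ for every $k$. There is no restriction $k\le r/2$ and no tail $2^{-r/2}$, because the target is just $B_{k}^{k}$ with identity maps and a direct sum of pyramids imposes no separation constraint. The one-point space lies in every pyramid, which justifies your choice $\nu_{n}=\delta_{0}$ for $n>M$.

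\textbf{What the paper's route buys.} It takes only a few lines once Lemma~\ref{lem:dist. between the sum of pyramids and the sum of mm-spaces with r} and Corollary~\ref{cor:the sum of Xnk's approximates the sum of pyramids} are available, reusing machinery the paper needs anyway.
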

\begin{proof}
    For each $n \in [N^{\prime}]$, by Lemma~\ref{lem:app. seq. of pyramid} there exists an approximation sequence $\{X_{nk}\}_{k=1}^{\infty}$ of the pyramid $\mathcal{Q}_{n}$.
    Take $Z_{k} \in \mathcal{X}(\{X_{nk}\}_{n=1}^{N^{\prime}};B;k)$ for each $k \in \mathbb{N}$. 
    By Lemma~\ref{lem:dist. between the sum of pyramids and the sum of mm-spaces with r}, we see that 
    \begin{equation}\label{eq:cor:dist. between direct sums of pyramids}
        \begin{aligned}
            &\rho\left(\sum_{n=1}^{N}\mathcal{P}_{n}^{a_{n}},\, \mathcal{P}_{Z_{k}}\right) \\
            &\le \sum_{n=1}^{M}\,\rho(\mathcal{P}_{n},\, \mathcal{P}_{X_{nk}}) + \frac{1}{2}\|A-B\|_{1}+\frac{1}{2}\sum_{n=M+1}^{N}a_{n}+\frac{1}{2}\sum_{n=M+1}^{N^{\prime}}b_{n} + \frac{1}{2^{k/2}}.
        \end{aligned}
    \end{equation}
    Since $\{P_{X_{nk}}\}_{k=1}^{\infty}$ converges weakly to $\mathcal{Q}_{n}$, the sequence $\{\mathcal{P}_{Z_{k}}\}_{k=1}^{\infty}$ converges weakly to the pyramid $\sum_{n=1}^{N^{\prime}}\mathcal{Q}_{n}^{b_{n}}$ by Corollary~\ref{cor:the sum of Xnk's approximates the sum of pyramids}. 
    Letting $k \to \infty$ in the inequality~\eqref{eq:cor:dist. between direct sums of pyramids}, we obtain this corollary.
\end{proof}
\begin{rem}
    It is easier to prove Corollary~\ref{cor:dist. between direct sums of pyramids} directly from Lemma~\ref{lem:the estimation of dist. between convex combinations of probability measures}. 
    The approach of its proof is the same as those of Lemma~\ref{lem:dist. between the measurement of the sum of mm-spaces with r} and Lemma~\ref{lem:dist. between the sum of pyramids and the sum of mm-spaces with r}.
\end{rem}

\begin{rem}\label{rem:cf. Lemma 6.28}
    The distance between two pyramids generated by atoms is estimated in~\cite[Lemma 6.28]{EKM2024}, that is, for any $A$, $B \in \mathcal{A}$, we have $\rho(\mathcal{P}_{A},\,\mathcal{P}_{B}) \le \|A-B\|_{1}$.
    Although the metric $\rho$ on $\Pi$ used in~\cite{EKM2024} differs from that in our paper, the same bound still follows from Corollary~\ref{cor:dist. between direct sums of pyramids}.
\end{rem}
From Corollary~\ref{cor:dist. between direct sums of pyramids}, we obtain this theorem immediately.
\begin{thm}\label{thm:the continuity of the direct sum of pyramids}
    Let $A=(a_{n})_{n=1}^{N}$, $A_{k}=(a_{nk})_{n=1}^{N}$, $k \in \mathbb{N}$, be elements of $\mathcal{A}_{1}$, and let $\{\mathcal{P}_{n}\}_{n=1}^{N}$, $\{\mathcal{P}_{nk}\}_{n=1}^{N}$, $k \in \mathbb{N}$, be sequences of pyramids. 
    If $\{A_{k}\}_{k=1}^{\infty}$ converges to $A$ in the $\ell^{1}$-norm, and $\{\mathcal{P}_{nk}\}_{k=1}^{N}$ converges weakly to the pyramid $\mathcal{P}_{n}$ for each $n \in [N]$, then the sequence of pyramids $\{\sum_{n=1}^{N} \mathcal{P}_{nk}^{a_{nk}}\}_{k=1}^{\infty}$ converges weakly to the direct sum $\sum_{n=1}^{N} \mathcal{P}_{n}^{a_{n}}.$
\end{thm}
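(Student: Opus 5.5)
The plan is to derive the statement directly from Corollary~\ref{cor:dist. between direct sums of pyramids}, using Theorem~\ref{thm:metric rho} to identify weak convergence with convergence in $\rho$. It therefore suffices to show
\[
\rho\left(\sum_{n=1}^{N}\mathcal{P}_{nk}^{a_{nk}},\, \sum_{n=1}^{N}\mathcal{P}_{n}^{a_{n}}\right) \to 0 \quad (k\to\infty).
\]

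Fix $\varepsilon>0$. If $N<\infty$, take $M=N$. If $N=\infty$, use $\|A\|_{1}=1$ to choose $M\in\N$ with $\sum_{n=M+1}^{\infty}a_{n}<\varepsilon/4$. Applying Corollary~\ref{cor:dist. between direct sums of pyramids} with this $M$, both weights being $A_{k}$ and $A$, gives
\[
\rho\left(\sum_{n=1}^{N}\mathcal{P}_{nk}^{a_{nk}},\, \sum_{n=1}^{N}\mathcal{P}_{n}^{a_{n}}\right)\le \sum_{n=1}^{M}\rho(\mathcal{P}_{nk},\mathcal{P}_{n}) + \frac12\|A_{k}-A\|_{1} + \frac12\sum_{n=M+1}^{N}a_{nk} + \frac12\sum_{n=M+1}^{N}a_{n}.
\]

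The only term needing care is the tail $\sum_{n>M}a_{nk}$. It satisfies
\[
\sum_{n>M}a_{nk}\le \sum_{n>M}a_{n}+\|A_{k}-A\|_{1},
\]
so it is controlled by the choice of $M$ and the $\ell^{1}$-convergence of $A_{k}$ to $A$.

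Choose $k_{0}$ such that for all $k\ge k_{0}$ we have $\|A_{k}-A\|_{1}<\varepsilon/4$ and $\rho(\mathcal{P}_{nk},\mathcal{P}_{n})<\varepsilon/(4M)$ for each $n\le M$. The second condition is available because $\mathcal{P}_{nk}\to\mathcal{P}_{n}$ weakly, which is $\rho$-convergence by Theorem~\ref{thm:metric rho}. Since only finitely many $n$ are involved, a single $k_{0}$ works for all of them. For $k\ge k_{0}$ the right-hand side is then less than $\varepsilon$, which proves the $\rho$-convergence and hence the weak convergence.

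There is no real obstacle here. The points to handle are the tail truncation when $N=\infty$ and the translation between weak convergence and $\rho$-convergence.
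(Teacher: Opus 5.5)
Your proposal is correct and follows the paper's own route. The paper obtains the theorem "immediately" from Corollary~\ref{cor:dist. between direct sums of pyramids} together with the fact that $\rho$ metrizes weak convergence (Theorem~\ref{thm:metric rho}). Your truncation at $M$ and the tail estimate $\sum_{n>M}a_{nk}\le\sum_{n>M}a_{n}+\|A_{k}-A\|_{1}$ are exactly the details that word hides, handled as in the proof of Corollary~\ref{cor:the sum of Xnk's approximates the sum of pyramids}.
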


\begin{rem}
    It is straightforward to verify that, for $A_{k}$, $A \in \mathcal{A}_{1}$, $k \in \N$, the following (i) and (ii) are equivalent to each other:
        \begin{enumerate}
            \item $\lim_{k\to\infty}\|A_{k} - A\|_{1}=0$;
            \item $\lim_{k\to\infty}\|A_{k} - A\|_{\infty} =0$.
        \end{enumerate}
    Here, $\|\cdot\|_{\infty}$ denotes the $\ell^{\infty}$-\textit{norm}, which is defined for $A=(a_{n})_{n=1}^{N}$, $B=(b_{n})_{n=1}^{M} \in \mathcal{A}_{1}$ by
    \begin{equation*}
        \|A-B\|_{\infty} := \begin{cases}
                                \max\{\max_{n=1}^{N}|a_{n}-b_{n}|,\; \max_{n=N+1}^{M}b_{n}\} &\text{if $N \le M$;}\\
                                \max\{\max_{n=1}^{M}|a_{n}-b_{n}|,\;\max_{n=M+1}^{N}a_{n}\} &\text{if $N \ge M$.}
                            \end{cases}
    \end{equation*}
    From this fact, it follows that the conclusion of Theorem~\ref{thm:the continuity of the direct sum of pyramids} also holds when $A_{k}$ converges to A in the $\ell^{\infty}$-norm.
\end{rem}

\begin{rem}
    For pyramids generated by atoms, a result similar to Theorem~\ref{thm:the continuity of the direct sum of pyramids} can be found in~\cite[Theorem 0.7]{EKM2024} and~\cite[Lemma 4.5]{KNS2024}.
    It states that for $A_{n}$, $A \in \mathcal{A}$, $n \in \N$, if $A_{n}$ converges to $A$ in the $\ell^{\infty}$-norm as $n \to \infty$, then $\mathcal{P}_{A_{n}}$ converges weakly to $\mathcal{P}_{A}$.
    We note that this result also follows from Corollary~\ref{cor:dist. between direct sums of pyramids}.
    We omit the alternative proof, as it is not directly relevant to the main results and is somewhat involved and technical.
\end{rem}
The next two propositions show the algebraic properties of the direct sum operation of pyramids.

\begin{dfn}\label{def:the definition of partitions}
    For a non-empty set $S$ and a family $\{S_{\lambda}\}_{\lambda \in \Lambda}$ of non-empty subsets of $S$, we say that $\{S_{\lambda}\}_{\lambda \in \Lambda}$ is a \textit{partition} of $S$ if it satisfies the following (i) and (ii).
    \begin{enumerate}
        \item $S_{\lambda} \cap S_{\mu} = \emptyset$ for any $\lambda, \mu \in \Lambda$ with $\lambda \neq \mu$.
        \item $S = \bigcup_{\lambda \in \Lambda} S_{\lambda}$.
    \end{enumerate}
\end{dfn}

\begin{prop}[Associative law]\label{prop:the associative law}
    Let $(a_{n})_{n=1}^{N} \in \mathcal{A}_{1}$, and let $\{\mathcal{P}_{n}\}_{n=1}^{N}$ be a sequence of pyramids. 
    For any partition $\{N_{k}\}_{k=1}^{M},\, M \in \overline{\mathbb{N}},$ of the set $[N]$, we have \[\sum_{n=1}^{N} \mathcal{P}_{n}^{a_{n}} = \sum_{k=1}^{M} \left(\sum_{n \in N_{k}} \mathcal{P}_{n}^{a_{n}/\alpha_{k}}\right)^{\alpha_{k}},\] where $\alpha_{k} := \sum_{n \in N_{k}} a_{n} > 0$ for $k \in [M]$.
\end{prop}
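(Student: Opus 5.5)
We prove the two inclusions directly from Definition~\ref{def:direct sum of pyramids in Section 3}, working with measures only. Write $\mathcal{Q}_{k} := \sum_{n \in N_{k}} \mathcal{P}_{n}^{a_{n}/\alpha_{k}}$. By Theorem~\ref{thm:direct sum of pyramids is a pyramid} this is a pyramid, since $(a_{n}/\alpha_{k})_{n \in N_{k}}$, after re-indexing $N_{k}$ by $[\#N_{k}]$, lies in $\mathcal{A}_{1}$. Note that the definition of the sum of measures is invariant under permutations of the index set, so re-indexing each $N_{k}$ is harmless. Similarly, $(\alpha_{k})_{k=1}^{M} \in \mathcal{A}_{1}$, because the $N_{k}$ partition $[N]$ and hence $\sum_{k}\alpha_{k} = \sum_{n} a_{n} = 1$.

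\emph{Inclusion $\subset$.} Let $X \in \sum_{n=1}^{N}\mathcal{P}_{n}^{a_{n}}$. Choose $X_{n} \in \mathcal{P}_{n}$ and 1-Lipschitz maps $f_{n}:X_{n} \to X$ with $\mu_{X} = \sum_{n} a_{n}(f_{n})_{*}\mu_{X_{n}}$. For each $k$, set
\[
\nu_{k} := \sum_{n \in N_{k}} \frac{a_{n}}{\alpha_{k}}(f_{n})_{*}\mu_{X_{n}} \in P(X),
\qquad
Y_{k} := (\supp{\nu_{k}}, d_{X}, \nu_{k}).
\]
Then $Y_{k} \in \mathcal{Q}_{k}$: for $n \in N_{k}$, the map $f_{n}$ is 1-Lipschitz into $\supp{\nu_{k}}$ up to a $\mu_{X_{n}}$-null set. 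To avoid this null-set issue, simply take $Y_{k} := (X, d_{X}, \nu_{k})$, which is mm-isomorphic to its support. The inclusion map $Y_{k} \to X$ is 1-Lipschitz. Regrouping the series, which is a countable sum of nonnegative terms evaluated on each Borel set, gives
\[
\sum_{k}\alpha_{k}\nu_{k} = \sum_{n} a_{n}(f_{n})_{*}\mu_{X_{n}} = \mu_{X}.
\]
Hence $X$ lies in the right-hand side.

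\emph{Inclusion $\supset$.} Let $X$ lie in the right-hand side. Choose $Y_{k} \in \mathcal{Q}_{k}$ and 1-Lipschitz maps $g_{k}:Y_{k} \to X$ with $\mu_{X} = \sum_{k}\alpha_{k}(g_{k})_{*}\mu_{Y_{k}}$. Since $Y_{k} \in \mathcal{Q}_{k}$, there are $X_{n} \in \mathcal{P}_{n}$ and 1-Lipschitz maps $h_{n}:X_{n} \to Y_{k}$, for $n \in N_{k}$, with $\mu_{Y_{k}} = \sum_{n\in N_{k}}(a_{n}/\alpha_{k})(h_{n})_{*}\mu_{X_{n}}$. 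Put $f_{n} := g_{k}\circ h_{n}$ for $n \in N_{k}$; each $f_{n}$ is 1-Lipschitz. Since pushforward commutes with countable convex sums (apply the definition on preimages of Borel sets), we get
\[
(g_{k})_{*}\mu_{Y_{k}} = \sum_{n \in N_{k}}\frac{a_{n}}{\alpha_{k}}(f_{n})_{*}\mu_{X_{n}},
\]
and therefore $\mu_{X} = \sum_{k}\sum_{n\in N_{k}} a_{n}(f_{n})_{*}\mu_{X_{n}} = \sum_{n} a_{n}(f_{n})_{*}\mu_{X_{n}}$. So $X \in \sum_{n}\mathcal{P}_{n}^{a_{n}}$.

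The main (minor) obstacle is bookkeeping rather than analysis. One must justify that a double series of nonnegative terms can be rearranged into a single series over $[N]$ (Tonelli for sums, evaluated set by set), and check that the spaces $Y_{k}$ built in the first inclusion are legitimate mm-spaces belonging to $\mathcal{Q}_{k}$. The latter holds because $(X, d_{X}, \nu_{k})$ is complete and separable, and because membership in a direct sum depends only on the mm-isomorphism class, which is determined by the support. No closure or limit argument is needed, since both sides are defined by exact equalities of measures.
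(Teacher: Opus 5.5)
Your proposal is correct and matches the paper's proof essentially step for step. For $\subset$ you take $Y_{k}=(X,d_{X},\nu_{k})$ with the identity map into $X$, and for $\supset$ you compose the maps from $Y_{k}$ into $X$ with the maps from $X_{n}$ into $Y_{k}$ and regroup the nonnegative series. Your extra remarks, that the inner sums are pyramids by Theorem~\ref{thm:direct sum of pyramids is a pyramid} and that the rearrangement of the series is legitimate, only make explicit what the paper leaves implicit.
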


\begin{proof}
    First, we prove the inclusion $\subset$. 
    Take any $X \in \sum_{n=1}^{N} \mathcal{P}_{n}^{a_{n}}$. 
    There exist mm-spaces $X_{n} \in \mathcal{P}_{n}$ and 1-Lipschitz maps $f_{n}:X_{n} \to X$, $n \in [N]$, such that $\mu_{X} = \sum_{n=1}^{N} a_{n}(f_{n})_{*}\mu_{X_{n}}$. 
    For $k \in [M]$, set \[Y_{k} := \left(X,\, d_{X},\, \sum_{n \in N_{k}} \frac{a_{n}}{\alpha_{k}}(f_{n})_{*}\mu_{X_{n}}\right).\]
    We see that $Y_{k}$ is an mm-space and $Y_{k} \in \sum_{n \in N_{k}} \mathcal{P}_{n}^{a_{n}/\alpha_{k}}$. 
    For $k \in [M]$, define a map $g_{k}:Y_{k} \to X$ by $g_{k}(x) := x$.
    We observe that \[\sum_{k=1}^{M}\alpha_{k}(g_{k})_{*}\mu_{Y_{k}} = \sum_{k=1}^{M} \alpha_{k} \sum_{n \in N_{k}} \frac{a_{n}}{\alpha_{k}}(f_{n})_{*}\mu_{X_{n}} = \sum_{n=1}^{N} a_{n}(f_{n})_{*}\mu_{X_{n}} = \mu_{X}. \] 
    Since $g_{k}$ is 1-Lipschitz, this equality implies $X \in \sum_{k=1}^{M}(\sum_{n \in N_{k}} \mathcal{P}_{n}^{a_{n}/\alpha_{k}})^{\alpha_{k}}$, and we obtain the inclusion $\subset$.

    Next, we prove $\supset$. 
    Take any $X \in \sum_{k=1}^{M}(\sum_{n \in N_{k}} \mathcal{P}_{n}^{a_{n}/\alpha_{k}})^{\alpha_{k}}$. 
    For $k \in [M]$, there exist mm-spaces $Y_{k} \in \sum_{n \in N_{k}} \mathcal{P}_{n}^{a_{n}/\alpha_{k}}$ and 1-Lipschitz maps $q_{k}:Y_{k} \to X$ such that $\mu_{X} = \sum_{k=1}^{M} \alpha_{k}(q_{k})_{*}\mu_{Y_{k}}$. 
    Moreover, for $k \in [M]$ and $n \in N_{k}$, there exist mm-spaces $X_{n} \in \mathcal{P}_{n}$ and 1-Lipschitz maps $f_{nk}:X_{n} \to Y_{k}$ such that $\mu_{Y_{k}} =\sum_{n \in N_{k}} a_{n}/\alpha_{k}\,(f_{nk})_{*}\mu_{X_{n}}$. 
    Define maps $h_{nk}:X_{n} \to X$ by $ h_{nk} := q_{k}\circ f_{nk}$. 
    We see that $h_{nk}$ is 1-Lipschitz and \[\sum_{n=1}^{N} a_{n}(h_{nk})_{*}\mu_{X_{n}} = \sum_{k=1}^{M}\alpha_{k}\sum_{n \in N_{k}} \frac{a_{n}}{\alpha_{k}}(h_{nk})_{*}\mu_{X_{n}} = \sum_{k=1}^{M}\alpha_{k}(q_{k})_{*}\mu_{Y_{k}} = \mu_{X},\]
    which implies $X \in \sum_{n=1}^{N} \mathcal{P}_{n}^{a_{n}}$. 
    This completes the proof.
\end{proof}

\begin{lem}\label{lem:l^p product of extended mm-sp. is equal to the l^p product of pyramids}
    Suppose $p \in [1,\infty]$. 
    For any $(a_{n})_{n=1}^{N}$, $(b_{m})_{m=1}^{M} \in \mathcal{A}_{1}$, and for any sequences $\{X_{n}\}_{n=1}^{N}$ and $\{Y_{m}\}_{m=1}^{M}$ of mm-spaces, we have \[\mathcal{P}_{(\sum_{n=1}^{N} X_{n}^{a_{n}})\times_{p}(\sum_{m=1}^{M} Y_{m}^{b_{m}})} = \mathcal{P}_{\sum_{n=1}^{N} X_{n}^{a_{n}}} \times_{p} \mathcal{P}_{\sum_{m=1}^{M} Y_{m}^{b_{m}}}.\]
\end{lem}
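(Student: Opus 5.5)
We prove the two inclusions separately. Write $X := \sum_{n=1}^{N} X_{n}^{a_{n}}$ and $Y := \sum_{m=1}^{M} Y_{m}^{b_{m}}$. The $\ell^{p}$-product of the extended mm-spaces, $X\times_{p}Y$, is the disjoint union of the pieces $X_{n}\times_{p}Y_{m}$, which lie at distance $+\infty$ from each other. By Lemma~\ref{lem:distributive raw for sums of measures}, its measure is $\sum_{n,m}a_{n}b_{m}\,\mu_{X_{n}}\otimes\mu_{Y_{m}}$. Fixing a bijection $\sigma:[NM]\to[N]\times[M]$, we therefore have
\[
X\times_{p}Y = \sum_{(n,m)} (X_{n}\times_{p}Y_{m})^{a_{n}b_{m}}
\]
as extended mm-spaces, in the sense of the direct-sum notation re-indexed by $\sigma$.

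For the inclusion $\supset$, we use that $\mathcal{P}_{X}\times_{p}\mathcal{P}_{Y}$ is the box-closure of $\bigcup\mathcal{P}_{X'\times_{p}Y'}$ over $X'\in\mathcal{P}_{X}$ and $Y'\in\mathcal{P}_{Y}$, by Lemma~\ref{lem:pyramids generated by directed set}. Since $\mathcal{P}_{X\times_{p}Y}$ is a pyramid and hence closed and downward closed, it suffices to show that $X'\times_{p}Y'\prec X\times_{p}Y$ whenever $X'\prec X$ and $Y'\prec Y$. Take domination maps $f:X\to X'$ and $g:Y\to Y'$. Then $f\times g$ is 1-Lipschitz for the $\ell^{p}$-metric, including on pairs at infinite distance, and it pushes $\mu_{X}\otimes\mu_{Y}$ to $\mu_{X'}\otimes\mu_{Y'}$. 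This is routine.

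For the inclusion $\subset$, which is the main step, we would use \eqref{eq:the representation of a pyramid associated with an extended mm-space}. An element $Z\in\mathcal{P}_{X\times_{p}Y}$ comes with 1-Lipschitz maps $h_{nm}:X_{n}\times_{p}Y_{m}\to Z$ such that $\mu_{Z}=\sum a_{n}b_{m}(h_{nm})_{*}(\mu_{X_{n}}\otimes\mu_{Y_{m}})$. These maps are unrelated across different pairs $(n,m)$, so $Z$ is not obviously the image of a product $X'\times_{p}Y'$. The plan is to first reduce to the case $N,M<\infty$ by approximation.

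\begin{enumerate}
\item Truncate to $Z_{K}$, whose measure is the normalized sum over $n,m\le K$. By Lemma~\ref{lem:the estimation of dist. between convex combinations of probability measures} and Lemma~\ref{lem:box < 2dP}, $Z_{K}\to Z$ in the box topology. We also have $Z_{K}\in\mathcal{P}_{X^{(K)}\times_{p}Y^{(K)}}$, where $X^{(K)}$ and $Y^{(K)}$ are the truncated, renormalized direct sums.
\item The right-hand side is closed. So it is enough to show $\mathcal{P}_{X^{(K)}\times_{p}Y^{(K)}}\subset\mathcal{P}_{X}\times_{p}\mathcal{P}_{Y}$, together with $\mathcal{P}_{X^{(K)}}\times_{p}\mathcal{P}_{Y^{(K)}}\subset\mathcal{P}_{X}\times_{p}\mathcal{P}_{Y}$ up to an error tending to $0$. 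This should follow from box-approximating $X^{(K)}$ by mm-spaces in $\mathcal{P}_{X}$; the details still need checking.
\end{enumerate}

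In the finite case, for $R>0$ we take the mm-spaces $X_{R}:=(\sum X_{n}^{a_{n}})_{R}\in\mathcal{P}_{X}$ and $Y_{R}:=(\sum Y_{m}^{b_{m}})_{R}\in\mathcal{P}_{Y}$. These lie in $\mathcal{P}_{X}$ and $\mathcal{P}_{Y}$ because collapsing the infinite gaps to the finite distance $R$ plus the base-point distances is 1-Lipschitz. In $X_{R}\times_{p}Y_{R}$, distinct blocks $X_{n}\times Y_{m}$ are at distance at least $R$ from each other.

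Given $Z$ and the maps $h_{nm}$, we restrict attention to bounded pieces. Choose compact sets of measure close to $1$ in each block, and define $F_{R}$ on the union of these sets to be $h_{nm}$ on the $(n,m)$ block. For $R$ larger than the diameters of the images plus the distances between them, $F_{R}$ is 1-Lipschitz on that union. Its push-forward is within a small total variation of $\mu_{Z}$, using Lemma~\ref{lem:dP < dTV}. This gives $Z\prec_{\varepsilon}X_{R}\times_{p}Y_{R}$ with $\varepsilon\to0$.

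Finally, we apply Proposition~\ref{prop:Pn <en Qn => P < Q}, or argue via Lemma~\ref{lem:box < 2dP}, to conclude that $Z$ lies in the closure of $\bigcup_{R}\mathcal{P}_{X_{R}\times_{p}Y_{R}}\subset\mathcal{P}_{X}\times_{p}\mathcal{P}_{Y}$.

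The main obstacle is this last step: we need the $\varepsilon$-domination to give an exact membership after taking limits. Proposition~\ref{prop:Pn <en Qn => P < Q} handles this, applied to the constant sequence $\mathcal{P}_{Z}$ and the sequence $\mathcal{P}_{X_{R}\times_{p}Y_{R}}$. For the latter we need that it converges weakly. It is monotone in $R$, so we may pass to a subsequence using compactness of $\Pi$ (Theorem~\ref{thm:metric rho}), and its limit is contained in $\mathcal{P}_{X}\times_{p}\mathcal{P}_{Y}$.
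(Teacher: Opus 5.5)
Your gluing argument is correct, and once one detour is removed it proves the lemma for all $N,M\in\overline{\mathbb{N}}$.

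\textbf{The detour to remove.} The detour is the reduction to $N,M<\infty$. Step 2 asks for $\mathcal{P}_{X^{(K)}\times_{p}Y^{(K)}}\subset\mathcal{P}_{X}\times_{p}\mathcal{P}_{Y}$, which is false in general, because renormalizing after truncation increases the weights. For a counterexample, take $Y$ a one-point space, $N=\infty$, $X_{1}=[0,1]$ with Lebesgue measure, and all other $X_{n}$ one-point spaces. Every element of $\mathcal{P}_{X}$ has atomic part of mass at least $1-a_{1}$. But $\mathcal{P}_{X^{(K)}}$ contains spaces whose atomic part has mass $1-a_{1}/\sum_{n\le K}a_{n}<1-a_{1}$. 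The approximate version of step 2 would need an extra continuity argument for $\times_{p}$, which you do not supply.

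\textbf{Why it is not needed.} Your construction only ever uses finitely many blocks, so it handles infinite $N,M$ directly.
\begin{enumerate}
\item Given $\varepsilon>0$, pick $K$ with $\sum_{n,m\le K}a_{n}b_{m}>1-\varepsilon/2$. Choose compact sets $C_{nm}\subset X_{n}\times Y_{m}$ only for $n,m\le K$. This restriction is the point: with infinitely many blocks the glued image could be unbounded.
\item The spaces $X_{R}=(\sum_{n=1}^{N}X_{n}^{a_{n}})_{R}$ and $Y_{R}$ are mm-spaces also when $N$ or $M$ is infinite. Distinct blocks of $X_{R}\times_{p}Y_{R}$ are still at distance at least $R$.
\item $X_{R}\times_{p}Y_{R}$ has the same underlying set, topology and measure as $X\times_{p}Y$, so you can simply take $F_{R}:=h$. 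For $R\ge\mathrm{diam}\,h(\bigcup C_{nm})$, this map is 1-Lipschitz on $\bigcup C_{nm}$ and pushes $\mu_{X_{R}\times_{p}Y_{R}}$ exactly onto $\mu_{Z}$. Hence $Z\prec_{\varepsilon}X_{R}\times_{p}Y_{R}$.
\item Composing with domination maps $Z\to W$, which do not increase $\dP$, gives $\mathcal{P}_{Z}\prec_{\varepsilon}\mathcal{P}_{X}\times_{p}\mathcal{P}_{Y}$.
\item Proposition~\ref{prop:Pn <en Qn => P < Q} then applies to the two constant sequences. No compactness of $\Pi$ or monotonicity in $R$ is needed.
\end{enumerate}

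\textbf{Comparison with the paper.} The paper reaches the same approximate domination without building any map by hand. It takes $C_{k}\in\mathcal{X}(\{X_{n}\}_{n=1}^{N};(a_{n})_{n=1}^{N};k)$ and $D_{k}\in\mathcal{X}(\{Y_{m}\}_{m=1}^{M};(b_{m})_{m=1}^{M};k)$. It notes that $C_{k}\times_{p}D_{k}$ lies in the analogous class for the blocks $X_{n}\times_{p}Y_{m}$ with weights $a_{n}b_{m}$. Corollary~\ref{cor:the sum of Xnk's approximates the sum of pyramids} then gives $\mathcal{P}_{C_{k}\times_{p}D_{k}}\to\mathcal{P}_{X\times_{p}Y}$ weakly. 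Lemma~\ref{lem:PXn -> P, Z in P => Z <ek Xnk} gives $H\prec_{1/\ell}C_{k(\ell)}\times_{p}D_{k(\ell)}$, and the proof ends with Proposition~\ref{prop:Pn <en Qn => P < Q}, as yours does.

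In the paper, the blockwise gluing is hidden in Lemma~\ref{lem:dist. between the measurement of the sum of mm-spaces with r}. There the targets are $\ell^{\infty}$-balls of radius $k$ and the gaps are at least $2k$, so boundedness of the glued map is automatic and no compact sets are needed.

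Your route is more elementary and self-contained: it uses only inner regularity and the definition of $\prec_{\varepsilon}$, and it produces explicit approximate domination maps. The paper's route is shorter given the $\rho$-estimates already established. It also handles countably many blocks uniformly and reuses the same corollary that drives the distributive law and the decomposition theorem.
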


\begin{proof}
    It is clear that the inclusion $\supset$ holds.
    Thus we prove the opposite inclusion $\subset$. 
    We set $\widetilde{X} := \sum_{n=1}^{N}X_{n}^{a_{n}}$, $\widetilde{Y} := \sum_{m=1}^{M}Y_{m}^{b_{m}}$, and $\widetilde{Z} := \sum_{n \in [N],\, m \in [M]}(X_{n}\times_{p} Y_{m})^{a_{n}b_{m}}$.
    Take any $H \in \mathcal{P}_{\widetilde{X} \times_{p} \widetilde{Y}}$, and for $k \in \mathbb{N}$, take any $C_{k} \in \mathcal{X}(\{X_{n}\}_{n=1}^{N};(a_{n})_{n=1}^{N};k)$ and any $D_{k} \in \mathcal{X}(\{Y_{m}\}_{m=1}^{M};(b_{m})_{m=1}^{M};k)$.
    Note that $C_{k} \in \mathcal{P}_{\widetilde{X}}$ and $D_{k} \in \mathcal{P}_{\widetilde{Y}}$.
    Take any bijective map $\varphi:[NM] \to [N] \times [M]$, $\varphi(n) =(\varphi_{1}(n),\,\varphi_{2}(m))$, and fix it.
    We define a sequence $E=(e_{n})_{n=1}^{NM}$ by $e_{n} := a_{\varphi_{1}(n)}b_{\varphi_{2}(n)}$ for $n \in [NM]$.
    Note that if $N=\infty$ or $M=\infty$, then we set $NM := \infty$.
    Then $E$ is a rearrangement of the terms $a_{n}b_{m}$ and belongs to $\mathcal{A}_{1}$. 
    We see that $C_{k} \times_{p} D_{k} \in \mathcal{X}(\{W_{n}\}_{n=1}^{NM};E;k)$, where $W_{n} := X_{\varphi_{1}(n)}\times_{p} Y_{\varphi_{2}(n)}$, $n \in [NM]$.
    Therefore, by Corollary~\ref{cor:the sum of Xnk's approximates the sum of pyramids}, $\mathcal{P}_{C_{k}\times_{p}D_{k}}$ converges weakly to the pyramid $\mathcal{P}_{\widetilde{Z}}$.
    Moreover, since extended mm-spaces $\widetilde{X}\times_{p}\widetilde{Y}$ and $\widetilde{Z}$ are mm-isomorphic to each other, we have $\mathcal{P}_{\widetilde{Z}} = \mathcal{P}_{\widetilde{X}\times_{p}\widetilde{Y}}$, and the pyramid $\mathcal{P}_{C_{k}\times_{p}D_{k}}$ converges weakly to $\mathcal{P}_{\widetilde{X}\times_{p}\widetilde{Y}}$.
    This implies that, by Lemma~\ref{lem:PXn -> P, Z in P => Z <ek Xnk}, there exists a subsequence $\{C_{k(\ell)} \times_{p} D_{k(\ell)}\}_{\ell=1}^{\infty}$ of $\{C_{k}\times_{p} D_{k}\}_{k=1}^{\infty}$ such that \[H \prec_{1/\ell} C_{k(\ell)} \times_{p} D_{k(\ell)}\] for any $\ell \in \N$, that is, $\mathcal{P}_{H} \prec_{1/\ell} \mathcal{P}_{\widetilde{X}} \times_{p} \mathcal{P}_{\widetilde{Y}}$.
    By Proposition~\ref{prop:Pn <en Qn => P < Q}, we have $\mathcal{P}_{H} \subset \mathcal{P}_{\widetilde{X}} \times_{p} \mathcal{P}_{\widetilde{Y}}$, and hence $H \in \mathcal{P}_{\widetilde{X}} \times_{p} \mathcal{P}_{\widetilde{Y}}$. 
    This completes the proof.
\end{proof}

\begin{prop}[Distributive law]\label{prop:the distributive law}
    For any $(a_{n})_{n=1}^{N}, (b_{n})_{n=1}^{M} \in \mathcal{A}_{1}$, any sequences of pyramids $\{\mathcal{P}_{n}\}_{n=1}^{N}$ and $\{\mathcal{Q}_{n}\}_{n=1}^{M}$, and any $p \in [1,\infty]$, we have \[\left(\sum_{n=1}^{N} \mathcal{P}_{n}^{a_{n}}\right) \times_{p} \left(\sum_{m=1}^{M} \mathcal{Q}_{m}^{b_{m}}\right) = \sum_{n \in [N],\, m \in [M]} (\mathcal{P}_{n} \times_{p}\mathcal{Q}_{m})^{a_{n}b_{m}}.\]
\end{prop}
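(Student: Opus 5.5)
We reduce the pyramid identity to the mm-space identity of Lemma~\ref{lem:l^p product of extended mm-sp. is equal to the l^p product of pyramids} by approximation, and then pass to limits using Theorem~\ref{thm:the continuity of the direct sum of pyramids} together with continuity of the $\ell^{p}$-product of pyramids.

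For each $n \in [N]$ and $m \in [M]$, choose approximation sequences $\{X_{nk}\}_{k=1}^{\infty}$ of $\mathcal{P}_{n}$ and $\{Y_{mk}\}_{k=1}^{\infty}$ of $\mathcal{Q}_{m}$ (Lemma~\ref{lem:app. seq. of pyramid}). Set $\widetilde{X}_{k} := \sum_{n} X_{nk}^{a_{n}}$ and $\widetilde{Y}_{k} := \sum_{m} Y_{mk}^{b_{m}}$. For each fixed $k$ the identity holds at the level of $\mm$-spaces:
\[
\mathcal{P}_{\widetilde{X}_{k}}\times_{p}\mathcal{P}_{\widetilde{Y}_{k}} = \mathcal{P}_{\widetilde{X}_{k}\times_{p}\widetilde{Y}_{k}} = \mathcal{P}_{\sum_{n,m}(X_{nk}\times_{p}Y_{mk})^{a_{n}b_{m}}} = \sum_{n,m}\bigl(\mathcal{P}_{X_{nk}}\times_{p}\mathcal{P}_{Y_{mk}}\bigr)^{a_{n}b_{m}}.
\]
The first equality is Lemma~\ref{lem:l^p product of extended mm-sp. is equal to the l^p product of pyramids}. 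The second uses the mm-isomorphism between $\widetilde{X}_{k}\times_{p}\widetilde{Y}_{k}$ and the direct sum indexed by $[N]\times[M]$; this follows from Lemma~\ref{lem:distributive raw for sums of measures} and the fact that distances between distinct blocks are infinite for every $p$. The third uses the remark $\mathcal{P}_{\sum X_{n}^{a_{n}}}=\sum \mathcal{P}_{X_{n}}^{a_{n}}$ together with $\mathcal{P}_{X\times_{p}Y}=\mathcal{P}_{X}\times_{p}\mathcal{P}_{Y}$ for mm-spaces.

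We now let $k\to\infty$. By Remark~\ref{rem:approximation sequence is a weak convergent sequence}, $\mathcal{P}_{X_{nk}}\to\mathcal{P}_{n}$ and $\mathcal{P}_{Y_{mk}}\to\mathcal{Q}_{m}$ weakly. Then Theorem~\ref{thm:the continuity of the direct sum of pyramids} (with constant weights) gives $\mathcal{P}_{\widetilde{X}_{k}}\to\sum\mathcal{P}_{n}^{a_{n}}$ and $\mathcal{P}_{\widetilde{Y}_{k}}\to\sum\mathcal{Q}_{m}^{b_{m}}$. Once we show that $\mathcal{P}_{X_{nk}}\times_{p}\mathcal{P}_{Y_{mk}}\to\mathcal{P}_{n}\times_{p}\mathcal{Q}_{m}$, Theorem~\ref{thm:the continuity of the direct sum of pyramids} applied after reindexing $[N]\times[M]$ by $[NM]$ also gives convergence of the right-hand side to $\sum_{n,m}(\mathcal{P}_{n}\times_{p}\mathcal{Q}_{m})^{a_{n}b_{m}}$. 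Since weak limits are unique (Theorem~\ref{thm:metric rho}), the identity follows.

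The main obstacle is continuity of $\times_{p}$ along these monotone approximations: we need $\mathcal{P}_{X_{k}}\times_{p}\mathcal{P}_{Y_{k}}\to\mathcal{P}\times_{p}\mathcal{Q}$ when $\{X_{k}\}$ and $\{Y_{k}\}$ are increasing approximation sequences. We plan to argue directly.
\begin{enumerate}
\item \emph{Increasing union.} Since $X_{k}\times_{p}Y_{k}$ is $\prec$-increasing, the union $\bigcup_{k}\mathcal{P}_{X_{k}\times_{p}Y_{k}}$ is an increasing union of pyramids contained in $\mathcal{P}\times_{p}\mathcal{Q}$. It therefore suffices to show that its closure equals $\mathcal{P}\times_{p}\mathcal{Q}$.
\item \emph{Reduction to generators.} By Lemma~\ref{lem:pyramids generated by directed set}, it is enough to approximate $X\times_{p}Y$ for arbitrary $X\in\mathcal{P}$ and $Y\in\mathcal{Q}$.
\item \emph{Approximating a generator.} By Lemma~\ref{lem:PXn -> P, Z in P => Z <ek Xnk}, we have $X\prec_{\varepsilon_{k}}X_{k}$ and $Y\prec_{\varepsilon_{k}}Y_{k}$ along a subsequence. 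The product of the two approximate domination maps gives $X\times_{p}Y\prec_{c\varepsilon_{k}}X_{k}\times_{p}Y_{k}$, with $c$ depending only on $p$. The additive distortion of the product map is controlled by $2^{1/p}\varepsilon_{k}$, the exceptional set has measure at most $2\varepsilon_{k}$, and the Prokhorov error of the product measure is bounded by the sum of the two errors.
\item \emph{Conclusion.} Proposition~\ref{prop:Pn <en Qn => P < Q} then gives the required inclusion.
\end{enumerate}
Alternatively, if this continuity turns out to be cumbersome, we would prove the two inclusions of the identity directly, using the above approximation only for the inclusion $\subset$.
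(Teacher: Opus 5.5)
Your outline works. The half showing that the level-$k$ right-hand side $\sum_{n,m}(\mathcal{P}_{X_{nk}}\times_{p}\mathcal{P}_{Y_{mk}})^{a_{n}b_{m}}=\mathcal{P}_{\widetilde{X}_{k}}\times_{p}\mathcal{P}_{\widetilde{Y}_{k}}$ converges to $\sum_{n,m}(\mathcal{P}_{n}\times_{p}\mathcal{Q}_{m})^{a_{n}b_{m}}$ is exactly the paper's proof of $\supset$. Combined with the trivial inclusion $\mathcal{P}_{\widetilde{X}_{k}}\times_{p}\mathcal{P}_{\widetilde{Y}_{k}}\subset(\sum_{n}\mathcal{P}_{n}^{a_{n}})\times_{p}(\sum_{m}\mathcal{Q}_{m}^{b_{m}})$, it yields $\supset$. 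The paper quotes the convergence $\mathcal{P}_{X_{nk}\times_{p}Y_{mk}}\to\mathcal{P}_{n}\times_{p}\mathcal{Q}_{m}$ from \cite[Lemma 1.21]{EKM2024} rather than proving it as in your steps 1--4. Your product estimate is fine: exceptional set $2\varepsilon$, additive error $2^{1/p}\varepsilon$, Prokhorov error $2\varepsilon$. Common subsequences for the two factors are available because the approximation sequences are $\prec$-increasing.

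You diverge from the paper on the inclusion $\subset$, which you obtain by also passing to the limit on the left. That requires $\mathcal{P}_{\widetilde{X}_{k}}\times_{p}\mathcal{P}_{\widetilde{Y}_{k}}\to(\sum_{n}\mathcal{P}_{n}^{a_{n}})\times_{p}(\sum_{m}\mathcal{Q}_{m}^{b_{m}})$. This does not follow from convergence of the two factors alone, and it is not what your obstacle paragraph proves. There $X_{k},Y_{k}$ are mm-spaces, whereas $\widetilde{X}_{k},\widetilde{Y}_{k}$ are extended mm-spaces, and Lemma~\ref{lem:PXn -> P, Z in P => Z <ek Xnk} is stated only for mm-spaces. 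The step can be repaired: for $X\in\sum_{n}\mathcal{P}_{n}^{a_{n}}$, weak convergence gives $Z_{k}\in\mathcal{P}_{\widetilde{X}_{k}}$ with $\square(X,Z_{k})\to 0$. Proposition~\ref{prop:box dist. <-> epsillon-mm-isom.} then gives $X\prec_{\varepsilon_{k}}Z_{k}$ for some $\varepsilon_{k}\to 0$, and your product estimate applies.

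The paper avoids this entirely, because $\subset$ is immediate. Suppose $\mu_{X}=\sum_{n}a_{n}(f_{n})_{*}\mu_{X_{n}}$ and $\mu_{Y}=\sum_{m}b_{m}(g_{m})_{*}\mu_{Y_{m}}$. Then the maps $f_{n}\times g_{m}:X_{n}\times_{p}Y_{m}\to X\times_{p}Y$ are 1-Lipschitz, and Lemma~\ref{lem:distributive raw for sums of measures} gives $\mu_{X}\otimes\mu_{Y}=\sum_{n,m}a_{n}b_{m}(f_{n}\times g_{m})_{*}(\mu_{X_{n}}\otimes\mu_{Y_{m}})$. So every generator $X\times_{p}Y$ lies in the right-hand side, which is a pyramid. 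Your fallback is therefore the better plan, but it is labeled backwards: the approximation is needed for $\supset$, while $\subset$ is the direct one.
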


\begin{proof}
    First, we prove the inclusion $\subset$. 
    By the definition of $\ell^{p}$-product $(\sum_{n=1}^{N}\mathcal{P}_{n}^{a_{n}})\times_{p}(\sum_{m=1}^{M}\mathcal{Q}_{m}^{b_{m}})$, it suffices to show that $X \times_{p} Y \in \sum_{n,m}(\mathcal{P}_{n}\otimes_{p}\mathcal{Q}_{m})^{a_{n}b_{m}}$ for any $X \in \sum_{n=1}^{N}\mathcal{P}_{n}^{a_{n}}$ and any $Y \in \sum_{m=1}^{M}\mathcal{Q}_{m}^{b_{m}}$. 
    For $n \in [N]$ and $m \in [M]$, there exist mm-spaces $X_{n} \in \mathcal{P}_{n},\, Y_{m} \in \mathcal{Q}_{m}$, and 1-Lipschitz maps $f_{n}:X_{n} \to X,\, g_{m}:Y_{m} \to Y$ such that $\mu_{X} = \sum_{n=1}^{N}a_{n}(f_{n})_{*}\mu_{X_{n}}$ and $\mu_{Y} = \sum_{m=1}^{M}b_{m}(g_{m})_{*}\mu_{Y_{m}}$. 
    We define a map $f_{n}\times g_{m}:X_{n}\times_{p} Y_{m} \to X\times_{p} Y$ by $(f_{n}\times g_{m})(x,y) := (f_{n}(x),\, g_{m}(y))$ for $x \in X_{n}$ and $y \in Y_{m}$.
    By Lemma~\ref{lem:distributive raw for sums of measures}, we obtain \[\sum_{n \in [N],\, m \in [M]} a_{n}b_{m}(f_{n}\times g_{m})_{*}(\mu_{X_{n}}\otimes\mu_{Y_{m}}) = \mu_{X}\otimes\mu_{Y}.\]
    Since $X_{n}\times_{p} Y_{m} \in \mathcal{P}_{n}\times_{p}\mathcal{Q}_{m}$ and the maps $f_{n}\times g_{m}$ are 1-Lipschitz, we have $X\times_{p} Y \in \sum_{n,m}(\mathcal{P}_{n}\times_{p}\mathcal{Q}_{m})^{a_{n}b_{m}}$. 
    This completes the proof of the inclusion $\subset$.

    Next, we prove the opposite inclusion $\supset$. 
    For $n \in [N]$ and $m \in [M]$, by Lemma~\ref{lem:app. seq. of pyramid}, there exist approximation sequences $\{X_{nk}\}_{k=1}^{\infty}$ and $\{Y_{mk}\}_{k=1}^{\infty}$ of the pyramids $\mathcal{P}_{n}$ and $\mathcal{Q}_{m}$, respectively. 
    We set $\widetilde{X}_{k} := \sum_{n=1}^{N} X_{nk}^{a_{n}}$, $\widetilde{Y}_{k} := \sum_{m=1}^{M}Y_{mk}^{b_{m}}$, and $\widetilde{Z}_{k} := \sum_{n \in [N],\, m \in [M]} (X_{nk}\times_{p}Y_{mk})^{a_{n}b_{m}}$.
    By~\cite[Lemma 1.21]{EKM2024}, the sequence $\{\mathcal{P}_{X_{nk}\times_{p}Y_{mk}}\}_{k=1}^{\infty}$ converges weakly to the pyramid $\mathcal{P}_{n} \times_{p}\mathcal{Q}_{m}$.
    Hence, by Theorem~\ref{thm:the continuity of the direct sum of pyramids}, the sequence $\{\mathcal{P}_{\widetilde{Z}_{k}}\}_{k=1}^{\infty}$ converges weakly to the pyramid $\sum_{n, m}\,(\mathcal{P}_{n}\times_{p}\mathcal{Q}_{m})^{a_{n}b_{m}}$.
    Since the extended mm-space $\widetilde{Z}_{k}$ is mm-isomorphic to the extended mm-space $\widetilde{X}_{k}\times_{p}\widetilde{Y}_{k}$, Lemma~\ref{lem:l^p product of extended mm-sp. is equal to the l^p product of pyramids} implies that
    \begin{align*}
        \mathcal{P}_{\widetilde{Z}_{k}} &= \mathcal{P}_{\widetilde{X}_{k}\times_{p}\widetilde{Y}_{k}} = \mathcal{P}_{\widetilde{X}_{k}} \times_{p} \mathcal{P}_{\widetilde{Y}_{k}} \subset \left(\sum_{n=1}^{N}\mathcal{P}_{n}^{a_{n}}\right)\times_{p}\left(\sum_{m=1}^{M}\mathcal{Q}_{m}^{b_{m}}\right). 
    \end{align*}    
    Letting $k \to \infty$, we obtain the inclusion $\supset$.
    This completes the proof.
\end{proof}

\begin{rem}
    \cite[Theorem 6.43]{EKM2024} provides a formula for the $\ell^{p}$-product of two pyramids generated by atoms.
    Since pyramids generated by atoms can be represented as direct sums of pyramids (see~\eqref{eq:pyramid generated by atoms is a direct sum of pyramids}), Proposition~\ref{prop:the distributive law} may be regarded as a generalization of~\cite[Theorem 6.43]{EKM2024}.
\end{rem}

\section{Examples of direct sum of pyramids}
In this section, we study examples of sequences of mm-spaces that converge weakly to a direct sum of pyramids.
We first establish the following useful lemmas to calculate examples.
Note that for an mm-space $X$, a closed subset $A \subset X$ with $\mu_{X}(A) > 0$ defines an mm-space \[\left(X,d_{X},\frac{1}{\mu_{X}(A)}\mu_{X}\Big|_{A}\right).\]
We denote it by the same symbol $A$.
\begin{lem}\label{lem:box distance with X and A in X}
    Let $X$ be an mm-space and $A \subset X$ a closed subset with $\mu_{X}(A)>0$.
    Then we have \[\square(A, X) \le 4(1-\mu_{X}(A)).\]
\end{lem}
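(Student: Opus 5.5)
The plan is to compare the two measures on the same underlying metric space $(X,d_X)$ and then apply Lemma~\ref{lem:box < 2dP}. Regard $A$ as the mm-space $(X, d_X, \nu)$ with
\[\nu := \frac{1}{\mu_X(A)}\mu_X|_A.\]
Its support is contained in the closed set $A$, so this triple is mm-isomorphic to the mm-space denoted by $A$. Lemma~\ref{lem:box < 2dP} then gives
\[\square(A,X) \le 2\dP(\nu,\mu_X),\]
and Lemma~\ref{lem:dP < dTV} further bounds this by $2\dTV(\nu,\mu_X)$. It therefore suffices to show $\dTV(\nu,\mu_X)\le 2(1-\mu_X(A))$.

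Write $a:=\mu_X(A)$ and fix a Borel set $B\subset X$. We have
\[\nu(B)-\mu_X(B) = \Bigl(\frac{1}{a}-1\Bigr)\mu_X(B\cap A) - \mu_X(B\setminus A).\]
The first term lies in $[0, (1-a)/a\cdot a] = [0,1-a]$, since $\mu_X(B\cap A)\le a$. The second term lies in $[0,\mu_X(X\setminus A)]=[0,1-a]$. Hence
\[\lvert\nu(B)-\mu_X(B)\rvert\le 1-a,\]
so $\dTV(\nu,\mu_X)\le 1-a$. This gives $\square(A,X)\le 2(1-a)$, which is stronger than the claimed $4(1-a)$.

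The only point needing care is the identification of the mm-space $A$ with $(X,d_X,\nu)$. Box distance is defined on mm-isomorphism classes, and the support of $\nu$ is a subset of $X$ isometric to itself, so this identification is immediate. Completeness and separability hold because $X$ is complete and separable; $A$ being closed is only needed to make $A$ itself an mm-space. I do not expect any real obstacle beyond this. The factor $4$ in the statement leaves room for a cruder route, for instance an $\varepsilon$-mm-isomorphism via Proposition~\ref{prop:box dist. <-> epsillon-mm-isom.}, but the total-variation route above suffices.
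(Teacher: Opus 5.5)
Your proposal is correct and follows the paper's proof: you apply Lemma~\ref{lem:box < 2dP}, then Lemma~\ref{lem:dP < dTV}, and then estimate $\dTV$ directly. The one difference is in that last estimate. The paper bounds the two terms by the triangle inequality and obtains $\dTV \le 2(1-\mu_X(A))$. You note instead that the two terms have opposite signs, which gives $\dTV \le 1-\mu_X(A)$ (attained at $B=A$) and hence the sharper bound $\square(A,X)\le 2(1-\mu_X(A))$.
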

\begin{proof}
    By Lemma~\ref{lem:dP < dTV} and Lemma~\ref{lem:box < 2dP}, we have \[\square(A,X) \le 2\dP(\mu_{A}, \mu_{X}) \le 2\dTV(\mu_{A},\mu_{X}).\]
    We estimate the total variation distance.
    For any Borel subset $C \subset X$, we observe that
    \begin{align*}
        \lvert \mu_{A}(C) - \mu_{X}(C) \rvert &= \left\lvert \frac{\mu_{X}(C\cap A)}{\mu_{X}(A)} - \mu_{X}(C \cap A) - \mu_{X}(C \setminus A) \right\rvert \\
            &\le (1-\mu_{X}(A))\frac{\mu_{X}(C \cap A)}{\mu_{X}(A)} + \mu_{X}(X\setminus A) \le 2(1-\mu_{X}(A)),
    \end{align*}
    which implies that \[\dTV(\mu_{A},\mu_{X}) \le 2(1-\mu_{X}(A)).\]
    This completes the proof.
\end{proof}
\begin{lem}\label{lem:the structure of mm-spaces that converge weakly to a direct sum of pyramids}
    Let $\mathcal{P}$ and $\mathcal{Q}$ be two pyramids and $\{X_{n}\}_{n=1}^{\infty}$ a sequence of mm-spaces, and let $0 < \alpha < 1$.
    If there exist closed subsets $A_{n}$, $B_{n}$ of $X_{n}$, $n \in \N$, such that 
    \begin{itemize}
        \item $\mu_{X_{n}}(A_{n}) > 0$ and $\mu_{X_{n}}(B_{n}) > 0$;
        \item $\displaystyle\lim_{n\to\infty}\mu_{X_{n}}(A_{n}) = \alpha$ and $\displaystyle\lim_{n\to\infty}\mu_{X_{n}}(B_{n}) = 1-\alpha$;
        \item $\displaystyle\lim_{n\to\infty}d_{X_{n}}(A_{n},B_{n}) = +\infty$;
        \item $\mathcal{P}_{A_{n}} \longrightarrow \mathcal{P}$ and $\mathcal{P}_{B_{n}} \longrightarrow \mathcal{Q}$ weakly as $n \to \infty$.
    \end{itemize}
    Then $\mathcal{P}_{X_{n}}$ converges weakly to the direct sum $\mathcal{P}^{\alpha} + \mathcal{Q}^{1-\alpha}$.
\end{lem}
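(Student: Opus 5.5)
The plan is to replace $X_{n}$ by a space in which $A_{n}$ and $B_{n}$ carry all the mass, and then apply Corollary~\ref{cor:the sum of Xnk's approximates the sum of pyramids}. Set $c_{n} := \mu_{X_{n}}(A_{n}\cup B_{n})$. For large $n$ we have $d_{X_{n}}(A_{n},B_{n})>0$, so $A_{n}\cap B_{n}=\emptyset$, and $c_{n} \to \alpha + (1-\alpha) = 1$. The set $C_{n} := A_{n}\cup B_{n}$ is closed with $\mu_{X_{n}}(C_{n})>0$. Viewed as an mm-space via the normalized restricted measure, Lemma~\ref{lem:box distance with X and A in X} gives $\square(C_{n},X_{n}) \le 4(1-c_{n}) \to 0$. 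By Proposition~\ref{prop:rho < box}, $\rho(\mathcal{P}_{C_{n}},\mathcal{P}_{X_{n}}) \to 0$. It therefore suffices to show that $\mathcal{P}_{C_{n}}$ converges weakly to $\mathcal{P}^{\alpha}+\mathcal{Q}^{1-\alpha}$, since $\rho$ metrizes weak convergence (Theorem~\ref{thm:metric rho}).

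Next I check that $C_{n}$ lies in the approximating family. Its measure is
\[
\mu_{C_{n}} = \frac{\mu_{X_{n}}(A_{n})}{c_{n}}\,\mu_{A_{n}} + \frac{\mu_{X_{n}}(B_{n})}{c_{n}}\,\mu_{B_{n}},
\]
where $\mu_{A_{n}},\mu_{B_{n}}$ are the normalized restrictions. Put $a_{1n} := \mu_{X_{n}}(A_{n})/c_{n}$ and $a_{2n} := \mu_{X_{n}}(B_{n})/c_{n}$. Then $A_{n}' := (a_{1n},a_{2n}) \in \mathcal{A}_{1}$, and $A_{n}'\to(\alpha,1-\alpha)$ in $\ell^{1}$. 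The inclusions $A_{n}\hookrightarrow C_{n}$ and $B_{n}\hookrightarrow C_{n}$ are isometric embeddings whose images are at distance $r(n) := d_{X_{n}}(A_{n},B_{n}) \to \infty$. Hence $C_{n} \in \mathcal{X}((A_{n},B_{n});A_{n}';r(n))$.

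One technical point is that the definition of $\mathcal{X}(\cdot)$ uses the mm-spaces $A_{n},B_{n}$ with support equal to the whole space. Under the paper's convention the mm-space $A_{n}$ is identified with $\supp\mu_{A_{n}}\subset A_{n}$, and likewise for $B_{n}$. So I take $\varphi_{1},\varphi_{2}$ to be the inclusions of these supports and regard $C_{n}$ as $\supp\mu_{C_{n}}$. The distance between the images is still at least $r(n)$.

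Corollary~\ref{cor:the sum of Xnk's approximates the sum of pyramids} with $N=2$ now applies. It uses the weak convergences $\mathcal{P}_{A_{n}}\to\mathcal{P}$ and $\mathcal{P}_{B_{n}}\to\mathcal{Q}$, the convergence $A_{n}'\to(\alpha,1-\alpha)$, and $r(n)\to\infty$, and yields $\mathcal{P}_{C_{n}}\to\mathcal{P}^{\alpha}+\mathcal{Q}^{1-\alpha}$. Combined with the first step, this proves the lemma.

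The corollary is stated for every $k$, but the hypotheses only hold for large $n$ (where $r(n)>0$ and the sets are disjoint). Changing finitely many terms does not affect a limit, so this causes no problem. The only real subtlety is the support convention addressed in the second step; everything else is direct.
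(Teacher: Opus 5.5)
Your proof is correct and follows essentially the paper's route: replace $X_n$ by a box-close space lying in $\mathcal{X}((A_n,B_n);\cdot\,;d_{X_n}(A_n,B_n))$, apply Corollary~\ref{cor:the sum of Xnk's approximates the sum of pyramids}, and transfer the convergence back to $\mathcal{P}_{X_n}$ via Proposition~\ref{prop:rho < box}. The differences are cosmetic. The paper reweights $\mu_{X_n}$ so that $A_n$ and $B_n$ carry masses exactly $\alpha$ and $1-\alpha$, and bounds $\dTV$ by hand. You instead normalize the restriction to $A_n\cup B_n$, invoke Lemma~\ref{lem:box distance with X and A in X}, and let the weights vary, which the corollary permits. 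You also make the support convention explicit, which the paper leaves implicit.
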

\begin{proof}
    We set $R_{n} := d_{X_{n}}(A_{n}, B_{n})$.
    Without loss of generality, we assume that $R_{n} > 0$ for any $n \in \N$.
    For $n \in \N$, we define \[\widetilde{X}_{n} := \left(X_{n},d_{X_{n}}, \mu_{\widetilde{X}_{n}}:=\frac{\alpha}{\mu_{X_{n}}(A_{n})}\mu_{X_{n}}\Big|_{A_{n}} + \frac{1-\alpha}{\mu_{X_{n}}(B_{n})}\mu_{X_{n}}\Big|_{B_{n}}\right),\]which are mm-spaces.
    Since $\widetilde{X}_{n} \in \X((A_{n},B_{n});(\alpha, 1-\alpha);R_{n})$, the pyramid $\mathcal{P}_{\widetilde{X}_{n}}$ converges weakly to the direct sum $\mathcal{P}^{\alpha}+\mathcal{Q}^{1-\alpha}$ by Corollary~\ref{cor:the sum of Xnk's approximates the sum of pyramids}.
    By Lemma~\ref{lem:dP < dTV}, Lemma~\ref{lem:box < 2dP}, and Proposition~\ref{prop:rho < box}, we have
    \begin{align*}
    \rho(\mathcal{P}_{X_{n}},\mathcal{P}^{\alpha}+\mathcal{Q}^{1-\alpha}) &\le \square(X_{n}, \widetilde{X}_{n}) + \rho(\mathcal{P}_{\widetilde{X}_{n}},\mathcal{P}^{\alpha}+\mathcal{Q}^{1-\alpha})\\
            &\le 2\dP(\mu_{X_{n}}, \mu_{\widetilde{X}_{n}}) + \rho(\mathcal{P}_{\widetilde{X}_{n}},\mathcal{P}^{\alpha}+\mathcal{Q}^{1-\alpha}) \\
            &\le 2d_{\mathrm{TV}}(\mu_{X_{n}}, \mu_{\widetilde{X}_{n}}) + \rho(\mathcal{P}_{\widetilde{X}_{n}},\mathcal{P}^{\alpha}+\mathcal{Q}^{1-\alpha}).
    \end{align*}
    Therefore, it suffices to prove that \[\lim_{n\to\infty}d_{\mathrm{TV}}(\mu_{X_{n}}, \mu_{\widetilde{X}_{n}})=0.\]
    For any Borel subset $C \subset X_{n}$, we observe that
    \begin{align*}
        \lvert \mu_{X_{n}}(C) - \mu_{\widetilde{X}_{n}}(C)\rvert &\le \left\lvert \mu_{X_{n}}(C \cap A_{n}) - \frac{\alpha}{\mu_{X_{n}}(A_{n})}\mu_{X_{n}}(C\cap A_{n}) \right\rvert \\
        &\hspace{5mm}+\left\lvert \mu_{X_{n}}(C \cap B_{n}) - \frac{1-\alpha}{\mu_{X_{n}}(B_{n})}\mu_{X_{n}}(C\cap B_{n}) \right\rvert + 1- \mu_{X_{n}}(A_{n}\cup B_{n})\\
        &\le \lvert \mu_{X_{n}}(A_{n}) - \alpha \rvert + \lvert \mu_{X_{n}}(B_{n})-(1-\alpha)\rvert + 1- \mu_{X_{n}}(A_{n}\cup B_{n}),
    \end{align*}
    which implies that 
    \begin{align*}
        d_{\mathrm{TV}}(\mu_{X_{n}}, \mu_{\widetilde{X}_{n}}) &\le \lvert \mu_{X_{n}}(A_{n}) - \alpha \rvert + \lvert \mu_{X_{n}}(B_{n})-(1-\alpha)\rvert + 1- \mu_{X_{n}}(A_{n}\cup B_{n}) \longrightarrow 0
    \end{align*}
    as $n \to \infty$.
    This completes the proof.
\end{proof}

We here recall the definition of wedge sum of two pointed mm-spaces.
Let $(X,x_{0})$ and $(Y,y_{0})$ be two pointed mm-spaces and let $0<\alpha<1$.
We define a function $d:(X\sqcup Y)\times (X\sqcup Y) \to [0,\infty)$ by
\begin{equation*}
    d(z,z^{\prime}) := \begin{cases}
                            d_{X}(z,z^{\prime}) &\text{if $z,z^{\prime} \in X$,} \\
                            d_{Y}(z,z^{\prime}) &\text{if $z,z^{\prime} \in Y$,} \\
                            d_{X}(z,x_{0}) +d_{Y}(z^{\prime},y_{0}) &\text{if $z \in X,\hspace{1mm}z^{\prime} \in Y$,} \\
                            d_{X}(z^{\prime},x_{0}) + d_{Y}(z,y_{0}) &\text{if $z^{\prime} \in X,\hspace{1mm}z \in Y$,}
                        \end{cases}
\end{equation*}
which is a pseudo-metric on $X\sqcup Y$.
We take a quotient of $X\sqcup Y$ by the equivalence relation $d=0$, that is, identifying two points $x_{0} \in X$ and $y_{0} \in Y$, and denote the quotient map by $\pi:X\sqcup Y \to (X\sqcup Y)/(d=0)$.
Then, the pseudo-metric $d$ on $X\sqcup Y$ induces a metric on $(X\sqcup Y)/(d=0)$, and we write the same symbol $d$ for it.
We denote the metric space $((X\sqcup Y)/(d=0), d)$ by $(X,x_{0})\vee (Y,y_{0})$.
Note that this metric space is complete and separable.
Moreover, we define a triple \[(X,x_{0})^{\alpha}\vee (Y,y_{0})^{1-\alpha} := ((X\sqcup Y)/(d=0)),d,\pi_{*}(\alpha\mu_{X}+(1-\alpha)\mu_{Y})),\] which is an mm-space.
We call this mm-space the \textit{wedge sum} of $(X,x_{0})$ and $(Y,y_{0})$.

\begin{prop}\label{prop:wedge sum converges weakly to direct sum}
    Let $\{X_{n}^{1}\}_{n=1}^{\infty}$ and $\{X_{n}^{2}\}_{n=1}^{\infty}$ be sequences of mm-spaces, and let $\mathcal{P}^{1}$ and $\mathcal{P}^{2}$ be pyramids.
    Suppose that $\mathcal{P}_{X_{n}^{i}}$ converges weakly to $\mathcal{P}^{i}$ for each $i=1,2$ as $n \to \infty$, and that there exist two sequences of points $\{x_{n}^{i} \in X_{n}\}_{n=1}^{\infty}$, $i=1,2$, that satisfy \[\lim_{n\to\infty}\mu_{X_{n}^{i}}(U_{r}(x_{n}^{i}))=0,\]for any $r >0$.
    Then, for any $0 < \alpha < 1$, the sequence $\{\mathcal{P}_{(X_{n}^{1},x_{n}^{1})^{\alpha}\vee(X_{n}^{2},x_{n}^{2})^{1-\alpha}}\}_{n=1}^{\infty}$ converges weakly to the direct sum $(\mathcal{P}^{1})^{\alpha} + (\mathcal{P}^{2})^{1-\alpha}$.
\end{prop}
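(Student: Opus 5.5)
We reduce to Lemma~\ref{lem:the structure of mm-spaces that converge weakly to a direct sum of pyramids}. Write $W_{n} := (X_{n}^{1},x_{n}^{1})^{\alpha}\vee(X_{n}^{2},x_{n}^{2})^{1-\alpha}$ and regard $X_{n}^{1}$ and $X_{n}^{2}$ as closed subsets of $W_{n}$ meeting only at the wedge point $\pi(x_{n}^{1})=\pi(x_{n}^{2})$. The hypothesis $\mu_{X_{n}^{i}}(U_{r}(x_{n}^{i}))\to 0$ for every fixed $r$ lets us choose $r_{n}\to\infty$ slowly (by a diagonal argument) with $\mu_{X_{n}^{i}}(U_{r_{n}}(x_{n}^{i}))\to 0$ for $i=1,2$. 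We then set
\[A_{n} := \pi\bigl(X_{n}^{1}\setminus U_{r_{n}}(x_{n}^{1})\bigr),\qquad B_{n} := \pi\bigl(X_{n}^{2}\setminus U_{r_{n}}(x_{n}^{2})\bigr).\]
These are closed subsets of $W_{n}$. Their measures are $\alpha\,\mu_{X_{n}^{1}}(X_{n}^{1}\setminus U_{r_{n}}(x_{n}^{1}))\to\alpha$ and similarly $\to 1-\alpha$, and they are positive for large $n$; we discard finitely many $n$ to ensure this. For $a\in A_{n}$ and $b\in B_{n}$ we have $d(a,b)=d_{X_{n}^{1}}(a,x_{n}^{1})+d_{X_{n}^{2}}(b,x_{n}^{2})\ge 2r_{n}\to\infty$, so the first three hypotheses of the lemma hold.

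It remains to check that $\mathcal{P}_{A_{n}}\to\mathcal{P}^{1}$ and $\mathcal{P}_{B_{n}}\to\mathcal{P}^{2}$ weakly, where $A_{n}$ carries the normalized restriction of $\mu_{W_{n}}$. The metric on $A_{n}$ inside $W_{n}$ is just $d_{X_{n}^{1}}$. The normalized restriction of $\mu_{W_{n}}$ to $A_{n}$ equals the normalized restriction of $\mu_{X_{n}^{1}}$ to $X_{n}^{1}\setminus U_{r_{n}}(x_{n}^{1})$, because the factor $\alpha$ cancels. So $A_{n}$ is mm-isomorphic to the subspace $X_{n}^{1}\setminus U_{r_{n}}(x_{n}^{1})$ of $X_{n}^{1}$. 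Lemma~\ref{lem:box distance with X and A in X} then gives $\square(A_{n},X_{n}^{1})\le 4\mu_{X_{n}^{1}}(U_{r_{n}}(x_{n}^{1}))\to 0$. Proposition~\ref{prop:rho < box} yields $\rho(\mathcal{P}_{A_{n}},\mathcal{P}_{X_{n}^{1}})\to 0$, and combined with $\mathcal{P}_{X_{n}^{1}}\to\mathcal{P}^{1}$ and Theorem~\ref{thm:metric rho} (compatibility of $\rho$ with weak convergence) we get $\mathcal{P}_{A_{n}}\to\mathcal{P}^{1}$. The same argument handles $B_{n}$. Lemma~\ref{lem:the structure of mm-spaces that converge weakly to a direct sum of pyramids} then gives the conclusion.

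The main delicate point is choosing $r_{n}$. For each $k$ pick $N_{k}$ increasing such that $\mu_{X_{n}^{i}}(U_{k}(x_{n}^{i}))<1/k$ for all $n\ge N_{k}$ and $i=1,2$, and set $r_{n}:=k$ for $N_{k}\le n<N_{k+1}$. A minor technical point is that $\supp\mu$ may be smaller than the wedge space; this is harmless because the lemma only needs closed subsets with the stated measure and distance properties. Alternatively, one can avoid $r_{n}$ altogether by using a fixed $r$ together with a limiting argument, but the diagonal choice is cleaner.
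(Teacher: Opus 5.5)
Your proposal is correct and is essentially the paper's proof. You remove balls of radii $r_n\to\infty$ with vanishing mass around the base points, and observe that the remaining pieces are at distance at least $2r_n$ with masses tending to $\alpha$ and $1-\alpha$. You then control $\rho(\mathcal{P}_{A_n},\mathcal{P}_{X_n^1})$ via Lemma~\ref{lem:box distance with X and A in X} and Proposition~\ref{prop:rho < box}, and conclude with Lemma~\ref{lem:the structure of mm-spaces that converge weakly to a direct sum of pyramids}. The differences are only cosmetic. You use one common radius $r_n$ for both factors where the paper uses separate $R_n^i$, and you spell out the diagonal choice of $r_n$ and the discarding of finitely many $n$ to get positive mass, which the paper leaves implicit.
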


\begin{proof}
    Set $\alpha^{1} := \alpha$, $\alpha^{2}:=1-\alpha$, and $Z_{n} := (X_{n}^{1}, x_{n}^{1})^{\alpha} \vee (X_{n}^{2}, x_{n}^{2})^{1-\alpha}$ for $n \in \mathbb{N}$.
    By the assumption, we find two sequences $\{R_{n}^{i}\}_{n=1}^{\infty}$, $i=1,2$, of positive numbers tending to infinity such that \[\lim_{n\to\infty}\mu_{X_{n}^{i}}(U_{R_{n}^{i}}(x_{n}^{i})) = 0.\]
    For $i=1,2$, we set $A_{n}^{i} := X_{n}^{i}\setminus U_{R_{n}^{i}}(x_{n}^{i})$, then we have \[\lim_{n\to\infty}\mu_{Z_{n}}(A_{n}^{i}) = \alpha^{i}\]and \[d_{Z_{n}}(A_{n}^{1}, A_{n}^{2}) \ge R_{n}^{1}+R_{n}^{2} \longrightarrow +\infty\]as $n \to \infty$.
    Moreover, by Proposition~\ref{prop:rho < box} and Lemma~\ref{lem:box distance with X and A in X}, we have 
    \begin{align*}
        \rho(\mathcal{P}_{A_{n}^{i}}, \mathcal{P}^{i}) &\le \square(A_{n}^{i}, X_{n}^{i}) + \rho(\mathcal{P}_{X_{n}^{i}}, \mathcal{P}^{i}) \\&\le 4\mu_{X_{n}^{i}}(U_{R_{n}^{i}}(x_{n}^{i})) + \rho(\mathcal{P}_{X_{n}^{i}}, \mathcal{P}^{i}) \longrightarrow 0
    \end{align*}
    as $n \to \infty$, which implies that $\{\mathcal{P}_{A_{n}^{i}}\}_{n=1}^{\infty}$ converges weakly to $\mathcal{P}^{i}$.
    By Lemma~\ref{lem:the structure of mm-spaces that converge weakly to a direct sum of pyramids}, we obtain this proposition.
\end{proof}

As an application of Proposition~\ref{prop:wedge sum converges weakly to direct sum}, we next prove the following proposition.
\begin{prop}\label{prop:example of a sequence of mm-spaces that weak converge to a direct sum of pyramids}
    Let $X$ and $Y$ be nontrivial mm-spaces and let $0 < \alpha < 1$ and $p, q \in [1, \infty)$.
    For any two sequences of points $\{\bar{x}^{n} \in X^{n}\}_{n=1}^{\infty}$ and $\{\bar{y}^{n} \in Y^{n}\}_{n=1}^{\infty}$, the pyramid $\mathcal{P}_{(X_{p}^{n},\bar{x}^{n})^{\alpha}\vee(Y_{q}^{n},\bar{y}^{n})^{1-\alpha}}$ converges weakly to $(X_{p}^{\infty})^{\alpha}+(Y_{q}^{\infty})^{1-\alpha}$. 
\end{prop}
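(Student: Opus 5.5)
The plan is to apply Proposition~\ref{prop:wedge sum converges weakly to direct sum} with $X_{n}^{1} := X_{p}^{n}$, $X_{n}^{2} := Y_{q}^{n}$, $x_{n}^{1} := \bar{x}^{n}$, $x_{n}^{2} := \bar{y}^{n}$, $\mathcal{P}^{1} := X_{p}^{\infty}$ and $\mathcal{P}^{2} := Y_{q}^{\infty}$. The first hypothesis is immediate. Since $\{X_{p}^{n}\}_{n}$ is increasing in the Lipschitz order (projection onto the first $n-1$ factors is a domination map) and generates $X_{p}^{\infty}$ by definition, it is an approximation sequence of $X_{p}^{\infty}$. Hence $\mathcal{P}_{X_{p}^{n}}$ converges weakly to $X_{p}^{\infty}$ by Remark~\ref{rem:approximation sequence is a weak convergent sequence}, and likewise for $Y$. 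So everything reduces to the small-ball condition: for every $r>0$,
\[\lim_{n\to\infty}\sup_{z \in X^{n}}\mu_{X}^{\otimes n}\bigl(U_{r}^{X_{p}^{n}}(z)\bigr) = 0,\]
and the same for $Y_{q}^{n}$. Uniformity in the center is needed because the base points $\bar{x}^{n}$ are arbitrary.

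To prove this, note that the $\ell^{p}$-ball satisfies $U_{r}(z) \subset \{x : \sum_{i=1}^{n} d_{X}(x_{i},z_{i})^{p} < r^{p}\}$ with $p<\infty$. Since $X$ is nontrivial, $\mu_{X}$ is not a Dirac mass. I claim there exist $\delta>0$ and $c>0$ such that $\mu_{X}(\{x : d_{X}(x,w) \ge \delta\}) \ge c$ for every $w \in X$. Suppose not. Then there are $w_{k}$ with $\mu_{X}(U_{1/k}(w_{k})) \to 1$. Once the mass exceeds $1/2$, these balls pairwise intersect, so $w_{k}$ is Cauchy and converges to some $w$. It follows that $\mu_{X}(\{w\}) = 1$, a contradiction.

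Now let $N(x) := \#\{i : d_{X}(x_{i},z_{i}) \ge \delta\}$. Under $\mu_{X}^{\otimes n}$, $N$ stochastically dominates a binomial variable $\mathrm{Bin}(n,c)$, uniformly in $z$, because the coordinates are independent. On $U_{r}(z)$ we have $N(x)\delta^{p} < r^{p}$, so $N < (r/\delta)^{p}$, a fixed bound. By a Chernoff bound or Chebyshev, $P(\mathrm{Bin}(n,c) < (r/\delta)^{p}) \to 0$ as $n \to \infty$. This gives the uniform decay, and finiteness of $p$ is exactly where this step uses the hypothesis $p<\infty$.

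With both hypotheses verified, Proposition~\ref{prop:wedge sum converges weakly to direct sum} yields the claimed convergence to $(X_{p}^{\infty})^{\alpha}+(Y_{q}^{\infty})^{1-\alpha}$. The main obstacle is the uniform small-ball estimate, specifically the uniform-in-center lower bound on the mass outside $\delta$-balls. The compactness-type argument above handles it, using completeness of $X$ to extract the limit point $w$. One should also check that this proposition only requires nontriviality of $X$ and not any finite moment assumption; the argument above uses none.
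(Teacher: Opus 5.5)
Your proof is correct, and its skeleton matches the paper's. The paper also derives the statement directly from Proposition~\ref{prop:wedge sum converges weakly to direct sum} together with the uniform small-ball decay $\lim_{n\to\infty}\sup_{x}\mu_X^{\otimes n}(U_r^{X_p^n}(x))=0$ of Lemma~\ref{lem:the measure of r-balls of infinite product space are zero}. It leaves implicit the convergence $\mathcal{P}_{X_p^n}\to X_p^{\infty}$, which comes from $\{X_p^n\}$ being an approximation sequence, as you note.

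The difference is in how the small-ball decay is proved.

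\textbf{The paper's route.} It works at the scale $r$ itself:
\begin{enumerate}
\item It picks $n_0$ with $\diam{X_p^{n_0}}>2r$, using $\diam{X_p^n}\to\infty$ for $p<\infty$.
\item It applies Lemma~\ref{lem:lem:the measure of r-balls of infinite product space are zero 1} to the finite product $X_p^{n_0}$, giving $\theta:=\sup_{x'}\mu_X^{\otimes n_0}(U_r(x'))<1$.
\item It splits the $n$ coordinates into $k=\lfloor n/n_0\rfloor$ blocks and uses $U_r(x)\subset U_r(\text{block}_1)\times\cdots\times U_r(\text{block}_k)\times X^{n-kn_0}$ to get the bound $\theta^{k}$.
\end{enumerate}

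\textbf{Your route.} You need only one-coordinate information at a single, possibly tiny, scale $\delta$. This is essentially Lemma~\ref{lem:lem:the measure of r-balls of infinite product space are zero 1} for $X$ itself. You prove it by a Cauchy-sequence and completeness argument, rather than by separating two support points at distance greater than $2r$. You then let the $\ell^p$ structure do the work by counting: inside an $r$-ball at most $(r/\delta)^p$ coordinates can deviate by at least $\delta$. Independence plus binomial concentration makes this event unlikely, uniformly in the center.

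\textbf{Comparison.} Your argument avoids the diameter growth of $X_p^n$ and the block inclusion, and it makes the role of $p<\infty$ explicit through the count. The paper's block argument is purely multiplicative. With $n_0=1$ it transfers verbatim to the $\ell^\infty$ case for $r<\diam{X}/2$ (Corollary~\ref{cor:the measure of r-balls of infinite product space for sup-norm are zero}), which the paper later uses for the infinite-product application.
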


For example, we apply this theorem for the $n$-torus $T^{n} := S^{1}(1)\times S^{1}(1)\times \cdots \times S^{1}(1)$, which is equipped with the Riemannian product metric and the normalized Riemannian volume measure.
Note that $T^{n}$ is mm-isomorphic to $T_{2}^{n}$, the $\ell^{2}$-product of $n$ copies of $S^{1}(1)$.
We obtain the following corollary.

\begin{cor}\label{cor:example n-torus}
    For any $0 < \alpha < 1$ and for any two sequences of points $\{x^{n} \in T^{n}\}_{n=1}^{\infty}$ and $\{y^{n} \in T^{n}\}_{n=1}^{\infty}$, the wedge sum $(T^{n}, x^{n})^{\alpha} \vee (T^{n}, y^{n})^{1-\alpha}$ converges weakly to the direct sum $(T_{2}^{\infty})^{\alpha} + (T_{2}^{\infty})^{1-\alpha}$.
\end{cor}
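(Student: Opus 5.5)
We specialize Proposition~\ref{prop:example of a sequence of mm-spaces that weak converge to a direct sum of pyramids} to $X=Y=S^{1}(1)$ and $p=q=2$. The circle $S^{1}(1)$, with its Riemannian distance and normalized volume measure, is a nontrivial mm-space. First we verify the identification noted before the statement. The Riemannian product metric on $T^{n}=S^{1}(1)\times\cdots\times S^{1}(1)$ equals the $\ell^{2}$-product metric of the factor distances, because a geodesic in a Riemannian product is a product of geodesics and its length is the $\ell^{2}$-combination of the factor lengths. The normalized volume measure on $T^{n}$ is the product of the normalized measures on the factors. Hence $T^{n}$ is mm-isomorphic to $(S^{1}(1))_{2}^{n}$, and the latter is exactly the mm-space $X_{2}^{n}$ with $X=S^{1}(1)$ from Example~\ref{ex:l^p-product of mm-spaces}.

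Under this identification, the points $x^{n},y^{n}\in T^{n}$ become points $\bar{x}^{n},\bar{y}^{n}\in X_{2}^{n}$. Since the wedge sum depends only on the pointed mm-space up to mm-isomorphism, the wedge sum $(T^{n},x^{n})^{\alpha}\vee(T^{n},y^{n})^{1-\alpha}$ is mm-isomorphic to $(X_{2}^{n},\bar{x}^{n})^{\alpha}\vee(X_{2}^{n},\bar{y}^{n})^{1-\alpha}$. The two wedge sums therefore have the same associated pyramid.

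We then apply Proposition~\ref{prop:example of a sequence of mm-spaces that weak converge to a direct sum of pyramids} with $X=Y=S^{1}(1)$ and $p=q=2\in[1,\infty)$. It gives that the associated pyramids converge weakly to $(X_{2}^{\infty})^{\alpha}+(X_{2}^{\infty})^{1-\alpha}$, where $X_{2}^{\infty}=T_{2}^{\infty}$ by definition. This is the claim of the corollary.

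The argument has no real obstacle. The only point needing care is the mm-isomorphism $T^{n}\cong T_{2}^{n}$, that is, that the Riemannian product distance is the $\ell^{2}$-combination of the factor distances. This is standard, and the paper states it as a note just before the corollary.
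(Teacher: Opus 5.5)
Your proposal is correct and is the same argument the paper uses: you apply Proposition~\ref{prop:example of a sequence of mm-spaces that weak converge to a direct sum of pyramids} with $X=Y=S^{1}(1)$ and $p=q=2$, via the mm-isomorphism $T^{n}\cong T_{2}^{n}$. You also check explicitly that $S^{1}(1)$ is nontrivial and that the wedge sum is invariant under pointed mm-isomorphism, which the paper leaves implicit.
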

Proposition~\ref{prop:example of a sequence of mm-spaces that weak converge to a direct sum of pyramids} directly follows from Proposition~\ref{prop:wedge sum converges weakly to direct sum} and the following lemma.
\begin{lem}\label{lem:the measure of r-balls of infinite product space are zero}
    Let $X$ be a nontrivial mm-space and let $p \in [1, \infty)$.
    Then, for any $r > 0$, we have \[\lim_{n\to\infty}\sup_{x\in X^{n}}\mu_{X}^{\otimes n}\left(U_{r}^{X_{p}^{n}}(x)\right) = 0.\]
\end{lem}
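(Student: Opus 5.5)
We need to show that small balls in $X_p^n$ have uniformly vanishing measure as $n\to\infty$. The plan is to first handle points and then pass to balls via a discretization on each coordinate.

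\emph{Reduction to a coordinatewise estimate.} Since $X$ is nontrivial, $\mu_X$ is not a Dirac mass. Fix $r>0$ and choose $\delta>0$ small; we will pick $\delta$ depending on $X$ only (say $\delta<1$ and $\delta$ much smaller than structural quantities of $\mu_X$). For $x=(x_1,\dots,x_n)$ and $y\in U_r^{X_p^n}(x)$ we have $\sum_i d_X(x_i,y_i)^p<r^p$, so the number of indices $i$ with $d_X(x_i,y_i)\ge\delta$ is less than $(r/\delta)^p=:m$. Hence
\[
U_r^{X_p^n}(x)\subset\bigcup_{S\subset[n],\,\#S< m}\ \prod_{i\notin S}U_\delta(x_i)\times\prod_{i\in S}X .
\]
Set $\theta:=\sup_{z\in X}\mu_X(U_\delta(z))$. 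The key claim is that $\theta<1$ for a suitable $\delta>0$. This holds because, if $\mu_X(U_\delta(z_\delta))\to1$ as $\delta\to0$, then tightness gives cluster points of $z_\delta$ in a compact set of large measure, which forces $\mu_X$ to be a Dirac mass. More concretely, take two distinct points $a,b$ of $\supp\mu_X$, and let $\eta=\min\{\mu_X(U_{s}(a)),\mu_X(U_s(b))\}>0$ with $s=d(a,b)/3$. Then any ball of radius $\delta\le s$ misses one of $U_s(a)$ and $U_s(b)$, so $\theta\le1-\eta<1$. This simple argument avoids compactness entirely, and it is the step I would do first.

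\emph{Counting.} With this bound, by product structure,
\[
\mu_X^{\otimes n}\bigl(U_r^{X_p^n}(x)\bigr)\le\sum_{k<m}\binom{n}{k}\theta^{\,n-k}\le m\,n^{m}\theta^{\,n-m},
\]
uniformly in $x$. Since $m$ and $\theta<1$ depend only on $r$ and $X$, the right-hand side tends to $0$ as $n\to\infty$, which proves the lemma.

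\emph{Main obstacle.} The only delicate point is ensuring the uniform bound $\theta<1$ and that the sets involved are measurable. Open balls are Borel, and the union over the finitely many $S$ is finite for each $n$, so measurability is fine. The uniform bound is handled by the two-point argument above. The finiteness $p<\infty$ is essential, since it makes the number of "far" coordinates bounded by $(r/\delta)^p$; for $p=\infty$ the statement would fail.
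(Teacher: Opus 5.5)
Your proof is correct, but it is organized differently from the paper's. The paper never passes to a small coordinate scale. It chooses $n_0$ with $\mathrm{diam}(X_p^{n_0})>2r$, which is possible because $p<\infty$ and $X$ is nontrivial. It then applies its auxiliary lemma ($\sup_{y}\mu_Y(U_r(y))<1$ for a nontrivial mm-space $Y$ and $0<r<\mathrm{diam}(Y)/2$) to the single space $Y=X_p^{n_0}$. Finally it observes that an $r$-ball in $X_p^n$ lies in a product of $r$-balls of $X_p^{n_0}$ over $\lfloor n/n_0\rfloor$ disjoint blocks of coordinates. This gives the bound $\theta_{n_0}^{\lfloor n/n_0\rfloor}$ with $\theta_{n_0}:=\sup_{x'\in X^{n_0}}\mu_X^{\otimes n_0}(U_r(x'))<1$, and no combinatorics.

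You instead fix one small scale $\delta$ in $X$ itself. You use the $\ell^p$ budget to show that fewer than $(r/\delta)^p$ coordinates can be $\delta$-far, and absorb the choice of these coordinates into a binomial union bound. What your version buys:
\begin{itemize}
\item an explicit rate of the form $n^{(r/\delta)^p}\theta^{\,n}$, in which the exponential base $\theta$ does not depend on $r$;
\item the sub-one ball bound is needed only at one small scale of $X$.
\end{itemize}
What the paper's version buys:
\begin{itemize}
\item it is shorter and uses $p<\infty$ only through $\mathrm{diam}(X_p^n)\to\infty$;
\item it carries over verbatim (with $n_0=1$) to the $\ell^\infty$-product when $r<\mathrm{diam}(X)/2$, which is exactly how the paper obtains its corollary for the sup-metric.
\end{itemize}

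One small slip needs fixing. With $s=d_X(a,b)/3$ and $\delta\le s$, it is not true that every $\delta$-ball misses $U_s(a)$ or $U_s(b)$. The triangle inequality only yields $d_X(a,b)<2s+2\delta\le\frac{4}{3}d_X(a,b)$; for a counterexample in $\mathbb{R}$, take $a=0$, $b=3$, $s=\delta=1$ and centre $3/2$. Taking $s=d_X(a,b)/4$ and $\delta\le s$ fixes it, as does keeping $s=d_X(a,b)/3$ and requiring $\delta\le d_X(a,b)/6$. Either choice gives $d_X(a,b)<2s+2\delta\le d_X(a,b)$, the desired contradiction. Also, the number of admissible $k$ in your union bound is $\lceil m\rceil$ rather than $m$, which is harmless.
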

To prove this, we need the next lemma.
\begin{lem}\label{lem:lem:the measure of r-balls of infinite product space are zero 1}
    Let $X$ be a nontrivial mm-space.
    Then, for any number $r$ with $0 < r < \diam{X}/2$, we have \[\sup_{x \in X} \mu_{X}(U_{r}(x)) < 1.\]
\end{lem}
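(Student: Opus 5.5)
We argue by contradiction and use only that $X = \supp{\mu_{X}}$ together with $0 < r < \diam{X}/2$. Suppose $\sup_{x \in X}\mu_{X}(U_{r}(x)) = 1$. Then there are points $x_{k} \in X$ with $\mu_{X}(U_{r}(x_{k})) \ge 1 - 1/k$. We will show that this forces all of $X$ to lie in a closed ball of radius about $r$, so that $\diam{X} \le 2r$, a contradiction.

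The key step is the following. For any two indices $k, \ell$ with $1/k + 1/\ell < 1$, the balls $U_{r}(x_{k})$ and $U_{r}(x_{\ell})$ intersect, because their measures sum to more than $1$. Hence $d_{X}(x_{k}, x_{\ell}) < 2r$. This alone does not bound the diameter, so we instead use the support property. Fix any $y \in X$ and any $\delta > 0$. Since $y \in \supp{\mu_{X}}$, we have $m := \mu_{X}(U_{\delta}(y)) > 0$. For $k$ with $1/k < m$, the sets $U_{\delta}(y)$ and $U_{r}(x_{k})$ intersect, so $d_{X}(y, x_{k}) < r + \delta$.

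Now take two arbitrary points $y, z \in X$ and apply this with the same large $k$, chosen so that $1/k < \min\{\mu_{X}(U_{\delta}(y)), \mu_{X}(U_{\delta}(z))\}$. This gives
\[
d_{X}(y,z) \le d_{X}(y,x_{k}) + d_{X}(x_{k},z) < 2r + 2\delta .
\]
Letting $\delta \to 0+$ yields $d_{X}(y,z) \le 2r$ for all $y, z \in X$, so $\diam{X} \le 2r < \diam{X}$, which is a contradiction. This argument also covers the case $\diam{X} = +\infty$, where every $r > 0$ is admissible, and the hypothesis that $X$ is nontrivial is exactly what guarantees $\diam{X} > 0$, so that such $r$ exist.

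The only delicate point is the step that uses $y \in \supp{\mu_{X}}$. The choice of $k$ depends on the pair $(y,z)$, but this is harmless because the final bound $2r + 2\delta$ does not depend on $k$. No earlier result of the paper is needed beyond the standing convention $X = \supp{\mu_{X}}$.
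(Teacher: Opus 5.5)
Your proof is correct and follows essentially the same route as the paper: since $X=\supp{\mu_{X}}$, every $\delta$-ball has positive mass, so a ball $U_{r}(x_{k})$ of nearly full measure must meet the $\delta$-balls around any two points, and the triangle inequality then gives a distance bound of $2r+2\delta$. The only difference is cosmetic: the paper fixes two points at distance greater than $2r$ and chooses $\delta$ small enough to contradict this directly, whereas you first show $\diam{X}\le 2r$ for all pairs and then invoke $2r<\diam{X}$.
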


\begin{proof}
    We prove this by contradiction.
    We assume that there exists a number $r$ with $0 < r < \diam{X}/2$ such that \[\sup_{x \in X} \mu_{X}(U_{r}(x))=1.\]        
    Then, there exists a sequence $\{x_{n}\}_{n=1}^{\infty}$ of points of $X$ such that $\mu_{X}(U_{r}(x_{n}))$ converges to $1$ as $n \to \infty$.
    We take two points $x, x^{\prime} \in X$ such that $d_{X}(x,x^{\prime}) > 2r$ and take any number $\delta$ with \[0 < \delta < \frac{d_{X}(x,x^{\prime})-2r}{2}.\]
    Set $\eta := \mu_{X}(U_{\delta}(x))$ and $\xi := \mu_{X}(U_{\delta}(x^{\prime}))$.
    We note that $\eta > 0$ and $\xi > 0$ since $x,x^{\prime} \in \supp{\mu_{X}}$.
    We take a large number $n \in \N$ and we have \[\mu_{X}(U_{r}(x_{n})) > 1- \min\{\eta, \xi\},\]which implies that \[U_{r}(x_{n}) \cap U_{\delta}(x) \neq \emptyset \hspace{3mm}\text{and}\hspace{3mm}U_{r}(x_{n})\cap U_{\delta}(x^{\prime}) \neq \emptyset.\]
    Take two points $y \in U_{r}(x_{n}) \cap U_{\delta}(x)$ and $y^{\prime} \in U_{r}(x_{n}) \cap U_{\delta}(x^{\prime})$.
    Then we have
    \begin{align*}
        2r &\ge d_{X}(y,y^{\prime}) \ge d_{X}(x,x^{\prime}) - d_{X}(x,y) - d_{X}(x^{\prime}, y^{\prime}) \\
        &\ge d_{X}(x,x^{\prime}) - 2\delta \\
        &> 2r,
    \end{align*}
    which is a contradiction.
    This completes the proof.
\end{proof}

\begin{proof}[Proof of Lemma~\ref{lem:the measure of r-balls of infinite product space are zero}]
    In this proof, we denote $U_{r}^{X_{p}^{n}}(x)$ by $U_{r}(x)$ for simplicity.
    We take any $r > 0$ and fix it.
    Since $\lim_{n\to\infty}\diam{X_{p}^{n}} = \infty$, there exists a number $n_{0} \in \N$ such that $\diam{X_{p}^{n}} > 2r$ for any $n \ge n_{0}$.

    For each $n \ge n_{0}$, there exists a number $k \in \N$ such that $kn_{0} \le n < (k+1)n_{0}$.
    Then, for any point $x=(x_{1},x_{2},\dots,x_{n}) \in X^{n}$ 
    \begin{align*}
        U_{r}(x) &\subset U_{r}((x_{1},\dots,x_{n_{0}}))\times U_{r}((x_{n_{0}+1},\dots,x_{2n_{0}}))\times\cdots \\
        &\hspace{15mm}\cdots\times U_{r}((x_{(k-1)n_{0}+1},\dots,x_{kn_{0}}))\times X^{n-kn_{0}},
    \end{align*}
    which implies that \[\mu_{X}^{\otimes n}(U_{r}(x)) \le \prod_{i=1}^{k}\mu_{X}^{\otimes n_{0}}(U_{r}((x_{(i-1)n_{0}+1},\dots,x_{in_{0}}))) \le \left(\sup_{x^{\prime} \in X^{n}}\mu_{X}^{\otimes n_{0}}(U_{r}(x^{\prime}))\right)^{k}.\]
    Since the right hand side of this inequality does not depend on the point $x$, Claim~\ref{lem:lem:the measure of r-balls of infinite product space are zero 1} implies that
    \[\sup_{x\in X^{n}}\mu_{X}^{\otimes n}(U_{r}(x)) \le \left(\sup_{x^{\prime} \in X^{n}}\mu_{X}^{\otimes n_{0}}(U_{r}(x^{\prime}))\right)^{k} \longrightarrow 0\] as $n \to \infty$.
    This completes the proof.
\end{proof}

We remark that, by the same argument in the proof of Lemma~\ref{lem:the measure of r-balls of infinite product space are zero}, we obtain the next corollary.

\begin{cor}\label{cor:the measure of r-balls of infinite product space for sup-norm are zero}
    Let $X$ be a nontrivial mm-space.
    Then, for any number $r$ with $0 < r < \diam{X}/2$, we have \[\lim_{n\to\infty}\sup_{x \in X^{n}}\mu_{X}^{\otimes n} \left(U_{r}^{X_{\infty}^{n}}(x)\right) = 0.\]
\end{cor}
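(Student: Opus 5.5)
The statement to prove is Corollary~\ref{cor:the measure of r-balls of infinite product space for sup-norm are zero}. I would copy the proof of Lemma~\ref{lem:the measure of r-balls of infinite product space are zero}, now with the $\ell^{\infty}$-product metric. The one real change is the step that produced a factor $n_{0}$. For $p<\infty$ that step used $\diam{X_{p}^{n}}\to\infty$. For $p=\infty$ we have $\diam{X_{\infty}^{n}}=\diam{X}$, which does not grow. This is exactly why $r$ is restricted to $0<r<\diam{X}/2$, and with that restriction the block length can be taken to be $n_{0}=1$.

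\textbf{Step 1 (coordinate factorization).} For $x=(x_{1},\dots,x_{n})\in X^{n}$, the definition of the $\ell^{\infty}$-product metric gives
\[U_{r}^{X_{\infty}^{n}}(x)=U_{r}^{X}(x_{1})\times U_{r}^{X}(x_{2})\times\cdots\times U_{r}^{X}(x_{n}).\]
This is an equality, not just an inclusion. Hence
\[\mu_{X}^{\otimes n}\bigl(U_{r}^{X_{\infty}^{n}}(x)\bigr)=\prod_{i=1}^{n}\mu_{X}\bigl(U_{r}(x_{i})\bigr)\le c^{\,n},\qquad c:=\sup_{y\in X}\mu_{X}(U_{r}(y)).\]

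\textbf{Step 2 (uniform bound below one).} Since $X$ is nontrivial and $0<r<\diam{X}/2$, Lemma~\ref{lem:lem:the measure of r-balls of infinite product space are zero 1} gives $c<1$. This holds even when $\diam{X}=\infty$, because that lemma only needs two points at distance greater than $2r$. The bound $c^{n}$ in Step 1 does not depend on $x$, so taking the supremum over $x\in X^{n}$ gives $\sup_{x}\mu_{X}^{\otimes n}(U_{r}^{X_{\infty}^{n}}(x))\le c^{n}\to0$.

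There is no genuine obstacle. The only point needing care is Step 2, namely that the supremum over centers is strictly below $1$ uniformly in $x$. That is precisely what Lemma~\ref{lem:lem:the measure of r-balls of infinite product space are zero 1} provides, so the result follows by the same argument as Lemma~\ref{lem:the measure of r-balls of infinite product space are zero} with blocks of length one.
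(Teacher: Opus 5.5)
Your proof is correct and follows the paper's approach: the paper obtains this corollary by rerunning the proof of Lemma~\ref{lem:the measure of r-balls of infinite product space are zero}. For the $\ell^{\infty}$-product the diameter no longer grows, and the hypothesis $0<r<\mathrm{diam}(X)/2$ is exactly what lets you take blocks of length one, so that $U_{r}^{X_{\infty}^{n}}(x)=\prod_{i}U_{r}(x_{i})$ together with Lemma~\ref{lem:lem:the measure of r-balls of infinite product space are zero 1} gives the uniform geometric bound $c^{n}\to 0$.
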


\section{Decomposition of pyramids}

In this section, we prove Theorem~\ref{thm:decomposition of pyramids} and Theorem~\ref{thm:the uniquness of decomposition of pyramid}.

\subsection{Decomposition}

\begin{dfn}\label{def:finiteness of the observable diameter of pyramids}
    Let $\mathcal{P}$ be a pyramid. 
    We say that the observable diameter of $\mathcal{P}$ is \textit{finite} if \[ \mathrm{ObsDiam}(\mathcal{P}\,;\,-\kappa) < \infty\] for every $0 < \kappa < 1$. 
    Otherwise, it is said to be \textit{infinite}.
\end{dfn}

\begin{rem}
    This notion coincides with condition (7.4) in~\cite{S2016book}, which concerns the uniform boundedness of the $\kappa$-observable diameter over a family of mm-spaces for each fixed $0 < \kappa < 1$, when restricted to pyramids.
\end{rem}

For convenience, we introduce the following notation: For $A=(a_{n})_{n=1}^{N} \in \mathcal{A}$ and a sequence of pyramids $\{\mathcal{P}_{n}\}_{n=1}^{N}$, we write 
\begin{align*}
    \X^{1-\|A\|_{1}} + \sum_{n=1}^{N}\mathcal{P}_{n}^{a_{n}} 
        :=\begin{cases}
            \X &\text{if $\|A\|_{1}=0$,}\\
            \sum_{n=0}^{N}\mathcal{P}_{n}^{a_{n}} &\text{if $0 < \|A\|_{1} < 1$}, \\
            \sum_{n=1}^{N}\mathcal{P}_{n}^{a_{n}} &\text{if $\|A\|_{1} = 1$},
        \end{cases}
\end{align*}
where $a_{0} := 1-\|A\|_{1}$ and $\mathcal{P}_{0} := \X$.

To prove Theorem~\ref{thm:decomposition of pyramids}, we establish some lemmas.

\begin{lem}\label{lem:the limit of pyramids tP when t to 0+}
    Let $\mathcal{P}$ be a pyramid. 
    Then $t\mathcal{P}$ converges weakly as $t \to 0+$, and the limit is $\mathcal{P}_{A}$ for some $A \in \mathcal{A}$.
\end{lem}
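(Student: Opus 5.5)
The plan is to use monotonicity of $t\mathcal{P}$ in $t$, compactness of $\Pi$, and scale invariance of the limit, then invoke Theorem~\ref{thm:the characterization of scale-invariant pyramids}.

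\textbf{Step 1: monotonicity.} For $0<s<t$ the map $\mathrm{id}:tX\to sX$ is $1$-Lipschitz and measure preserving. Hence $sX\prec tX$ for every mm-space $X$, and therefore $s\mathcal{P}\subset t\mathcal{P}$. So $t\mathcal{P}$ decreases as $t\downarrow 0$.

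\textbf{Step 2: candidate limit.} Set
\[
\mathcal{Q}:=\bigcap_{t>0}t\mathcal{P}=\bigcap_{k\in\N}2^{-k}\mathcal{P}.
\]
I must check that $\mathcal{Q}$ is a pyramid.
\begin{itemize}
\item It contains $*$, so it is nonempty.
\item Each $t\mathcal{P}$ is a pyramid, since scaling is a homeomorphism of $(\X,\square)$ preserving $\prec$. Hence $\mathcal{Q}$ is closed and downward closed.
\item Directedness is the delicate point, because an intersection of directed sets need not be directed.
\end{itemize}

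I would avoid proving directedness directly. By Theorem~\ref{thm:metric rho}, $\Pi$ is compact, so every sequence $t_k\to 0+$ has a subsequence along which $t_k\mathcal{P}$ converges weakly to some pyramid $\mathcal{Q}'$. I then identify $\mathcal{Q}'=\mathcal{Q}$ directly from Definition~\ref{def:weak Hausdorff convergence of pyramids}.
\begin{itemize}
\item If $X\in\mathcal{Q}$, then $X\in t_k\mathcal{P}$ for all $k$. So $\square(X,t_k\mathcal{P})=0$ and $X\in\mathcal{Q}'$.
\item Conversely, take $X\in\mathcal{Q}'$ and fix $s>0$. For large $k$ we have $t_k<s$, so $t_k\mathcal{P}\subset s\mathcal{P}$. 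Then $\square(X,s\mathcal{P})\le\square(X,t_k\mathcal{P})\to0$. Since $s\mathcal{P}$ is closed, $X\in s\mathcal{P}$. Hence $X\in\mathcal{Q}$.
\end{itemize}
Thus every subsequential limit equals $\mathcal{Q}$. In particular $\mathcal{Q}$ is a pyramid, and $t\mathcal{P}\to\mathcal{Q}$ weakly as $t\to0+$ along the continuum.

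\textbf{Step 3: scale invariance.} For $u>0$,
\[
u\mathcal{Q}=\bigcap_{t}ut\mathcal{P}=\bigcap_{t'}t'\mathcal{P}=\mathcal{Q},
\]
using that $X\mapsto uX$ is a bijection of $\X$. Theorem~\ref{thm:the characterization of scale-invariant pyramids} then gives $\mathcal{Q}=\mathcal{P}_A$ for some $A\in\A$.

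\textbf{Main obstacle.} The key point is Step 2. The intersection is not obviously directed, and I resolve this by realizing it as a weak limit using compactness. The one detail to verify is that the closedness of $s\mathcal{P}$ and the monotone inclusion are enough to identify every subsequential limit, which the argument above does.
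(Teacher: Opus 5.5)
Your proof is correct and follows the same route as the paper: you identify the limit as $\bigcap_{t>0}t\mathcal{P}$, observe that it is scale invariant, and apply Theorem~\ref{thm:the characterization of scale-invariant pyramids}. The only difference is in the convergence step: the paper extracts a monotone subsequence and cites \cite[Proposition 1.12 (2)]{EKM2024} for weak convergence of a decreasing sequence of pyramids to its intersection, whereas you prove this directly from the compactness of $\Pi$ and the definition of weak convergence, which also settles directedness of the intersection without a citation.
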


\begin{proof}
    Take any sequence of positive numbers $\{t_{n}\}_{n=1}^{\infty}$ converging to 0.
    There exists a monotone decreasing subsequence $\{t_{n(k)}\}_{k=1}^{\infty} \subset \{t_{n}\}_{n=1}^{\infty}$.
    We have $t_{n(k+1)}\mathcal{P} \subset t_{n(k)}\mathcal{P}$ for each $k$.
    By~\cite[Proposition 1.12 (2)]{EKM2024}, it follows that $t_{n(k)}\mathcal{P}$ converges weakly to the pyramid $\bigcap_{k=1}^{\infty}t_{n(k)}\mathcal{P}$.
    We here observe that \[\bigcap_{t > 0} t\mathcal{P} = \bigcap_{k=1}^{\infty} t_{n(k)}\mathcal{P},\] which implies that $\{t_{n}\mathcal{P}\}_{n=1}^{\infty}$ has the  subsequence $\{t_{n(k)}\mathcal{P}\}_{k=1}^{\infty}$ converging weakly to the pyramid $\bigcap_{t > 0}t\mathcal{P}$.
    Therefore, $t\mathcal{P}$ converges weakly to the pyramid $\bigcap_{t > 0}t\mathcal{P}$ as $t \to 0+$.
    Moreover, by definition, the pyramid $\bigcap_{t > 0}t\mathcal{P}$ is scale invariant, and hence, by Theorem~\ref{thm:the characterization of scale-invariant pyramids}, there exists $A \in \A$ such that $\mathcal{P}_{A} = \bigcap_{t>0} t\mathcal{P}$.
    This completes the proof.
\end{proof}

\begin{lem}\label{lem:scale transformation of direct sums of pyramids}
    Let $(a_{n})_{n=1}^{N} \in \mathcal{A}_{1}$, and let $\{\mathcal{P}_{n}\}_{n=1}^{N}$ be a sequence of pyramids. For any $t > 0$ , we have \[t\left(\sum_{n=1}^{N} \mathcal{P}_{n}^{a_{n}}\right) = \sum_{n=1}^{N} (t\mathcal{P}_{n})^{a_{n}}.\]
\end{lem}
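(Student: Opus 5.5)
The identity is a direct double inclusion, unwinding Definition~\ref{def:direct sum of pyramids in Section 3} together with the definition $t\mathcal{Q} := \{tY \mid Y \in \mathcal{Q}\}$. The only facts needed are that a map $f:Y \to X$ is $1$-Lipschitz with respect to $(d_{Y}, d_{X})$ if and only if it is $1$-Lipschitz with respect to $(td_{Y}, td_{X})$, and that scaling the metric does not change the underlying set, the Borel $\sigma$-algebra, or the measure. Both sides are pyramids: the left side because $t$ applied to a pyramid is again a pyramid, the right side by Theorem~\ref{thm:direct sum of pyramids is a pyramid}. Still, the inclusions will be checked purely at the level of sets of mm-spaces.

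For the inclusion $\subset$, take $W \in t(\sum_{n}\mathcal{P}_{n}^{a_{n}})$, so $W = tX$ for some $X \in \sum_{n}\mathcal{P}_{n}^{a_{n}}$. Choose $X_{n} \in \mathcal{P}_{n}$ and $1$-Lipschitz maps $f_{n}:X_{n} \to X$ with $\mu_{X} = \sum_{n} a_{n}(f_{n})_{*}\mu_{X_{n}}$. Then $tX_{n} \in t\mathcal{P}_{n}$, and the same set map $f_{n}:tX_{n} \to tX$ is $1$-Lipschitz, since $td_{X}(f_{n}(x), f_{n}(y)) \le td_{X_{n}}(x,y)$. Moreover $\mu_{tX} = \mu_{X}$ and $\mu_{tX_{n}} = \mu_{X_{n}}$, so the decomposition $\mu_{tX} = \sum_{n} a_{n}(f_{n})_{*}\mu_{tX_{n}}$ holds verbatim. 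Hence $W \in \sum_{n}(t\mathcal{P}_{n})^{a_{n}}$.

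For the inclusion $\supset$, take $W \in \sum_{n}(t\mathcal{P}_{n})^{a_{n}}$, with $Y_{n} \in t\mathcal{P}_{n}$ and $1$-Lipschitz maps $g_{n}:Y_{n} \to W$ such that $\mu_{W} = \sum_{n} a_{n}(g_{n})_{*}\mu_{Y_{n}}$. Write $Y_{n} = tX_{n}$ with $X_{n} \in \mathcal{P}_{n}$, and set $X := t^{-1}W$, so that $W = tX$. Then $g_{n}:X_{n} \to X$ is $1$-Lipschitz after multiplying both metrics by $t^{-1}$, and the same measure identity holds. Thus $X \in \sum_{n}\mathcal{P}_{n}^{a_{n}}$ and $W \in t(\sum_{n}\mathcal{P}_{n}^{a_{n}})$.

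There is no real obstacle; the only point needing care is the isomorphism-class bookkeeping. Each of $t\mathcal{P}_{n}$ and $t(\cdot)$ is defined on representatives, and one must note that $tX$ is well defined on mm-isomorphism classes, since an mm-isomorphism for $d$ remains one for $td$ and supports are unchanged. With that observation, choosing representatives as above is legitimate and the proof is complete.
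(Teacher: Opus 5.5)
Your proof is correct and essentially matches the paper's argument. The paper packages it through the union representation \eqref{eq:another representation of direct sum of pyramids} together with the identity $t\mathcal{P}_{\sum_{n} X_{n}^{a_{n}}} = \mathcal{P}_{\sum_{n} (tX_{n})^{a_{n}}}$, whereas you unwind the definition of the direct sum directly. The key observation is the same in both: rescaling both metrics by $t$ preserves $1$-Lipschitz maps and leaves the measures unchanged.
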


\begin{proof}
    For any mm-spaces $X_{n}$, $n \in [N]$, it is clear that $t(\sum_{n=1}^{N}X_{n}^{a_{n}}) = \sum_{n=1}^{N}(tX_{n})^{a_{n}}$ and $t\mathcal{P}_{\sum_{n=1}^{N}X_{n}^{a_{n}}} = \mathcal{P}_{\sum_{n=1}^{N}(tX_{n})^{a_{n}}}$.
    This implies that
    \begin{align*}
        t\left(\sum_{n=1}^{N} \mathcal{P}_{n}^{a_{n}}\right) &=t\left(\bigcup_{(X_{n})_{n=1}^{N} \in \prod_{n=1}^{N}\mathcal{P}_{n}}\mathcal{P}_{\sum_{n=1}^{N}X_{n}^{a_{n}}}\right)\\
        &= \bigcup_{(X_{n})_{n=1}^{N} \in \prod_{n=1}^{N}\mathcal{P}_{n}}t\mathcal{P}_{\sum_{n=1}^{N}X_{n}^{a_{n}}}= \bigcup_{(X_{n})_{n=1}^{N} \in \prod_{n=1}^{N}\mathcal{P}_{n}}\mathcal{P}_{\sum_{n=1}^{N}(tX_{n})^{a_{n}}}\\
        &= \bigcup_{(Y_{n})_{n=1}^{N} \in \prod_{n=1}^{N}t\mathcal{P}_{n}}\mathcal{P}_{\sum_{n=1}^{N}Y_{n}^{a_{n}}} = \sum_{n=1}^{N}(t\mathcal{P}_{n})^{a_{n}}.
    \end{align*}
    We obtain this lemma.
\end{proof}

\begin{lem}\label{lem:the characterization of 1pt mm-sp.}
    For an mm-space $X$, the following (i) and (ii) are equivalent to each other.
    \begin{enumerate}
        \item $X$ is mm-isomorphic to the one-point mm-space $*$.
        \item  For any $\kappa$ with $0 < \kappa < 1$, we have $\mathrm{ObsDiam}(X;\, -\kappa) = 0$.
    \end{enumerate}
\end{lem}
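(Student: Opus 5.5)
The direction (i) $\Rightarrow$ (ii) is immediate, and the work is in (ii) $\Rightarrow$ (i).

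For (i) $\Rightarrow$ (ii), an mm-isomorphism $f:X\to *$ is a constant map. Hence every 1-Lipschitz $g:X\to\R$ pushes $\mu_X$ forward to a Dirac measure $\delta_{c}$. The partial diameter of a Dirac measure is zero, since the set $\{c\}$ has full measure and diameter $0$. So $\obsdiam{X}{\kappa}=0$ for every $\kappa$.

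For (ii) $\Rightarrow$ (i), I argue by contraposition. Since we assume $X=\supp{\mu_X}$, it suffices to show that if $X$ contains two distinct points $x\neq x'$, then $\obsdiam{X}{\kappa}>0$ for some $\kappa$. Set $D:=d_X(x,x')>0$ and take $\delta:=D/4$. Because $x,x'\in\supp{\mu_X}$, the quantities $\eta:=\mu_X(U_\delta(x))$ and $\xi:=\mu_X(U_\delta(x'))$ are both positive, and the two balls are disjoint. The test function is the 1-Lipschitz map $g:=d_X(x,\cdot):X\to\R$. Under $g$, the ball $U_\delta(x)$ maps into $[0,\delta)$ and $U_\delta(x')$ maps into $(D-\delta,D+\delta)$. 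These two intervals are at distance at least $D-2\delta=D/2$ from each other.

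Now choose $\kappa$ with $0<\kappa<\min\{\eta,\xi\}$. Let $B\subset\R$ be any Borel set with $g_*\mu_X(B)\ge 1-\kappa$. Then $B$ must meet $g(U_\delta(x))$, since otherwise $g_*\mu_X(B)\le 1-\eta<1-\kappa$. By the same argument $B$ must meet $g(U_\delta(x'))$. Consequently $\diam{B}\ge D/2$, which gives $\mathrm{diam}(g_*\mu_X;1-\kappa)\ge D/2>0$. Therefore $\obsdiam{X}{\kappa}\ge D/2>0$, contradicting (ii).

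The only point needing care is the positivity of the ball measures. This follows from the support convention stated after the definition of mm-isomorphism, exactly as in the proof of Lemma~\ref{lem:lem:the measure of r-balls of infinite product space are zero 1}.
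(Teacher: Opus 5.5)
Your proof is correct. It follows the same contrapositive as the paper, but it handles the key step differently.

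The paper asserts, without comment, that a non-trivial $X$ contains Borel sets $A_1,A_2$ with $\mu_X(A_i)>0$ and $d_X(A_1,A_2)>0$. It then quotes the general inequality $\obsdiam{X}{\kappa'}\ge\Sep(X;\kappa,\kappa)\ge d_X(A_1,A_2)$ for $0<\kappa'<\kappa<\min_i\mu_X(A_i)$ from \cite{S2016book}.

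You instead take the sets to be the disjoint balls $U_\delta(x)$ and $U_\delta(x')$ around two support points. You then test with the single $1$-Lipschitz function $d_X(x,\cdot)$ and bound the partial diameter of its push-forward by hand. In effect, you inline the proof of the separation/observable-diameter inequality in this special case.

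What each approach buys:
\begin{itemize}
\item Your route is self-contained and elementary. It also supplies the justification for the existence of $A_1,A_2$ that the paper skips.
\item The paper's route is shorter. It also reuses a tool, the comparison of $\mathrm{ObsDiam}$ with $\Sep$, that the paper needs again later, both in the uniqueness argument of Section~5 and in the covering-number argument of Section~6.
\end{itemize}

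One small wording point in (i)$\Rightarrow$(ii). That $\mu_X$ is a Dirac mass follows because the isometry $\supp{\mu_X}\to\{*\}$ is injective, which forces $\supp{\mu_X}$ to be a single point. It does not follow merely from the map to $*$ being constant.
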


\begin{proof}
    It is clear that (i) $\Rightarrow$ (ii). 
    We prove (i) $\Leftarrow$ (ii). 
    Suppose that $X$ is not mm-isomorphic to the one-point mm-space $*$. 
    Then, there exist Borel subsets $A_{1}$ and $A_{2}$ of $X$ such that $\mu_{X}(A_{1}) > 0$, $\mu_{X}(A_{2}) > 0$, and $d_{X}(A_{1},\, A_{2}) > 0$. 
    Take two positive numbers $\kappa$ and $\kappa^{\prime}$ with $0 < \kappa^{\prime} < \kappa < \min_{i=1,2}\,\mu_{X}(A_{i})$. 
    By~\cite[Proposition 2.26 (2)]{S2016book}, we have \[\mathrm{ObsDiam}(X;\,-\kappa^{\prime}) \ge \mathrm{Sep}(X;\kappa,\, \kappa) \ge d_{X}(A_{1},\, A_{2}) > 0.\]
    This implies that (ii) does not hold, which completes the proof.
\end{proof}

\begin{lem}\label{lem:the characterization of pyramid whose observable diameter is finite}
    For a pyramid $\mathcal{P}$, the following (i) and (ii) are equivalent to each other.
    \begin{enumerate}
        \item The observable diameter of $\mathcal{P}$ is finite.
        \item $t\mathcal{P} \longrightarrow \{*\} \hspace{5mm} \text{weakly as} \hspace{5mm}t \to 0+$.
    \end{enumerate}
\end{lem}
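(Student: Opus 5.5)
By Lemma~\ref{lem:the limit of pyramids tP when t to 0+}, $t\mathcal{P}$ converges weakly as $t\to0+$ to $\bigcap_{t>0}t\mathcal{P}$. So (ii) is equivalent to $\bigcap_{t>0}t\mathcal{P}=\{*\}$. Equivalently, for every nontrivial $X\in\X$ there is some $t>0$ with $X\notin t\mathcal{P}$, i.e.\ $t^{-1}X\notin\mathcal{P}$. I will prove both directions through this reformulation, using the scaling rule of Lemma~\ref{lem:scale invariance of observable diameter} and the characterization of the one-point space in Lemma~\ref{lem:the characterization of 1pt mm-sp.}.

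For (i) $\Rightarrow$ (ii), take $X\in\bigcap_{t>0}t\mathcal{P}$. Then $t^{-1}X\in\mathcal{P}$ for every $t>0$, so for each $0<\kappa<1$
\[t^{-1}\obsdiam{X}{\kappa}=\obsdiam{t^{-1}X}{\kappa}\le\obsdiam{\mathcal{P}}{\kappa}<\infty .\]
Letting $t\to0+$ forces $\obsdiam{X}{\kappa}=0$ for all $\kappa$. By Lemma~\ref{lem:the characterization of 1pt mm-sp.}, $X$ is mm-isomorphic to $*$. Hence the limit is $\{*\}$; it is nonempty since $*$ lies in every pyramid.

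For (ii) $\Rightarrow$ (i), argue by contradiction: suppose $\obsdiam{\mathcal{P}}{\kappa}=\infty$ for some $\kappa$. For each $t>0$ we then have $\obsdiam{t\mathcal{P}}{\kappa}=\infty$. The task is to produce a fixed nontrivial mm-space lying in every $t\mathcal{P}$, or in the weak limit. Choose $X_k\in\mathcal{P}$ and $1$-Lipschitz $f_k:X_k\to\R$ with $\mathrm{diam}((f_k)_*\mu_{X_k};1-\kappa)>k^2$. Since $(\R,(f_k)_*\mu_{X_k})\prec X_k$, this space lies in $\mathcal{P}$. Rescaling by $t_k=1/k$ gives a measure $\nu_k$ on $\R$ whose $(1-\kappa)$-partial diameter exceeds $k$, and $(\R,\nu_k)\in t_k\mathcal{P}$. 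Truncating via the $1$-Lipschitz map $x\mapsto \max\{-1,\min\{1,x-m_k\}\}$, where $m_k$ is a suitable median-type point, yields measures $\nu_k'$ on $[-1,1]$ in $t_k\mathcal{P}$. Since the partial diameter of $\nu_k$ diverges, $\nu_k'$ must place mass bounded below, say at least $c(\kappa)>0$, near each of $\pm1$, or at least near two points at distance $\ge1$. Precisely: if an interval of length $2$ carried mass $\ge1-\kappa$, the partial diameter would be at most $2$. A careful choice of $m_k$ (as a quantile) then gives mass $\ge\kappa/2$ on each side beyond distance $1$.

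By compactness of $P([-1,1])$, pass to a weak limit $\nu$. It is nontrivial because it has mass $\ge\kappa/2$ at both $\pm1$ after truncation. Moreover $([-1,1],\nu)$ is a box-limit of elements of $t_k\mathcal{P}$ (Lemma~\ref{lem:box < 2dP}), so it belongs to the weak limit of $t_k\mathcal{P}$, which is $\{*\}$ by (ii). This is a contradiction.

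The main obstacle is the quantile bookkeeping in the truncation step: choosing $m_k$ so that both tails carry a definite mass. I would take $m_k$ to be a $\kappa/2$-quantile-based midpoint and use that any interval of length $k$ captures mass $<1-\kappa$. Alternatively, I can use the separation distance via \cite[Proposition 2.26]{S2016book} to extract two sets of measure $\ge\kappa/2$ at distance $\to\infty$, and map them $1$-Lipschitzly onto $\{\pm1\}$-type points.
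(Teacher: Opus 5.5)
Your proof is correct. Direction (i)$\Rightarrow$(ii) is the paper's argument: intersect over $t$, use $\obsdiam{tX}{\kappa}=t\,\obsdiam{X}{\kappa}$, and conclude with Lemma~\ref{lem:the characterization of 1pt mm-sp.}.

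For (ii)$\Rightarrow$(i) you take a genuinely different route. The paper argues by contradiction through the limit formula for the observable diameter \cite[Theorem 1.1]{Y_limit_formula}. Infinite $\kappa$-observable diameter forces $\lim_{\varepsilon\to0+}\liminf_{n\to\infty}\obsdiam{\frac{1}{n}\mathcal{P}}{(\kappa+\varepsilon)}=\infty$, which is incompatible with a weak limit $\{*\}$ of observable diameter $0$.

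You instead exhibit an explicit nontrivial element of the limit. Your quantile bookkeeping closes exactly as you indicate:
\begin{itemize}
\item Let $a_k$ and $b_k$ be the lower and upper $\kappa/2$-quantiles of $\nu_k$. Then $[a_k,b_k]$ carries mass $\ge 1-\kappa$.
\item Hence $b_k-a_k\ge\mathrm{diam}(\nu_k;1-\kappa)$, and you only need this to exceed $2$, so your $k^2$ is more than enough.
\item With $m_k=(a_k+b_k)/2$, each of $(-\infty,m_k-1]$ and $[m_k+1,\infty)$ carries mass $\ge\kappa/2$.
\item The portmanteau theorem keeps atoms of mass $\ge\kappa/2$ at $\pm1$ in the weak limit $\nu$.
\item Finally, $\square\le 2\dP$ together with clause (ii) of weak convergence gives the contradiction.
\end{itemize}

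What your approach buys is self-containment. Truncating to $[-1,1]$ supplies exactly the compactness whose absence on $\mathbb{R}$ makes the general limit formula delicate. Your argument also gives slightly more: $s\mathcal{P}\subset t\mathcal{P}$ for $s\le t$ and each $t\mathcal{P}$ is box-closed, so $([-1,1],\nu)$ is a nontrivial member of $\bigcap_{t>0}t\mathcal{P}$ itself.

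The paper's version is shorter, but it rests on a deep external theorem that is used nowhere else in the argument.
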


\begin{proof}
    First, we prove (i) $\Rightarrow$ (ii). 
    Assume that the observable diameter of $\mathcal{P}$ is finite.
    Set $\mathcal{P}^{\prime} := \bigcap_{t > 0} t\mathcal{P}$.
    Since $\mathcal{P}^{\prime} \subset t\mathcal{P}$ for any $t > 0$ and $\mathcal{P}$ has finite observable diameter, Lemma~\ref{lem:scale invariance of observable diameter} shows that for any $0 < \kappa < 1$, \[\mathrm{ObsDiam}(\mathcal{P}^{\prime};\,-\kappa) \le \mathrm{ObsDiam}(t\mathcal{P};\,-\kappa) = t\mathrm{ObsDiam}(\mathcal{P};\,-\kappa) \longrightarrow 0\] as $t \to 0+$. 
    This implies that $\mathrm{ObsDiam}(\mathcal{P}^{\prime};\, -\kappa) = 0$ for any $0 < \kappa < 1$, that is, $\mathrm{ObsDiam}(X;\, -\kappa) = 0$ for any $X \in \mathcal{P}^{\prime}$ and any $0 < \kappa < 1$. 
    By Lemma~\ref{lem:the characterization of 1pt mm-sp.}, we obtain $\mathcal{P}^{\prime} = \{*\}$. 
    This completes the proof of (i)$\Rightarrow$(ii).

    Next, we prove (i) $\Leftarrow$ (ii). Assume that (i) does not hold. Then there exists a real number $0 < \kappa_{0} < 1$ such that $\mathrm{ObsDiam}(\mathcal{P};\,-\kappa_{0}) = \infty$. 
    Since the observable diameter of $\mathcal{P}$ is monotone nonincreasing in $\kappa$, we have \[\mathrm{ObsDiam}(\mathcal{P};\,-\kappa) = \infty\] for any $0 < \kappa < \kappa_{0}$. 
    For such a $\kappa$, we have 
    \begin{align*}
        &\lim_{\varepsilon \to 0+}\liminf_{n \to \infty} \mathrm{ObsDiam}\left(\frac{1}{n}\mathcal{P};\,-(\kappa+\varepsilon)\right)\\ &= \lim_{\varepsilon \to 0+} \liminf_{n \to \infty} \frac{1}{n}\mathrm{ObsDiam}(\mathcal{P};\,-(\kappa+\varepsilon)) = \infty.
    \end{align*}
    By the limit formula for the observable diameter (see~\cite[Theorem 1.1]{Y_limit_formula}), this implies that the pyramid $t\mathcal{P}$ does not converge weakly to the pyramid $\{*\}$ as $t \to 0+$, in other words, condition (ii) does not hold. 
    This completes the proof of (i) $\Leftarrow$ (ii).
\end{proof}
For $A=(a_{n})_{n=1}^{N} \in \mathcal{A}$ and a real number $c > 0$, we define \[cA:=(ca_{n})_{n=1}^{N},\]which also belongs to $\mathcal{A}$.
\begin{dfn}\label{def:the sum of sequences with weight}
    Let $(a_{n})_{n=1}^{N} \in \A$ and $B_{n} = (b_{nk})_{k=1}^{N_{n}} \in \mathcal{A}$, $n \in [N]$. 
    We define $\sum_{n=1}^{N} a_{n}B_{n}$ as the nonincreasing rearrangement of the terms $a_{n}b_{nk}$ for all $n \in [N]$ and $k \in [N_{n}]$. 
\end{dfn}

 We note that $\sum_{n=1}^{N} a_{n}B_{n} \in \mathcal{A}$.
 
\begin{lem}\label{lem:the condition when A coincides with the direct sum of seq. of real numbers}
    For $A=(a_{n})_{n=1}^{N} \in \mathcal{A}\setminus \{(0)\}$ and $B_{n}=(b_{nk})_{k=1}^{N_{n}} \in \mathcal{A}$, $n \in [N]$, the following (i) and (ii) are equivalent to each other.
    \begin{enumerate}
        \item $A = \sum_{n=1}^{N} a_{n}B_{n}$.
        \item $B_{n} = (1)$ for any $n \in [N]$.
    \end{enumerate}
\end{lem}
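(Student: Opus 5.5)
The direction (ii) $\Rightarrow$ (i) is immediate. If $B_{n}=(1)$ for every $n$, then the family of products $a_{n}b_{nk}$ is exactly $\{a_{n}\}_{n\in[N]}$, and $A$ is already nonincreasing. Its nonincreasing rearrangement is therefore $A$ itself.

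For (i) $\Rightarrow$ (ii), I will compare $\ell^{1}$-norms and then argue inductively on the largest terms. Since all terms are nonnegative, a rearrangement preserves the sum, so
\[\Big\|\sum_{n=1}^{N}a_{n}B_{n}\Big\|_{1}=\sum_{n=1}^{N}a_{n}\|B_{n}\|_{1}\le \sum_{n=1}^{N}a_{n}=\|A\|_{1},\]
with equality if and only if $\|B_{n}\|_{1}=1$ for each $n$. Here I use $a_{n}>0$ and $\|B_{n}\|_{1}\le 1$. Assuming (i), the two norms agree, and $\|A\|_{1}\le 1<\infty$, so every $B_{n}$ satisfies $\|B_{n}\|_{1}=1$. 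In particular $B_{n}\neq(0)$, and all entries of $B_{n}$ are positive.

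It remains to show that each $B_{n}$ has exactly one term. Suppose some $B_{m}$ has at least two terms. Then $b_{m1}<1$, so $a_{m}b_{mk}<a_{m}$ for every $k$. I will compare the multiplicities of the values on both sides, working from the largest value down. Let $v=a_{1}$ be the largest value. In the rearranged sequence, the value $v$ can only come from $a_{n}b_{nk}$ with $a_{n}=v$ and $b_{nk}=1$. A term equal to $1$ forces $B_{n}=(1)$, because the norm is $1$.

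Next, let $S_{v}=\{n: a_{n}=v\}$; this set is finite since $\sum a_{n}\le 1$. The number of products equal to $v$ is $\#\{n\in S_{v}: B_{n}=(1)\}$. By (i) this count must equal $\#S_{v}$, so $B_{n}=(1)$ for all $n\in S_{v}$. Once these indices are removed from both sides, the remaining multisets still agree, and I proceed to the next largest value of $A$. The removal is justified because the products coming from $n\notin S_{v}$ are all strictly less than $v$.

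Proceeding by strong induction over the distinct values of $A$ in decreasing order, I obtain $B_{n}=(1)$ for every $n$. This works because every $a_{n}$ is attained at some finite stage: the values are positive and summable, so there are only finitely many values exceeding any given $a_{n}$.

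The main obstacle is the multiplicity bookkeeping in the inductive step. At a later stage with value $v'$, a product $a_{n}b_{nk}=v'$ could in principle come from some $a_{n}>v'$ with $b_{nk}<1$. This cannot happen, since every such $n$ was already shown to have $B_{n}=(1)$. The products therefore equal to $v'$ come only from indices with $a_{n}=v'$ and $b_{nk}=1$, and the count argument repeats.
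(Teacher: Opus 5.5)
Your proof is correct and is essentially the paper's argument: the paper takes the smallest index $n_{0}$ with $B_{n_{0}}\neq(1)$ and notes that the value $a_{n_{0}}$ occurs at most $m-1$ times in $\sum_{n}a_{n}B_{n}$, against $m$ times in $A$, which is your multiplicity count phrased as a minimal counterexample rather than a forward induction. Your preliminary $\ell^{1}$-norm comparison is harmless but unnecessary, since an entry $b_{nk}=1$ already forces $B_{n}=(1)$ from $\|B_{n}\|_{1}\le 1$.
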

 
\begin{proof}
    It is clear that (i) $\Leftarrow$ (ii). 
    We prove (i) $\Rightarrow$ (ii). 
    Assume that the condition (ii) does not hold. 
    Then there exists a number $n_{0} \in [N]$ such that $B_{n_{0}} \neq (1)$. 
    We suppose that the number $n_{0}$ is the smallest one which satisfies $B_{n_{0}} \neq (1)$. 
    Set $m := \#\{n \in \mathbb{N} \mid a_{n} = a_{n_{0}}\} < \infty$ and $(b_{n})_{n=1}^{N^{\prime}} := \sum_{n=1}^{N} a_{n}B_{n} \in \mathcal{A}$. 
    Then we have \[\#\{n \in \mathbb{N} \mid b_{n} = a_{n_{0}}\} = \#\{n \in \mathbb{N} \mid a_{n}=a_{n_{0}}\hspace{1mm}\text{and}\hspace{1mm}B_{n}=(1)\} \le m-1 < m. \]
    This implies that $A \neq \sum_{n=1}^{N} a_{n}B_{n}$, in other words, the condition (i) does not hold.
    This completes the proof of (i) $\Rightarrow$ (ii).
\end{proof} 

\begin{lem}\label{lem:the direct sum of pyramids equals the direct sum of seq. of real number}
    For any $A=(a_{n})_{n=1}^{N} \in \mathcal{A}_{1}\cap\mathcal{A}$ and any $B_{n}=(b_{nk})_{k=1}^{N_{n}} \in \mathcal{A}$, $n \in [N]$, we have \[\sum_{n=1}^{N} \mathcal{P}_{B_{n}}^{a_{n}} = \mathcal{P}_{\sum_{n=1}^{N} a_{n}B_{n}}.\]
\end{lem}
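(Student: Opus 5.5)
We prove the two inclusions directly from the definitions of the direct sum (Definition~\ref{def:direct sum of pyramids in Section 3}) and of $\mathcal{P}_{A}$ (Proposition~\ref{prop:pyramids generated by atoms}); no limiting argument is needed. Write $C=(c_{j})_{j=1}^{L}:=\sum_{n=1}^{N}a_{n}B_{n}$. By Definition~\ref{def:the sum of sequences with weight}, $C$ is a rearrangement of the terms $a_{n}b_{nk}$. So we fix a bijection $\sigma$ from $[L]$ onto the index set $\{(n,k)\mid n\in[N],\,k\in[N_{n}]\}$, with $c_{j}=a_{n}b_{nk}$ whenever $\sigma(j)=(n,k)$. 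If $B_{n}=(0)$ we regard its index set as empty, which is consistent with $\mathcal{P}_{(0)}=\X$. We also note that
\[
\|C\|_{1}=\sum_{n=1}^{N}a_{n}\|B_{n}\|_{1},\qquad\text{hence}\qquad 1-\|C\|_{1}=\sum_{n=1}^{N}a_{n}\bigl(1-\|B_{n}\|_{1}\bigr),
\]
using $\|A\|_{1}=1$. In both directions the maps into $X$ need not be injective, and the inequality $\mu_{X}\ge\sum c_{j}\delta_{x_{j}}$ counts repeated points with multiplicity. This is how the definition of $\mathcal{P}_{A}$ is to be read.

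\emph{Inclusion $\subset$.} Let $X\in\sum_{n}\mathcal{P}_{B_{n}}^{a_{n}}$. Then there are $X_{n}\in\mathcal{P}_{B_{n}}$ and 1-Lipschitz maps $f_{n}:X_{n}\to X$ with $\mu_{X}=\sum_{n}a_{n}(f_{n})_{*}\mu_{X_{n}}$. By definition of $\mathcal{P}_{B_{n}}$ there are maps $g_{n}:[N_{n}]\to X_{n}$ with $\mu_{X_{n}}\ge\sum_{k}b_{nk}\delta_{g_{n}(k)}$. Pushing forward by $f_{n}$ gives
\[
(f_{n})_{*}\mu_{X_{n}}\ge\sum_{k}b_{nk}\delta_{f_{n}(g_{n}(k))}.
\]
Summing with weights $a_{n}$ yields $\mu_{X}\ge\sum_{j}c_{j}\delta_{h(j)}$, where $h(j):=f_{n}(g_{n}(k))$ for $\sigma(j)=(n,k)$. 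Hence $X\in\mathcal{P}_{C}$.

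\emph{Inclusion $\supset$.} Let $X\in\mathcal{P}_{C}$, so $\mu_{X}\ge\sum_{j}c_{j}\delta_{h(j)}$ for some map $h:[L]\to X$. Put $x_{nk}:=h(\sigma^{-1}(n,k))$. The remainder $\rho_{0}:=\mu_{X}-\sum_{j}c_{j}\delta_{h(j)}$ is a nonnegative measure of mass $1-\|C\|_{1}$. If this mass is positive, let $\rho:=\rho_{0}/(1-\|C\|_{1})$; otherwise let $\rho$ be arbitrary. Define
\[
\nu_{n}:=\sum_{k}b_{nk}\delta_{x_{nk}}+\bigl(1-\|B_{n}\|_{1}\bigr)\rho\in P(X).
\]
By the mass identity above, $\sum_{n}a_{n}\nu_{n}=\sum_{j}c_{j}\delta_{h(j)}+(1-\|C\|_{1})\rho=\mu_{X}$.

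Now set $X_{n}:=(\supp{\nu_{n}},d_{X},\nu_{n})$, which is an mm-space. Each point $x_{nk}$ is an atom of $\nu_{n}$, so it lies in $\supp{\nu_{n}}$, and $\nu_{n}\ge\sum_{k}b_{nk}\delta_{x_{nk}}$; thus $X_{n}\in\mathcal{P}_{B_{n}}$. The inclusion $X_{n}\hookrightarrow X$ is 1-Lipschitz and pushes $\nu_{n}$ forward to $\nu_{n}$, so $X\in\sum_{n}\mathcal{P}_{B_{n}}^{a_{n}}$.

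The main point needing care is the bookkeeping rather than any analytic estimate. Specifically, $N$ and the $N_{n}$ may be infinite, so the countable sums of measures must be handled as in Definition~\ref{def:the sum of probability measures}, and the reindexing via $\sigma$ is independent of the chosen bijection by the same rearrangement argument as for \eqref{eq:the sum of product measures}.
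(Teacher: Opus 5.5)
Your proof is correct and is essentially the paper's argument. For $\subset$ you push the atomic lower bound forward along each $f_{n}$, which is a cleaner substitute for the paper's explicit bookkeeping over coinciding image points $y_{m}$. For $\supset$ you split the remainder $\mu_{X}-\sum_{j}c_{j}\delta_{h(j)}$ among the summands in proportion to $1-\|B_{n}\|_{1}$, exactly as the paper does with $\nu_{n}=\frac{1-\|B_{n}\|_{1}}{\nu(X)}\nu$.
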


\begin{proof}
    First, we prove the inclusion $\subset$. 
    Take any $X \in \sum_{n=1}^{N}\mathcal{P}_{B_{n}}^{a_{n}}$. 
    For $n \in [N]$, there exist mm-spaces $X_{n} \in \mathcal{P}_{B_{n}}$ and 1-Lipschitz maps $f_{n}:X_{n} \to X$ such that $\mu_{X} = \sum_{n=1}^{N}a_{n}(f_{n})_{*}\mu_{X_{n}}$. 
    Moreover, since $X_{n} \in \mathcal{P}_{B_{n}}$, there exist points $x_{nk} \in X_{n}$, $k \in [N_{n}]$, such that $\sum_{k=1}^{N_{n}}b_{nk}\delta_{x_{nk}} \le \mu_{X_{n}}$. 
    For $n \in [N]$ and $k \in [N_{n}]$, set $y_{nk} := f_{n}(x_{nk})$.
    We take mutually distinct points $y_{m} \in X$ for $m \in [M]$, $M \in \overline{\mathbb{N}}$, so that $\{y_{m}\mid m \in [M]\} = \{y_{nk} \mid n \in [N], k \in [N_{n}]\}$.
    For $m \in [M]$, set \[N_{1}(m) := \{n \in [N] \mid \text{There exists }k \in [N_{n}] \hspace{1mm}\text{such that }\hspace{1mm} y_{nk} = y_{m}\}\]and \[N(m) := \{(n,k) \mid y_{nk} = y_{m},\hspace{1mm}n \in [N], k \in [N_{n}]\}.\]
    Then we have 
    \begin{align*}
        \mu_{X}(\{y_{m}\}) &=\sum_{n=1}^{N}a_{n}(f_{n})_{*}\mu_{X_{n}}(\{y_{m}\}) \ge \sum_{n \in N_{1}(m)}a_{n}(f_{n})_{*}\mu_{X_{n}}(\{y_{m}\}) \\
            &\ge \sum_{n \in N_{1}(m)}a_{n}\sum_{\ell=1}^{N_{n}}b_{n\ell}\delta_{x_{n\ell}}(f_{n}^{-1}(\{y_{m}\})) = \sum_{n \in N_{1}(m)}a_{n}\sum_{k;(n,k) \in N(m)}b_{nk} \\
            &= \sum_{(n,k) \in N(m)}a_{n}b_{nk},
    \end{align*}
    and we obtain \[\mu_{X} \ge \sum_{m=1}^{M}\sum_{(n,k) \in N(m)}a_{n}b_{nk}\delta_{y_{m}} = \sum_{n \in [N],\, k \in [N_{n}]}a_{n}b_{nk}\delta_{y_{nk}}.\] 
    This implies that $X \in \mathcal{P}_{\sum_{n=1}^{N} a_{n}B_{n}}$, which completes the proof of the inclusion $\subset$.

    Next, we prove the opposite inclusion $\supset$. 
    Take any $X \in \mathcal{P}_{\sum_{n=1}^{N} a_{n}B_{n}}$. 
    There exist points $x_{nk} \in X$, $n \in [N]$, $k \in [N_{n}]$, such that \[\sum_{n \in [N],\, k \in [N_{n}]} a_{n}b_{nk}\delta_{x_{nk}} \le \mu_{X}.\] 
    Set 
    \begin{equation*}
        \nu := \mu_{X} - \sum_{k \in [N_{n}],\, n \in [N]} a_{n}b_{nk}\delta_{x_{nk}} \hspace{3mm}\text{and}\hspace{3mm}\nu_{n} := \begin{cases}
                        \frac{1-\|B_{n}\|_{1}}{\nu(X)}\nu &\text{if $\nu(X) > 0$,} \\
                        0 &\text{if $\nu(X) = 0$,}
                    \end{cases}
    \end{equation*}
    for $n \in [N]$.
    We observe that $\sum_{n=1}^{N} a_{n}\nu_{n} = \nu$. 
    In fact, it is clear if $\nu(X) = 0$. 
    If $\nu(X) > 0$, then we have \[\sum_{n=1}^{N}a_{n}\nu_{n} = \sum_{n=1}^{N} \frac{a_{n}-a_{n}\|B_{n}\|_{1}}{\nu(X)}\nu = \frac{1-\sum_{n=1}^{N}a_{n}\|B_{n}\|_{1}}{\nu(X)}\nu = \nu.\]
    For $n \in [N]$, define \[\mu_{n} := \sum_{k=1}^{N_{n}} b_{nk}\delta_{x_{nk}} + \nu_{n}.\]
    It is clear that $\mu_{n}$ is a Borel probability measure on $X$, and the triple $X_{n}:=(X,\, d_{X},\, \mu_{n})$ is an mm-space that belongs to $\mathcal{P}_{B_{n}}$. 
    We define 1-Lipschitz maps $f_{n}:X_{n} \to X$ by $f_{n}(x) := x$ for $n \in [N]$, then we have 
    \begin{align*}
        \sum_{n=1}^{N}a_{n}(f_{n})_{*}\mu_{X_{n}} &= \sum_{n=1}^{N}a_{n}\mu_{n} = \sum_{n=1}^{N}\sum_{k=1}^{N_{n}}a_{n}b_{nk}\delta_{x_{nk}} + \sum_{n=1}^{N}a_{n}\nu_{n} \\ &= \sum_{n=1}^{N}\sum_{k=1}^{N_{n}}a_{n}b_{nk}\delta_{x_{nk}} +\nu = \mu_{X},
    \end{align*}
    which implies $X \in \sum_{n=1}^{N}\mathcal{P}_{B_{n}}^{a_{n}}$. 
    This completes the proof of the inclusion $\supset$.
\end{proof}
\begin{rem}
A similar construction of the measure $\mu_{n}$ used in the proof of Lemma~\ref{lem:the direct sum of pyramids equals the direct sum of seq. of real number} appears in the proof of~\cite[Lemma 6.28]{EKM2024}.
\end{rem}
For $n \in \mathbb{N}$, we define an mm-space $\mathbb{D}_{n} = (\{s_{n}^{1},\dots, s_{n}^{n}\},\, d_{\mathbb{D}_{n}},\, \mu_{\mathbb{D}_{n}})$ by \[d_{\mathbb{D}_{n}}(s_{n}^{i}, s_{n}^{j}) := \begin{cases}
                                                n &\text{if $i \neq j$} \\
                                                0 &\text{if $i = j$} \end{cases}\hspace{5mm}\text{and}\hspace{5mm} \mu_{\mathbb{D}_{n}} :=\frac{1}{n}\sum_{i=1}^{n} \delta_{s_{n}^{i}}. \]
By Lemma~\ref{lem:infinitely dissipation criterion}, we see that the sequence of mm-spaces $\{\mathbb{D}_{n}\}_{n=1}^{\infty}$ infinitely dissipates. 
Furthermore, for any $A=(a_{n})_{n=1}^{N} \in \mathcal{A}$ and any sequence of mm-spaces $\{X_{k}\}_{k=1}^{N}$, we set \[\mathbb{D}_{n}(\{X_{k}\}_{k=1}^{N};\,A) := \begin{cases}
                                                    (\sum_{k=0}^{N}X_{k}^{a_{k}})_{n} &\text{if $\|A\|_{1} < 1$,} \\
                                                    (\sum_{k=1}^{N}X_{k}^{a_{k}})_{n} &\text{if $\|A\|_{1} = 1$,} \\
                                            \end{cases}\] 
where $a_{0} := 1-\|A\|_{1}$ and $X_{0} := \mathbb{D}_{n}$.

\begin{proof}[Proof of Theorem~\ref{thm:decomposition of pyramids}]
    By Lemma~\ref{lem:the limit of pyramids tP when t to 0+}, the pyramid $t\mathcal{P}$ converges weakly to the pyramid $\mathcal{P}_{A}$ for some $A=(a_{n})_{n=1}^{N} \in \mathcal{A}$ as $t \to 0+$. 
    If $A=(0)$, then $\mathcal{P} = \mathcal{X}$, and hence this theorem holds.
    We next consider the case $A \neq (0)$.
    For $n \in \mathbb{N}$, we set $W_{n} := \mathbb{D}_{n}(\{pt_{k}\}_{k=1}^{N};\,A)$, where $pt_{k}$ is the one-point mm-space. 
    Since $ W_{n} \in \mathcal{P}_{A} \subset \mathcal{P}$, it follows from Lemma~\ref{lem:app. seq. of pyramid} that there exists an approximation sequence $\{Y_{n}\}_{n=1}^{\infty}$ of the pyramid $\mathcal{P}$ such that $W_{n} \prec Y_{n}$ for each $n$. 
    Let $g_{n}:Y_{n} \to W_{n}$ be a domination map. 
    We note that \[\mu_{Y_{n}}(g_{n}^{-1}(\{pt_{k}\})) = \mu_{W_{n}}(\{pt_{k}\}) = a_{k} > 0\] for each $k$. 
    Define triples \[X_{nk} := \left(Y_{n},\, d_{Y_{n}},\, a_{k}^{-1}\mu_{Y_{n}}|_{g_{n}^{-1}(\{pt_{k}\})}\right).\]
    Since $g_{n}^{-1}(\{pt_{k}\})$ are closed subsets of $Y_{n}$, $X_{nk}$ are mm-spaces. 
    If $\|A\|_{1} < 1$, then we define mm-spaces \[X_{n0} := \left(Y_{n},\, d_{Y_{n}},\, (1-\|A\|_{1})^{-1}\mu_{Y_{n}}|_{g_{n}^{-1}(\mathbb{D}_{n})}\right).\]
    Since $\mathbb{D}_{n} \prec X_{n0}$ for every $n$, it follows from Lemma~\ref{lem:infinitely dissipation criterion} that $\{X_{n0}\}_{n=1}^{\infty}$ infinitely dissipates. 
    By Theorem~\ref{thm:metric rho} and by a diagonal argument, we obtain a subsequence $\{X_{n(\ell)k}\}_{\ell=1}^{\infty}$ of $\{X_{nk}\}_{n=1}^{\infty}$ such that, for each $k \in [N]$, $\{\mathcal{P}_{X_{n(\ell)k}}\}_{\ell=1}^{\infty}$ is a weakly convergent sequence, and we denote its limit by $\mathcal{P}_{k}$. 
    Since for any $k \neq \ell$ we have \[\mathrm{dist}(g_{n}^{-1}(\{pt_{k}\}),\, g_{n}^{-1}(\{pt_{\ell}\})) \ge n,\] we observe that if $\|A\|_{1}=1$, then $Y_{n(\ell)} \in \mathcal{X}(\{X_{n(\ell)k}\}_{k=1}^{N};(a_{k})_{k=1}^{N};n(\ell))$ and if $\|A\|_{1} < 1$, then we also have $Y_{n(\ell)} \in \mathcal{X}(\{X_{n(\ell)k}\}_{k=0}^{N};(a_{k})_{k=0}^{N};n(\ell))$, where $a_{0} := 1-\|A\|_{1}$.
    By Proposition~\ref{prop:infinitely dissipated sequence converges weakly to the maximum pyramid} and Corollary~\ref{cor:the sum of Xnk's approximates the sum of pyramids}, $\mathcal{P}_{Y_{n(\ell)}}$ converges weakly to the pyramid $\mathcal{X}^{1-\|A\|_{1}} + \sum_{n=1}^{N}\mathcal{P}_{n}^{a_{n}}$.
    We recall that $\{Y_{n(\ell)}\}_{\ell=1}^{\infty}$ is also an approximation sequence of $\mathcal{P}$, and we obtain \[\mathcal{P} = \mathcal{X}^{1-\|A\|_{1}} + \sum_{n=1}^{N}\mathcal{P}_{n}^{a_{n}}.\]

    Next, we prove that the observable diameter of $\mathcal{P}_{n}$ is finite for every $n$.
    By Lemma~\ref{lem:the limit of pyramids tP when t to 0+}, for each $n \in [N]$ there exists $B_{n} \in \A$ such that $t\mathcal{P}_{n}$ converges weakly to $\mathcal{P}_{B_{n}}$ as $t \to 0+$. 
    By Theorem~\ref{thm:the continuity of the direct sum of pyramids} and Lemma~\ref{lem:scale transformation of direct sums of pyramids}, the pyramid $t\mathcal{P} = \mathcal{X}^{1-\|A\|_{1}} + \sum_{n=1}^{N} (t\mathcal{P}_{n})^{a_{n}}$ converges weakly to the pyramid $\mathcal{X}^{1-\|A\|_{1}} + \sum_{n=1}^{N} \mathcal{P}_{B_{n}}^{a_{n}}$, which is equal to $\mathcal{P}_{A}$. 
    We set $B=(b_{n})_{n=1}^{N^{\prime}} := \sum_{n=1}^{N}a_{n}B_{n}$.
    By Proposition~\ref{prop:the associative law} and Lemma~\ref{lem:the direct sum of pyramids equals the direct sum of seq. of real number}, we have 
    \begin{align*}
        \mathcal{X}^{1-\|A\|_{1}} + \sum_{n=1}^{N}\mathcal{P}_{B_{n}}^{a_{n}} &= \mathcal{X}^{1-\|A\|_{1}} + \left(\sum_{n=1}^{N}\mathcal{P}_{B_{n}}^{\frac{a_{n}}{\|A\|_{1}}}\right)^{\|A\|_{1}} = \mathcal{X}^{1-\|A\|_{1}} + \mathcal{P}_{\|A\|_{1}^{-1}B}^{\|A\|_{1}} \\
        &= \mathcal{X}^{1-\|A\|_{1}} + \left\lbrace\mathcal{X}^{1-\frac{\|B\|_{1}}{\|A\|_{1}}} + \left(\sum_{n=1}^{N^{\prime}}\{*\}^{\frac{b_{n}}{\|B\|_{1}}}\right)^{\frac{\|B\|_{1}}{\|A\|_{1}}}\right\rbrace^{\|A\|_{1}} \\
        &= \left(\mathcal{X}^{\frac{1-\|A\|_{1}}{1-\|B\|_{1}}} + \mathcal{X}^{\frac{\|A\|_{1}-\|B\|_{1}}{1-\|B\|_{1}}}\right)^{1-\|B\|_{1}} + \left(\sum_{n=1}^{N^{\prime}} \{*\}^{\frac{b_{n}}{\|B\|_{1}}}\right)^{\|B\|_{1}}\\ 
        &= \mathcal{X}^{1-\|B\|_{1}}+ \left(\sum_{n=1}^{N^{\prime}} \{*\}^{b_{n}/\|B\|_{1}}\right)^{\|B\|_{1}}\\
        &= \mathcal{P}_{B},
    \end{align*} 
    which implies $\mathcal{P}_{A} = \mathcal{P}_{B}$.
    By Lemma~\ref{lem:A=B <=> P_A = P_B}, we have $A=B=\sum_{n=1}^{N}a_{n}B_{n}$.
    By Lemma~\ref{lem:the condition when A coincides with the direct sum of seq. of real numbers}, we obtain $B_{n} = (1)$ for every $n \in [N]$. 
    This shows that the pyramid $t\mathcal{P}_{n}$ converges weakly to the pyramid $\{*\}$, and hence, by Lemma~\ref{lem:the characterization of pyramid whose observable diameter is finite}, the observable diameter of $\mathcal{P}_{n}$ is finite. 
    This completes the proof.
\end{proof}

\subsection{Uniqueness}

In this subsection, we establish some lemmas and prove the uniqueness of the decomposition of pyramids.

\begin{lem}\label{lem:lem of lem:the direct sum of Pn include the direct sum of Xn, then Xn in some Pm 1}
    Let $(a_{n})_{n=1}^{N} \in \mathcal{A}_{1}$ and $\{X_{n}\}_{n=1}^{N}$ a sequence of mm-spaces.
    Let $\mathcal{P}$ be a pyramid.
    For any $n_{0} \in [N]$ and any sequence of positive numbers $\{R_{k}\}_{k=1}^{\infty}$ diverging to infinity, suppose that there exist $Y_{k} \in \mathcal{P}$ and 1-Lipschitz maps $f_{k}:Y_{k} \to (\sum_{n=1}^{N}X_{n}^{a_{n}})_{R_{k}}$, $k \in \N$, such that \[a_{n_{0}}(f_{k})_{*}\mu_{Y_{k}} \le \sum_{n=1}^{N}a_{n}\mu_{X_{n}} \hspace{3mm}\text{and}\hspace{3mm}\lim_{k\to\infty}\mu_{Y_{k}}(f_{k}^{-1}(X_{n_{0}})) = 1.\]
    Then we have $X_{n_{0}} \in \mathcal{P}$.
\end{lem}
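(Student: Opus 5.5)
We will show that $X_{n_{0}}$ is a box-limit of mm-spaces dominated by the $Y_{k}$. Then closedness and downward closedness of $\mathcal{P}$ (Definition~\ref{def:pyramid}) give $X_{n_{0}} \in \mathcal{P}$. Write $Z_{k} := (\sum_{n=1}^{N}X_{n}^{a_{n}})_{R_{k}}$ and $\nu_{k} := (f_{k})_{*}\mu_{Y_{k}}$, a Borel probability measure on $Z_{k}$. Since $X_{n_{0}}$ is isometrically embedded in $Z_{k}$ and $\nu_{k}(X_{n_{0}}) = \mu_{Y_{k}}(f_{k}^{-1}(X_{n_{0}})) =: c_{k} \to 1$, we may assume $c_{k}>0$. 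The first step is to restrict: the restriction $\nu_{k}|_{X_{n_{0}}}$ satisfies $a_{n_{0}}\nu_{k}|_{X_{n_{0}}} \le (\sum_{n}a_{n}\mu_{X_{n}})|_{X_{n_{0}}} = a_{n_{0}}\mu_{X_{n_{0}}}$. Hence $\nu_{k}|_{X_{n_{0}}} \le \mu_{X_{n_{0}}}$ as measures on $X_{n_{0}}$.

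The key observation is that a sub-probability measure bounded above by a probability measure and of mass $c_{k}$ is close to it in total variation. Indeed $\mu_{X_{n_{0}}} - \nu_{k}|_{X_{n_{0}}}$ is a nonnegative measure of mass $1-c_{k}$. So, setting $\widetilde{\nu}_{k} := c_{k}^{-1}\nu_{k}|_{X_{n_{0}}}$, a probability measure on $X_{n_{0}}$, we get $\dTV(\widetilde{\nu}_{k}, \mu_{X_{n_{0}}}) \le 2(1-c_{k}) \to 0$, by the same computation as in Lemma~\ref{lem:box distance with X and A in X}. By Lemma~\ref{lem:dP < dTV} and Lemma~\ref{lem:box < 2dP}, $\square((X_{n_{0}}, d_{X_{n_{0}}}, \widetilde{\nu}_{k}), X_{n_{0}}) \le 4(1-c_{k}) \to 0$.

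It remains to show that $V_{k} := (X_{n_{0}}, d_{X_{n_{0}}}, \widetilde{\nu}_{k})$, or something box-close to it, belongs to $\mathcal{P}$. We compare $Y_{k}$ with the restricted space $Y_{k}' := (Y_{k}, d_{Y_{k}}, c_{k}^{-1}\mu_{Y_{k}}|_{f_{k}^{-1}(X_{n_{0}})})$. The set $f_{k}^{-1}(X_{n_{0}})$ is closed, because $X_{n_{0}}$ is closed in $Z_{k}$ (it lies at distance $\ge R_{k}$ from the other pieces). By Lemma~\ref{lem:box distance with X and A in X}, $\square(Y_{k}', Y_{k}) \le 4(1-c_{k})$. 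We would like to say $Y_{k}'\in\mathcal P$ directly, but restriction is not domination. Instead, take a point $x_{0}\in X_{n_{0}}$ and define $h_{k} : Y_{k} \to X_{n_{0}}$ by $h_{k} := \pi\circ f_{k}$, where $\pi : Z_{k} \to X_{n_{0}}$ is the identity on $X_{n_{0}}$ and sends everything else to $x_{0}$. This map is 1-Lipschitz up to an additive error on the non-exceptional domain $f_{k}^{-1}(X_{n_{0}})$, which has measure $c_{k}$; in fact it is genuinely 1-Lipschitz there. Its push-forward $(h_{k})_{*}\mu_{Y_{k}} = \nu_{k}|_{X_{n_{0}}} + (1-c_{k})\delta_{x_{0}}$ is within $\dTV$-distance $O(1-c_{k})$ of $\mu_{X_{n_{0}}}$. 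Therefore $X_{n_{0}} \prec_{\varepsilon_{k}} Y_{k}$ with $\varepsilon_{k} := \max\{1-c_{k}, 2(1-c_{k})\}\to 0$ (Definition~\ref{def:Lip. order up to error}).

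We conclude by applying Proposition~\ref{prop:Pn <en Qn => P < Q} with the constant sequences $\mathcal{P}_{n} := \mathcal{P}_{X_{n_{0}}}$ and $\mathcal{Q}_{n} := \mathcal{P}$. For this we need $\mathcal{P}_{X_{n_{0}}} \prec_{\varepsilon_{k}} \mathcal{P}$, i.e.\ every $W \prec X_{n_{0}}$ satisfies $W \prec_{\varepsilon_{k}} Y$ for some $Y \in \mathcal{P}$. Composing a domination map $X_{n_{0}}\to W$ with $h_{k}$ gives this with the same error. The proposition then yields $\mathcal{P}_{X_{n_{0}}} \subset \mathcal{P}$, so $X_{n_{0}}\in\mathcal{P}$.

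The main obstacle is the error bookkeeping in this last step, namely checking that the composed map is still 1-Lipschitz up to $\varepsilon_{k}$ with Prokhorov error at most $\varepsilon_{k}$. This holds because push-forward by a 1-Lipschitz map does not increase $\dP$. Notably, the hypothesis $R_{k}\to\infty$ is barely used; it only guarantees that $X_{n_{0}}$ is closed and separated from the other pieces.
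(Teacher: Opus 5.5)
Your argument is correct and is essentially the paper's proof. Your map $h_{k}=\pi\circ f_{k}$ is exactly the paper's $\widetilde{f}_{k}$. The paper likewise uses $a_{n_{0}}(f_{k})_{*}\mu_{Y_{k}}\le\sum_{n}a_{n}\mu_{X_{n}}$ to bound $\dTV(\mu_{X_{n_{0}}},(\widetilde{f}_{k})_{*}\mu_{Y_{k}})$ by $1-c_{k}$, and then concludes via Proposition~\ref{prop:Pn <en Qn => P < Q}. The restriction and box-distance detour in your first half is not needed. Your composition step, passing from $X_{n_{0}}\prec_{\varepsilon_{k}}Y_{k}$ to $\mathcal{P}_{X_{n_{0}}}\prec_{\varepsilon_{k}}\mathcal{P}$, just makes explicit what the paper leaves implicit.
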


\begin{proof}
    Set $Z_{k}:=(\sum_{n=1}^{N}X_{n}^{a_{n}})_{R_{k}}$ and $\widetilde{Y}_{k} := f_{k}^{-1}(X_{n_{0}})$, $k \in \N$.
    We note that $\mu_{Y_{k}}(Y_{k}\setminus\widetilde{Y}_{k})$ converges to 0 by the assumption of this lemma.
    We take any point $x_{0} \in X_{n_{0}}$ and fix it.
    For $k \in \N$, we define a map $\widetilde{f}_{k}:Y_{k} \to X_{n_{0}}$ by
    \begin{equation*}
        \widetilde{f}_{k}(y) := \begin{cases}
                                    f_{k}(y) &\text{if $y \in \widetilde{Y}_{k}$},\\
                                    x_{0} &\text{if $y \in Y_{k}\setminus\widetilde{Y}_{k}$},
        \end{cases}
    \end{equation*}
    which is a 1-Lipschitz map up to $\mu_{Y_{k}}(Y_{k}\setminus\widetilde{Y}_{k})$.

    For any Borel subset $B \subset X_{n_{0}}$, we see that
    \begin{align*}
        (\widetilde{f}_{k})_{*}\mu_{Y_{k}}(B) &= \mu_{Y_{k}}(f_{k}^{-1}(B)) + \delta_{x_{0}}(B)\mu_{Y_{k}}(Y_{k}\setminus\widetilde{Y}_{k}) \\
            &\le \frac{1}{a_{n_{0}}}\left(\sum_{n=1}^{N}a_{n}\mu_{X_{n}}\right)(B) + \mu_{Y_{k}}(Y_{k}\setminus \widetilde{Y}_{k}) = \mu_{X_{n_{0}}}(B) + \mu_{Y_{k}}(Y_{k}\setminus\widetilde{Y}_{k})
    \end{align*}
    and
    \begin{align*}
        \mu_{X_{n_{0}}}(B) &= \frac{1}{a_{n_{0}}}\left\lbrace a_{n_{0}}(f_{k})_{*}\mu_{Y_{k}}(B) + \left(\sum_{n=1}^{N}a_{n}\mu_{X_{n}} - a_{n_{0}}(f_{k})_{*}\mu_{Y_{k}}\right)(B) \right\rbrace \\
            &\le (\widetilde{f}_{k})_{*}\mu_{Y_{k}}(B) + \frac{1}{a_{n_{0}}}\left(\sum_{n=1}^{N}a_{n}\mu_{X_{n}}-a_{n_{0}}(f_{k})_{*}\mu_{Y_{k}}\right)(X_{n_{0}}) \\
            &=(\widetilde{f}_{k})_{*}\mu_{Y_{k}}(B) + 1-(f_{k})_{*}\mu_{Y_{k}}(X_{n_{0}}) = (\widetilde{f}_{k})_{*}\mu_{Y_{k}}(B) + \mu_{Y_{k}}(Y_{k}\setminus\widetilde{Y}_{k}).
    \end{align*}
    These inequalities and Lemma~\ref{lem:dP < dTV} imply that \[\dP(\mu_{X_{n_{0}}},\, (\widetilde{f}_{k})_{*}\mu_{Y_{k}}) \le d_{\mathrm{TV}}(\mu_{X_{n_{0}}},\, (\widetilde{f}_{k})_{*}\mu_{Y_{k}}) \le \mu_{Y_{k}}(Y_{k}\setminus\widetilde{Y}_{k}),\]
    and we obtain \[X_{n_{0}} \prec_{\mu_{Y_{k}}(Y_{k}\setminus\widetilde{Y}_{k})} Y_{k},\]that is, \[X_{n_{0}} \prec_{\mu_{Y_{k}}(Y_{k}\setminus\widetilde{Y}_{k})}\mathcal{P}\]for $k \in \N$.
    By Proposition~\ref{prop:Pn <en Qn => P < Q}, we have $X_{n_{0}} \in \mathcal{P}$.
    This completes the proof.
\end{proof}

\begin{lem}\label{lem:the direct sum of Pn include the direct sum of Xn, then Xn in some Pm}
    Let $(a_{n})_{n=1}^{N} \in \mathcal{A}_{1}\cap \mathcal{A}$, and let $\{X_{n}\}_{n=1}^{N}$ be a sequence of mm-spaces and $\{\mathcal{P}_{n}\}_{n=1}^{N}$ a sequence of pyramids of finite observable diameter. 
    If \[\mathcal{P}_{\sum_{n=1}^{N} X_{n}^{a_{n}}} \subset \sum_{n=1}^{N} \mathcal{P}_{n}^{a_{n}},\]
    then there exists a bijective map $f:[N] \to [N]$ such that $a_{n} = a_{f(n)}$ and $X_{n} \in \mathcal{P}_{f(n)}$ for each $n \in [N]$.
\end{lem}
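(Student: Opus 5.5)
The strategy is to test the inclusion on the approximating spaces $Z_k := (\sum_{n=1}^{N}X_n^{a_n})_{R_k}$ with $R_k \to \infty$, to pull the resulting measure decompositions back to the pieces $X_n$, and then to apply Lemma~\ref{lem:lem of lem:the direct sum of Pn include the direct sum of Xn, then Xn in some Pm 1}.

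Fix base points and $R_k\to\infty$. Each $Z_k$ belongs to $\mathcal{P}_{\sum X_n^{a_n}}$: map each point at distance from the base points to the point in the corresponding copy of $X_n$; this map is 1-Lipschitz because distances only increase. Hence $Z_k\in\sum_{m}\mathcal{P}_m^{a_m}$, so there are $Y_{mk}\in\mathcal{P}_m$ and 1-Lipschitz maps $f_{mk}:Y_{mk}\to Z_k$ with $\mu_{Z_k}=\sum_m a_m (f_{mk})_*\mu_{Y_{mk}}$. In particular $a_m(f_{mk})_*\mu_{Y_{mk}}\le \sum_n a_n\mu_{X_n}$. Set $p_{mnk}:=\mu_{Y_{mk}}(f_{mk}^{-1}(X_n))$. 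Then $\sum_n p_{mnk}=1$ and $\sum_m a_m p_{mnk}=a_n$.

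The key step is to show that $\{p_{mnk}\}_{n}$ concentrates on a single $n$ as $k\to\infty$. Finite observable diameter of $\mathcal{P}_m$ is used here. Suppose $Y_{mk}$ sends mass at least $\kappa$ to each of two different pieces $X_n$ and $X_{n'}$. Compose $f_{mk}$ with the 1-Lipschitz function $z\mapsto d_{Z_k}(z,X_n)$. The resulting push-forward has mass at least $\kappa$ at $0$ and mass at least $\kappa$ at values $\ge R_k$. Therefore $\mathrm{ObsDiam}(Y_{mk};-\kappa')\ge R_k$ for $\kappa'<\kappa$; more precisely, the partial diameter of the push-forward is at least $R_k$ unless more than $\kappa'$ mass is discarded. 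Since $\mathrm{ObsDiam}(\mathcal{P}_m;-\kappa')<\infty$, this is impossible for large $k$. So the second-largest value of $p_{mnk}$ over $n$ tends to $0$. Passing to a diagonal subsequence, $p_{mnk}\to p_{mn}\in\{0,1\}$ for every pair $(m,n)$, and each $m$ has exactly one $n=:g(m)$ with $p_{mn}=1$.

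The delicate points are the uniformity needed to pass to limits when $N=\infty$, and the requirement $\sum_m p_{mn}\le\dots$. From $\sum_m a_m p_{mnk}=a_n$, Fatou's lemma gives $\sum_{m:g(m)=n}a_m\le a_n$. Summing over $n$, both sides total $1$, so equality holds for every $n$: $a_n=\sum_{g(m)=n}a_m$. In particular every $n$ is hit by $g$.

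It remains to show that $g$ is injective and weight-preserving. I expect this combinatorial step, which uses the nonincreasing order of $(a_n)$, to be the main obstacle. Order matters here: $a_n=\sum_{g(m)=n}a_m$ alone does not force singletons. For example, with $a=(1/2,1/4,1/4)$ one could have $g(1)=1$, $g(2)=g(3)=1$, but then $n=2$ is not hit, so surjectivity rules this out. I would argue by induction on the distinct values, in the spirit of Lemma~\ref{lem:the condition when A coincides with the direct sum of seq. of real numbers}. Take the largest value $a_1$ and let $m$ range over the finitely many indices with $a_m=a_1$. The set $g^{-1}(n)$ of any $n$ with $a_n=a_1$ contains elements of total mass $a_1$, and every $a_m\le a_1$. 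Counting the indices with $a_m=a_1$ against those with $a_n=a_1$, every such $n$ must receive exactly one such $m$ and nothing else. Remove these indices and repeat. Since each value occurs only finitely often, this handles $N=\infty$ as well. The result is that $g$ is a bijection with $a_{g(m)}=a_m$; set $f:=g^{-1}$.

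Finally, fix $n$ and let $m=f(n)$. Then $a_m(f_{mk})_*\mu_{Y_{mk}}\le\sum a_j\mu_{X_j}$ and $\mu_{Y_{mk}}(f_{mk}^{-1}(X_n))\to1$ along the subsequence. Since $a_m=a_n$, Lemma~\ref{lem:lem of lem:the direct sum of Pn include the direct sum of Xn, then Xn in some Pm 1}, applied with $n_0=n$, gives $X_n\in\mathcal{P}_m=\mathcal{P}_{f(n)}$.
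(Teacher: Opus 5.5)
The step that does not go through as written is the passage from ``the second largest of the numbers $p_{mnk}$ (over $n$) tends to $0$'' to ``after a diagonal subsequence, $p_{mnk}\to p_{mn}\in\{0,1\}$ and each $m$ has exactly one $n$ with $p_{mn}=1$''.

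For finite $N$ this is fine, since $\sum_n p_{mnk}=1$ and only $N-1$ competitors are small. For $N=\infty$, which the lemma allows and which you flag as delicate without resolving, it does not follow. A priori the row $(p_{mnk})_n$ could spread its mass over more and more pieces, with every share tending to $0$. It could also sit on a single piece $X_{n_k}$ with $n_k\to\infty$. In either case all pointwise limits $p_{mn}$ vanish and no $g(m)$ exists. The Fatou step, which needs $\sum_n\sum_{g(m)=n}a_m=1$, then collapses. Your two-piece separation bound cannot exclude these scenarios, because it only detects two pieces that each carry a fixed amount of mass.

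The missing ingredient is tightness of the rows in $n$, uniformly in $k$. It is already contained in your identity $\sum_{m'}a_{m'}p_{m'nk}=a_n$, which gives $p_{mnk}\le a_n/a_m$ for all $k$, with $\sum_n a_n/a_m=1/a_m<\infty$. So for every $\delta>0$ there is a finite set of indices, independent of $k$, outside of which each row has mass less than $\delta$. Together with your bound on the second largest share, this forces $\max_n p_{mnk}\to 1$. The maximizer eventually lies in the finite set $\{n : a_n\ge a_m/2\}$, so a subsequence freezes it, and a diagonal argument over $m$ yields $g$.

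The paper treats this point differently. If no piece keeps a positive share along a subsequence, the preimages $f_{mk}^{-1}(X_n)$, $n\in[N]$, satisfy the infinite-dissipation criterion, giving $\mathcal{X}\subset\mathcal{P}_m$ and contradicting finite observable diameter. A one-piece-versus-complement separation estimate then pushes the share to $1$.

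With this step inserted, the rest of your argument is correct and takes a genuinely different route from the paper. The paper extracts $m_1,m_2,\dots$ level by level using properties (i)--(iii) of its claim. You instead get the global balance $a_n=\sum_{g(m)=n}a_m$ from Fatou, then show one level set at a time that $g$ maps each level set of the nonincreasing sequence bijectively onto itself. This is shorter and isolates exactly where the ordering is used. The final application of the preceding lemma with $n_0=n$ and $a_{f(n)}=a_n$ is right.
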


\begin{proof}
    Let $\{n_{k}\}_{k=0}^{M}$ be a monotone increasing sequence of integers, where $M \in \overline{\N}$, such that $n_{0}=0$ and \[a_{1}=a_{2}= \cdots = a_{n_{1}} > a_{n_{1}+1}=a_{n_{1}+2} = \cdots = a_{n_{2}} > \cdots.\]
    More precisely, it satisfies the following properties:
    \begin{itemize}
        \item for every $n \in [N]$, there exists $k \in [M]$ such that $n_{k-1} < n \le n_{k}$;
        \item for each $k \in [M]$ and $n$ with $n_{k-1} < n \le n_{k}$, we have $a_{n}=a_{n_{k}}$;
        \item if $N < +\infty$, then $M < +\infty$ and $n_{M} = N$.
    \end{itemize}
    We will prove later that there exist permutations $\sigma_{k} \in \mathfrak{S}_{n_{k}-n_{k-1}}$ for $k \in [M]$ such that $X_{n} \in \mathcal{P}_{n_{k-1}+\sigma_{k}(n-n_{k-1})}$ for any $n$ with $n_{k-1}+1 \le n \le n_{k}$.
    Note that we denote by $\mathfrak{S}_{n}$ the set of all permutations on $\{1,2,\dots, n\}$.
    Then, we define a map $f:[N] \to [N]$ by $f(n):=n_{k-1} + \sigma_{k}(n-n_{k-1})$ for $n_{k-1}+1 \le n \le n_{k}$ and complete the proof.
    
    For $k \in \mathbb{N}$, we define mm-spaces $Z_{k} := (\sum_{n=1}^{N}X_{n}^{a_{n}})_{k}$.
    Since $Z_{k} \in \mathcal{P} := \sum_{n=1}^{N}\mathcal{P}_{n}^{a_{n}}$, there exist mm-spaces $Y_{nk} \in \mathcal{P}_{n}$ and 1-Lipschtz maps $f_{nk}:Y_{nk} \to Z_{k}$ for $n \in [N]$ and $k \in \mathbb{N}$ such that 
    \begin{equation}\label{eq:lem:the direct sum of Pn include the direct sum of Xn, then Xn in some Pm_1}
        \sum_{n=1}^{N} a_{n}(f_{nk})_{*}\mu_{Y_{nk}} = \mu_{Z_{k}} = \sum_{n=1}^{N} a_{n}\mu_{X_{n}}.
    \end{equation}

    First, we prove the following claim.
    \begin{clm}\label{clm:clm:lem:the direct sum of Pn include the direct sum of Xn, then Xn in some Pm 1}
        For any $n \in [N]$ and any subsequence $\{k(\ell)\}_{\ell=1}^{\infty} \subset \{k\}_{k=1}^{\infty}$, there exist a number $m_{n} \in [N]$ and a subsequence $\{k^{\prime}(\ell)\}_{\ell=1}^{\infty}$ of $\{k(\ell)\}_{\ell=1}^{\infty}$ such that
        \begin{enumerate}
            \item $a_{m_{n}} \ge a_{n}$;
            \item $\lim_{\ell\to\infty}\mu_{Y_{nk^{\prime}(\ell)}}(f_{nk^{\prime}(\ell)}^{-1}(X_{m_{n}})) = 1$;
            \item $\limsup_{\ell\to\infty}a_{s}\mu_{Y_{sk^{\prime}(\ell)}}(f_{sk^{\prime}(\ell)}^{-1}(X_{m_{n}})) \le a_{m_{n}}-a_{n}$ for any $s \in [N]$ with $s \neq n$.
        \end{enumerate}
    \end{clm}

    \begin{proof}
        We take any subsequence $\{k(\ell)\}_{\ell=1}^{\infty}$ of $\{k\}_{k=1}^{\infty}$ and fix it.
        First, we prove that there exist a number $m_{n} \in [N]$ and a subsequence $\{\kpl\}_{\ell=1}^{\infty}$ such that 
        \begin{equation}\label{eq:clm:clm:lem:the direct sum of Pn include the direct sum of Xn, then Xn in some Pm 1_1}
            \liminf_{\ell\to\infty}\mu_{Y_{n\kpl}}(f_{n\kpl}^{-1}(X_{m_{n}})) > 0
        \end{equation}
        by contradiction.
        We assume the contrary. 
        Then we see that for any number $m \in [N]$ and any subsequence $\{k(\ell(s))\}_{s=1}^{\infty}$ of $\{\kl\}_{\ell=1}^{\infty}$, we have \[\liminf_{s \to \infty}\mu_{Y_{nk(\ell(s))}}(f_{nk(\ell(s))}^{-1}(X_{m})) = 0,\]which implies that, for any $m \in [N]$, we have \[\lim_{\ell\to\infty}\mu_{Y_{n\kl}}(f_{n\kl}^{-1}(X_{m})) = 0.\]
        Since we observe that
        \begin{itemize}
            \item $Y_{n\kl} = \bigcup_{m=1}^{N} f_{n\kl}^{-1}(X_{m})$; 
            \item $\lim_{\ell\to\infty} \mu_{Y_{n\kl}}(f_{n\kl}^{-1}(X_{m})) = 0$ for any $m \in [N]$;
            \item $d_{Y_{n\kl}}(f_{n\kl}^{-1}(X_{m}),\, f_{n\kl}^{-1}(X_{m^{\prime}})) \ge \kl$ for any $m, m^{\prime} \in [N]$ with $m \neq m^{\prime}$,
        \end{itemize}
        by Lemma~\ref{lem:infinitely dissipation criterion} the sequence $\{Y_{n\kl}\}_{\ell=1}^{\infty}$ infinitely dissipates. 
        Since $Y_{n\kl} \in \mathcal{P}_{n}$ for every $\ell \in \N$, it follows from Proposition~\ref{prop:infinitely dissipated sequence converges weakly to the maximum pyramid} that $\mathcal{X} \subset \mathcal{P}_{n}$, which contradicts the finiteness of the observable diameter of $\mathcal{P}_{n}$. 
        This completes the proof of the assertion.
        Take such $m_{n}$ and $\{\kpl\}_{\ell=1}^{\infty}$ and fix them.

        We next prove (ii) of this claim by contradiction.
        Assume that 
        \begin{equation}\label{eq:clm:clm:lem:the direct sum of Pn include the direct sum of Xn, then Xn in some Pm 1_2}
            \beta := \liminf_{\ell\to\infty}\mu_{Y_{n\kpl}}(f_{n\kpl}^{-1}(X_{m_{n}})) < 1.
        \end{equation}
        By~\eqref{eq:clm:clm:lem:the direct sum of Pn include the direct sum of Xn, then Xn in some Pm 1_1}, we have $0 < \beta < 1$.
        Take any $\beta_{0}$ with $\beta < \beta_{0} <1$ and fix it.
        By the assumption~\eqref{eq:clm:clm:lem:the direct sum of Pn include the direct sum of Xn, then Xn in some Pm 1_2}, we find a subsequence $\{k^{\prime}(\ell(s))\}_{s=1}^{\infty}$ of $\{\kpl\}_{\ell=1}^{\infty}$ such that \[0 < \beta/2 \le \mu_{Y_{nk^{\prime}(\ell(s))}}(f_{nk^{\prime}(\ell(s))}^{-1}(X_{m_{n}})) <\beta_{0}\] for any $s \in \N$.
        For this subsequence, we also have \[0 < 1-\beta_{0} \le \mu_{Y_{nk^{\prime}(\ell(s))}}(f_{nk^{\prime}(\ell(s))}^{-1}(Z_{k^{\prime}(\ell(s))}\setminus X_{m_{n}})).\]
        For any $\kappa$ with $0 < \kappa < \min\{\beta/2,\hspace{1mm}1-\beta_{0}\}$, by~\cite[Proposition 2.26 (2)]{S2016book}, we observe that
        \begin{align*}
            \obsdiam{\mathcal{P}_{n}}{\kappa} &\ge \obsdiam{Y_{nk^{\prime}(\ell(s))}}{\kappa} \ge \mathrm{Sep}(Y_{nk^{\prime}(\ell(s))};\beta/2,\hspace{1mm}1-\beta_{0})\\
                &\ge d_{Y_{nk^{\prime}(\ell(s))}}(f_{nk^{\prime}(\ell(s))}^{-1}(X_{m_{n}}),\hspace{1mm}f_{nk^{\prime}(\ell(s))}^{-1}(Z_{k^{\prime}(\ell(s))}\setminus X_{m_{n}}))\\
                &\ge k^{\prime}(\ell(s)) \hspace{1mm}\longrightarrow \hspace{1mm}+\infty
        \end{align*}
        as $s \to \infty$.
        Note that the last inequality is followed by the fact that the maps $f_{nk^{\prime}(\ell(s))}$ are 1-Lipschitz.
        This contradicts the finiteness of the observable diameter of $\mathcal{P}_{n}$ and we obtain (ii) of this claim.
        
        By the equation~\eqref{eq:lem:the direct sum of Pn include the direct sum of Xn, then Xn in some Pm_1}, we see that 
        \begin{align*}
            a_{m_{n}} &= \sum_{s=1}^{N}a_{s}\mu_{X_{s}}(X_{m_{n}}) = \sum_{s=1}^{N}a_{s}(f_{s\kpl})_{*}\mu_{Y_{s\kpl}}(X_{m_{n}}) \\
                &\ge a_{n}(f_{n\kpl})_{*}\mu_{Y_{n\kpl}}(X_{m_{n}}) \longrightarrow a_{n}
        \end{align*}
        as $\ell \to \infty$.
        This implies that (i) $a_{m_{n}} \ge a_{n}$. 
        Furthermore, for any $s \in [N]$ with $s \neq n$, we have \[a_{s}(f_{s\kpl})_{*}\mu_{Y_{s\kpl}}(X_{m_{n}}) \le a_{m_{n}} - a_{n}(f_{n\kpl})_{*}\mu_{Y_{n\kpl}}(X_{m_{n}}) \longrightarrow a_{m_{n}} - a_{n}\] as $\ell \to \infty$, which implies (iii) of this claim.
        We finish the proof.
    \end{proof}
    We now construct the above mentioned permutation $\sigma_{1} \in \mathfrak{S}_{n_{1}}$.
    By Claim~\ref{clm:clm:lem:the direct sum of Pn include the direct sum of Xn, then Xn in some Pm 1}, there exist a number $m_{1} \in [N]$ and a subsequence $\{k_{\ell}^{(1)}\}_{\ell=1}^{\infty}$ of $\{k\}_{k=1}^{\infty}$ such that 
    \begin{enumerate}[label=(\textbf{1.\arabic*}), leftmargin=*]
        \item $a_{m_{1}} \ge a_{1}$;
        \item $\lim_{\ell\to\infty}\mu_{Y_{1k_{\ell}^{(1)}}}(f_{1k_{\ell}^{(1)}}^{-1}(X_{m_{1}})) = 1$;
        \item $\limsup_{\ell\to\infty}\mu_{Y_{sk_{\ell}^{(1)}}}(f_{sk_{\ell}^{(1)}}^{-1}(X_{m_{1}})) \le a_{m_{1}} - a_{1}$ for any $s \in [N]$ with $s \neq 1$.
    \end{enumerate}
    From (\textbf{1.1}) and the monotonicity of $(a_{n})_{n=1}^{N}$, we have $a_{1}=a_{m_{1}}$, $1 \le m_{1} \le n_{1}$, and 
    \begin{equation*}
        (\textbf{1.3}^{\mathbf{\prime}})\hspace{1mm}\textstyle\lim_{\ell\to\infty}\mu_{Y_{sk_{\ell}^{(1)}}}(f_{sk_{\ell}^{(1)}}^{-1}(X_{m_{1}})) = 0\hspace{2mm} \text{for any}\hspace{1mm} s \in [N]\hspace{1mm} \text{with}\hspace{1mm} s \neq 1.
    \end{equation*}
    By Lemma~\ref{lem:lem of lem:the direct sum of Pn include the direct sum of Xn, then Xn in some Pm 1} and $(\mathbf{1.2})$, we have $X_{m_{1}} \in \mathcal{P}_{1}$.
    If $n_{1}=1$, then $m_{1}=1$, and we set $\sigma_{1} = \mathrm{id}$, the identity of $\mathfrak{S}_{1}$.
    If not, then by Claim~\ref{clm:clm:lem:the direct sum of Pn include the direct sum of Xn, then Xn in some Pm 1}, there exist a number $m_{2} \in [N]$ and a subsequence $\{k_{\ell}^{(2)}\}_{\ell=1}^{\infty}$ of $\{k_{\ell}^{(1)}\}_{\ell=1}^{\infty}$ such that 
    \begin{enumerate}[label=(\textbf{2.\arabic*}), leftmargin=*]
        \item $a_{m_{2}} \ge a_{2} = a_{1}$;
        \item $\lim_{\ell\to\infty}\mu_{Y_{2k_{\ell}^{(2)}}}(f_{2k_{\ell}^{(2)}}^{-1}(X_{m_{2}})) = 1$;
        \item $\limsup_{\ell\to\infty}\mu_{Y_{sk_{\ell}^{(2)}}}(f_{sk_{\ell}^{(2)}}^{-1}(X_{m_{2}})) \le a_{m_{2}} - a_{2}$ for any $s \in [N]$ with $s \neq 2$.
    \end{enumerate}
    From (\textbf{2.1}) and the monotonicity of $(a_{n})_{n=1}^{N}$, we have $a_{m_{2}} = a_{1}$, $1 \le m_{2} \le n_{1}$, and \[(\textbf{2.3}^{\mathbf{\prime}})\hspace{1mm}\textstyle\lim_{\ell\to\infty}\mu_{Y_{sk_{\ell}^{(2)}}}(f_{sk_{\ell}^{(2)}}^{-1}(X_{m_{2}})) = 0\hspace{2mm} \text{for any}\hspace{1mm} s \in [N]\hspace{1mm} \text{with}\hspace{1mm} s \neq 2.\]
    From (\textbf{1.2}) and ($\mathbf{2.3}^{\mathbf{\prime}}$), we have $m_{1} \neq m_{2}$.
    By Lemma~\ref{lem:lem of lem:the direct sum of Pn include the direct sum of Xn, then Xn in some Pm 1} and $(\mathbf{2.2})$, we have $X_{m_{2}} \in \mathcal{P}_{2}$.
    If $n_{1}=2$, then we define the permutation $\sigma_{1}$ by $\sigma_{1}(m_{i}) := i$, $i=1,2$.
    If not, by repeating this process, we obtain numbers $m_{1}, m_{2},\dots, m_{n_{1}} \in [N]$ and subsequences $\{k_{\ell}^{(n_{1})}\}_{\ell=1}^{\infty} \subset \cdots \{k_{\ell}^{(2)}\}_{\ell=1}^{\infty} \subset \{k_{\ell}^{(1)}\}_{\ell=1}^{\infty}$ such that
    \begin{enumerate}[label=$(\mathbf{n_{1}.\arabic*})$, leftmargin=*]
        \item $\{m_{1},m_{2},\dots, m_{n_{1}}\}=\{1,2,\dots, n_{1}\}$;
        \item $\lim_{\ell\to\infty}\mu_{Y_{nk_{\ell}^{(n_{1})}}}(f_{nk_{\ell}^{(n_{1})}}^{-1}(X_{m_{n}})) = 1$ for $n=1,2,\dots,n_{1}$;
        \item $\lim_{\ell\to\infty}\mu_{Y_{sk_{\ell}^{(n_{1})}}}(f_{sk_{\ell}^{(n_{1})}}^{-1}(X_{m_{n}})) = 0$ for any $s \in [N]$ with $s \neq n$, $n=1,2,\dots, n_{1}$.
    \end{enumerate}
    We define a permutation $\sigma_{1}$ by \[\sigma_{1}(m_{n}) := n\hspace{2mm}\text{for}\hspace{1mm}n=1,2,\dots,n_{1}.\]
    By Lemma~\ref{lem:lem of lem:the direct sum of Pn include the direct sum of Xn, then Xn in some Pm 1} and $(\mathbf{n_{1}.2})$, we have $X_{m_{n}} \in \mathcal{P}_{n}$, that is, $X_{n} \in \mathcal{P}_{\sigma_{1}(n)}$, $n=1,2,\dots, n_{1}$.

    Next, we construct the above mentioned permutation $\sigma_{2} \in \mathfrak{S}_{n_{2}-n_{1}}$.
    By Claim~\ref{clm:clm:lem:the direct sum of Pn include the direct sum of Xn, then Xn in some Pm 1}, there exist a number $m_{n_{1}+1} \in [N]$ and a subsequence $\{k_{\ell}^{(n_{1}+1)}\}_{\ell=1}^{\infty}$ of $\{k_{\ell}^{(n_{1})}\}_{\ell=1}^{\infty}$ such that 
    \begin{enumerate}[label=$(\mathbf{n_{1}+1.\arabic*})$, leftmargin=*]
        \item $a_{m_{n_{1}+1}} \ge a_{n_{1}+1}$;
        \item $\lim_{\ell\to\infty}\mu_{Y_{n_{1}+1,\,k_{\ell}^{(n_{1}+1)}}}(f_{n_{1}+1,\,k_{\ell}^{(n_{1}+1)}}^{-1}(X_{m_{n_{1}+1}})) = 1$;
        \item $\limsup_{\ell\to\infty}\mu_{Y_{sk_{\ell}^{(n_{1}+1)}}}(f_{sk_{\ell}^{(n_{1}+1)}}^{-1}(X_{m_{n_{1}+1}})) \le a_{m_{n_{1}+1}} - a_{n_{1}+1}$ for any $s \in [N]$ with $s \neq n_{1}+1$.
    \end{enumerate}
    From $(\mathbf{n_{1}.3})$ and $(\mathbf{n_{1}+1.2})$, we have $m_{n_{1}+1} \in [N]\setminus\{1,2,\dots,n_{1}\}$, that is, $m_{n_{1}+1} \ge n_{1}+1$.
    Furthermore, from $(\mathbf{n_{1}+1.1})$ and the monotonicity of $(a_{n})_{n=1}^{N}$, we have $a_{m_{n_{1}+1}} = a_{n_{1}+1}$, $n_{1}+1 \le m_{n_{1}+1} \le n_{2}$, and 
    \begin{align*}
        (\mathbf{n_{1}+1.3}^{\mathbf{\prime}})\hspace{1mm} &\lim_{\ell\to\infty}\mu_{Y_{sk_{\ell}^{(n_{1}+1)}}}(f_{sk_{\ell}^{(n_{1}+1)}}^{-1}(X_{m_{n_{1}+1}})) = 0\hspace{2mm} \text{for any}\hspace{1mm} s \in [N]\hspace{1mm}\text{with}\\& s \neq n_{1}+1.
    \end{align*}
    By Lemma~\ref{lem:lem of lem:the direct sum of Pn include the direct sum of Xn, then Xn in some Pm 1} and $(\mathbf{n_{1}+1.2})$, we have $X_{m_{n_{1}+1}} \in \mathcal{P}_{n_{1}+1}$.
    If $n_{2}=n_{1}+1$, then $m_{n_{1}+1}=n_{1}+1$, and we set $\sigma_{2} = \mathrm{id}$, the identity of $\mathfrak{S}_{n_{2}-n_{1}}$.
    If not, then by Claim~\ref{clm:clm:lem:the direct sum of Pn include the direct sum of Xn, then Xn in some Pm 1}, there exist a number $m_{n_{1}+2} \in [N]$ and a subsequence $\{k_{\ell}^{(n_{1}+2)}\}_{\ell=1}^{\infty}$ of $\{k_{\ell}^{(n_{1}+1)}\}_{\ell=1}^{\infty}$ such that 
    \begin{enumerate}[label=$(\mathbf{n_{1}+2.\arabic*})$, leftmargin=*]
        \item $a_{m_{n_{1}+2}} \ge a_{n_{1}+2} = a_{n_{1}+1}$;
        \item $\lim_{\ell\to\infty}\mu_{Y_{n_{1}+2,\,k_{\ell}^{(n_{1}+2)}}}(f_{n_{1}+2,\,k_{\ell}^{(n_{1}+2)}}^{-1}(X_{m_{n_{1}+2}})) = 1$;
        \item $\limsup_{\ell\to\infty}\mu_{Y_{sk_{\ell}^{(n_{1}+2)}}}(f_{sk_{\ell}^{(n_{1}+2)}}^{-1}(X_{m_{n_{1}+2}})) \le a_{m_{n_{1}+2}} - a_{n_{1}+2}$ for any $s \in [N]$ with $s \neq n_{1}+2$.
    \end{enumerate}
    From $(\mathbf{n_{1}.3})$ and $(\mathbf{n_{1}+2.2})$, we have $m_{n_{1}+2} \ge n_{1}+1$.
    Furthermore, from $(\mathbf{n_{1}+2.1})$ and the monotonicity of $(a_{n})_{n=1}^{N}$, we have $a_{m_{n_{1}+2}} = a_{n_{1}+1}$, $n_{1}+1 \le m_{n_{1}+2} \le n_{2}$, and 
    \begin{align*}
        (\mathbf{n_{1}+2.3}^{\mathbf{\prime}})\hspace{1mm} &\lim_{\ell\to\infty}\mu_{Y_{sk_{\ell}^{(n_{1}+2)}}}(f_{sk_{\ell}^{(n_{1}+2)}}^{-1}(X_{m_{n_{1}+2}})) = 0 \hspace{2mm}\text{for any}\hspace{1mm} s \in [N] \hspace{1mm} \text{with}\\ &s \neq n_{1}+2.
    \end{align*}
    From $(\mathbf{n_{1}+1.2})$ and $(\mathbf{n_{1}+2.3^{\prime}})$, we have $m_{n_{1}+1} \neq m_{n_{1}+2}$.
    By Lemma~\ref{lem:lem of lem:the direct sum of Pn include the direct sum of Xn, then Xn in some Pm 1} and $(\mathbf{n_{1}+2.2})$, we have $X_{m_{n_{1}+2}} \in \mathcal{P}_{n_{1}+2}$.
    If $n_{2}=n_{1}+2$, then we define the permutation $\sigma_{2}$ by $\sigma_{2}(m_{n_{1}+i}-n_{1}) := i$, $i=1,2$.
    If not, by repeating this process, we obtain numbers $m_{n_{1}+1}, m_{n_{1}+2},\dots, m_{n_{2}} \in [N]$ and subsequences $\{k_{\ell}^{(n_{2})}\}_{\ell=1}^{\infty} \subset \cdots \subset\{k_{\ell}^{(n_{1}+2)}\}_{\ell=1}^{\infty} \subset \{k_{\ell}^{(n_{1}+1)}\}_{\ell=1}^{\infty}$ such that
    \begin{enumerate}[label=$(\mathbf{n_{2}.\arabic*})$, leftmargin=*]
        \item $\{m_{n_{1}+1},m_{n_{1}+2},\dots, m_{n_{2}}\}=\{n_{1}+1,n_{1}+2,\dots, n_{2}\}$;
        \item $\lim_{\ell\to\infty}\mu_{Y_{nk_{\ell}^{(n_{2})}}}(f_{nk_{\ell}^{(n_{2})}}^{-1}(X_{m_{n}})) = 1$ for $n=n_{1}+1,n_{1}+2,\dots,n_{2}$;
        \item $\lim_{\ell\to\infty}\mu_{Y_{sk_{\ell}^{(n_{2})}}}(f_{sk_{\ell}^{(n_{2})}}^{-1}(X_{m_{n}})) = 0$ for any $s \in [N]$ with $s \neq n$, $n=n_{1}+1,n_{1}+2,\dots, n_{2}$.
    \end{enumerate}
    We define a permutation $\sigma_{2}$ by \[\sigma_{2}(m_{n}-n_{1}) := n-n_{1}\hspace{2mm}\text{for}\hspace{1mm}n=n_{1}+1,n_{1}+2,\dots,n_{2}.\]
    By Lemma~\ref{lem:lem of lem:the direct sum of Pn include the direct sum of Xn, then Xn in some Pm 1} and $(\mathbf{n_{2}.2})$, we have $X_{m_{n}} \in \mathcal{P}_{n}$, that is, $X_{n} \in \mathcal{P}_{n_{1}+\sigma_{2}(n-n_{1})}$, $n=n_{1}+1,n_{1}+2,\dots, n_{2}$.

    By repeating this process, we obtain permutations $\sigma_{k} \in \mathfrak{S}_{n_{k}-n_{k-1}}$, $k \in [M]$, such that $X_{n} \in \mathcal{P}_{n_{k-1}+\sigma_{k}(n-n_{k-1})}$ for $n=n_{k-1}+1, n_{k-1}+2,\dots,n_{k}$.
    This completes the proof.
\end{proof}

\begin{cor}\label{cor:P1+...+PN=Q1+...+QN, then Pn=Qm(n)}
    Let $N \in \N$ and let $\mathcal{P}_{n}$, $\mathcal{Q}_{n}$, $n = 1,2,\dots, N$, be pyramids of finite observable diameter. 
    If \[\mathcal{P}_{1}^{\frac{1}{N}} + \cdots \mathcal{P}_{N}^{\frac{1}{N}} = \mathcal{Q}_{1}^{\frac{1}{N}} +  \cdots \mathcal{Q}_{N}^{\frac{1}{N}},\]
    then there exists a permutation $\sigma \in \mathfrak{S}_{N}$ such that $\mathcal{P}_{n} = \mathcal{Q}_{\sigma(n)}$ for each $n = 1,2,\dots, N$.
\end{cor}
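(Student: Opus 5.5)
The plan is to apply Lemma~\ref{lem:the direct sum of Pn include the direct sum of Xn, then Xn in some Pm} to direct sums of approximating mm-spaces. A pigeonhole argument over the finitely many permutations in $\mathfrak{S}_{N}$ then yields inclusions $\mathcal{P}_{n} \subset \mathcal{Q}_{\sigma(n)}$. Combining these with the symmetric inclusions upgrades them to equalities.

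\textbf{Step 1: a permutation for each stage of the approximation.} By Lemma~\ref{lem:app. seq. of pyramid}, choose for each $n \in \{1,\dots,N\}$ an approximation sequence $X_{n1} \prec X_{n2} \prec \cdots$ of $\mathcal{P}_{n}$. For each $k$, consider the extended mm-space $\sum_{n=1}^{N} X_{nk}^{1/N}$. By~\eqref{eq:another representation of direct sum of pyramids},
\[\mathcal{P}_{\sum_{n=1}^{N} X_{nk}^{1/N}} \subset \sum_{n=1}^{N}\mathcal{P}_{n}^{1/N} = \sum_{n=1}^{N}\mathcal{Q}_{n}^{1/N}.\]
The weight $(1/N,\dots,1/N)$ lies in $\mathcal{A}_{1}\cap\mathcal{A}$, and the $\mathcal{Q}_{n}$ have finite observable diameter. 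Hence Lemma~\ref{lem:the direct sum of Pn include the direct sum of Xn, then Xn in some Pm} gives a permutation $f_{k} \in \mathfrak{S}_{N}$ with $X_{nk} \in \mathcal{Q}_{f_{k}(n)}$ for all $n$.

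\textbf{Step 2: a single permutation.} Since $\mathfrak{S}_{N}$ is finite, some $\sigma \in \mathfrak{S}_{N}$ equals $f_{k}$ for infinitely many $k$. Fix $n$. For an arbitrary $k$, choose $k' \ge k$ with $f_{k'}=\sigma$. Then $X_{nk} \prec X_{nk'} \in \mathcal{Q}_{\sigma(n)}$, so $X_{nk} \in \mathcal{Q}_{\sigma(n)}$ by property (ii) of a pyramid. Thus $\bigcup_{k}\mathcal{P}_{X_{nk}} \subset \mathcal{Q}_{\sigma(n)}$. Since $\mathcal{Q}_{\sigma(n)}$ is box-closed, taking the closure gives $\mathcal{P}_{n} \subset \mathcal{Q}_{\sigma(n)}$. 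Exchanging the roles of the $\mathcal{P}$'s and $\mathcal{Q}$'s (the $\mathcal{P}_{n}$ also have finite observable diameter), we likewise get $\tau \in \mathfrak{S}_{N}$ with $\mathcal{Q}_{m} \subset \mathcal{P}_{\tau(m)}$ for all $m$.

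\textbf{Step 3: from inclusions to equalities.} Put $\pi := \tau\circ\sigma \in \mathfrak{S}_{N}$. Then
\[\mathcal{P}_{n} \subset \mathcal{Q}_{\sigma(n)} \subset \mathcal{P}_{\pi(n)}\]
for every $n$. Iterating gives $\mathcal{P}_{n} \subset \mathcal{P}_{\pi(n)} \subset \cdots \subset \mathcal{P}_{\pi^{r}(n)}$, and this chain returns to $\mathcal{P}_{n}$ when $r$ is the order of $\pi$. So all inclusions in the chain are equalities; in particular $\mathcal{P}_{n} = \mathcal{P}_{\pi(n)}$. Since $\mathcal{Q}_{\sigma(n)}$ is sandwiched between these, $\mathcal{P}_{n} = \mathcal{Q}_{\sigma(n)}$ for each $n$, which is the claim.

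The main obstacle is that the permutation from Lemma~\ref{lem:the direct sum of Pn include the direct sum of Xn, then Xn in some Pm} depends on the chosen mm-spaces $X_{nk}$. This is handled by the finiteness of $\mathfrak{S}_{N}$ together with the monotonicity of the approximation sequences, which is exactly why $N<\infty$ is assumed. Everything else is routine.
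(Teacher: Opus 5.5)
Your proof is correct and follows the paper's argument. You take approximation sequences of each $\mathcal{P}_{n}$, apply the lemma on inclusions $\mathcal{P}_{\sum_{n} X_{n}^{a_{n}}} \subset \sum_{n}\mathcal{P}_{n}^{a_{n}}$ at each stage, use the finiteness of $\mathfrak{S}_{N}$ to get a single $\sigma$ with $\mathcal{P}_{n} \subset \mathcal{Q}_{\sigma(n)}$, and close the cycle of inclusions through the order of $\tau\circ\sigma$. The differences are cosmetic: you pigeonhole on the whole permutation and justify the limit step via monotonicity, downward closedness and box-closedness, where the paper extracts subsequences one coordinate at a time and just says ``letting $\ell\to\infty$''.
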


\begin{proof}
    By Lemma~\ref{lem:app. seq. of pyramid}, for $n = 1,2,\dots,N$ there exists an approximation sequence $\{X_{nk}\}_{k=1}^{\infty}$ of the pyramid $\mathcal{P}_{n}$. 
    Since $\mathcal{P}_{\sum_{n=1}^{N}X_{nk}^{1/N}} \subset \sum_{n=1}^{N}\mathcal{Q}_{n}^{1/N}$ for any $k$, there exists a permutation $\sigma_{k} \in \mathfrak{S}_{N}$ such that $X_{nk} \in \mathcal{Q}_{\sigma_{k}(n)}$ for each $n$ by Lemma~\ref{lem:the direct sum of Pn include the direct sum of Xn, then Xn in some Pm}. 
    By the finiteness of the number $N$, there exist a subsequence $\{X_{1k_{\ell}^{(1)}}\}_{\ell=1}^{\infty}$ of $\{X_{1k}\}_{k=1}^{\infty}$ and a number $n_{1}$ with $1\le n_{1} \le N$ such that $X_{1k_{\ell}^{(1)}} \in \mathcal{Q}_{n_{1}}$ for any $\ell$. 
    By the same argument, there exist a subsequence $\{X_{2k_{\ell}^{(2)}}\}_{\ell=1}^{\infty}$ of $\{X_{2k_{\ell}^{(1)}}\}_{\ell=1}^{\infty}$ and a number $n_{2}$ with $1 \le n_{2} \le N$ and $n_{2}\neq n_{1}$ such that $X_{2k_{\ell}^{(2)}} \in \mathcal{Q}_{n_{2}}$ for any $\ell$. 
    We repeat this process, which ends in finite steps, and we obtain a subsequence $\{X_{nk(\ell)}\}_{\ell=1}^{\infty}$ of $\{X_{nk}\}_{k=1}^{\infty}$  and a permutation $\sigma \in \mathfrak{S}_{N}$ such that $X_{nk(\ell)} \in \mathcal{Q}_{\sigma(n)}$ for any $n$ and any $\ell \in \mathbb{N}$. 
    Letting $\ell \to \infty$, we obtain $\mathcal{P}_{n} \subset \mathcal{Q}_{\sigma(n)}$ for each $n$.
    By the same argument, we find a permutation $\tau \in \mathfrak{S}_{N}$ such that $\mathcal{Q}_{n} \subset \mathcal{P}_{\tau(n)}$ for each $n$. 
    Set $m$ as the order of the permutation $\rho := \tau \circ \sigma$. 
    Then, for each $n = 1,2,\dots,N$, we have
    \begin{align*}
        &\mathcal{P}_{n} \subset \mathcal{Q}_{\sigma(n)} \subset \mathcal{P}_{\tau(\sigma(n))} = \mathcal{P}_{\rho(n)} \subset \mathcal{Q}_{\sigma(\rho(n))} \subset \mathcal{P}_{\tau(\sigma(\rho(n)))} = \mathcal{P}_{\rho^{2}(n)} \subset \cdots \\
        &\hspace{10mm} \cdots \subset \mathcal{Q}_{\sigma(\rho^{m-1}(n))} \subset \mathcal{P}_{\tau(\sigma(\rho^{m-1}(n)))} = \mathcal{P}_{\rho^{m}(n)} = \mathcal{P}_{n}, 
    \end{align*}
    which implies that \[\mathcal{P}_{n} = \mathcal{P}_{\rho^{\ell}(n)} \hspace{3mm}\text{and}\hspace{3mm} \mathcal{P}_{n} = \mathcal{Q}_{\sigma(n)}\] for any $n$ and any $\ell = 1,2,\dots, m-1$.
    This completes the proof.
\end{proof}

\begin{dfn}\label{def:partially dissipative pyramids}
    Let $\mathcal{P}$ be a pyramid. 
    We say that the pyramid $\mathcal{P}$ is \textit{partially infinitely dissipated} if and only if there exists $A \in \A$ with $\|A\|_{1} < 1$ such that \[t\mathcal{P} \longrightarrow \mathcal{P}_{A} \hspace{5mm} \text{weakly as}\hspace{2mm} t \to 0+.\]
\end{dfn}

\begin{rem}
    We note that a pyramid $\mathcal{P}$ is partially infinitely dissipated if and only if there exists $A \in \A$ with $\|A\|_{1} < 1$ such that the sequence $\{\frac{1}{n}\mathcal{P}\}_{n=1}^{\infty}$ \textit{infinitely dissipates with atoms} $A$ (for details, see~\cite{EKM2024}).
\end{rem}

\begin{lem}\label{lem:not partially dissipative pyramids}
    Let $\mathcal{P}$ be a pyramid. 
    If there exist a real number $0 < \alpha < 1$ and a pyramid $\mathcal{Q}$ such that \[\mathcal{X}^{1-\alpha} + \mathcal{Q}^{\alpha} \subset \mathcal{P},\] then the pyramid $\mathcal{P}$ is partially infinitely dissipated.    
\end{lem}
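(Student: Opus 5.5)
We want to show that if $\mathcal{X}^{1-\alpha}+\mathcal{Q}^{\alpha}\subset\mathcal{P}$, then the weak limit $\mathcal{P}_{A}$ of $t\mathcal{P}$ as $t\to0+$, which exists by Lemma~\ref{lem:the limit of pyramids tP when t to 0+}, satisfies $\|A\|_{1}<1$. The plan is to rescale the inclusion, pass to the limit $t\to 0+$, and read off a bound on $\|A\|_{1}$ from the resulting inclusion of atom pyramids. By Lemma~\ref{lem:scale transformation of direct sums of pyramids} and $t\mathcal{X}=\mathcal{X}$, we get
\[\mathcal{X}^{1-\alpha}+(t\mathcal{Q})^{\alpha}=t(\mathcal{X}^{1-\alpha}+\mathcal{Q}^{\alpha})\subset t\mathcal{P}\]
for every $t>0$.

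\textbf{Step 1: pass to the limit.} Lemma~\ref{lem:the limit of pyramids tP when t to 0+} gives $t\mathcal{Q}\to\mathcal{P}_{B}$ for some $B\in\A$. Theorem~\ref{thm:the continuity of the direct sum of pyramids} then gives $\mathcal{X}^{1-\alpha}+(t\mathcal{Q})^{\alpha}\to\mathcal{X}^{1-\alpha}+\mathcal{P}_{B}^{\alpha}$. Inclusion is preserved under weak convergence: apply Proposition~\ref{prop:Pn <en Qn => P < Q} with $\varepsilon_{n}\to0$, since inclusion trivially gives $\prec_{\varepsilon}$. Hence
\[\mathcal{X}^{1-\alpha}+\mathcal{P}_{B}^{\alpha}\subset\mathcal{P}_{A}.\]

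\textbf{Step 2: extract $\|A\|_{1}<1$.} It suffices to exhibit mm-spaces in the left-hand side that are incompatible with atoms of total mass close to $1$. The cleanest choice is an infinitely dissipating family inside the left-hand side. Take $X_{n}=\mathbb{D}_{n}$ in the $\mathcal{X}$-slot and the one-point space $*$ (which lies in any pyramid) in the $\mathcal{P}_{B}$-slot. The direct-sum definition then yields the space
\[W_{n}=(\{p\}\sqcup \mathbb{D}_{n},\ \text{points at mutual distance }n,\ \alpha\delta_{p}+(1-\alpha)\mu_{\mathbb{D}_{n}})\in\mathcal{X}^{1-\alpha}+\mathcal{P}_{B}^{\alpha}.\]
So $W_{n}\in\mathcal{P}_{A}$, and there is a map $f:[N]\to W_{n}$ with $\mu_{W_{n}}\ge\sum_{k}a_{k}\delta_{f(k)}$. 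Every atom of $W_{n}$ other than $p$ has mass $(1-\alpha)/n$, which forces the following:
\begin{itemize}
\item since $a_{1}>0$ is fixed, for $n$ large only $p$ can carry an $a_{k}$ of size $\ge a_{1}$;
\item the total mass satisfies $\sum_{k}a_{k}\le\mu_{W_{n}}(\{p\})+\sum_{k\ge 2}a_{k}$.
\end{itemize}

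This counting is the delicate part, because the $a_{k}$ may be placed at the small atoms of $\mathbb{D}_{n}$. Each small atom can absorb only $a_{k}$ with $a_{k}\le(1-\alpha)/n$. Fix $M$ so that $\sum_{k>M}a_{k}<\varepsilon$, and take $n$ so large that $(1-\alpha)/n<a_{M}$. Then $a_{1},\dots,a_{M}$ must all sit at $p$, giving $\sum_{k\le M}a_{k}\le\alpha$. Hence $\|A\|_{1}\le\alpha+\varepsilon$, and letting $\varepsilon\to0$ gives $\|A\|_{1}\le\alpha<1$, so $\mathcal{P}$ is partially infinitely dissipated.

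The main obstacle is Step 1: checking that Theorem~\ref{thm:the continuity of the direct sum of pyramids} applies along $t\to0+$ with a continuum parameter. This is handled by working along arbitrary sequences $t_{j}\to0$, since both limits exist along every sequence. Step 2 is elementary. Note that the argument never uses $B$, so we could even bypass the limit of $t\mathcal{Q}$ by using only $\{*\}\subset t\mathcal{Q}$. With that observation the whole proof reduces to $\mathcal{X}^{1-\alpha}+\{*\}^{\alpha}\subset t\mathcal{P}$ for all $t$, hence $\subset\bigcap_{t}t\mathcal{P}=\mathcal{P}_{A}$, which avoids continuity entirely. I would adopt this simpler route.
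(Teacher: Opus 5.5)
Your proof is correct, and the simplified route you adopt at the end differs from the paper's in a useful way.

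\textbf{The paper's route.} The paper keeps $\mathcal{Q}$. It takes the limit $t\mathcal{Q}\to\mathcal{P}_{B}$ and passes $\mathcal{X}^{1-\alpha}+(t\mathcal{Q})^{\alpha}\subset t\mathcal{P}$ to the limit, using continuity of the direct sum. It identifies the left side with $\mathcal{P}_{\alpha B}$. It then asserts, without further argument, that $\mathcal{P}_{\alpha B}\subset\mathcal{P}_{A}$ forces $\|A\|_{1}\le\alpha\|B\|_{1}$.

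\textbf{Your route.} You replace $\mathcal{Q}$ by $\{*\}$, using monotonicity of the direct sum in each slot. You observe that $\mathcal{X}^{1-\alpha}+\{*\}^{\alpha}$ is scale invariant, so it lies in $\bigcap_{t>0}t\mathcal{P}$. That intersection is the limit $\mathcal{P}_{A}$ by the proof of Lemma~\ref{lem:the limit of pyramids tP when t to 0+}. Alternatively, without that identification, apply Proposition~\ref{prop:Pn <en Qn => P < Q} to a constant sequence. This step needs neither the limit of $t\mathcal{Q}$ nor Theorem~\ref{thm:the continuity of the direct sum of pyramids}.

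You then prove the comparison step that the paper leaves implicit, via the test spaces $W_{n}$. The atoms $a_{1},\dots,a_{M}$ exceed $(1-\alpha)/n$, so they must all sit at $p$, which gives $\|A\|_{1}\le\alpha$.

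\textbf{Comparison.} The paper's version yields the sharper bound $\alpha\|B\|_{1}$, which is not needed for the lemma. Yours is more elementary and fully self-contained. Only the trivial case $A=(0)$ needs a separate remark, since there $a_{M}=0$ and the conclusion $\|A\|_{1}=0<1$ is immediate.
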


\begin{proof}
    By Lemma~\ref{lem:the limit of pyramids tP when t to 0+}, there exist $A, B \in \mathcal{A}$ such that $t\mathcal{P}$ and $t\mathcal{Q}$ converge weakly to $\mathcal{P}_{A}$ and $\mathcal{P}_{B}$ as $t \to 0+$, respectively.
    By Lemma~\ref{lem:scale transformation of direct sums of pyramids}, for any $t > 0$, we have $\mathcal{X}^{1-\alpha} + (t\mathcal{Q})^{\alpha} \subset t\mathcal{P}$. 
    Letting $t \to 0+$, we see that $\mathcal{P}_{\alpha B} = \mathcal{X}^{1-\alpha} + \mathcal{P}_{B}^{\alpha} \subset \mathcal{P}_{A}$, which implies that $\|A\|_{1} \le \alpha\|B\|_{1} < 1$. This completes the proof.
\end{proof}

\begin{lem}\label{lem:X+P=X+Q, the P=Q}
    Let $\mathcal{P}$ and $\mathcal{Q}$ be two pyramids that are not partially infinitely dissipated. 
    If there exists a real number $\alpha$ with $0 < \alpha < 1$ such that\[\mathcal{X}^{1-\alpha} + \mathcal{P}^{\alpha} = \mathcal{X}^{1-\alpha} + \mathcal{Q}^{\alpha},\]
    then we have $\mathcal{P} = \mathcal{Q}$.
\end{lem}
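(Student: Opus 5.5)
The plan is to show $\mathcal{P} \subset \mathcal{Q}$; the reverse inclusion then follows by symmetry. Fix $X \in \mathcal{P}$. For $k \in \N$ choose an mm-space $D_{k}$ consisting of $k$ points with mutual distance $k$ and uniform measure, so that $\{D_{k}\}_{k=1}^{\infty}$ infinitely dissipates by Lemma~\ref{lem:infinitely dissipation criterion}. Put $Z_{k} := (D_{k}^{1-\alpha} + X^{\alpha})_{k}$. Since $D_{k} \in \X$ and $X \in \mathcal{P}$, we have $Z_{k} \in \X^{1-\alpha}+\mathcal{P}^{\alpha} = \X^{1-\alpha}+\mathcal{Q}^{\alpha}$. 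Hence there are an mm-space $W_{k}$, a space $Y_{k} \in \mathcal{Q}$, and 1-Lipschitz maps $g_{k}:W_{k}\to Z_{k}$, $f_{k}:Y_{k}\to Z_{k}$ such that
\[
(1-\alpha)\,\delta\text{-part} \colon\quad \mu_{Z_{k}} = (1-\alpha)(g_{k})_{*}\mu_{W_{k}} + \alpha (f_{k})_{*}\mu_{Y_{k}}.
\]
In particular $\alpha (f_{k})_{*}\mu_{Y_{k}} \le \mu_{Z_{k}}$. Set $\beta_{k} := \mu_{Y_{k}}(f_{k}^{-1}(X))$.

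The key step, and the one I expect to be the main obstacle, is to show that $\beta_{k} \to 1$. Suppose not. Then along a subsequence $\beta_{k} \to \beta < 1$. Consider $Y'_{k} := (Z_{k}, d_{Z_{k}}, (f_{k})_{*}\mu_{Y_{k}})$; since $Y'_{k} \prec Y_{k}$, we have $Y'_{k} \in \mathcal{Q}$. Its measure splits into two parts:
\begin{itemize}
\item a part of mass $1-\beta_{k}$ living on the points of $D_{k}$, where each point carries mass at most $(1-\alpha)/(\alpha k) \to 0$ and distinct points are at distance $k$;
\item a part of mass $\beta_{k}$ on $X$, dominated by $\mu_{X}/\beta_{k}$, so these normalized restrictions form a tight family.
\end{itemize}
If $\beta = 0$, then $\{Y'_{k}\}$ infinitely dissipates by Lemma~\ref{lem:infinitely dissipation criterion}. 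Proposition~\ref{prop:infinitely dissipated sequence converges weakly to the maximum pyramid} together with Proposition~\ref{prop:Pn <en Qn => P < Q} then gives $\X \subset \mathcal{Q}$. But $\X$ is partially infinitely dissipated (with $A=(0)$), a contradiction. If $0<\beta<1$, then $Y'_{k}$ lies in $\X((D'_{k},X'_{k});(1-\beta_{k},\beta_{k});k)$. Here $D'_{k}$, the normalized $D_{k}$-part, infinitely dissipates. The space $X'_{k}$, namely $X$ with the normalized restricted measure, converges weakly along a subsequence in $\Pi$ to some pyramid $\mathcal{R}$ by Theorem~\ref{thm:metric rho}. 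Corollary~\ref{cor:the sum of Xnk's approximates the sum of pyramids} then gives $\mathcal{P}_{Y'_{k}} \to \X^{1-\beta}+\mathcal{R}^{\beta}$. Since each $\mathcal{P}_{Y'_{k}} \subset \mathcal{Q}$, Proposition~\ref{prop:Pn <en Qn => P < Q} yields $\X^{1-\beta}+\mathcal{R}^{\beta}\subset\mathcal{Q}$. By Lemma~\ref{lem:not partially dissipative pyramids}, $\mathcal{Q}$ is then partially infinitely dissipated, contradicting the hypothesis.

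Once $\beta_{k}\to 1$ is established, I would repeat the argument of Lemma~\ref{lem:lem of lem:the direct sum of Pn include the direct sum of Xn, then Xn in some Pm 1}. Define $\widetilde f_{k}:Y_{k}\to X$ by $\widetilde f_{k} := f_{k}$ on $f_{k}^{-1}(X)$ and $\widetilde f_{k} := x_{0}$ elsewhere. This map is 1-Lipschitz up to $1-\beta_{k}$. The inequality $\alpha(f_{k})_{*}\mu_{Y_{k}}|_{X}\le \alpha\mu_{X}$ gives the same two-sided total variation estimate as in that lemma, so $\dTV(\mu_{X},(\widetilde f_{k})_{*}\mu_{Y_{k}})\le 1-\beta_{k}$. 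Lemma~\ref{lem:dP < dTV} then gives $X \prec_{1-\beta_{k}} Y_{k}$ with $Y_{k}\in\mathcal{Q}$, i.e. $\mathcal{P}_{X}\prec_{1-\beta_{k}}\mathcal{Q}$. Proposition~\ref{prop:Pn <en Qn => P < Q} yields $X\in\mathcal{Q}$, so $\mathcal{P}\subset\mathcal{Q}$, and exchanging the roles of $\mathcal{P}$ and $\mathcal{Q}$ gives equality.
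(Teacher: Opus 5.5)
Your proof is correct and follows essentially the same route as the paper: the same test spaces $(\mathbb{D}_k^{1-\alpha}+X^{\alpha})_k$, the same key claim that $\mu_{Y_k}(f_k^{-1}(X))\to 1$ proved by the same two-case contradiction (infinite dissipation when all the mass escapes to the $\mathbb{D}_k$-part, and Lemma~\ref{lem:not partially dissipative pyramids} when only part of it does), and the same final $\prec_{\varepsilon}$ argument. The differences are cosmetic. You push the measure forward to $Z_k$ instead of restricting $\mu_{Y_k}$ on $Y_k$. You also inline the argument of Lemma~\ref{lem:lem of lem:the direct sum of Pn include the direct sum of Xn, then Xn in some Pm 1} rather than citing it, which is slightly cleaner here because that lemma is stated for a fixed family of summands, while the $\mathbb{D}_k$-summand varies with $k$.
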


\begin{proof}
   We first prove that $\mathcal{P} \subset \mathcal{Q}$. 
   Take any $X \in \mathcal{P}$ and set $Z_{n} := (\mathbb{D}_{n}^{1-\alpha}+X^{\alpha})_{n}$ for $n \in \N$.
   Since $Z_{n} \in \mathcal{X}^{1-\alpha}+\mathcal{Q}^{\alpha}$, there exist mm-spaces $Y_{n} \in \mathcal{Q},\, W_{n} \in \mathcal{X}$, and 1-Lipschitz maps $f_{n}:Y_{n} \to Z_{n},\, g_{n}:W_{n}\to Z_{n}$ for $n \in \N$ such that 
   \begin{equation}\label{eq:lem:X+P=X+Q, the P=Q_1}
        (1-\alpha)\mu_{\mathbb{D}_{n}}+\alpha\mu_{X} = \mu_{Z_{n}} = (1-\alpha)(g_{n})_{*}\mu_{W_{n}} + \alpha(f_{n})_{*}\mu_{Y_{n}}.
    \end{equation} 
   \begin{clm}\label{clm:lem:X+P=X+Q, the P=Q}
        We have 
       \[\lim_{n\to\infty} \mu_{Y_{n}}(f_{n}^{-1}(X)) = 1.\]
   \end{clm}

   \begin{proof}
       We first prove that \[\lim_{n\to\infty} \mu_{Y_{n}}(f_{n}^{-1}(\mathbb{D}_{n})) = 0\]by contradiction.
       Suppose that $\limsup_{n\to\infty}\mu_{Y_{n}}(f_{n}^{-1}(\mathbb{D}_{n})) > 0$. 
       Then there exist a subsequence $\{n(k)\}_{k=1}^{\infty}$ of $\{n\}_{n=1}^{\infty}$ and a real number $0 < \beta \le 1$ such that for any $k \in \N$, \[\lim_{k\to\infty}\mu_{Y_{n(k)}}(f_{n(k)}^{-1}(\mathbb{D}_{n(k)})) = \beta \hspace{3mm}\text{and} \hspace{3mm} \mu_{Y_{n(k)}}(f_{n(k)}^{-1}(\mathbb{D}_{n(k)})) > 0.\] 
       Define mm-spaces \[A_{k} := \left(Y_{n(k)},\, d_{Y_{n(k)}},\, \mu_{A_{k}}:=\frac{1}{\mu_{Y_{n(k)}}(f_{n(k)}^{-1}(\mathbb{D}_{n(k)}))}\mu_{Y_{n(k)}}\bigg|_{f_{n(k)}^{-1}(\mathbb{D}_{n(k)})} \right).\] 
       By the definition of $\mathbb{D}_{n(k)}$ and the equation~\eqref{eq:lem:X+P=X+Q, the P=Q_1}, we see that 
       \begin{itemize}
           \item $\supp{\mu_{A_{k}}} \subset \bigsqcup_{i=1}^{n(k)} f_{n(k)}^{-1}(\{s_{n(k)}^{i}\}) = f_{n(k)}^{-1}(\mathbb{D}_{n(k)})$;
           \item $d_{Y_{n(k)}}(f_{n(k)}^{-1}(\{s_{n(k)}^{i}\}),\, f_{n(k)}^{-1}(\{s_{n(k)}^{j}\})) \ge n(k)$ for any $i \neq j$;
           \item $\mu_{A_{k}}(f_{n(k)}^{-1}(\{s_{n(k)}^{i}\})) = \displaystyle\frac{\mu_{Y_{n(k)}}(f_{n(k)}^{-1}(\{s_{n(k)}^{i}\}))}{\mu_{Y_{n(k)}}(f_{n(k)}^{-1}(\mathbb{D}_{n(k)}))}\le \displaystyle\frac{1-\alpha}{\alpha}\cdot\frac{\mu_{\mathbb{D}_{n(k)}}(\{s_{n(k)}^{i}\}) }{\mu_{Y_{n(k)}}(f_{n(k)}^{-1}(\mathbb{D}_{n(k)}))} \longrightarrow 0$ as $k \to \infty$.
       \end{itemize}
       By Lemma~\ref{lem:infinitely dissipation criterion}, these imply that the sequence $\{A_{k}\}_{k=1}^{\infty}$ infinitely dissipates. 
       If $\beta=1$, then the sequence $\{Y_{n(k)}\}_{k=1}^{\infty}$ also infinitely dissipates, and by Proposition~\ref{prop:infinitely dissipated sequence converges weakly to the maximum pyramid}, $\mathcal{P}_{Y_{n(k)}}$ converges weakly to $\X$, which contradicts the hypothesis that $\mathcal{Q}$ is not partially infinitely dissipated.
       Suppose $0<\beta < 1$. 
       Then, for large $k$, we have $\mu_{Y_{n(k)}}(f_{n(k)}^{-1}(X)) >0$.
       For such large $k$, we define \[B_{k}:=\left(Y_{n(k)}, d_{Y_{n(k)}}, \frac{1}{\mu_{Y_{n(k)}}(f_{n(k)}^{-1}(X))}\mu_{Y_{n(k)}}\Bigg|_{f_{n(k)}^{-1}(X)} \right),\] which are mm-spaces.
       By Theorem~\ref{thm:metric rho}, there exists a subsequence $\{B_{k(\ell)}\}_{\ell=1}^{\infty}$ of $\{B_{k}\}_{k=1}^{\infty}$ such that $\mathcal{P}_{B_{k(\ell)}}$ converges weakly to some pyramid $\mathcal{R}$.
       Since \[Y_{n(k(\ell))} \in \mathcal{X}((A_{k(\ell)}, B_{k(\ell)});(\alpha_{k(\ell)},1-\alpha_{k(\ell)});n(k(\ell))),\] where $\alpha_{k(\ell)} := \mu_{Y_{n(k(\ell))}}(f_{n(k(\ell))}^{-1}(\mathbb{D}_{n(k(\ell))}))$, by Corollary~\ref{cor:the sum of Xnk's approximates the sum of pyramids}, we see that $\mathcal{P}_{Y_{n(k(\ell))}}$ converges weakly to the pyramid $\mathcal{X}^{\beta}+\mathcal{R}^{1-\beta}$.
       Since $Y_{n(k(\ell))} \in \mathcal{Q}$, we have $\mathcal{X}^{\beta}+\mathcal{R}^{1-\beta} \subset \mathcal{Q}$. 
       By Lemma~\ref{lem:not partially dissipative pyramids}, this implies that the pyramid $\mathcal{Q}$ is partially infinitely dissipated, which is a contradiction, and we obtain \[\lim_{n\to\infty}\mu_{Y_{n}}(f_{n}^{-1}(\mathbb{D}_{n})) = 0\] and \[\lim_{n\to\infty}\mu_{Y_{n}}(f_{n}^{-1}(X)) = 1- \lim_{n\to\infty}\mu_{Y_{n}}(f_{n}^{-1}(\mathbb{D}_{n})) = 1.\] 
       We finish the proof.
   \end{proof}
   By Lemma~\ref{lem:lem of lem:the direct sum of Pn include the direct sum of Xn, then Xn in some Pm 1} and Claim~\ref{clm:lem:X+P=X+Q, the P=Q}, we obtain $X \in \mathcal{Q}$, which implies $\mathcal{P} \subset \mathcal{Q}$. The proof of $\mathcal{P} \supset \mathcal{Q}$ is similar to that of $\mathcal{P} \subset \mathcal{Q}$. This completes the proof.
\end{proof}

\begin{proof}[Proof of Theorem~\ref{thm:the uniquness of decomposition of pyramid}]
    For any $t > 0$, we have \[\mathcal{X}^{1-\|A\|_{1}}+\sum_{n=1}^{N}(t\mathcal{P}_{n})^{a_{n}} = \mathcal{X}^{1-\|B\|_{1}}+\sum_{m=1}^{M}(t\mathcal{Q}_{m})^{b_{m}}\] by Lemma~\ref{lem:scale transformation of direct sums of pyramids}. 
    Since both $\mathcal{P}_{n}$ and $\mathcal{Q}_{n}$ have finite observable diameters, both $t\mathcal{P}_{n}$ and $t\mathcal{Q}_{m}$ converge weakly to the pyramid $\{*\}$ as $t \to 0+$ by Lemma~\ref{lem:the characterization of pyramid whose observable diameter is finite}, and we obtain \[\mathcal{X}^{1-\|A\|_{1}}+\sum_{n=1}^{N}\{*\}^{a_{n}} = \mathcal{X}^{1-\|B\|_{1}}+\sum_{m=1}^{M}\{*\}^{b_{m}},\]and hence, $\mathcal{P}_{A} = \mathcal{P}_{B}$.
    By Lemma~\ref{lem:A=B <=> P_A = P_B}, we have $A=B$, that is, $N=M$ and $a_{n}=b_{n}$ for every $n \in [N]$. 
    Since both $\sum_{n=1}^{N}\mathcal{P}_{n}^{a_{n}/\|A\|_{1}}$ and $\sum_{n=1}^{N}\mathcal{Q}_{n}^{a_{n}/\|A\|_{1}}$ are not partially infinitely dissipated, Lemma~\ref{lem:X+P=X+Q, the P=Q} shows that\[\sum_{n=1}^{N}\mathcal{P}_{n}^{a_{n}/\|A\|_{1}} = \sum_{n=1}^{N}\mathcal{Q}_{n}^{a_{n}/\|A\|_{1}}.\] 
    For simplicity, we assume that $\|A\|_{1} = 1$. 

    For $n \in [N]$, there exists an approximation sequence $\{X_{nk}\}_{k=1}^{\infty}$ of the pyramid $\mathcal{P}_{n}$ by Lemma~\ref{lem:app. seq. of pyramid}. 
    Then we have $\mathcal{P}_{\sum_{n=1}^{N}X_{nk}^{a_{n}}} \subset \sum_{n=1}^{N}\mathcal{Q}_{n}^{a_{n}}$. 
    By Lemma~\ref{lem:the direct sum of Pn include the direct sum of Xn, then Xn in some Pm}, there exist bijective maps $f_{k}:[N] \to [N]$ such that $X_{nk} \in \mathcal{Q}_{f_{k}(n)}$ and $a_{n} = a_{f_{k}(n)}$ for $k \in \mathbb{N}$ and $n \in [N]$. 
    Take a sequence $\{n_{\ell}\}_{\ell=0}^{M}$, where $M \in \overline{\mathbb{N}}$, as in the proof of Lemma~\ref{lem:the direct sum of Pn include the direct sum of Xn, then Xn in some Pm}, that is, \[a_{1}=a_{2}=\cdots = a_{n_{1}}> a_{n_{1}+1}=a_{n_{1}+2} = \cdots a_{n_{2}}> \cdots.\]
    Then, for any $\ell \in \{0\}\cup[M]$ and any $k \in \mathbb{N}$, we have \[\mathcal{P}_{\sum_{j=1}^{n_{\ell+1}-n_{\ell}}X_{n_{\ell}+j,\, k}^{1/(n_{\ell+1}-n_{\ell})}} \subset \sum_{j=1}^{n_{\ell+1}-n_{\ell}}\mathcal{Q}_{f_{k}(n_{\ell}+j)}^{1/(n_{\ell+1}-n_{\ell})} = \sum_{j=1}^{n_{\ell+1}-n_{\ell}}\mathcal{Q}_{n_{\ell}+j}^{1/(n_{\ell+1}-n_{\ell})}.\] 
    Letting $k \to \infty$, by Theorem~\ref{thm:the continuity of the direct sum of pyramids}, we obtain \[\sum_{j=1}^{n_{\ell+1}-n_{\ell}}\mathcal{P}_{n_{\ell}+j}^{1/(n_{\ell+1}-n_{\ell})} \subset \sum_{j=1}^{n_{\ell+1}-n_{\ell}}\mathcal{Q}_{n_{\ell}+j}^{1/(n_{\ell+1}-n_{\ell})}.\] 
    By the same argument, we see that the opposite inclusion holds, which implies that for any $\ell \in \{0\}\cup[M]$, we have \[\sum_{j=1}^{n_{\ell+1}-n_{\ell}}\mathcal{P}_{n_{\ell}+j}^{1/(n_{\ell+1}-n_{\ell})} = \sum_{j=1}^{n_{\ell+1}-n_{\ell}}\mathcal{Q}_{n_{\ell}+j}^{1/(n_{\ell+1}-n_{\ell})}.\] 
    By Corollary~\ref{cor:P1+...+PN=Q1+...+QN, then Pn=Qm(n)}, there exists a permutation $\sigma_{\ell} \in \mathfrak{S}_{n_{\ell+1}-n_{\ell}}$ such that $\mathcal{P}_{n_{\ell}+j} = \mathcal{Q}_{n_{\ell}+\sigma_{\ell}(j)}$ for $1 \le j \le n_{\ell+1}-n_{\ell}$ and for $\ell \in \{0\}\cup[M]$. 
    Define a map $f:[N] \to [N]$ by $f(n_{\ell}+j) := n_{\ell}+\sigma_{\ell}(j)$. 
    Then the map $f$ is bijective and satisfies $\mathcal{P}_{n} = \mathcal{Q}_{f(n)}$ and $a_{n} = a_{f(n)} = b_{f(n)}$ for every $n \in [N]$.
    We finish the proof of this theorem.
\end{proof}

\section{Covering numbers of pyramids}
In this section, we introduce the \textit{covering number} of a pyramid and prove Theorem~\ref{thm:when a pyrmid is an ext. mm sp.}.

\begin{dfn}[\cite{VL_mm_entropy}]\label{def:covering number of mm-spaces}
    For $r > 0$, $0 < \kappa < 1$, and for an mm-space $X$, we define \[\cov{X}{r}{\kappa} := \min \hspace{1mm}\{\#\mathcal{N}\mid \mathcal{N} \subset X,\hspace{1mm}\mu_{X}(B_{r}(\mathcal{N})) \ge 1-\kappa\},\] and call it the $(r,\kappa)$-\textit{covering number} of $X$. 
\end{dfn}
\begin{rem}
    Let $X$ be an mm-space. 
    Since the measure $\mu_{X}$ is inner regular, we have $\cov{X}{r}{\kappa} < +\infty$ for any $r > 0$ and any $0 < \kappa <1$.
\end{rem}
The covering number is an invariant for mm-spaces.
The following lemma follows immediately from the definition.
\begin{lem}\label{lem:monotonicity of Cov in X}
    Let $X$ and $Y$ be two mm-spaces with $X \prec Y$, and let $r > 0$, $0 < \kappa < 1$.
    Then we have \[\cov{X}{r}{\kappa} \le \cov{Y}{r}{\kappa}.\]
\end{lem}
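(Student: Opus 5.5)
Since $X \prec Y$, I would fix a domination map $f:Y \to X$, that is, a 1-Lipschitz map with $f_{*}\mu_{Y} = \mu_{X}$, and transport an optimal covering net of $Y$ to $X$ along $f$. This is the same idea as in the proof of (i) $\Leftarrow$ (ii) of Proposition~\ref{prop:P is compact <=> P=PX}.

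First, I would take a subset $\mathcal{N} \subset Y$ attaining the minimum, so that $\#\mathcal{N} = \cov{Y}{r}{\kappa}$ and $\mu_{Y}(B_{r}(\mathcal{N})) \ge 1-\kappa$. If this minimum is $+\infty$, there is nothing to prove. By the preceding remark it is in fact finite.

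Next, I would set $\mathcal{N}_{X} := f(\mathcal{N}) \subset X$, so that $\#\mathcal{N}_{X} \le \#\mathcal{N}$. Since $f$ is 1-Lipschitz, every $y$ with $d_{Y}(y,\mathcal{N}) \le r$ satisfies $d_{X}(f(y), f(\mathcal{N})) \le r$. Hence $B_{r}(\mathcal{N}) \subset f^{-1}(B_{r}(\mathcal{N}_{X}))$. The set $B_{r}(\mathcal{N}_{X})$ is closed, hence Borel, so its preimage is measurable. Therefore
\[\mu_{X}(B_{r}(\mathcal{N}_{X})) = \mu_{Y}(f^{-1}(B_{r}(\mathcal{N}_{X}))) \ge \mu_{Y}(B_{r}(\mathcal{N})) \ge 1-\kappa.\]

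Finally, this shows that $\mathcal{N}_{X}$ is admissible in the definition of $\cov{X}{r}{\kappa}$, which gives $\cov{X}{r}{\kappa} \le \#\mathcal{N}_{X} \le \cov{Y}{r}{\kappa}$.

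There is one minor subtlety. Under the convention $X = \supp{\mu_{X}}$, the map $f$ need only be defined on the support, but it still takes values in $X$, so $\mathcal{N}_{X} \subset X$ as required. Apart from this, no step poses an obstacle.
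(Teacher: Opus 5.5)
Your proof is correct and is the intended one. The paper gives no written proof and says the lemma "follows immediately from the definition"; pushing a minimal $(r,\kappa)$-covering net of $Y$ forward along a domination map $f:Y\to X$, exactly as in the proof of (i) $\Leftarrow$ (ii) of Proposition~\ref{prop:P is compact <=> P=PX}, is that immediate argument, and your inclusion $B_{r}(\mathcal{N})\subset f^{-1}(B_{r}(f(\mathcal{N})))$ is the only step needed.
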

Lemma~\ref{lem:monotonicity of Cov in X} states that the covering number is monotone increasing with respect to the Lipschitz order relation.
By this fact, we obtain the following definition of the covering number of a pyramid (for details, see~\cite{EKM2024}).
\begin{dfn}\label{def:covering number of pyramids}
    For $r > 0$, $0 < \kappa < 1$, and for a pyramid $\mathcal{P}$, we define \[\cov{\mathcal{P}}{r}{\kappa} := \sup_{X \in \mathcal{P}}\cov{X}{r}{\kappa}\] and call it the $(r,\kappa)$-\textit{covering number} of $\mathcal{P}$.
\end{dfn}
By Lemma~\ref{lem:monotonicity of Cov in X}, for any mm-space $X$, we have \[\cov{\mathcal{P}_{X}}{r}{\kappa} = \cov{X}{r}{\kappa}.\]
The following proposition is straightforward.
\begin{prop}\label{prop:P < Q => Cov(P) < Cov(Q)}
    For two pyramids $\mathcal{P}$ and $\mathcal{Q}$ with $\mathcal{P} \subset \mathcal{Q}$, and for any $r > 0$ and $0 < \kappa < 1$, we have \[\cov{\mathcal{P}}{r}{\kappa} \le \cov{\mathcal{Q}}{r}{\kappa}.\]
\end{prop}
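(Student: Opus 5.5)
The inequality follows directly from the two definitions, since the covering number of a pyramid is a supremum over its members. Fix $r>0$ and $0<\kappa<1$.

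Definition~\ref{def:covering number of pyramids} gives
\[\cov{\mathcal{P}}{r}{\kappa}=\sup_{X\in\mathcal{P}}\cov{X}{r}{\kappa}.\]
Because $\mathcal{P}\subset\mathcal{Q}$, every $X\in\mathcal{P}$ is also an element of $\mathcal{Q}$. Hence
\[\cov{X}{r}{\kappa}\le\sup_{Y\in\mathcal{Q}}\cov{Y}{r}{\kappa}=\cov{\mathcal{Q}}{r}{\kappa}\]
for each $X\in\mathcal{P}$. Taking the supremum over $X\in\mathcal{P}$ yields the claim.

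The statement allows the value $+\infty$, and the argument holds verbatim in $[0,+\infty]$, since a supremum over a subset never exceeds the supremum over the whole set.

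No step presents a genuine obstacle. The only point to check is that $\cov{X}{r}{\kappa}$ is well-defined for each $X$, which holds because mm-spaces are taken up to isomorphism and the covering number is an isomorphism invariant. Lemma~\ref{lem:monotonicity of Cov in X} is not needed here. It only matters if one instead wants to express the pyramid's covering number through an approximating sequence.
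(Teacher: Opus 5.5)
Your proof is correct and matches the paper, which states this proposition as straightforward without writing out a proof. The intended argument is yours: the covering number of a pyramid is a supremum over its members, so the supremum over the subset $\mathcal{P}$ cannot exceed the supremum over $\mathcal{Q}$, with values in $[0,+\infty]$.
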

The following lemma is required to prove Theorem~\ref{thm:when a pyrmid is an ext. mm sp.}.
\begin{lem}\label{lem:the covering number of ext mm-space is finite}
    For any extended mm-space $X$, and for any $r > 0$ and $0 < \kappa < 1$, we have \[\cov{\mathcal{P}_{X}}{r}{\kappa} < +\infty.\]
\end{lem}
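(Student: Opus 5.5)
By Proposition~\ref{prop:decomposition of an ext. mm-sp and its uniqueness}, I will write $X$ as a direct sum $\sum_{n=1}^{N}X_{n}^{a_{n}}$ with $(a_{n})_{n=1}^{N}\in\A_{1}$ and mm-spaces $X_{n}$. I will then produce, for fixed $r>0$ and $0<\kappa<1$, a single finite number bounding $\cov{Y}{r}{\kappa}$ uniformly over all $Y \in \mathcal{P}_{X}$.

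First, I choose a finite $M\in[N]$ with $\sum_{n=M+1}^{N}a_{n}<\kappa/2$; this is possible since $\|(a_{n})\|_{1}=1$. For each $n\le M$, the space $X_{n}$ is separable and $\mu_{X_{n}}$ is inner regular, so I can find a finite set $\mathcal{N}_{n}\subset X_{n}$ with $\mu_{X_{n}}(B_{r}(\mathcal{N}_{n}))\ge 1-\kappa/2$ (as in the Remark after Definition~\ref{def:covering number of mm-spaces}). I then set $\mathcal{N}:=\bigcup_{n=1}^{M}\mathcal{N}_{n}\subset X$, which is finite, and compute
\[\mu_{X}(B_{r}(\mathcal{N}))\ge\sum_{n=1}^{M}a_{n}\mu_{X_{n}}(B_{r}(\mathcal{N}_{n}))\ge(1-\kappa/2)\sum_{n=1}^{M}a_{n}\ge(1-\kappa/2)^{2}.\]
To reach exactly $1-\kappa$, I will instead use $\kappa/3$ in both choices, which gives $(1-\kappa/3)^{2}\ge 1-\kappa$. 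For $x\in X_{n}$, the ball $B_{r}(\mathcal{N})$ intersected with $X_{n}$ contains $B^{X_{n}}_{r}(\mathcal{N}_{n})$, since $d=d_{X_{n}}$ on $X_{n}$.

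Next, I take any $Y\in\mathcal{P}_{X}$ with a 1-Lipschitz map $f:X\to Y$ such that $f_{*}\mu_{X}=\mu_{Y}$. Since $f$ is 1-Lipschitz for the extended metric, $f(B_{r}(\mathcal{N}))\subset B_{r}(f(\mathcal{N}))$. Hence
\[\mu_{Y}(B_{r}(f(\mathcal{N})))\ge\mu_{X}(B_{r}(\mathcal{N}))\ge1-\kappa,\]
which gives $\cov{Y}{r}{\kappa}\le\#\mathcal{N}$. Taking the supremum over $Y$ yields $\cov{\mathcal{P}_{X}}{r}{\kappa}\le\#\mathcal{N}<+\infty$.

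The only mildly delicate point is the meaning of $\mathcal{P}_{X}$ and the Lipschitz property when $d_{X}$ takes the value $+\infty$. Points in different components are at infinite distance, so the 1-Lipschitz condition imposes nothing there, and within one component the inclusion of balls holds as usual. I also need measurability of $f^{-1}(B_{r}(f(\mathcal{N})))\supset B_{r}(\mathcal{N})$, which is fine because $f$ is Borel, so the monotonicity argument of Lemma~\ref{lem:monotonicity of Cov in X} transfers verbatim. I expect no real obstacle beyond this bookkeeping.
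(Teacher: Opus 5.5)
Your proof is correct and follows the paper's argument. You decompose $X$ as $\sum_{n=1}^{N}X_{n}^{a_{n}}$, keep the finitely many components that carry almost all of the mass, cover each by a finite $r$-net, and bound $\cov{Y}{r}{\kappa}$ for every $Y \in \mathcal{P}_{X}$ by pushing that net forward under a domination map; you make this last step explicit, whereas the paper leaves it implicit. Incidentally, your original choice $\kappa/2$ already suffices, since $(1-\kappa/2)^{2} = 1-\kappa+\kappa^{2}/4 \ge 1-\kappa$.
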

\begin{proof}
    By Proposition~\ref{prop:decomposition of an ext. mm-sp and its uniqueness}, there exist $A=(a_{n})_{n=1}^{N} \in \mathcal{A}_{1}$ and a sequence of mm-spaces $\{X_{n}\}_{n=1}^{N}$ such that $X$ is mm-isomorphic to $\sum_{n=1}^{N}X_{n}^{a_{n}}$.
    Take a number $N^{\prime} \in \N$ such that $\sum_{n=1}^{N^{\prime}}a_{n} > 1 - \kappa$, and take a number $\kappa^{\prime}$ such that \[0 < \kappa^{\prime} \le (\kappa-\textstyle\sum_{n=N^{\prime}+1}^{N}a_{n})/\sum_{n=1}^{N^{\prime}}a_{n}.\]
    For $n=1,2,\dots, N^{\prime}$, let $\mathcal{N}_{n}$ be a finite subset of $X_{n}$ such that $\mu_{X_{n}}(B_{r}(\mathcal{N}_{n})) \ge 1- \kappa^{\prime}$.
    Then we have 
    \begin{align*}
        \mu_{X}(B_{r}(\textstyle\sum_{n=1}^{N^{\prime}}\mathcal{N}_{n})) &=\sum_{n=1}^{N^{\prime}}a_{n}\mu_{X_{n}}(B_{r}(\mathcal{N}_{n})) \ge \sum_{n=1}^{N^{\prime}}a_{n}(1-\kappa^{\prime}) \\&=1-\sum_{n=N^{\prime}+1}^{N}a_{n} - \sum_{n=1}^{N^{\prime}}a_{n}\kappa^{\prime} \ge 1-\kappa,
    \end{align*}
    which implies that \[\cov{\mathcal{P}_{X}}{r}{\kappa} \le \sum_{n=1}^{N^{\prime}}\#\mathcal{N}_{n} < +\infty.\]
    This completes the proof.
\end{proof}
As a corollary, we immediately obtain the following claim.
\begin{cor}\label{cor:the condition where a pyramid is not an mm-space}
    Let $\mathcal{P}$ be a pyramid.
    If there exist numbers $r > 0$ and $0 < \kappa < 1$ such that $\cov{\mathcal{P}}{r}{\kappa} = + \infty$, then $\mathcal{P}$ is not associated with any mm-space.
\end{cor}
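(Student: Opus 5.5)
The corollary is the contrapositive of Lemma~\ref{lem:the covering number of ext mm-space is finite}, specialized to (non-extended) mm-spaces. We argue by contradiction. Suppose that $\mathcal{P} = \mathcal{P}_{X}$ for some mm-space $X$.

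Every mm-space is in particular an extended mm-space: its metric takes values in $[0,\infty) \subset [0,+\infty]$. It can also be written as the trivial direct sum $X = \sum_{n=1}^{1} X^{1}$ with weight $(1) \in \mathcal{A}_{1}$. The pyramid $\mathcal{P}_{X}$ is the same set whether $X$ is regarded as an mm-space or as an extended mm-space, since in both cases it is $\{Y \in \X \mid Y \prec X\}$ with the same notion of domination map.

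Lemma~\ref{lem:the covering number of ext mm-space is finite} then gives $\cov{\mathcal{P}}{r}{\kappa} = \cov{\mathcal{P}_{X}}{r}{\kappa} < +\infty$ for every $r>0$ and every $0<\kappa<1$. This contradicts the hypothesis that $\cov{\mathcal{P}}{r}{\kappa} = +\infty$ for some $r$ and $\kappa$.

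Alternatively, the case of an mm-space can be settled directly, without the lemma. We have the identity $\cov{\mathcal{P}_{X}}{r}{\kappa} = \cov{X}{r}{\kappa}$, which follows from Lemma~\ref{lem:monotonicity of Cov in X}. The right-hand side is finite by inner regularity of $\mu_{X}$, as noted in the remark after Definition~\ref{def:covering number of mm-spaces}.

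There is no genuine obstacle. The only point to check is that an mm-space is indeed an instance of the extended notion, so that the lemma applies; the direct identity above confirms this in any case.
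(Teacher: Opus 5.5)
Your proposal is correct and matches the paper's approach: the paper obtains this corollary directly from Lemma~\ref{lem:the covering number of ext mm-space is finite}, viewing an mm-space as a particular extended mm-space and taking the contrapositive. Your alternative route, via $\cov{\mathcal{P}_{X}}{r}{\kappa} = \cov{X}{r}{\kappa} < +\infty$ using Lemma~\ref{lem:monotonicity of Cov in X} and inner regularity, is also valid and uses facts the paper records just before the lemma.
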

As an application of this corollary, we obtain the following proposition.
\begin{prop}\label{prop:measure of balls is almost zero, then P is not an mm-space}
    Let $\mathcal{P}$ be a pyramid. 
    If there exists a positive number $r$ such that \[\inf_{X \in \mathcal{P}} \sup_{x \in X} \mu_{X}(B_{r}(x)) = 0,\] then $\mathcal{P}$ is not associated with any mm-space.
\end{prop}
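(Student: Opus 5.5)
The plan is to derive this from Corollary~\ref{cor:the condition where a pyramid is not an mm-space}. It therefore suffices to find some $r' > 0$ and $0 < \kappa < 1$ with $\cov{\mathcal{P}}{r'}{\kappa} = +\infty$. Fix the $r$ from the hypothesis and fix any $\kappa$, say $\kappa = 1/2$. I will show that $\cov{\mathcal{P}}{r}{1/2} = +\infty$.

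The key step is a lower bound on covering numbers in terms of ball masses. Let $X$ be an mm-space and let $\mathcal{N} \subset X$ satisfy $\mu_X(B_r(\mathcal{N})) \ge 1/2$. Then $\#\mathcal{N}$ must be finite (or else we are already done), and $B_r(\mathcal{N}) = \bigcup_{x \in \mathcal{N}} B_r(x)$. This gives
\[\frac12 \le \sum_{x\in\mathcal{N}}\mu_X(B_r(x)) \le \#\mathcal{N}\cdot \sup_{x\in X}\mu_X(B_r(x)).\]
Hence
\[\cov{X}{r}{1/2} \ge \frac{1}{2\sup_{x\in X}\mu_X(B_r(x))},\]
with the convention that the right side is $+\infty$ when the supremum is $0$.

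Taking the supremum over $X \in \mathcal{P}$ and using the hypothesis $\inf_{X\in\mathcal{P}}\sup_{x}\mu_X(B_r(x)) = 0$, I will choose $X_k \in \mathcal{P}$ with $\sup_x \mu_{X_k}(B_r(x)) < 1/k$. This gives $\cov{X_k}{r}{1/2} > k/2$, so $\cov{\mathcal{P}}{r}{1/2} = +\infty$ by Definition~\ref{def:covering number of pyramids}. Corollary~\ref{cor:the condition where a pyramid is not an mm-space} then says that $\mathcal{P}$ is not associated with any mm-space. In fact, by Lemma~\ref{lem:the covering number of ext mm-space is finite}, it is not associated even with an extended mm-space.

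I do not expect a real obstacle. The only care needed is that the covering number uses closed balls $B_r$, matching the closed balls in the hypothesis, and that the union bound is applied to a finite net, which is automatic since the minimum in the definition is attained by a finite set.
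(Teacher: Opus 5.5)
Your proposal is correct and follows essentially the same route as the paper: choose $X_n \in \mathcal{P}$ with $\sup_{x}\mu_{X_n}(B_r(x)) < 1/n$, apply the union bound to a minimizing net for $\cov{X_n}{r}{1/2}$ to get $\#\mathcal{N}_n \ge n/2$, and conclude via Corollary~\ref{cor:the condition where a pyramid is not an mm-space}. Your remark that Lemma~\ref{lem:the covering number of ext mm-space is finite} also excludes extended mm-spaces is correct as well.
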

\begin{proof}
    Take a number $r > 0$ such that 
    \begin{equation}\label{eq:prop:measure of balls is almost zero, then P is not an mm-space 1}
        \inf_{X \in \mathcal{P}} \sup_{x \in X} \mu_{X}(B_{r}(x)) = 0
    \end{equation}
    and fix it.
    By~\eqref{eq:prop:measure of balls is almost zero, then P is not an mm-space 1}, we find mm-spaces $X_{n} \in \mathcal{P}$, $n \in \N$, such that \[\sup_{x \in X_{n}}\mu_{X_{n}}(B_{r}(x)) < \frac{1}{n}\]for $n \in \N$.
    Let $\mathcal{N}_{n}:=\{x_{n}^{1},x_{n}^{2},\dots,x_{n}^{N_{n}}\} \subset X_{n}$ be a minimizer of $\cov{X_{n}}{r}{1/2}$.
    Then we observe that
    \[\frac{1}{2} \le \mu_{X_{n}}(B_{r}(\mathcal{N}_{n})) \le \sum_{i=1}^{N_{n}}\mu_{X_{n}}(B_{r}(x_{n}^{i})) \le \frac{N_{n}}{n},\]which implies that $N_{n} \ge n/2$.
    Therefore, we obtain \[\cov{\mathcal{P}}{r}{1/2} \ge \cov{X_{n}}{r}{1/2}=N_{n} \ge n/2 \longrightarrow +\infty\]as $n \to \infty$.
    By Corollary~\ref{cor:the condition where a pyramid is not an mm-space}, we see that the pyramid $\mathcal{P}$ is not associated with any mm-space.
    This completes the proof.
\end{proof}

The following statement is known and follows immediately from~\cite[Proposition 7.37]{S2016book}.
\begin{cor}\label{cor:infinite product is not mm-sp.}
    Let $X$ be a nontrivial mm-space and let $1 \le p \le \infty$. 
    Then the pyramid $X_{p}^{\infty}$ is not associated with any mm-space.  
\end{cor}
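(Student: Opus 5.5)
The natural route is Proposition~\ref{prop:measure of balls is almost zero, then P is not an mm-space} applied with $\mathcal{P} = X_{p}^{\infty}$. Its hypothesis asks for one radius $r > 0$ with $\inf_{Y \in X_{p}^{\infty}} \sup_{y \in Y} \mu_{Y}(B_{r}(y)) = 0$. The finite products $X_{p}^{n}$ belong to $X_{p}^{\infty}$ by its definition via Lemma~\ref{lem:pyramids generated by directed set}, so it suffices to show
\[\lim_{n\to\infty}\sup_{x \in X^{n}} \mu_{X}^{\otimes n}\bigl(B_{r}^{X_{p}^{n}}(x)\bigr) = 0\]
for some $r>0$. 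The proposition then says $X_{p}^{\infty}$ is not associated with any mm-space.

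\textbf{Case $1 \le p < \infty$.} Lemma~\ref{lem:the measure of r-balls of infinite product space are zero} gives the limit for open balls $U_{r}$ and every $r > 0$. Since $B_{r}(x) \subset U_{2r}(x)$, applying the lemma with radius $2r$ handles closed balls.

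\textbf{Case $p = \infty$.} Here Corollary~\ref{cor:the measure of r-balls of infinite product space for sup-norm are zero} gives the same conclusion for $0 < r < \diam{X}/2$. Because $X$ is nontrivial, $\diam{X} > 0$ (possibly $+\infty$), so this range is nonempty. Fix $r$ with $0 < r < \diam{X}/4$ and use $B_{r}(x) \subset U_{2r}(x)$, since $2r$ still lies in the admissible range.

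\textbf{Main obstacle.} Essentially none remains: the substantive work sits in Lemma~\ref{lem:the measure of r-balls of infinite product space are zero} and its corollary, whose proof blocks coordinates into groups of size $n_{0}$ and uses Lemma~\ref{lem:lem:the measure of r-balls of infinite product space are zero 1} to obtain a uniform factor below $1$. The only points to check are the open versus closed ball distinction and the restricted radius range for $p=\infty$, both handled above.
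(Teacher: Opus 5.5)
Your proposal is correct and follows the paper's proof: apply Proposition~\ref{prop:measure of balls is almost zero, then P is not an mm-space} to $X_{p}^{\infty}$, bounding the infimum by the finite products $X_{p}^{n}\in X_{p}^{\infty}$ via Lemma~\ref{lem:the measure of r-balls of infinite product space are zero} (for $p<\infty$) and Corollary~\ref{cor:the measure of r-balls of infinite product space for sup-norm are zero} (for $p=\infty$). Your extra step $B_{r}(x)\subset U_{2r}(x)$, with $r<\diam{X}/4$ when $p=\infty$, cleanly handles the closed-versus-open ball mismatch that the paper passes over silently.
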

We nevertheless give an alternative proof as a corollary of Proposition~\ref{prop:measure of balls is almost zero, then P is not an mm-space}, which may be more direct.
\begin{proof}[Proof of Corollary~\ref{cor:infinite product is not mm-sp.}]
    Take any positive number $r$ with $r < \diam{X}/2$ and fix it.
    By Lemma~\ref{lem:the measure of r-balls of infinite product space are zero} and Corollary~\ref{cor:the measure of r-balls of infinite product space for sup-norm are zero}, we have \[\inf_{X\in X_{p}^{\infty}}\sup_{x\in X}\mu_{X}(B_{r}^{X}(x)) \le \lim_{n\to\infty}\sup_{x \in X^{n}}\mu_{X}^{\otimes n}(B_{r}^{X_{p}^{n}}(x))=0.\]
    By Proposition~\ref{prop:measure of balls is almost zero, then P is not an mm-space}, we obtain this corollary.
\end{proof}
We need the following lemma to prove Theorem~\ref{thm:when a pyrmid is an ext. mm sp.}.
By Proposition~\ref{prop:P is compact <=> P=PX}, this lemma can be viewed as a variant of Lemma~\ref{lem:box-precompactness criterion}.
\begin{lem}\label{lem:the condition when a pyramid of finite observable diameter is an mm-space}
    Let $\mathcal{P}$ be a pyramid of finite observable diameter.
    Then, the following (i) and (ii) are equivalent to each other.
    \begin{enumerate}
        \item For any $r > 0$ and any $0 < \kappa < 1$, we have $\cov{\mathcal{P}}{r}{\kappa} < +\infty$.
        \item There exists an mm-space $X$ such that $\mathcal{P} = \mathcal{P}_{X}$.
    \end{enumerate}
\end{lem}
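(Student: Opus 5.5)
The plan is to show that (ii) $\Rightarrow$ (i) is immediate and to prove (i) $\Rightarrow$ (ii) through compactness. For (ii) $\Rightarrow$ (i), an mm-space is in particular an extended mm-space, so Lemma~\ref{lem:the covering number of ext mm-space is finite} applies; alternatively, $\cov{\mathcal{P}_{X}}{r}{\kappa}=\cov{X}{r}{\kappa}<+\infty$ by the remark following Definition~\ref{def:covering number of mm-spaces}.

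For (i) $\Rightarrow$ (ii), I will use Proposition~\ref{prop:P is compact <=> P=PX}. It then suffices to show that $\mathcal{P}$ is compact in the box topology. Since $\mathcal{P}$ is $\square$-closed in the complete space $(\X,\square)$, it is enough to show that $\mathcal{P}$ is $\square$-precompact. For this I will verify condition (ii) of Lemma~\ref{lem:box-precompactness criterion}: for each $0<\varepsilon<1$ there should be a uniform bound $\Delta(\varepsilon)$ such that every $Y\in\mathcal{P}$ has an $\varepsilon$-supporting net $\mathcal{N}$ with $\#\mathcal{N}\le\Delta(\varepsilon)$ and $\diam{\mathcal{N}}\le\Delta(\varepsilon)$.

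The cardinality bound comes directly from (i). Put $K:=\cov{\mathcal{P}}{\varepsilon}{\varepsilon/2}<+\infty$. Then every $Y\in\mathcal{P}$ has a set $\mathcal{N}\subset Y$ with $\#\mathcal{N}\le K$ and $\mu_Y(B_{\varepsilon}(\mathcal{N}))\ge 1-\varepsilon/2$.

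The diameter bound is the main obstacle, because the points of $\mathcal{N}$ could be far apart. Here I will use finite observable diameter, via separation distance. I discard every $x\in\mathcal{N}$ with $\mu_Y(B_\varepsilon(x))<\varepsilon/(2K)$; this loses total measure at most $\varepsilon/2$. The remaining net $\mathcal{N}'$ satisfies $\mu_Y(B_\varepsilon(\mathcal{N}'))\ge1-\varepsilon$, and each of its balls has measure at least $\eta:=\varepsilon/(2K)$.

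Now take $x,x'\in\mathcal{N}'$. If $d_Y(x,x')>2\varepsilon$, the balls $B_\varepsilon(x)$ and $B_\varepsilon(x')$ are disjoint, have measure at least $\eta$ each, and lie at distance at least $d_Y(x,x')-2\varepsilon$. By \cite[Proposition 2.26 (2)]{S2016book}, as used in Lemma~\ref{lem:the characterization of 1pt mm-sp.}, for $0<\kappa'<\eta$ we get
\[d_Y(x,x')-2\varepsilon\le \Sep(Y;\eta,\eta)\le\obsdiam{Y}{\kappa'}\le\obsdiam{\mathcal{P}}{\kappa'}<+\infty.\]
Hence $\diam{\mathcal{N}'}\le D:=2\varepsilon+\obsdiam{\mathcal{P}}{\kappa'}$, which is uniform in $Y$. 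If the criterion requires $\mathcal{N}'$ to be nonempty and discrete, this holds automatically, since $\mathcal{N}'$ is finite and nonempty because $1-\varepsilon>0$.

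Setting $\Delta(\varepsilon):=\max\{K,D\}$ verifies the criterion, so $\mathcal{P}$ is precompact, hence compact, and Proposition~\ref{prop:P is compact <=> P=PX} gives $\mathcal{P}=\mathcal{P}_X$. The only delicate points are the pruning of light balls and the direction of the separation/observable diameter inequality. The assumption of finite observable diameter is used exactly to bound the diameter of the net.
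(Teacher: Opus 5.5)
Your proof is correct, and it takes a more direct route than the paper's.

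\textbf{The paper's route.} The paper fixes an approximation sequence $\{X_n\}$ of $\mathcal{P}$ and builds bounded nets only on the $X_n$. To do this it centers each net at a point $x_n^1$ whose $\varepsilon/4$-ball is heavy. It then discards the part of the net outside $B_R(x_n^1)$, using the pyramid-level bound $\mathrm{Sep}(\mathcal{P};\cdot)\le\mathrm{ObsDiam}(\mathcal{P};\cdot)$ from \cite[Proposition 4.10]{OS2015_limit_formula}. Finally it transports these nets to an arbitrary $Y\in\mathcal{P}$ through a map $X_n\to Y$ that is 1-Lipschitz up to $\delta$, coming from $Y\prec_{\delta}X_n$. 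This transport step forces extra bookkeeping with the non-exceptional domain and the index set $J_n^R$.

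\textbf{Your route.} You observe that $\mathrm{Cov}(\mathcal{P};r,\kappa)$ is by definition a supremum over all of $\mathcal{P}$. Hence every $Y\in\mathcal{P}$ already carries a net of cardinality at most $K$, and no transport is needed. Pruning the light balls then lets you bound pairwise distances through the mm-space inequality $\mathrm{Sep}(Y;\eta,\eta)\le\mathrm{ObsDiam}(Y;-\kappa')$ for $\kappa'<\eta$. You finish with $\mathrm{ObsDiam}(Y;-\kappa')\le\mathrm{ObsDiam}(\mathcal{P};-\kappa')$. The direction of the separation inequality and the pruning estimate are both handled correctly.

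\textbf{Comparison.} Your version is shorter and avoids approximate dominations entirely. The paper's version only ever uses covering and separation bounds for the members $X_n$ of an approximation sequence, so it implicitly proves slightly more. That extra generality is not needed for the lemma as stated.
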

\begin{proof}
    (i) $\Leftarrow$ (ii) is followed by Lemma~\ref{lem:the covering number of ext mm-space is finite}.
    We prove (i) $\Rightarrow$ (ii).
    Assume (i). 
    We prove that the subset $\mathcal{P} \subset \mathcal{X}$ is compact with respect to the box topology. 
    Take any $0< \varepsilon < 1$ and fix it.
    By Lemma~\ref{lem:app. seq. of pyramid}, there exists an approximation sequence $\{X_{n}\}_{n=1}^{\infty}$ of the pyramid $\mathcal{P}$.
    We define $C_{\varepsilon} := \cov{\mathcal{P}}{\varepsilon/4}{\varepsilon/4} < \infty$. 
    Let $\mathcal{N}_{n} = \{x_{n}^{1}, \dots, x_{n}^{N_{n}}\} \subset X_{n}$ be a minimizer of $\cov{X_{n}}{\varepsilon/4}{\varepsilon/4}$. 
    Since $N_{n} \le C_{\varepsilon}$ and \[\sum_{i=1}^{N_{n}}\mu_{X_{n}}(B_{\varepsilon/4}(x_{n}^{i})) \ge \mu_{X_{n}}(\textstyle\bigcup_{i=1}^{N_{n}} B_{\varepsilon/4}(x_{n}^{i})) = \mu_{X_{n}}(B_{\varepsilon/4}(\mathcal{N}_{n})) \ge 1-\varepsilon/4\] for $n \ge 1$, there exists a natural number $i_{n}$ with $1 \le i_{n} \le N_{n}$ such that $\mu_{X_{n}}(B_{\varepsilon/4}(x_{n}^{i_{n}})) \ge (1-\varepsilon/4)/C_{\varepsilon} > 0$. 
    Renumbering the sequence $\{x_{n}^{i}\}_{i=1}^{N_{n}}$, we assume that 
    \begin{equation}\label{eq:section 6 meaure of ball centered at x_n^1}
        \mu_{X_{n}}(B_{\varepsilon/4}(x_{n}^{1})) \ge (1-\varepsilon/4)/C_{\varepsilon}.
    \end{equation}
    for any $n \in \N$.
    \begin{clm}\label{clm:Nn is almost uniformly bounded}
        There exist a positive number $R$ such that for any $n \in \N$, we have \[\mu_{X_{n}}(B_{\varepsilon/4}(\mathcal{N}_{n}^{R})) \ge 1 - \varepsilon/2,\] where $\mathcal{N}_{n}^{R} := \{x_{n}^{i} \in \mathcal{N}_{n} \mid d_{X_{n}}(x_{n}^{1},\, x_{n}^{i}) \le R\} = \mathcal{N}_{n}\cap B_{R}(x_{n}^{1})$.
    \end{clm}

    \begin{proof}
        Set $R :=\mathrm{Sep}(\mathcal{P};(1-\varepsilon/4)/C_{\varepsilon},\varepsilon/4) + 1$.
        By~\cite[Proposition 4.10]{OS2015_limit_formula}, for any $\kappa$ with $0 < \kappa < \min\{(1-\varepsilon/4)/C_{\varepsilon},\hspace{1mm}\varepsilon/4\}$, we have \[\mathrm{Sep}(\mathcal{P};(1-\varepsilon/4)/C_{\varepsilon},\varepsilon/4) \le \obsdiam{\mathcal{P}}{\kappa}.\]
        Since the observable diameter of $\mathcal{P}$ is assumed to be finite, we have $R < \infty$.
        By~\eqref{eq:section 6 meaure of ball centered at x_n^1}, for any $n \in \N$, we have \[\mu_{X_{n}}(B_{\varepsilon/4}(x_{n}^{1})) \ge (1-\varepsilon/4)/C_{\varepsilon}.\]
        By a simple observation, we see that 
        \begin{align*}
            d_{X_{n}}(B_{\varepsilon/4}(x_{n}^{1}),\hspace{1mm}B_{\varepsilon/4}(\mathcal{N}_{n}\setminus \mathcal{N}_{n}^{R})) &> R-1 = \mathrm{Sep}(\mathcal{P};(1-\varepsilon/4)/C_{\varepsilon},\varepsilon/4)\\
                &\ge \mathrm{Sep}(X_{n};(1-\varepsilon/4)/C_{\varepsilon},\varepsilon/4),
        \end{align*}
        which implies that for any $n \in \N$, we have \[\mu_{X_{n}}(B_{\varepsilon/4}(\mathcal{N}_{n}\setminus \mathcal{N}_{n}^{R})) < \varepsilon/4.\]
        This shows that 
        \begin{align*}
            \mu_{X_{n}}(B_{\varepsilon/4}(\mathcal{N}_{n}^{R})) &\ge \mu_{X_{n}}(B_{\varepsilon/4}(\mathcal{N}_{n})\setminus B_{\varepsilon/4}(\mathcal{N}_{n}\setminus \mathcal{N}_{n}^{R})) \\
                &= \mu_{X_{n}}(B_{\varepsilon/4}(\mathcal{N}_{n})) - \mu_{X_{n}}(B_{\varepsilon/4}(\mathcal{N}_{n}\setminus \mathcal{N}_{n}^{R})) \\
                &\ge 1-\varepsilon/4 - \varepsilon/4 = 1-\varepsilon/2,
        \end{align*}
        which completes the proof.
    \end{proof}
 
    Set $\Delta(\varepsilon) := \max\{C_{\varepsilon},\, 2R+\varepsilon\}$. 
    We see that for every $n \in \N$,  
    \begin{itemize}
        \item $\mathcal{N}_{n}^{R}$ is an $\varepsilon$-supporting net of $X_{n}$;
        \item $\#\mathcal{N}_{n}^{R} \le C_{\varepsilon} \le \Delta(\varepsilon)$;
        \item $\diam{\mathcal{N}_{n}^{R}} \le 2R \le \Delta(\varepsilon)$.
    \end{itemize}

    Take any $Y \in \mathcal{P}$ and fix it. 
    There exist positive number $\delta$ and $n \in \N$ such that $0 < \delta < \min\{\varepsilon/8,\, \varepsilon/(4C_{\varepsilon})\}$, $n > 4/\varepsilon$, and $Y \prec_{\delta} X_{n}$ by Lemma~\ref{lem:PXn -> P, Z in P => Z <ek Xnk}.
    Let $g:X_{n} \to Y$ be a 1-Lipschitz map up to $\delta$ such that $\dP(g_{*}\mu_{X_{n}}, \mu_{Y}) \le \delta$, and let $\widetilde{X}_{n} \subset X_{n}$ be a non-exceptional domain of $g$. 
    Set \[I_{n}^{R} := \{1\le i \le N_{n} \mid x_{n}^{i} \in \mathcal{N}_{n}^{R}\}\] and \[J_{n}^{R} := \{j \in I_{n}^{R} \mid \mu_{X_{n}}(B_{\varepsilon/4}(x_{n}^{j})) > \delta\}.\] 
    Then we have 
    \begin{align*}
        \mu_{X_{n}}(B_{\varepsilon/4}(\{x_{n}^{j}\mid j \in J_{n}^{R}\}) &= \mu_{X_{n}}(B_{\varepsilon/4}(\mathcal{N}_{n}^{R}\setminus \{x_{n}^{i}\mid i \in I_{n}^{R}\setminus J_{n}^{R}\})) \\
        &\ge \mu_{X_{n}}(B_{\varepsilon/4}(\mathcal{N}_{n}^{R})\setminus B_{\varepsilon/4}(\{x_{n}^{i}\mid i \in I_{n}^{R}\setminus J_{n}^{R}\})) \\
        &= \mu_{X_{n}}(B_{\varepsilon/4}(\mathcal{N}_{n}^{R})) - \mu_{X_{n}}(\textstyle\bigcup_{i \in I_{n}^{R} \setminus J_{n}^{R}}B_{\varepsilon/4}(\{x_{n}^{i}\})) \\
        &\ge 1-\varepsilon/2 - \delta C_{\varepsilon} \ge 1 - \frac{3}{4}\varepsilon.
    \end{align*}
    For any $ j \in J_{n}^{R}$, we see that $B_{\varepsilon/4}(x_{n}^{j})\cap \widetilde{X}_{n} \neq \emptyset$ since $\mu_{X_{n}}(B_{\varepsilon/4}(x_{n}^{j})) > \delta$.
    Take any points $\tilde{x}_{n}^{j} \in B_{\varepsilon/4}(x_{n}^{j})\cap \widetilde{X}_{n}$, $j \in J_{n}^{R}$, and set $\widetilde{\mathcal{N}}_{n}^{R} := \{\tilde{x}_{n}^{j} \mid j \in J_{n}^{R}\}$. 
    By the definition of $\widetilde{\mathcal{N}}_{n}^{R}$, we have
    \begin{align*}
        \mu_{X_{n}}(B_{\varepsilon/2}(\widetilde{\mathcal{N}}_{n}^{R})) &= \mu_{X_{n}}(\textstyle\bigcup_{j \in J_{n}^{R}}B_{\varepsilon/2}(\tilde{x}_{n}^{j})) \\
            &\ge \mu_{X_{n}}(\textstyle\bigcup_{j \in J_{n}^{R}}B_{\varepsilon/4}(x_{n}^{j})) \ge 1 - \displaystyle\frac{3}{4}\varepsilon.
    \end{align*}
    Since $g$ is a 1-Lipschitz map up to $\delta$, we have \[B_{\varepsilon/2}(\widetilde{\mathcal{N}}_{n}^{R}) \cap \widetilde{X}_{n} \subset g^{-1}(B_{\varepsilon/2 + \delta}(g(\widetilde{\mathcal{N}}_{n}^{R}))) \subset g^{-1}(B_{\varepsilon-\delta}(g(\widetilde{\mathcal{N}}_{n}^{R}))).\] 
    This implies that
    \begin{align*}
        \mu_{Y}(B_{\varepsilon}(g(\widetilde{\mathcal{N}}_{n}^{R}))) &\ge g_{*}\mu_{X_{n}}(B_{\varepsilon-\delta}(g(\widetilde{\mathcal{N}}_{n}^{R}))) - \delta\\
            &\ge \mu_{X_{n}}(B_{\varepsilon/2}(\widetilde{\mathcal{N}}_{n}^{R})\cap\widetilde{X}_{n}) - \delta \ge \mu_{X_{n}}(B_{\varepsilon/2}(\widetilde{\mathcal{N}}_{n}^{R})) - 2\delta\\
            &\ge 1 - \frac{3}{4}\varepsilon - 2 \delta \ge 1 - \varepsilon.
    \end{align*}
    This shows that the subset $g(\widetilde{\mathcal{N}}_{n}^{R})$ of $Y$ is an $\varepsilon$-supporting net of $Y$ with $\#g(\widetilde{\mathcal{N}}_{n}^{R}) \le \# J_{n}^{R} \le \Delta(\varepsilon)$. 
    Moreover, for any points $\tilde{x}_{n}^{i}, \tilde{x}_{n}^{j} \in \widetilde{\mathcal{N}}_{n}^{R}$, we have 
    \begin{align*}
        d_{Y}(g(\tilde{x}_{n}^{i}),\,g(\tilde{x}_{n}^{j})) &\le d_{X_{n}}(\tilde{x}_{n}^{i}, \tilde{x}_{n}^{j}) + \delta \le d_{X_{n}}(x_{n}^{i}, x_{n}^{j}) + \varepsilon/2 + \delta \\
            &\le 2R + \varepsilon/2 + \delta \le 2R+\varepsilon \le \Delta(\varepsilon),
    \end{align*}
    which implies that $\diam{g(\widetilde{\mathcal{N}}_{n}^{R})} \le \Delta(\varepsilon)$. 
    By Lemma~\ref{lem:box-precompactness criterion}, we see that the subset $\mathcal{P} \subset \mathcal{X}$ is compact with respect to the box topology, and by Proposition~\ref{prop:P is compact <=> P=PX}, there exists an mm-space $X$ such that $\mathcal{P} = \mathcal{P}_{X}$. 
    This completes the proof.
    \end{proof}

    \begin{proof}[Proof of Theorem~\ref{thm:when a pyrmid is an ext. mm sp.}]
    (i) $\Leftarrow$ (ii) is followed by Lemma~\ref{lem:the covering number of ext mm-space is finite}.
    We prove (i) $\Rightarrow$ (ii).
    Assume (i).
    By Lemma~\ref{lem:the condition when a pyramid of finite observable diameter is an mm-space}, it suffices to consider the case where the observable diameter of $\mathcal{P}$ is infinite.
    Assume that the observable diameter of $\mathcal{P}$ is infinite.
    By Theorem~\ref{thm:decomposition of pyramids}, there exist $A= (a_{n})_{n=1}^{N} \in \mathcal{A}$ and a sequence $\{\mathcal{P}_{n}\}_{n=1}^{N}$ of pyramids of finite observable diameter such that \[\mathcal{P} = \mathcal{X}^{1-\|A\|_{1}} + \sum_{n=1}^{N}\mathcal{P}_{n}^{a_{n}}.\]

    \begin{clm}\label{clm:Cov(P_n;r,k) <= Cov(P;r,a_n k)}
        Let $\mathcal{P} = \sum_{n=1}^{N}\mathcal{P}_{n}^{a_{n}}$ be a direct sum of pyramids. 
        For any $r > 0$, $0 < \kappa < 1$, and $n \in [N]$, we have \[\cov{\mathcal{P}_{n}}{r}{\kappa} \le \cov{\mathcal{P}}{r}{a_{n}\kappa}.\]
    \end{clm}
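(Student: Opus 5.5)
Fix $r>0$, $0<\kappa<1$ and $n\in[N]$. By Definition~\ref{def:covering number of pyramids}, it suffices to show that for every $X\in\mathcal{P}_{n}$ there is some $Z\in\mathcal{P}$ with $\cov{X}{r}{\kappa}\le\cov{Z}{r}{a_{n}\kappa}$. We may assume $\cov{\mathcal{P}}{r}{a_{n}\kappa}<\infty$, since otherwise there is nothing to prove.

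The plan is to embed $X$ as the $n$-th component of a spaced-out direct sum and read off coverings. Given $X\in\mathcal{P}_{n}$, choose for $m\neq n$ the one-point spaces $X_{m}:=*\in\mathcal{P}_{m}$, and put $X_{n}:=X$. For a large parameter $R>2r$, let $Z:=(\sum_{m=1}^{N}X_{m}^{a_{m}})_{R}$, built with base points as in Section~3. Then $Z\in\mathcal{X}(\{X_{m}\}_{m=1}^{N};(a_{m})_{m=1}^{N};R)$. Moreover $Z\in\mathcal{P}$, because $Z$ is dominated by the extended mm-space $\sum_{m}X_{m}^{a_{m}}$: the identity map from the extended space is 1-Lipschitz (distances between distinct pieces are $+\infty$) and pushes the measure forward correctly. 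By \eqref{eq:another representation of direct sum of pyramids}, this gives $Z\in\mathcal{P}$.

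Next take a minimizing net $\mathcal{N}\subset Z$ for $\cov{Z}{r}{a_{n}\kappa}$, so that $\mu_{Z}(B_{r}(\mathcal{N}))\ge1-a_{n}\kappa$. Since the pieces $\varphi_{m}(X_{m})$ are at mutual distance $\ge R>2r$, every point of $\mathcal{N}$ lies within distance $r$ of at most one piece. Let $\mathcal{N}'$ be the set of points of $X$ obtained by projecting those points of $\mathcal{N}$ whose $r$-ball meets $\varphi_{n}(X)$. For each such point $z$, pick any $x\in X$ with $d_{Z}(z,\varphi_{n}(x))\le r$; this alone only yields a $2r$-cover. To avoid losing a factor, I would work with $\mathcal{N}$ chosen inside $\operatorname{supp}\mu_{Z}=\bigsqcup\varphi_{m}(X_{m})$. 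Here is the one subtle step. If the definition of $\mathrm{Cov}$ allows net points anywhere in $Z$, then a point off the support could cover part of $X$ within radius $r$ that no point of $X$ covers. The fix is to design the gluing so that $Z$ has no points off the support: with the wedge-type metric $d(x,y)=d(x,\bar x_{n})+d(y,\bar x_{m})+R$, the underlying set is exactly $\bigsqcup X_{m}$. Hence $\mathcal{N}\subset\bigsqcup X_{m}$ automatically, and $\mathcal{N}':=\varphi_{n}^{-1}(\mathcal{N})$ satisfies $B_{r}(\mathcal{N})\cap\varphi_{n}(X)=\varphi_{n}(B_{r}^{X}(\mathcal{N}'))$, because points from other pieces are at distance $\ge R>r$.

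Finally the measure count. Since $\mu_{Z}=\sum_{m}a_{m}(\varphi_{m})_{*}\mu_{X_{m}}$,
\[
1-a_{n}\kappa\le\mu_{Z}(B_{r}(\mathcal{N}))\le a_{n}\mu_{X}(B_{r}(\mathcal{N}'))+(1-a_{n}),
\]
so $\mu_{X}(B_{r}(\mathcal{N}'))\ge1-\kappa$. Therefore $\cov{X}{r}{\kappa}\le\#\mathcal{N}'\le\#\mathcal{N}=\cov{Z}{r}{a_{n}\kappa}\le\cov{\mathcal{P}}{r}{a_{n}\kappa}$. Taking the supremum over $X\in\mathcal{P}_{n}$ proves the claim. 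The main point to double-check is that $\mathcal{N}'$ is nonempty, or handled trivially when $X$ carries no net points. The inequality above forces $\mu_{X}(B_{r}(\mathcal{N}'))\ge1-\kappa>0$, so $\mathcal{N}'\neq\emptyset$.
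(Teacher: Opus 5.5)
Your argument is correct and is essentially the paper's proof: the paper also places $X$ in a $2r$-spaced sum $(X^{a_n}+\{*\}^{1-a_n})_{2r}\in\mathcal{P}$, takes a minimizing net for $(r,a_n\kappa)$, and bounds the mass outside $X$ by $1-a_n$. The differences are minor: the paper merges all other components into a single point instead of keeping $N-1$ separate points, and your single inequality $\mu_Z(B_r(\mathcal{N}))\le a_n\mu_X(B_r(\mathcal{N}'))+(1-a_n)$ replaces the paper's case split on whether $*\in\mathcal{N}$.
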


    \begin{proof}
        Take any $n \in [N]$ and $X \in \mathcal{P}_{n}$ and fix them.
        We set $X^{\prime} := (X^{a_{n}} + \{*\}^{1-a_{n}})_{2r}$, which belongs to $\mathcal{P}$. 
        Let $\mathcal{N} \subset X^{\prime}$ be a minimizer of $\cov{X^{\prime}}{r}{a_{n}\kappa}$.

        If $* \in \mathcal{N}$, then we have \[1-a_{n}\kappa \le \mu_{X_{^{\prime}}}(B_{r}(\mathcal{N})) \le a_{n}\mu_{X}(B_{r}(\mathcal{N}\setminus\{*\})) + 1-a_{n},\] which implies that \[\mu_{X}(B_{r}(\mathcal{N}\setminus \{*\})) \ge 1 - \kappa.\]
        On the other hand, if $* \notin \mathcal{N}$, then we see that \[1-a_{n}\kappa \le \mu_{X^{\prime}}(B_{r}(\mathcal{N})) = a_{n}\mu_{X}(B_{r}(\mathcal{N})).\] This shows that \[\mu_{X}(B_{r}(\mathcal{N})) \ge (1-a_{n}\kappa)/a_{n} \ge 1-\kappa.\]
        By the above argument, we have \[\cov{X}{r}{\kappa} \le \#\mathcal{N} = \cov{X^{\prime}}{r}{a_{n}\kappa} \le \cov{\mathcal{P}}{r}{a_{n}\kappa}.\]
        Since we choose $X \in \mathcal{P}_{n}$ arbitrarily, we obtain $\cov{\mathcal{P}_{n}}{r}{\kappa} \le \cov{\mathcal{P}}{r}{a_{n}\kappa}$. 
        This completes the proof.
    \end{proof}

    We note that $\cov{\mathcal{X}}{r}{\kappa} = \infty$ for any $r > 0$ and $0 < \kappa < 1$. 
    By Claim~\ref{clm:Cov(P_n;r,k) <= Cov(P;r,a_n k)}, we have $\|A\|_{1}=1$ and $\mathcal{P} = \sum_{n=1}^{N}\mathcal{P}_{n}^{a_{n}}$. 
    Moreover, we see that $\cov{\mathcal{P}_{n}}{r}{\kappa} \le \cov{\mathcal{P}}{r}{a_{n}\kappa} < \infty$ for any $r > 0$, $0 < \kappa < 1$, and $n \in [N]$. 
    By Lemma~\ref{lem:the condition when a pyramid of finite observable diameter is an mm-space}, there exists an mm-space $X_{n}$ such that $\mathcal{P}_{n} = \mathcal{P}_{X_{n}}$. 
    This implies that \[\mathcal{P} = \sum_{n=1}^{N}\mathcal{P}_{X_{n}}^{a_{n}} = \mathcal{P}_{\sum_{n=1}^{N}X_{n}^{a_{n}}},\] which completes the proof of (i) $\Rightarrow$ (ii).
\end{proof}
By Theorem~\ref{thm:when a pyrmid is an ext. mm sp.} and Proposition~\ref{prop:P < Q => Cov(P) < Cov(Q)}, we obtain the following corollary.
\begin{cor}\label{cor:Q < P_X, X; ext. mm-sp. => Q=P_Y}
    Let $\mathcal{Q}$ be a pyramid.
    If we have $\mathcal{Q} \subset \mathcal{P}_{X}$ for some extended mm-space $X$, then there exists an extended mm-space $Y$ such that $\mathcal{Q}=\mathcal{P}_{Y}$.
\end{cor}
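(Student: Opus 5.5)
The plan is to combine Theorem~\ref{thm:when a pyrmid is an ext. mm sp.} with the monotonicity of covering numbers under inclusion of pyramids. Concretely, assume $\mathcal{Q} \subset \mathcal{P}_{X}$ for an extended mm-space $X$. The aim is to verify condition (i) of Theorem~\ref{thm:when a pyrmid is an ext. mm sp.} for $\mathcal{Q}$. That theorem then yields an extended mm-space $Y$ with $\mathcal{Q} = \mathcal{P}_{Y}$.

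First, I note that $\mathcal{P}_{X}$ is a pyramid by the result of Ozawa and Shioya recalled in Section~3.2. Hence Proposition~\ref{prop:P < Q => Cov(P) < Cov(Q)} applies to the inclusion $\mathcal{Q} \subset \mathcal{P}_{X}$. It gives, for every $r > 0$ and $0 < \kappa < 1$,
\[\cov{\mathcal{Q}}{r}{\kappa} \le \cov{\mathcal{P}_{X}}{r}{\kappa}.\]
The proposition itself is immediate: the covering number of a pyramid is a supremum over its members, and enlarging the set can only increase that supremum.

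Second, I invoke Lemma~\ref{lem:the covering number of ext mm-space is finite}, which says $\cov{\mathcal{P}_{X}}{r}{\kappa} < +\infty$ for all $r > 0$ and $0 < \kappa < 1$. Equivalently, this is the implication (ii) $\Rightarrow$ (i) of Theorem~\ref{thm:when a pyrmid is an ext. mm sp.} applied to $\mathcal{P}_{X}$. Combining the two steps, $\cov{\mathcal{Q}}{r}{\kappa} < +\infty$ for all such $r$ and $\kappa$, which is exactly condition (i) for $\mathcal{Q}$.

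Finally, the implication (i) $\Rightarrow$ (ii) of Theorem~\ref{thm:when a pyrmid is an ext. mm sp.} produces the desired extended mm-space $Y$. There is no genuine obstacle left at this stage, since all the difficulty sits in that implication, which rests on the decomposition Theorem~\ref{thm:decomposition of pyramids} and on Lemma~\ref{lem:the condition when a pyramid of finite observable diameter is an mm-space}. The only point needing care is that the hypotheses of the cited results are met, namely that both $\mathcal{Q}$ and $\mathcal{P}_{X}$ are genuine pyramids.
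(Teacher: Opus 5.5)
Your proposal is correct and matches the paper's argument: the paper obtains this corollary directly from Theorem~\ref{thm:when a pyrmid is an ext. mm sp.} (using (ii) $\Rightarrow$ (i) for $\mathcal{P}_{X}$ and (i) $\Rightarrow$ (ii) for $\mathcal{Q}$), combined with the monotonicity of covering numbers in Proposition~\ref{prop:P < Q => Cov(P) < Cov(Q)}. That is exactly the chain you describe.
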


\begin{rem}
    If $X$ in Corollary~\ref{cor:Q < P_X, X; ext. mm-sp. => Q=P_Y} is an mm-space, then the conclusion follows from Proposition~\ref{prop:P is compact <=> P=PX}.
\end{rem}

\bibliographystyle{abbrv}
\bibliography{Bibliography}

\begin{thebibliography}{10}

\bibitem{Bill}
P.~Billingsley.
\newblock {\em Convergence of probability measures}.
\newblock Wiley Series in Probability and Statistics: Probability and Statistics. John Wiley \& Sons, Inc., New York, second edition, 1999.
\newblock A Wiley-Interscience Publication.

\bibitem{EKM2024}
S.~Esaki, D.~Kazukawa, and A.~Mitsuishi.
\newblock Invariants for {G}romov's pyramids and their applications.
\newblock {\em Adv. Math.}, 442:Paper No. 109583, 2024.

\bibitem{Fukaya}
K.~Fukaya.
\newblock Collapsing of {R}iemannian manifolds and eigenvalues of {L}aplace operator.
\newblock {\em Invent. Math.}, 87(3):517--547, 1987.

\bibitem{Gromov}
M.~Gromov.
\newblock {\em Metric structures for {R}iemannian and non-{R}iemannian spaces}.
\newblock Modern Birkh\"auser Classics. Birkh\"auser Boston, Inc., Boston, MA, english edition, 2007.
\newblock Based on the 1981 French original, With appendices by M. Katz, P. Pansu and S. Semmes, Translated from the French by Sean Michael Bates.

\bibitem{GrM}
M.~Gromov and V.~D. Milman.
\newblock A topological application of the isoperimetric inequality.
\newblock {\em Amer. J. Math.}, 105(4):843--854, 1983.

\bibitem{Huber}
P.~J. Huber.
\newblock {\em Robust statistics}.
\newblock Wiley Series in Probability and Mathematical Statistics. John Wiley \& Sons, Inc., New York, 1981.

\bibitem{K2022}
D.~Kazukawa.
\newblock Convergence of metric transformed spaces.
\newblock {\em Israel J. Math.}, 252(1):243--290, 2022.

\bibitem{KNS2024}
D.~Kazukawa, H.~Nakajima, and T.~Shioya.
\newblock Principal bundle structure of the space of metric measure spaces.
\newblock {\em Proc. Roy. Soc. Edinburgh Sect. A}, page 1–31, 2024.

\bibitem{KY2021}
D.~Kazukawa and T.~Yokota.
\newblock Boundedness of precompact sets of metric measure spaces.
\newblock {\em Geom. Dedicata}, 215:229--242, 2021.

\bibitem{Ledoux}
M.~Ledoux.
\newblock {\em The concentration of measure phenomenon}, volume~89 of {\em Mathematical Surveys and Monographs}.
\newblock American Mathematical Society, Providence, RI, 2001.

\bibitem{Nakajima_box_dist}
H.~Nakajima.
\newblock Box distance and observable distance via optimal transport.
\newblock Israel Journal of Mathematics, to appear. arXiv:2204.04893.

\bibitem{OS2015_limit_formula}
R.~Ozawa and T.~Shioya.
\newblock Limit formulas for metric measure invariants and phase transition property.
\newblock {\em Math. Z.}, 280(3-4):759--782, 2015.

\bibitem{S2016book}
T.~Shioya.
\newblock {\em Metric measure geometry}, volume~25 of {\em IRMA Lectures in Mathematics and Theoretical Physics}.
\newblock EMS Publishing House, Z\"urich, 2016.
\newblock Gromov's theory of convergence and concentration of metrics and measures.

\bibitem{Shioya_2017}
T.~Shioya.
\newblock Metric measure limits of spheres and complex projective spaces.
\newblock In {\em Measure theory in non-smooth spaces}, Partial Differ. Equ. Meas. Theory, pages 261--287. De Gruyter Open, Warsaw, 2017.

\bibitem{VL_mm_entropy}
A.~M. Vershik and M.~A. Lifshits.
\newblock On mm-entropy of a {B}anach space with {G}aussian measure.
\newblock {\em Theory Probab. Appl.}, 68(3):431--439, 2023.
\newblock Translation of Teor. Veroyatn. Primen. {\bf 68} (2023), 532--543.

\bibitem{Y_limit_formula}
S.~Yokota.
\newblock A complete proof of the limit formula for observable diameter, 2024.
\newblock arXiv:2407.08122.

\end{thebibliography}
\end{document}